\newtheorem{thm}{Theorem}[section]
\newtheorem{prop}[thm]{Proposition}
\newtheorem{cor}[thm]{Corollary}
\newtheorem{lem}[thm]{Lemma}
\theoremstyle{definition}
\newtheorem{defn}[thm]{Definition}
\newtheorem{exmp}[thm]{Example}
\newtheorem{notn}[thm]{Notation}
\newtheorem{strg}[thm]{Strategy}
\theoremstyle{remark}
\newtheorem{rmk}[thm]{Remark}
\let\c@equation\c@thm
\numberwithin{equation}{section}
\newcommand{\conf}{\mathrm{Conf}}
\newcommand{\GL}{\mathrm{GL}}
\newcommand{\PGL}{\mathrm{PGL}}
\newcommand{\SL}{\mathrm{SL}}
\newcommand{\sgn}{\mathrm{sign}}
\newcommand{\DT}{\mathrm{DT}}
\newcommand{\ad}{\mathrm{ad}}
\newcommand{\diag}{\mathrm{diag}}
\newcommand\mapsfrom{\mathrel{\reflectbox{\ensuremath{\mapsto}}}}
\newcommand{\tw}{\mathrm{tw}}
\newcommand{\ord}{\mathrm{ord}}
\renewcommand{\vec}[1]{\mathbf{#1}}
\newcommand{\inprod}[2]{\left\langle#1,#2\right\rangle}
\tikzset{
  -z>/.style={
    decoration={
      show path construction,
      lineto code={
        \path (\tikzinputsegmentfirst) -- (\tikzinputsegmentlast) coordinate[pos=#1] (mid);
        \draw (\tikzinputsegmentfirst) -- (mid);
        \draw[double distance=2pt, -implies] (mid) -- (\tikzinputsegmentlast);      }
    },decorate
  }, -z>/.default=.5,
  z->/.style={
    decoration={
      show path construction,
      lineto code={
          \path (\tikzinputsegmentfirst) -- (\tikzinputsegmentlast) coordinate[pos=#1] (mid);
                \draw[double distance=2pt] (\tikzinputsegmentfirst) -- (mid);
                \draw[decoration={markings, mark=at position 1 with {\arrow[scale=2]{>}}},postaction={decorate}] (mid) -- (\tikzinputsegmentlast);
      }
    },decorate
  }, z->/.default=.5,
  --z>/.style={
    decoration={
      show path construction,
      lineto code={
        \path (\tikzinputsegmentfirst) -- (\tikzinputsegmentlast) coordinate[pos=#1] (mid);
        \draw [dashed](\tikzinputsegmentfirst) -- (mid);
        \draw [double distance=2pt, dashed, -implies] (mid) -- (\tikzinputsegmentlast);      }
    },decorate
  }, --z>/.default=.5,
    z-->/.style={
    decoration={
      show path construction,
      lineto code={
          \path (\tikzinputsegmentfirst) -- (\tikzinputsegmentlast) coordinate[pos=#1] (mid);
                \draw[double distance=2pt, dashed] (\tikzinputsegmentfirst) -- (mid);
                \draw[dashed, decoration={markings, mark=at position 1 with {\arrow[scale=2]{>}}},postaction={decorate}] (mid) -- (\tikzinputsegmentlast);
      }
    },decorate
  }, z-->/.default=.5,
   x->/.style={
    decoration={
      show path construction,
      lineto code={
          \path (\tikzinputsegmentfirst) -- (\tikzinputsegmentlast) coordinate[pos=#1] (mid);
                \draw[double distance=3pt] (\tikzinputsegmentfirst) -- (mid);
                \draw[decoration={markings, mark=at position 1 with {\arrow[scale=2]{>}}},postaction={decorate}] (\tikzinputsegmentfirst) -- (\tikzinputsegmentlast);
      }
    },decorate
  }, x->/.default=.5,
  -x>/.style={
    decoration={
      show path construction,
      lineto code={
        \path (\tikzinputsegmentfirst) -- (\tikzinputsegmentlast) coordinate[pos=#1] (mid);
        \draw[double distance=3pt, shorten >=2pt] (mid) -- (\tikzinputsegmentlast); 
        \draw[decoration={markings, mark=at position 1 with {\arrow[scale=2]{>}}},postaction={decorate}] (\tikzinputsegmentfirst) -- (\tikzinputsegmentlast);}
    },decorate
  }, -x>/.default=.5,
   x-->/.style={
    decoration={
      show path construction,
      lineto code={
          \path (\tikzinputsegmentfirst) -- (\tikzinputsegmentlast) coordinate[pos=#1] (mid);
                \draw[double distance=3pt, dashed] (\tikzinputsegmentfirst) -- (mid);
                \draw[dashed, decoration={markings, mark=at position 1 with {\arrow[scale=2]{>}}},postaction={decorate}] (\tikzinputsegmentfirst) -- (\tikzinputsegmentlast);
      }
    },decorate
  }, x-->/.default=.5,
  --x>/.style={
    decoration={
      show path construction,
      lineto code={
        \path (\tikzinputsegmentfirst) -- (\tikzinputsegmentlast) coordinate[pos=#1] (mid);
        \draw[double distance=3pt, dashed, shorten >=2pt] (mid) -- (\tikzinputsegmentlast); 
        \draw[dashed, decoration={markings, mark=at position 1 with {\arrow[scale=2]{>}}},postaction={decorate}] (\tikzinputsegmentfirst) -- (\tikzinputsegmentlast);}
    },decorate
  }, --x>/.default=.5,
}
\title{Donaldson-Thomas Transformation of Double Bruhat Cells in Semisimple Lie Groups}
\author{Daping Weng}
\date{\today}
\begin{document}

\begin{abstract} Double Bruhat cells $G^{u,v}$ were first studied by Fomin and Zelevinsky \cite{FZ}. They provide important examples of cluster algebras \cite{BFZ} and cluster Poisson varieties \cite{FGamalgamation}. Cluster varieties produce examples of 3d Calabi-Yau categories with stability conditions, and their Donaldson-Thomas invariants, defined by Kontsevich and Soibelman \cite{KS}, are encoded by a formal automorphism on the cluster variety known as the Donaldson-Thomas transformation. Goncharov and Shen conjectured in \cite{GS} that for any semisimple Lie group $G$, the Donaldson-Thomas transformation of the cluster Poisson variety $H\backslash G^{u,v}/H$ is a slight modification of Fomin and Zelevinsky's twist map \cite{FZ}. In this paper we prove this conjecture, using crucially Fock and Goncharov's cluster ensembles \cite{FGensemble} and the amalgamation construction \cite{FGamalgamation}. Our result, combined with the work of Gross, Hacking, Keel, and Kontsevich \cite{GHKK}, proves the duality conjecture of Fock and Goncharov \cite{FGensemble} in the case of $H\backslash G^{u,v}/H$.
\end{abstract}

\maketitle

\tableofcontents

\section{Introduction}

Cluster algebras were defined by Fomin and Zelevinsky in \cite{FZI}. Cluster varieties were introduced by Fock and Goncharov in \cite{FGensemble}. They can be used to construct examples of 3d Calabi-Yau categories with stability conditions. One important object of study for such categories are their Donaldson-Thomas invariants, introduced by Kontsevich and Soibelman \cite{KS}, which generalize geometric invariants of Calabi-Yau manifolds. For a 3d Calabi-Yau category with stability condition constructed from a quiver (seed) together with a generic potential, its Donaldson-Thomas invariants are encoded by a single formal automorphism on the corresponding cluster variety, which is also known as the Donaldson-Thomas transformation \cite{KS}. Keller \cite{KelDT} gave a combinatorial characterization of a certain class of Donaldson-Thomas transformations based on quiver mutation. Goncharov and Shen gave an equivalent definition of the Donaldson-Thomas transformations using tropical points of cluster varieties in \cite{GS}, which we will use in this paper.

Double Bruhat cells form an important family of examples in the study of cluster algebras and cluster Poisson varieties since the very beginning of the subject. On the one hand, Berenstein, Fomin, and Zelevinsky \cite{BFZ} proved that the algebra of regular functions on double Bruhat cells in simply connected semisimple Lie groups are upper cluster algebras. On the other hand, Fock and Goncharov \cite{FGamalgamation} showed that double Bruhat cells in adjoint semisimple Lie groups are cluster Poisson varieties. Furthermore, Fock and Goncharov proved in the same paper that the Poisson structure in the biggest double Bruhat cell, which is a Zariski open subset of the Lie group, coincides with the Poisson-Lie structure defined by Drinfeld in \cite{D}. Williams \cite{Wilkacmoody} showed that these two constructions can be combined into a cluster ensemble in the sense of Fock and Goncharov \cite{FGensemble}, which will play a central role in our construction of Donaldson-Thomas transformation on the double quotient $H\backslash G^{u,v}/H$.

Recall the flag variety $\mathcal{B}$ associated to a semisimple Lie group $G$. Generic configurations of flags were studied by Fock and Goncharov in \cite{FGteich}. The cluster Donaldson-Thomas transformation of such configuration space was constructed by Goncharov and Shen in \cite{GS}. In this paper we make use of the configuration space of quadruple of flags with certain degenerate conditions depending on a pair of Weyl group elements $(u,v)$, which we call $\conf^{u,v}(\mathcal{B})$, and show that such configuration space is isomorphic to the quotient $H\backslash G^{u,v}/H$ of double Bruhat cells. 

Our main result is to show that the Donaldson-Thomas transformation on the cluster Poisson variety associated to $H\backslash G^{u,v}/H$ is equivalent to a modified version of Fomin and Zelevinsky's twist map \cite{FZ}, which is in turn equivalent to some explicit automorphism on the configuration space $\conf^{u,v}(\mathcal{B})$ as well. Related versions of the twist map in the case of Grassmannian have been studied independently by Marsh and Scott \cite{MStw}, by Muller and Speyer \cite{MSpos}, and by the author \cite{Weng}.

\subsection{Main Result} To state our main result, we need to first introduce the related spaces and maps between them. 

Let $G$ be a semisimple Lie group. Fix a pair of opposite Borel subgroups $B_\pm$ of $G$ and let $H:=B_+\cap B_-$ be the corresponding maximal torus. Then with respect to these two Borel subgroups, $G$ admits two Bruhat decompositions 
\[
G=\bigsqcup_{w\in W}B_+wB_+=\bigsqcup_{w\in W}B_-wB_-,
\]
where $W$ denotes the Weyl group with respect to the maximal torus $H$. For a pair of Weyl group elements $(u,v)$, we define the \textit{double Bruhat cell} $G^{u,v}$ to be the intersection
\[
G^{u,v}:=B_+uB_+\cap B_-vB_-.
\]

One of the most important map in the study of double Bruhat cells is Fomin and Zelevinsky's twist map, which they introduced in \cite{FZ}
\begin{align*}
    \tw:G^{u,v}& \rightarrow G^{u^{-1},v^{-1}}\\
    x & \mapsto \left(\left[\overline{u}^{-1}x\right]_-^{-1}\overline{u}^{-1}x\overline{v^{-1}}\left[x\overline{v^{-1}}\right]_+^{-1}\right)^{t\circ \iota}.
\end{align*}
Here $x=[x]_-[x]_0[x]_+$ denotes the Gaussian decomposition, $\overline{w}$ denotes the lift of a Weyl group element $w$ defined by Equation \eqref{wbar} (see also \cite{FZ}, \cite{BFZ}, and \cite{GS}), and $\iota$ is anti-involution on $G$ that maps $G^{u,v}$ biregularly to $G^{u^{-1},v^{-1}}$ (see Equation \eqref{iotadef}).

Double Bruhat cells are closely related to cluster varieties. On the one hand, Berenstein, Fomin, and Zelevinsky \cite{BFZ} showed that when $G$ is a simply connected semisimple Lie group (which we will indicate by a subscript $sc$ in this papre), the coordinate ring $\mathcal{O}\left(\mathcal{G}^{u,v}_{sc}\right)$ has the structure of a cluster algebra, which can be interpreted as a birational equivalence with a cluster $\mathcal{A}$-variety $\mathcal{A}^{u,v}$
\[
\psi:G_{sc}^{u,v}\dashrightarrow \mathcal{A}^{u,v}.
\]
On the other hand, Fock and Goncharov \cite{FGamalgamation} defined amalgamation map for double Bruhat cells in an adjoint semisimple Lie group, which induces a birational equivalence between the cluster $\mathcal{X}$-variety $\mathcal{X}^{u,v}$ corresponding to $\mathcal{A}^{u,v}$ and the double quotient $H\backslash G^{u,v}/H$
\[
\chi:\mathcal{X}^{u,v}\rightarrow H\backslash G/H.
\]

On the cluster Poisson variety $\mathcal{X}^{u,v}$, one question we can ask is whether it possesses a Donaldson-Thomas transformation $\DT$ in the sense of Goncharov and Shen \cite{GS}, and if yes, a follow-up question to ask is whether the Donaldson-Thomas transformation is a cluster transformation. Our main result provides positive answers to both questions. 

The next space we would like to introduce is the configuration space of quadruple of flags. Let $\mathcal{B}$ be the flag variety associated to the semisimple Lie group $G$. By using the Borel subgroup $B_+$ we can define a bijection between the $G$-orbits in $\mathcal{B}\times \mathcal{B}$ and the Weyl group $W$; in particular, it is known that two Borel subgroups $B_1$ and $B_2$ are opposite if and only if the pair $\left(B_1,B_2\right)$ is in the orbit corresponding to the longest Weyl group element $w_0$. For any two Borel subgroups $B_1$ and $B_2$, we write $\xymatrix{B_1 \ar[r]^w & B_2}$ if the pair $\left(B_1,B_2\right)$ is in the orbit corresponding to the Weyl group element $w$, and write $\xymatrix{B \ar@{-}[r] & B'}$ if $B$ and $B'$ are opposite Borel subgroups. Then we define the configuration space $\conf^{u,v}(\mathcal{B})$ to be the configuration space of quadruple of Borel subgroups $(B_1,B_2,B_3,B_4)$ satisfying the relative condition
\[
\xymatrix{B_1 \ar[r]^u \ar@{-}[d] & B_2 \ar@{-}[d] \\ B_3 \ar[r]_{v^*} & B_4}
\]
with $v^*:=w_0vw_0$, modulo the diagonal adjoint action by $G$. As it turns out, $\conf^{u,v}(\mathcal{B})$ is naturally isomorphic to the double quotient of the double Bruhat cell $H\backslash G^{u,v}/H$ (Proposition \ref{flag-bruhat}).

Starting with a quadruple of Borel subgroups $\left(B_1,B_2,B_3,B_4\right)$ whose configuration is in $\conf^{u,v}(\mathcal{B})$, we can find unique Borel subgroups $B_5$, $B_6$, $B_7$, and $B_8$ that fit in the following two relative position diagrams, with $w^l:=w_0w^{-1}$ and $w^r:=w^{-1}w_0$.
\[
\xymatrix{&B_1 \ar[r]^u \ar@{-}[dd] & B_2 \ar@{-}[dd] & \\
B_5 \ar[ur]^{u^l}\ar@{-}[urr] & & & B_6 \ar[ul]_{v} \\
& B_3 \ar[r]_{v^*} \ar@{-}[urr] \ar[ul]^{u^*} & B_4 \ar[ur]_{v^l} &}
 \quad \quad \quad \quad 
\xymatrix{&B_1 \ar[r]^u \ar[dl]_v \ar@{-}[dd] \ar@{-}[drr] & B_2 \ar@{-}[dd] \ar[dr]^{u^r} & \\
B_7 \ar[dr]_{v^r} \ar@{-}[drr] & & & B_8 \ar[dl]^{u^*} \\
& B_3 \ar[r]_{v^*} & B_4 &}
\]
Using the two hexagon diagrams above we define two biregular automorphisms on $\conf^{u,v}(\mathcal{B})$
\[
\xi:\left[B_1,B_2,B_3,B_4\right] \mapsto \left[B_3^*,B_5^*,B_6^*,B_2^*\right] \quad \quad \text{and} \quad \quad \eta:\left[B_1,B_2,B_3,B_4\right]\mapsto \left[B_8^*,B_4^*,B_1^*,B_7^*\right],
\]
where $*$ is induced by an involution on $G$ (see Equation \ref{starinv}).

Now we are ready to state our main theorems.

\begin{thm}\label{mainthm} (1) The Donaldson-Thomas transformation $\DT$ exists on $\mathcal{X}^{u,v}$ and is a cluster transformation; moreover, the following diagram commutes.
\[
\xymatrix{ \mathcal{X}^{u,v} \ar[r]^\DT \ar[d]_{\chi}^\cong & \mathcal{X}^{u,v} \ar[d]_{\chi}^\cong \\
H\backslash G^{u,v} /H \ar[r]^{\tw\circ \iota} \ar[d]_\cong & H\backslash G^{u,v}/H \ar[d]^\cong \\
\conf^{u,v}(\mathcal{B})\ar[r]_\eta & \conf^{u,v}(\mathcal{B})}
\]

(2) By intertwining the top level with the cluster theoretical map $i_\mathcal{X}$ \eqref{ixdef} and the two lower levels with the anti-involutions $\iota$ \eqref{iotadef}, we get another commutative diagram, with $\Xi:=i_\mathcal{X}\circ \DT\circ i_\mathcal{X}$.
\[
\xymatrix{ \mathcal{X}^{u,v} \ar[r]^\Xi \ar[d]_{\chi}^\cong & \mathcal{X}^{u,v} \ar[d]_{\chi}^\cong \\
H\backslash G^{u,v} /H \ar[r]^{ \iota\circ \tw} \ar[d]_\cong & H\backslash G^{u,v}/H \ar[d]^\cong \\
\conf^{u,v}(\mathcal{B})\ar[r]_\xi & \conf^{u,v}(\mathcal{B})}
\]

(3) The top square in the last commutative diagram fits into a bigger commutative diagram as below, with $s$ being any arbitrary section and $p$ being the canonical map $p:\mathcal{A}^{u,v}\rightarrow \mathcal{X}^{u,v}$ defined for any cluster ensemble.
\[
\xymatrix{ & G_{sc}^{u,v} \ar@{-->}[r]^\psi  \ar@<.5ex>[d] & \mathcal{A}^{u,v}\ar[d]^p & \\
\mathcal{X}^{u,v} \ar[r]^\chi \ar@/_5ex/[rr]_\Xi & H\backslash G^{u,v}/H \ar@<.5ex>[u]^s \ar@/_5ex/[rr]_{\iota \circ \tw} & \mathcal{X}^{u,v} \ar[r]^(0.4){\chi} & H\backslash G^{u,v}/H
}
\]
\end{thm}

Part (3) of our theorem is similar to a result of Williams (\cite{Wilkacmoody}, Theorem 4.9). 

The special case of our main theorem for double Bruhat cells in $\GL_n$ was solved in another paper of the author \cite{WengGL}; however, the bipartite graph method (which was also used in the computation of Donaldson-Thomas transformation of Grassmannian \cite{Weng}) does not apply in the general case of double Bruhat cells in semisimple Lie groups.

The following is an important application of our main result. We proved in our main theorem that the Donaldson-Thomas transformation of the cluster Poisson variety $H\backslash G^{u,v}/H$ is a cluster transformation. Combined with the work of Gross, Hacking, Keel, and Kontsevich (Theorem 0.10 of \cite{GHKK}), our result proves the Fock and Goncharov's conjecture \cite{FGensemble} in the case of $H\backslash G^{u,v}/H$.

\subsection{Structure of the Paper} We divide the rest of the paper into two sections. Section 2 contains all the preliminaries necessary for our proof of the main theorem. Subsection \ref{bruhat} introduces double Bruhat cells $G^{u,v}$ and related structures, most of which are similar to the original work of Fomin and Zelevinsky \cite{FZ} and that of Fock and Goncharov \cite{FGamalgamation}. Subsection \ref{2.2} introduces the configuration spaces of quadruples of Borel subgroups and describes the link between such configuration spaces and double quotients of double Bruhat cells, which is the key to proving the cluster nature of our candidate map for Donaldson-Thomas transformation; such link and the proof of cluster nature are due to Shen. Subsections \ref{cluster}, \ref{1.4}, and \ref{amal} review Fock and Goncharov's theory of cluster ensemble, tropicalization, and amalgamation respectively; the main source of reference of which is \cite{FGensemble} and \cite{FGamalgamation}. Subsection \ref{cells} focuses on the cluster structures related to the main object of our study, namely the double Bruhat cells $G^{u,v}$, the main resources of reference of which are \cite{BFZ} and \cite{FGamalgamation}.

Section 3 is the proof of our main theorem itself, and we will prove the three parts in a backward order. Subsection \ref{3.1} uses cluster ensemble to give a factorization of the composite automorphism $\iota \circ \tw$ on $H\backslash G^{u,v}/H$ and proves part (3) of our main theorem. Subsections \ref{3.2} and \ref{3.3} prove part (2) of our main theorem, and the idea presented in Subsection \ref{3.3} is due to a private conversation with Shen. Subsection \ref{3.4} summarizes the relation among all the maps we have discussed thusfar in the paper, and use the intertwining maps to recover part (1) from part (2) of our main theorem.

In addition to the two main sections of the paper, we have also included three subsections in the appendix. Appendix \ref{A} describes a sequence of mutations can be used to realize the braid relation in the $G_2$ case. Appendix \ref{B} is a table consisting of Lie theoretical identities used in the description of cluster ensemble structure on double Bruhat cells. Lastly, Appendix \ref{C} gives a computational example for the Donaldson-Thomas transformation in the case of $H\backslash \PGL_3^{w_0,w_0}/H$.

\subsection{Acknowledgements} The author is deeply grateful to his advisor Alexander Goncharov for his enlightening guidance and strong encouragement during the process of solving this problem as well as his detailed advice in the revision of the paper. The author also would like to thank both Alexander Goncharov and Linhui Shen for their help on understanding the relation between configuration space of flags and double Bruhat cells and their inspiring idea on proving the cluster nature of our candidate map using the configuration of quadruples of Borel subgroups. This paper uses many ideas from the work of Goncharov \cite{Gon}, the work of Fock and Goncharov \cite{FGensemble}, \cite{FGamalgamation}, \cite{FGteich}, the work of Goncharov and Shen \cite{GS}, and the work of Fomin and Zelevinsky \cite{FZ}; without the work of all these pioneers in the field this paper would not have been possible.

\section{Preliminaries}

\subsection{Structures in Semisimple Lie Groups}\label{bruhat}

This subsection serves as a brief review of some structure theory of semisimple Lie groups with focus on structures that will play important roles in our paper. Please see \cite{Hum} or other standard Lie group textbook for more details.

Let $G$ be a semisimple Lie group and let $\mathfrak{g}$ be its Lie algebra. Fix a pair of opposite Borel subgroups $B_\pm$ in $G$. Then $H:=B_+\cap B_-$ is a maximal torus in $G$. The Weyl group $W$ of $G$ is defined to be the quotient $N_GH/H$. It is known that for any Borel subgroup $B$ of $G$, there is a \textit{Bruhat decomposition} $G=\bigsqcup_{w\in W} BwB$. Taking the two Bruhat decomposition corresponding to the pair of opposite Borel subgroups $B_\pm$ and intersecting them, we obtain our object of interest.

\begin{defn} For a pair of Weyl group elements $(u,v)\in W\times W$, the \textit{double Bruhat cell} $G^{u,v}$ is defined to be
\[
G^{u,v}:=\left(B_+uB_+\right)\cap\left( B_-vB_-\right).
\]
\end{defn}

It is known that the commutator subgroup of a Borel subgroup is a maximal unipotent subgroup. We will denote the commutator subgroup of each of the pair of opposite subgroups by $N_\pm:=[B_\pm,B_\pm]$. 

\begin{defn} The subset of \textit{Gaussian decomposable} elements in $G$ is defined to be the Zariski open subset $G_0:=N_-HN_+$. A \textit{Gaussian decomposition} of an element $x\in G_0$ is the factorization
\[
x=[x]_-[x]_0[x]_+
\]
where $[x]_\pm\in N_\pm$ and $[x]_0\in H$. 
\end{defn}

\begin{prop} The Gaussian decomposition of a Gaussian decomposable element is unique.
\end{prop}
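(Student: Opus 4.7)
The plan is to leverage the two elementary facts $N_- \cap B_+ = \{e\}$ and $N_+ \cap H = \{e\}$, which hold because $B_+ \cap B_- = H$ and because $H$ meets each maximal unipotent subgroup trivially. Uniqueness will then follow from a standard cancellation argument.

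Suppose $x \in G_0$ admits two Gaussian decompositions
\[
x = n_- h n_+ = n'_- h' n'_+
\]
with $n_\pm, n'_\pm \in N_\pm$ and $h, h' \in H$. Rearranging yields
\[
(n'_-)^{-1} n_- \;=\; h'\, n'_+ n_+^{-1}\, h^{-1}.
\]
The left-hand side lies in $N_-$. For the right-hand side, I would rewrite it as $(h' h^{-1}) \cdot h(n'_+ n_+^{-1}) h^{-1}$. Since $H$ normalizes $N_+$, the second factor lies in $N_+$, so the whole expression lies in $H N_+ = B_+$. Therefore $(n'_-)^{-1} n_-$ belongs to $N_- \cap B_+$, which is trivial, giving $n_- = n'_-$.

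Once $n_-$ is pinned down, the equation reduces to $h n_+ = h' n'_+$, whence $h(h')^{-1} = n'_+ n_+^{-1} \in H \cap N_+ = \{e\}$, yielding $h = h'$ and $n_+ = n'_+$. This completes the uniqueness argument. There is no real obstacle here: the statement is a standard consequence of the triangular decomposition, and the only subtlety is to move the $h$'s carefully past the unipotent factors so that one side of the equality lands cleanly in $N_-$ while the other lands cleanly in $B_+$.
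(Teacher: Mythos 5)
Your proof is correct and uses exactly the same standard facts the paper invokes ($HN_+=B_+$ and $N_\pm\cap B_\mp=\{e\}$, with $H\cap N_+\subset B_-\cap N_+=\{e\}$); you have simply written out the cancellation argument that the paper leaves implicit. No issues.
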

\begin{proof} It follows from the standard facts that $N_-H=B_-$, $HN_+=B_+$, and $N_\pm\cap B_\mp=\{e\}$.
\end{proof}

The adjoint action of $H$ on $\mathfrak{g}$ admits a root space decomposition, and the choice of the pair of opposite Borel subgroups $B_\pm$ defines a subset of \textit{simple roots} $\Pi$ in the root system. The root system spans a lattice called the \textit{root lattice} $Q$. There is a dual notion called \textit{simple coroots}, which can be identified with elements inside the Cartan subalgebra $\mathfrak{h}$ of $\mathfrak{g}$. We will denote the simple coroot dual to $\alpha$ as $H_\alpha$. The \textit{Cartan matrix} of $G$ can then be defined as 
\[
C_{\alpha\beta}:= \inprod{H_\alpha}{\beta}.
\]
The Cartan matrix $C_{\alpha\beta}$ of a semisimple Lie group $G$ is known to have 2 along the diagonal and non-positive integers entries elsewhere. In particular, the Cartan matrix $C_{\alpha\beta}$ matrix is invertible and symmetrizable, i.e., there exists a diagonal matrix $D:=\diag(D_\alpha)$ with integer entries and a symmetric matrix $B_{\alpha\beta}$ with rational entries such that 
\begin{equation}\label{symmetrizable}
C_{\alpha\beta}=D_\alpha B_{\alpha\beta}.
\end{equation}

The Lie algebra $\mathfrak{g}$ can be generated by the Chevalley generators $E_{\pm \alpha}$ and $H_\alpha$; the relations among the Chevalley generators are
\begin{align*}
    [H_\alpha,H_\beta]=& 0,\\
    [H_\alpha,E_{\pm \beta}]=& \pm C_{\alpha\beta}E_{\pm\beta},\\
    [E_{\pm \alpha}, E_{\mp \beta}]=& \pm \delta_{\alpha\beta} H_\alpha,\\
    \left(\ad_{E_{\pm\alpha}}\right)^{1-C_{\alpha\beta}}E_{\pm \beta}=&0 \quad \quad \text{for $\alpha\neq \beta$}.
\end{align*}

The simple coroots $H_\alpha$ are also cocharacters of $H$, and hence they define group homomorphisms $\mathbb{C}^*\rightarrow H$; we will denote such homomorphisms by 
\[
t\mapsto t^{H_\alpha}
\]
for any $a\in \mathbb{C}^*$. Using the exponential map $\exp:\mathfrak{g}\rightarrow G$ we also define a group homomorphisms $e_{\pm \alpha}:\mathbb{C}\rightarrow G$ by
\[
e_{\pm \alpha}(t):=\exp\left(tE_{\pm \alpha}\right).
\]
Particularly when $t=1$ we will omit the argument and simply write $e_{\pm\alpha}$. The arguments $t$ in $e_{\pm \alpha}(t)$ are known as \textit{Lusztig factorization coordinates}, which can be used to define coordinate system on double Bruhat cells as well (see \cite{FZ} for details).

It is known that a semisimple Lie group $G$ is generated by elements of the form $e_{\pm \alpha}(t)$ and the maximal torus $H$. We can then define an anti-involution $t$ on $G$ called \textit{transposition}, which acts on the generators by
\[
\left(e_{\pm \alpha}(t)\right)^t=e_{\mp \alpha}(t) \quad \quad \text{and} \quad \quad a^t=a \quad \forall a\in H.
\]
Note that if $G$ is a matrix group, then the transposition anti-involution we defined above is identical to the transposition of matrices.

A less well-known anti-involution on a semisimple Lie group $G$ is the anti-involution $\iota$, which acts on the generators by
\begin{equation} \label{iotadef}
\left(e_{\pm\alpha}(t)\right)^\iota =e_{\pm\alpha}(t) \quad \quad \text{and} \quad \quad a^\iota=a^{-1} \quad \forall a\in H.
\end{equation}

It is not hard to verify on generators that the three anti-involutions: group inverse, transposition, and $\iota$, commute with each other. In particular, the composition of any two of them gives an involution on $G$. 

The set of simple roots also defines a Coxeter generating set $S=\{s_\alpha\}$ for the Weyl group $W$. The braid relations among these Coxeter generators are the following:
\begin{equation} \label{braid}
\left\{\begin{array}{ll}
    s_\alpha s_\beta s_\alpha = s_\beta s_\alpha s_\beta & \text{if $C_{\alpha\beta}C_{\beta\alpha}=1$;} \\
    s_\alpha s_\beta s_\alpha s_\beta = s_\beta s_\alpha s_\beta s_\alpha & \text{if $C_{\alpha\beta}C_{\beta\alpha}=2$;} \\
    s_\alpha s_\beta s_\alpha s_\beta s_\alpha s_\beta = s_\beta s_\alpha s_\beta s_\alpha s_\beta s_\alpha & \text{if $C_{\alpha\beta}C_{\beta\alpha}=3$.}
\end{array}\right.
\end{equation}

\begin{defn} For an element $w$ in the Weyl group $W$, a \textit{reduced word} of $w$ (with respect to the choice of Coxeter generating set $S$) is a sequence of simple roots $\vec{i}=(\alpha(1),\alpha(2),\dots, \alpha(l))$ which is the shortest among all sequences satisfying $s_{\alpha(1)}s_{\alpha(2)}\dots s_{\alpha(l)}=w$. Entries of a reduced word are called \textit{letters}, and the number $l$ is called the \textit{length} of $w$. 
\end{defn}

One important fact about reduced words is that any two reduced words of the same Weyl group element can be obtained from each other via a finite sequence of braid relations. It is also known that there exists a unique longest element inside the Weyl group $W$ of any semisimple Lie group $G$, and we will denote this longest element by $w_0$. It follows easily from the uniqueness that $w_0^{-1}=w_0$. Conjugation by any lift of $w_0$ swaps the pair of opposite Borel subgroups, i.e., $w_0B_\pm w_0=B_\mp$.

\begin{defn} For a pair of Weyl group elements $(u,v)\in W\times W$, a \textit{reduced word} of $(u,v)$ is a sequence $\vec{i}=(\alpha(1),\alpha(2),\dots, \alpha(l))$ in which every letter is either a simple root or the opposite of a simple root, satisfying the conditions that if we pick out the subsequence consisting of simple roots we get back a reduced word of $v$, and if we pick out the subsequence consisting of the opposite simple roots and then drop the minus sign in front of every letter we get back a reduced word of $u$. The number $l$ is again called the \textit{length} of the pair $(u,v)$.
\end{defn}

Generally speaking, the Weyl group $W$ does not live inside the Lie group $G$. However, one can define a lift of each Coxeter generator $s_\alpha$ by
\begin{equation}\label{wbar}
\overline{s}_\alpha:=e_\alpha^{-1}e_{-\alpha}e_\alpha^{-1}=e_{-\alpha}e_\alpha^{-1}e_{-\alpha}.
\end{equation}
It is not hard to verify that these lifts of the Coxeter generators satisfy the braid relations \ref{braid}. Thus by using a reduced word one can define a lift $\overline{w}$ for any Weyl group element $w$, which is independent of the choice of the reduced word (see also \cite{FZ} and \cite{GS}). 

From Equation \eqref{wbar} we also see that $\overline{s}_\alpha$ are fixed by the anti-involution $\iota$. Then it follows that $\overline{w}^\iota=\overline{w^{-1}}$, and hence $\left(B_\pm wB_\pm\right)^\iota=B_\pm w^{-1}B_\pm$ and $\left(G^{u,v}\right)^\iota=G^{u^{-1},v^{-1}}$. Later we will see in Subsection \ref{cells} that the restriction of the anti-involution $\iota$ to each double Bruhat cell $G^{u,v}$ also has cluster theoretical interpretation.

Conjugation by the longest element $w_0$ defines an involution $w^*:=w_0ww_0$ on the Weyl group $W$. We can lift such involution up to the semisimple Lie group $G$ by defining 
\begin{equation}\label{starinv}
x^*:= \overline{w}_0 x^{-1\circ t}\overline{w}_0^{-1}.
\end{equation}
Since $w_0B_\pm w_0=B_\mp$ and $B_\pm^t=B_\mp$, we know that the pair of opposite Borel subgroups $B_\pm$ is invariant under the involution $*$, i.e., $B_\pm^*=B_\pm$. The reason we call the involution $*$ on $G$ a lift of the involution $*$ on $W$ is because of the following proposition.

\begin{prop} For a Weyl group element $w$, $\overline{w^*}=\overline{w}^*$.
\end{prop}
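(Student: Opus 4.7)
The plan is to reduce the identity to the case of simple reflections. The involution $w \mapsto w^* = w_0 w w_0$ on $W$ preserves the Coxeter generating set, since $-w_0$ permutes the simple roots: writing the image of $\alpha$ as $\alpha^*$, we have $s_\alpha^* = s_{\alpha^*}$, and in particular $w \mapsto w^*$ preserves length. Meanwhile, the map $x \mapsto x^*$ on $G$ is a \emph{group} homomorphism, because the transpose-inverse $x \mapsto (x^{-1})^t$ is an involutive automorphism (inversion and transposition are commuting anti-involutions on generators), and conjugation by $\overline{w}_0$ is an automorphism. Therefore, once $\overline{s}_\alpha^* = \overline{s}_{\alpha^*}$ is established for every simple $\alpha$, it extends to arbitrary $w$: for any reduced word $(\alpha(1),\dots,\alpha(l))$ of $w$, the sequence $(\alpha(1)^*,\dots,\alpha(l)^*)$ is a reduced word of $w^*$, and
\[
\overline{w^*} = \overline{s}_{\alpha(1)^*} \cdots \overline{s}_{\alpha(l)^*} = \overline{s}_{\alpha(1)}^* \cdots \overline{s}_{\alpha(l)}^* = \left(\overline{s}_{\alpha(1)} \cdots \overline{s}_{\alpha(l)}\right)^* = \overline{w}^*.
\]

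For the simple-reflection case, I would first unpack $\overline{s}_\alpha^* = \overline{w}_0 (\overline{s}_\alpha^{-1})^t \overline{w}_0^{-1}$. From $\overline{s}_\alpha = e_\alpha^{-1} e_{-\alpha} e_\alpha^{-1}$ one gets $\overline{s}_\alpha^{-1} = e_\alpha e_{-\alpha}^{-1} e_\alpha$; applying transposition (which sends $e_{\pm\alpha} \mapsto e_{\mp\alpha}$, commutes with inversion, and reverses products) yields $(\overline{s}_\alpha^{-1})^t = e_{-\alpha} e_\alpha^{-1} e_{-\alpha}$, which is precisely the second expression for $\overline{s}_\alpha$ in \eqref{wbar}. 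Thus the transpose-inverse is a no-op on $\overline{s}_\alpha$, leaving $\overline{s}_\alpha^* = \overline{w}_0 \overline{s}_\alpha \overline{w}_0^{-1}$.

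It remains to show $\overline{w}_0 \overline{s}_\alpha \overline{w}_0^{-1} = \overline{s}_{\alpha^*}$, which I would do by exploiting that $s_\alpha$ is a right descent of $w_0$. Set $w' := w_0 s_\alpha$, so $\ell(w') = \ell(w_0)-1$. Concatenating a reduced word for $w'$ with $s_\alpha$ gives a reduced word for $w_0$, so by the well-definedness of the lift one has $\overline{w}_0 = \overline{w'}\,\overline{s}_\alpha$. On the other hand $s_{\alpha^*} w' = s_{\alpha^*} w_0 s_\alpha = w_0$ with the lengths adding, so prepending $s_{\alpha^*}$ to a reduced word for $w'$ is another reduced word for $w_0$, yielding $\overline{w}_0 = \overline{s}_{\alpha^*}\,\overline{w'}$. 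Substituting the first expression into the conjugation:
\[
\overline{w}_0 \overline{s}_\alpha \overline{w}_0^{-1} = (\overline{w'}\,\overline{s}_\alpha)\,\overline{s}_\alpha\,(\overline{w'}\,\overline{s}_\alpha)^{-1} = \overline{w'}\,\overline{s}_\alpha\,\overline{w'}^{-1} = \overline{s}_{\alpha^*},
\]
the last equality being a rearrangement of $\overline{s}_{\alpha^*}\overline{w'} = \overline{w'}\overline{s}_\alpha$. The main subtlety will be spotting the symmetric identity $(\overline{s}_\alpha^{-1})^t = \overline{s}_\alpha$, which is what makes the transpose-inverse in the definition of $*$ drop out cleanly; the remainder is a tidy application of the well-definedness of the lift to two reduced words for $w_0$ differing at one end.
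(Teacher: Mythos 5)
Your proof is correct and follows essentially the same route as the paper: reduce to Coxeter generators, observe that the transpose--inverse fixes $\overline{s}_\alpha$ (equivalently $\overline{s}_\alpha^t=\overline{s}_\alpha^{-1}$), and then use the two length-additive factorizations of $\overline{w}_0$ coming from $w_0 s_\alpha = s_{\alpha^*} w_0$ to identify the conjugate $\overline{w}_0\overline{s}_\alpha\overline{w}_0^{-1}$ with $\overline{s}_{\alpha^*}$. The only difference is that you spell out the reduction from general $w$ to simple reflections (via $*$ being a group automorphism and preserving reduced words), which the paper leaves implicit.
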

\begin{proof} We only need to show it for the Coxeter generators. Let $s_\alpha$ be a Coxeter generator. From the definition of the lift $\overline{s}_\alpha$ we see that $\overline{s}_\alpha^t=\overline{s}_\alpha^{-1}$. By a length argument we also know that $s_\alpha^*$ is also a Coxeter generator, say $s_\beta$. Then it follows that $w_0s_\alpha=s_\beta w_0$. Since $l(w_0s_\alpha)=l(s_\beta w_0)=l(w_0)-1$, it follows that
\[
\overline{w}_0\overline{s}_\alpha^{-1}=\overline{w_0s_\alpha}=\overline{s_\beta w_0}=\overline{s}_\beta^{-1}\overline{w}_0.
\]
Therefore we know that
\[
\overline{s}_\alpha^*=\overline{w}_0\overline{s}_\alpha^{-1\circ t}=\overline{w}_0\overline{s}_\alpha\overline{w}_0^{-1}=\overline{s}_\beta. \qedhere
\]
\end{proof}

Each simple root $\alpha$ also defines a group homomorphism $\varphi_\alpha:\SL_2\rightarrow G$, which maps the generators as follows:
\[
\begin{pmatrix} 1 & t \\ 0 & 1\end{pmatrix} \mapsto e_\alpha(t), \quad \quad \begin{pmatrix} 1 & 0 \\ t & 1 \end{pmatrix} \mapsto e_{-\alpha}(t), \quad \quad \begin{pmatrix} t & 0 \\ 0 & t^{-1}\end{pmatrix} \mapsto t^{H_\alpha}.  
\]
Using this group homomorphism, the lift $\overline{s}_\alpha$ can be alternatively defined as 
\[
\overline{s}_\alpha:=\varphi_\alpha\begin{pmatrix} 0 & -1 \\ 1 & 0 \end{pmatrix}.
\]
The following identities are due to Fomin and Zelevinsky \cite{FZ}, which can be easily verified within $\SL_2$ and then mapped over to $G$ under $\varphi_\alpha$:
\begin{equation}\label{e_+e_-}
e_\alpha(p)e_{-\alpha}(q)=e_{-\alpha}\left(\frac{q}{1+pq}\right)(1+pq)^{H_\alpha}e_\alpha\left(\frac{p}{1+pq}\right);
\end{equation}
\begin{equation}\label{e_+s}
e_\alpha(t)\overline{s}_\alpha=e_{-\alpha}\left(t^{-1}\right)t^{H_\alpha} e_\alpha\left(-t^{-1}\right);
\end{equation}
\begin{equation}\label{se_-}
\overline{s}_\alpha^{-1}e_{-\alpha}(t)=e_{-\alpha}\left(-t^{-1}\right)t^{H_\alpha} e_\alpha\left(t^{-1}\right).
\end{equation}

In general there is more than one Lie group associated to the same semisimple Lie algebra $\mathfrak{g}$. Among such a family of Lie groups, there is a simply connected one $G_{sc}$ and a centerless one $G_{ad}$ (which is also known as the \textit{adjoint form} and hence the subscript), and they are unique up to isomorphism. Any Lie group $G$ associated to the Lie algebra $\mathfrak{g}$ is a quotient of $G_{sc}$ by some subgroup of the center $C(G_{sc})$, and $G_{ad}\cong G/C(G)$. For the rest of this subsection we will discuss some structures special to each of these two objects.

Let's start with the simply connected case $G_{sc}$. One important fact from representation theory is that the finite dimensional irreducible representations of a simply connected semisimple Lie group $G_{sc}$ are classified by dominant \textit{weights}, which form a cone inside the \textit{weight lattice} $P$. The weight lattice $P$ can be identified with the lattice of characters of the maximal torus $H$, and contains the root lattice $Q$ as a sublattice. The dual lattice of $P$ is spanned by the simple coroots $H_\alpha$. Hence by dualizing $\{H_\alpha\}$ we obtain a basis $\{\omega_\alpha\}$ of $P$, and we call the weights $\omega_\alpha$ \textit{fundamental weights}. The fundamental weights are dominant, and their convex hull is precisely the cone of dominant weights. Since fundamental weights are elements in the lattice of characters of the maximal torus $H$, they define group homomorphisms $H\rightarrow \mathbb{C}^*$ which we will denote as 
\[
a\mapsto a^{\omega_\alpha}.
\]
Further, it is known that for each fundamental weight $\omega_\alpha$, there is a regular function $\Delta_\alpha$ on $G_{sc}$ uniquely determined by the equation
\[
\Delta_\alpha(x)=[x]_0^{\omega_\alpha}
\]
when restricted to the subset of Gaussian decomposable elements of $G_{sc}$ (see also \cite{Hum} Section 31.4). In particular, if we consider $\mathbb{C}[G_{sc}]$ as a representation of $G_{sc}$ under the natural action $(x.f)(y):=f(x^{-1}y)$, then $\Delta_\alpha$ is a highest weight vector of weight $\omega_\alpha$ (see \cite{FZ} Proposition 2.2). Such regular functions $\Delta_\alpha$ are called \textit{generalized minors}, and they are indeed the principal minors in the case of $\SL_n$.

\begin{prop}\label{minor} $\Delta_\alpha(x)=\Delta_\alpha\left(x^t\right)$. If $\beta\neq \alpha$, then $\Delta_\alpha\left(\overline{s}_\beta^{-1}x\right)=\Delta_\alpha(x)=\Delta_\alpha\left(x\overline{s}_\beta\right)$.
\end{prop}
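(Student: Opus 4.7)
The plan is to treat the two identities separately: the transposition identity reduces to uniqueness of the Gaussian decomposition, while the $\overline{s}_\beta$-invariance reduces to a standard $\mathfrak{sl}_2$-representation computation.

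For $\Delta_\alpha(x) = \Delta_\alpha(x^t)$, both sides are regular on $G_{sc}$, so by Zariski density of $G_0$ it suffices to verify the equality on Gaussian decomposable $x$. Writing $x = [x]_-[x]_0[x]_+$ and using that transposition is an anti-involution with $N_\pm^t = N_\mp$ and $h^t = h$ for $h \in H$, I obtain
\[
x^t = [x]_+^t\,[x]_0\,[x]_-^t,
\]
which is already a Gaussian decomposition of $x^t$ since $[x]_+^t \in N_-$ and $[x]_-^t \in N_+$. Uniqueness of the Gaussian decomposition then gives $[x^t]_0 = [x]_0$, hence $\Delta_\alpha(x^t) = [x]_0^{\omega_\alpha} = \Delta_\alpha(x)$.

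For the right invariance $\Delta_\alpha(x\overline{s}_\beta) = \Delta_\alpha(x)$ when $\beta \neq \alpha$, I would use the matrix coefficient presentation
\[
\Delta_\alpha(g) = \langle v^*_{\omega_\alpha},\, g\cdot v_{\omega_\alpha}\rangle
\]
afforded by the fact (recalled in the excerpt from \cite[Proposition 2.2]{FZ}) that $\Delta_\alpha$ is a highest weight vector of weight $\omega_\alpha$ in $\mathbb{C}[G_{sc}]$; here $v_{\omega_\alpha}$ is a fixed highest weight vector of the irreducible representation of highest weight $\omega_\alpha$, and $v^*_{\omega_\alpha}$ is the dual functional. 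The identity reduces to $\overline{s}_\beta\cdot v_{\omega_\alpha} = v_{\omega_\alpha}$. Consider the $\mathfrak{sl}_2$-triple $(E_\beta, E_{-\beta}, H_\beta)$ acting on this representation: the highest weight condition gives $E_\beta v_{\omega_\alpha} = 0$, while
\[
H_\beta v_{\omega_\alpha} = \langle H_\beta, \omega_\alpha\rangle\, v_{\omega_\alpha} = \delta_{\alpha\beta}\, v_{\omega_\alpha} = 0
\]
since $\beta \neq \alpha$. Hence $v_{\omega_\alpha}$ spans a trivial $\mathfrak{sl}_2$-submodule, so $E_{-\beta} v_{\omega_\alpha} = 0$ as well, and the defining formula $\overline{s}_\beta = e_\beta^{-1}e_{-\beta}e_\beta^{-1}$ yields $\overline{s}_\beta \cdot v_{\omega_\alpha} = v_{\omega_\alpha}$.

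The left invariance $\Delta_\alpha(\overline{s}_\beta^{-1}x) = \Delta_\alpha(x)$ I would then deduce from the previous two identities, rather than repeat the representation-theoretic argument on the dual side. The alternative expression $\overline{s}_\alpha = e_{-\alpha}e_\alpha^{-1}e_{-\alpha}$ together with $e_{\pm\beta}^t = e_{\mp\beta}$ immediately yields $\overline{s}_\beta^t = \overline{s}_\beta^{-1}$, so
\[
\Delta_\alpha(\overline{s}_\beta^{-1}x) = \Delta_\alpha\bigl((\overline{s}_\beta^{-1}x)^t\bigr) = \Delta_\alpha(x^t\overline{s}_\beta) = \Delta_\alpha(x^t) = \Delta_\alpha(x).
\]
No genuine obstacle arises: the only substantive input is the matrix coefficient interpretation of $\Delta_\alpha$, which is granted by the result from \cite{FZ} recalled immediately before the statement.
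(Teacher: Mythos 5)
Your proof is correct and follows essentially the same route as the paper: the first identity via uniqueness of the Gaussian decomposition, and the second via the highest-weight property of $\Delta_\alpha$ together with the vanishing $\langle H_\beta,\omega_\alpha\rangle=0$ for $\beta\neq\alpha$ (which forces $E_{-\beta}$-invariance as well), with the remaining one-sided invariance obtained by transposition. The only difference is cosmetic — you prove the right invariance directly on the representation side and transpose to get the left one, whereas the paper does the mirror image — and your version makes explicit the $\mathfrak{sl}_2$ weight-zero step that the paper leaves implicit.
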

\begin{proof} The first part follows from the fact that transposition swaps $N_\pm$ while leaving $H$ invariant. To show the second part, recall that $\overline{s}_\beta=e_\beta^{-1}e_{-\beta}e_\beta^{-1}$; therefore
\[
\Delta_\alpha\left(\overline{s}_\beta^{-1}x\right)=\Delta_\alpha\left(e_\beta e_{-\beta}^{-1}e_\beta x\right).
\]
But then since $\Delta_\alpha$ is a highest weight vector of weight $\omega_\alpha$, it is invariant under the action of $e_\beta^{-1}$. Therefore we can conclude that
\[
\Delta_\alpha\left(\overline{s}_\beta^{-1}x\right)=\Delta_\alpha\left(e_\beta e_{-\beta}^{-1}e_\beta x\right)=\Delta_\alpha\left(e_{-\beta}^{-1}e_\beta x\right)=\left(e_{-\beta}.\Delta_\alpha\right)\left(e_\beta x\right).
\]
But then since $\Delta_\alpha$ is a highest weight vector with weight $\omega_\alpha$ in $\mathbb{C}[G]$ as a representation of $G$, the action of $e_{-\beta}$ leaves $\Delta_\alpha$ invariant when $\beta\neq \alpha$. Therefore we have
\[
\Delta_\alpha\left(\overline{s}_\beta^{-1}x\right)=\left(e_{-\beta}.\Delta_\alpha\right)\left(e_\beta x\right)=\Delta_\alpha\left(e_\beta x\right)=\Delta_\alpha(x).
\]
The other equality can be obtained from this one by taking transposition.
\end{proof}

Now let's turn to the adjoint form $G_{ad}$. Since the Cartan matrix $C_{\alpha\beta}$ is invertible, and we can use it to construct another basis $\left\{H^\alpha\right\}$ of the Cartan subalgebra $\mathfrak{h}$, which is defined by
\[
H_\alpha=\sum_\beta C_{\alpha\beta}H^\beta.
\]
Replacing $H_\alpha$ with $H^\alpha$ we can rewrite the relations among the Chevalley generators as
\begin{align*}
    [H^\alpha,H^\beta]=& 0, \\
    [H^\alpha, E_{\pm \beta}]=&\pm \delta_{\alpha\beta}E_{\pm \beta},\\
    [E_{\pm \alpha}, E_{\mp \beta}]=& \pm \delta_{\alpha\beta}C_{\alpha\gamma}H^\gamma,\\
    \left(\ad_{E_{\pm\alpha}}\right)^{1-C_{\alpha\beta}}E_{\pm \beta}=&0 \quad \quad \text{for $\alpha\neq \beta$.}
\end{align*}
It turns out that $H^\alpha$ are cocharacters of the maximal torus $H$ of $G_{ad}$, and hence we can extend our earlier notation $t\mapsto t^{H_\alpha}$ to $t\mapsto t^{H^\alpha}$. In particular the following statement is true.

\begin{prop}\label{commute} If $\beta\neq \alpha$, then $e_{\pm \beta} t^{H^\alpha}=t^{H^\alpha}e_{\pm \beta}$.
\end{prop}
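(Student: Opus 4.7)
The plan is to reduce the group-level commutation to a Lie-algebra relation and then exponentiate. By construction $a^{H^\alpha}$ is the image of $a\in\mathbb{C}^*$ under the cocharacter $\mathbb{C}^*\to H$ of the adjoint torus corresponding to $H^\alpha\in\mathfrak{h}$, and $e_{\pm\beta}(t)=\exp(tE_{\pm\beta})$. Consequently, conjugation of a root-space one-parameter subgroup by a torus element is controlled by the adjoint action on the root vector:
\[
a^{H^\alpha}\,e_{\pm\beta}(t)\,a^{-H^\alpha}=\exp\!\bigl(t\,\mathrm{Ad}(a^{H^\alpha})E_{\pm\beta}\bigr),
\]
so it suffices to understand $\mathrm{Ad}(a^{H^\alpha})E_{\pm\beta}$.

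Next I would invoke the rewritten Chevalley relation $[H^\alpha,E_{\pm\beta}]=\pm\delta_{\alpha\beta}E_{\pm\beta}$ displayed immediately before the statement. This says that $E_{\pm\beta}$ is an eigenvector of $\ad_{H^\alpha}$ with eigenvalue $\pm\delta_{\alpha\beta}$, and by functoriality of $\exp$ this eigenvalue exponentiates to the character value $\mathrm{Ad}(a^{H^\alpha})E_{\pm\beta}=a^{\pm\delta_{\alpha\beta}}E_{\pm\beta}$. When $\alpha\neq\beta$ the exponent vanishes and the adjoint action is trivial, so $a^{H^\alpha}e_{\pm\beta}(t)a^{-H^\alpha}=e_{\pm\beta}(t)$ for every $t\in\mathbb{C}$; specializing to $t=1$ gives the proposition.

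The only conceptual care needed --- and the reason the proposition is placed in the paragraph devoted to $G_{ad}$ rather than $G_{sc}$ --- is to verify that the formal symbol $a^{H^\alpha}$ is actually a well-defined cocharacter of $H$. The defining relation $H_\alpha=\sum_\gamma C_{\alpha\gamma}H^\gamma$ combined with $\beta(H_\alpha)=C_{\alpha\beta}$ forces $\beta(H^\gamma)=\delta_{\beta\gamma}$, so $H^\alpha$ pairs integrally with every simple root, and hence with every element of the root lattice $Q$. Since $Q$ is precisely the character lattice of the adjoint torus, $H^\alpha$ does extend to a homomorphism $\mathbb{C}^*\to H\subset G_{ad}$. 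Once this point is granted the exponentiation argument above is entirely routine, so I anticipate no substantive obstacle in writing out the proof.
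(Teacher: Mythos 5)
Your argument is correct and is essentially the paper's proof: the paper simply cites the relation $[H^\alpha,E_{\pm\beta}]=0$ for $\beta\neq\alpha$, which is exactly the Lie-algebra fact you exponentiate. Your additional check that $H^\alpha$ is an integral cocharacter of the adjoint torus is a point the paper asserts without proof, but it does not change the approach.
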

\begin{proof} This follows the fact that $[H^\alpha,E_{\pm \beta}]=0$ whenever $\beta\neq \alpha$.
\end{proof}

The basis $\left\{H^\alpha\right\}$ of $\mathfrak{h}$ was first introduced by Fock and Goncharov \cite{FGamalgamation} to give the cluster Poisson structure on the double Bruhat cell $G_{ad}^{u,v}$, which will play an important role in our paper. 

Lastly, we should point out that $H^\alpha$ are not cocharacters of the maximal torus $H$ of a non-adjoint-form semisimple Lie group $G$. Thus for a non-adjoint-form semisimple Lie group $G$, $a^{H^\alpha}$ is only well-defined up to a center element; however, since we will be considering the double quotient $H\backslash G^{u,v}/H$, this center element ambiguity will be gone because the center $C(G)$ is contained in the maximal torus $H$. 

\subsection{Configuration of Quadruples of Borel Subgroups} \label{2.2}

In this subsection we will look at the double quotients of double Bruhat cells $H\backslash G^{u,v}/H$ from a different angle. Recall that a semisimple Lie group $G$ acts transitively on the space of its Borel subgroups $\mathcal{B}$ by conjugation, with the stable subgroup of the point $B$ being the Borel subgroup $B$ itself. For notation simplicity, for any $x\in G$ and any $B\in \mathcal{B}$ we will denote the Borel subgroup $xBx^{-1}$ as $x.B$. In particular, since lifts of Weyl group elements are unique up to an $H$-multiple and $H\subset B_\pm$, there is no ambiguity in writing $\overline{w}.B_\pm$ as $w.B_\pm$.

It is known that the orbits of the diagonal $G$-action on $\mathcal{B}\times \mathcal{B}$ can be identified with elements of an abstract Weyl group, which can in turn be identified with the Weyl group $W$ we choose using the Borel subgroup $B_+$ (see Chriss and Ginzburg \cite{CG} Chapter 3 for more details). For future convenience we define the following notation.

\begin{notn} For a pair of Borel subgroups $\left(B_1,B_2\right)$ we write $\xymatrix{B_1 \ar[r]^w & B_2}$ if $\left(B_1,B_2\right)\sim \left(B_+,w.B_+\right)$ in $\mathcal{B}\times \mathcal{B}$ (we use $\sim$ to denote the equivalence relation of being in the same $G$-orbit of $\mathcal{B}\times \mathcal{B}$). It follows that if $\xymatrix{B_1\ar[r]^w & B_2}$ then $\xymatrix{B_2 \ar[r]^{w^{-1}} & B_1}$. 
\end{notn} 

\begin{prop}\label{eqposition} For two Borel subgroups $x.B_+$ and $y.B_+$, $\xymatrix{x.B_+\ar[r]^w & y.B_+}$ if and only if $x^{-1}y\in B_+wB_+$.
\end{prop}
\begin{proof} Suppose $x^{-1}y$ belongs to the Bruhat cell $B_+wB_+$ under the Bruhat decomposition. Then we know that $\left(x.B_+,y.B_+\right)\sim \left(B_+,\left(x^{-1}y\right).B_+\right)\sim \left(B_+,w.B_+\right)$.
\end{proof}

Recall that we have an involution $*$ defined on $G$, which can be extended naturally to the space of Borel subgroups $\mathcal{B}$ with $B_\pm^*=B_\pm$. With respect to the involution $*$ we have the following observation.

\begin{prop} $\xymatrix{B_1 \ar[r]^w & B_2}$ if and only if $\xymatrix{B_1^*\ar[r]^{w^*} & B_2^*}$.
\end{prop}
\begin{proof} Suppose $B_1=x.B_+$ and $B_2=y.B_+$. Then $B_1^*=x^*.B_+$ and $B_2=y^*.B_+$. From our last proposition we also know that $x^{-1}y\in B_+wB_+$. Therefore $\left(x^*\right)^{-1}y^*=\left(x^{-1}y\right)^*$ is in the Bruhat cell $B_+w^*B_+$.
\end{proof}

Recall that we also have an anti-involution $\iota$ defined on $G$, and composing with taking inverses we get an group involution $x\mapsto x^{-1\circ \iota}$. We can extend this group involution naturally to the space of Borel subgroups $\mathcal{B}$, and by an abuse of notation we denote the image of a Borel subgroup $B$ under such involution as $B^\iota$ (this notation makes sense because taking inverses does not affect $B$ at all because it is a subgroup). Note that $B_\pm^\iota=B_\pm$ and if $B=x.B_+$ then $B^\iota=x^{-1\circ \iota}.B_+$.

\begin{prop}\label{commutinginvolution} The two involutions $*$ and $\iota$ on $\mathcal{B}$ commute.
\end{prop}
\begin{proof} It suffices to show on the group level that $\left(x^*\right)^{-1\circ \iota}=\left(x^{-1\circ \iota}\right)^*$, which can be done with the following computation:
\[
\left(x^*\right)^{-1\circ \iota}=\left(\overline{w}_0\left(x^{-1\circ t}\right)\overline{w}_0^{-1}\right)^{-1\circ \iota}=\overline{w_0^{-1}} \left(x^{-1\circ \iota}\right)^{-1\circ t}\overline{w_0^{-1}}^{-1}=\left(x^{-1\circ \iota}\right)^*.
\]
Note that the last equality holds because $w_0^{-1}=w_0$.
\end{proof}

\begin{prop}\label{iotadual} If $\xymatrix{B_1\ar[r]^w & B_2}$ then $\xymatrix{B_1^\iota \ar[r]^w & B_2^\iota}$, or equivalently $\xymatrix{B_2^\iota \ar[r]^{w^{-1}}& B_1^\iota}$.
\end{prop}
\begin{proof} From the assumption we know that $\left(B_1, B_2\right)\sim \left(B_+, w.B_+\right)\sim \left(B_+,\overline{w}.B_+\right)$. Recall that when we introduce $\overline{w}$ we said that $\overline{w}^\iota=\overline{w}^{-1}$ (as a direct corollary of Equation \eqref{wbar}); therefore 
\[
\left(B_1^\iota,B_2^\iota\right)\sim \left(B_+,\left(\overline{w}^\iota\right)^{-1}.B_+\right)\sim \left(B_+,w.B_+\right). \qedhere
\]
\end{proof}

\begin{notn} Since $w_0^{-1}=w_0$ and $w_0^*=w_0$, we will also denote $\xymatrix{B_1 \ar[r]^{w_0} &B_2}$ simply as $\xymatrix{B_1 \ar@{-}[r] & B_2}$ without writing $w_0$ explicitly.
\end{notn}

\begin{prop}\label{opposite flag} Let $B_1$ and $B_2$ be two Borel subgroups. Then the followings are equivalent:
\begin{enumerate}
\item $B_1$ and $B_2$ are opposite Borel subgroups;
\item $\xymatrix{B_1 \ar@{-}[r] & B_2}$;
\item there exists an element $z\in G$ such that $B_1=z.B_+$ and $B_2=z.B_-$.
\end{enumerate}
Further the choice of $z$ in (3) is unique up to a right multiple of an element from $H$.
\end{prop}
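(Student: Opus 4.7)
The plan is to establish (3)$\Rightarrow$(1), (3)$\Rightarrow$(2), (2)$\Rightarrow$(3), and (1)$\Rightarrow$(3), which together give the three-way equivalence, and then to address the uniqueness of $g$ separately.

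The implications out of (3) are immediate. If $B_1=gB_+g^{-1}$ and $B_2=gB_-g^{-1}$, then conjugation by $g$ is an automorphism of $G$ carrying the opposite pair $(B_+,B_-)$ to $(B_1,B_2)$, so (1) follows from the fact that $B_+$ and $B_-$ are opposite by construction. For (2), I would rewrite $B_2=g\overline{w}_0 B_+\overline{w}_0^{-1}g^{-1}$ using $B_-=\overline{w}_0 B_+\overline{w}_0^{-1}$; then $g$ and $g\overline{w}_0$ are coset representatives of $B_1$ and $B_2$ in $G/B_+$, and $g^{-1}(g\overline{w}_0)=\overline{w}_0\in B_+w_0B_+$ yields $d_+(B_1,B_2)=w_0$.

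For (2)$\Rightarrow$(3), starting from $B_1=xB_+x^{-1}$ and $B_2=yB_+y^{-1}$ with $x^{-1}y\in B_+w_0B_+$, I factor $x^{-1}y=b_1\overline{w}_0 b_2$ with $b_1,b_2\in B_+$ and verify directly that $g:=xb_1$ works: $B_1=(xb_1)B_+(xb_1)^{-1}$ since $b_1\in B_+$, and $B_2=xb_1\overline{w}_0 b_2 B_+ b_2^{-1}\overline{w}_0^{-1}b_1^{-1}x^{-1}=(xb_1)B_-(xb_1)^{-1}$. For (1)$\Rightarrow$(3), $G$-equivariance reduces the problem to the case $B_1=B_+$, after which I would invoke the classical fact that the conjugation action of $B_+$ on the flag variety is transitive on the set of Borel subgroups opposite to $B_+$ (the big Bruhat cell), producing some $b\in B_+$ with $B_2=bB_-b^{-1}$, and $g=b$ then satisfies (3).

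For the uniqueness claim, suppose $g$ and $g'$ both satisfy (3); then $g^{-1}g'$ normalizes both $B_+$ and $B_-$, and since every Borel subgroup is its own normalizer in $G$, I obtain $g^{-1}g'\in B_+\cap B_-=H$. The reverse direction is obvious since $H\subset B_\pm$. The only step that is not completely formal is the transitivity of $B_+$ on Borel subgroups opposite to $B_+$ used in (1)$\Rightarrow$(3); this is a standard consequence of the Bruhat decomposition together with the self-normalization property $N_G(B_+)=B_+$, and is the point at which the underlying structure theory genuinely enters.
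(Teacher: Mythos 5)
Your proof is correct, and its core coincides with the paper's. The one nontrivial implication you share verbatim is (2)$\Rightarrow$(3): factor $x^{-1}y=b_1\overline{w}_0b_2\in B_+w_0B_+$ and take $g=xb_1$; and your uniqueness argument ($g^{-1}g'$ normalizes both $B_\pm$, hence lies in $B_+\cap B_-=H$) is the same as the paper's coset formulation, since the identification $\mathcal{B}\cong G/B_+$ already encodes $N_G(B_+)=B_+$. The only organizational difference is how the equivalence is closed: the paper runs the cycle (1)$\Rightarrow$(2)$\Rightarrow$(3)$\Rightarrow$(1), proving (1)$\Rightarrow$(2) by computing that oppositeness of $xB_+x^{-1}$ and $yB_+y^{-1}$ forces $x^{-1}y\in B_+w_0B_+$, whereas you prove (1)$\Rightarrow$(3) directly by $G$-equivariant reduction to $B_1=B_+$ and the transitivity of $B_+$ on the Borel subgroups opposite to $B_+$. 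These are two packagings of the same big-cell fact; the paper's route keeps the argument self-contained at the level of the Bruhat decomposition, while yours outsources that step to a quoted classical statement. Your four implications do yield the three-way equivalence, so the logic is sound.
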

\begin{proof} (1)$\implies$(2). Suppose the pair $\left(B_1,B_2\right)\sim \left(B_+,x.B_+\right)$. From (1) we know that $B_1\cap B_2$ is a maximal torus, and thus $B_+\cap x.B_+$ is also a maximal torus in $B_+$. But then since any two maximal tori in $B_+$ are conjugate to each other, by conjugating by some $b\in B_+$ we may further impose the condition that $B_+\cap x.B_+=H$. This forces $x.B_+=B_-=w_0.B_+$, which implies (2).

(2)$\implies$(3). Suppose $B_1=x.B_+$ and $B_2=y.B_+$. Then from (2) we know that $x^{-1}y\in B_+w_0B_+$. Thus we can find $b$ and $b'$ from $B_+$ such that $x^{-1}y=b\overline{w}_0b'$. Let $z:=xb$; then 
\[
z.B_+=(xb).B_+=x.B_+=B_1 \quad \quad \text{and} \quad \quad z.B_-=\left(z\overline{w}_0\right).B_+=\left(yb'^{-1}\right).B_+=y.B_+=B_2.
\]

(3)$\implies$(1) is trivial since $z.B_+$ and $z.B_-$ are obviously opposite Borel subgroups.

For the remark on the uniqueness of $z$, note that if $z.B_+=z'.B_+$ and $z.B_-=z'.B_-$, then $z^{-1}z'$ is in both $B_+$ and $B_-$; since $B_+\cap B_-=H$, it follows that $z$ and $z'$ can only differ by a right multiple of an element from $H$.
\end{proof}

The reason we introduce the notation $\xymatrix{B_1\ar[r]^w& B_2}$ is because we want to be able to break the arrow into two in a unique way whenever $w=uv$ with $l(w)=l(u)+l(v)$. We begin by proving the following lemma.

\begin{lem} Suppose $\left(\alpha(1),\alpha(2),\dots, \alpha(l)\right)$ is a reduced word of a Weyl group element $w\in W$ with $l(w)>1$. Then for any $x\in B_+wB_+$ there exists a factorization $x=x_1x_2\dots x_l$ with $x_k\in B_+s_{\alpha(k)}B_+$ for each $k$. Moreover, if $x=x'_1x'_2\dots x'_l$ is another such factorization, then there exists $b_1,b_2,\dots, b_{l-1}\in B_+$ such that
\begin{align*}
    x'_1=&x_1b_1,\\
    x'_k=&b_{k-1}^{-1}x_kb_k \quad \quad \text{for} \quad 2\leq k\leq l-1,\\
    x'_l=&b_{l-1}x_l.
\end{align*}
\end{lem}
\begin{proof} The existence part follows from the fact that $B_+uvB_+=\left(B_+uB_+\right)\left(B_+vB_+\right)$ whenever $l(uv)=l(u)+l(v)$ (see for example \cite{Hum} Section 29.3 Lemma A). For the uniqueness (up to $B_+$-multiples) part we will do an induction on the length of $w$. Suppose $x=x_1x_2\dots x_l=x'_1x'_2\dots x'_l$ are two such factorizations. Then 
\[
x'_lx_l^{-1}\in \left(B_+s_{\alpha(l)}B_+\right)^2\subset B_+\cup \left(B_+s_{\alpha(l)}B_+\right).
\]
To show that $x'_lx_l^{-1}\in B_+$, we just need to rule out the possibility that $x'_lx_l^{-1}\in B_+s_{\alpha(l)}B_+$. Suppose $x'_lx_l^{-1}\in B_+s_{\alpha(l)}B_+$; then by considering the element
\[
x_1x_2\dots x_{l-1}=xx_l^{-1}=x'_1x'_2\dots x'_{l-1}\left(x'_lx_l^{-1}\right),
\]
we see from the left hand side that this element is in $B_+ws_{\alpha(l)}B_+$ and from the right hand side that this element is in $B_+wB_+$. This is absurd because the Bruhat cells are disjoint. Therefore we can conclude that $x'_lx_l^{-1}\in B_+$.

Now find $b_{l-1}\in B_+$ such that $x'_l=b_{l-1}^{-1}x_l$ and consider $y:=xx_l^{-1}=x_1x_2\dots x_{l-1}=x'_1x'_2\dots x'_{l-1}b_{l-1}^{-1}$. By using the same argument, we can deduce that $x'_{l-1}b_{l-1}^{-1}x_{l-1}^{-1}\in B_+$. But this is saying that there exists $b_{l-2}\in B_+$ such that $x'_{l-1}=b_{l-2}^{-1}x_{l-1}b_{l-1}$. Continue in this fashion, we will eventually reach the state that $x'_1b_1^{-1}=x_1$, which is the same as saying that $x'_1=x_1b_1$, and hence the proof is finished.
\end{proof}

\begin{cor} Suppose $u$ and $v$ are two Weyl group elements with $l(uv)=l(u)+l(v)$. Then for any $x\in B_+uvB_+$ there exists a factorization $x=yz$ with $y\in B_+uB_+$ and $z\in B_+vB_+$. Moreover, if $x=y'z'$ is another such factorization, then there exists $b\in B_+$ such that $y'=yb$ and $z'=b^{-1}z$.
\end{cor}
\begin{proof} Again the existence follows the fact that $B_+uvB_+=\left(B_+uB_+\right)\left(B_+vB_+\right)$ whenever $l(uv)=l(u)+l(v)$. For the uniqueness part we can apply the lemma above. Choose any reduced word $\left(\alpha(1),\alpha(2),\dots, \alpha(m)\right)$ for $u$ and any reduced word $\left(\beta(1),\beta(2),\dots, \beta(n)\right)$ for $v$. Then both $y$ and $y'$ can be factorized with respect to the first reduced word and both $z$ and $z'$ can be factorized with respect to the second reduced word. But since $l(uv)=l(u)+l(v)$, $\left(\alpha(1),\alpha(2),\dots, \alpha(m), \beta(1),\beta(2),\dots, \beta(n)\right)$ is a reduced word of $uv$, and the factorizations we get for $y$, $y'$, $z$, and $z'$ can be concatenated to become two factorizations of $x$ with respect to the concatenated reduced word, and it follows from the above lemma that there exists some $b\in B_+$ such that $y'=yb$ and $z'=b^{-1}z$. 
\end{proof}

\begin{cor}\label{2.8} Suppose $u$ and $v$ are two Weyl group elements with $l(uv)=l(u)+l(v)$. Then for any pair of Borel subgroups $\xymatrix{B_1 \ar[r]^{uv} & B_2}$ there exists a unique Borel subgroup $B_3$ such that $\xymatrix{B_1 \ar[r]^u & B_3 \ar[r]^v & B_2}$.
\end{cor}
\begin{proof} Without loss of generality we may assume that $B_1=B_+$ and $B_2=x.B_+$; then it follows that $x\in B_+uvB_+$. By the above corollary we know that $x=yz$ for some $y\in B_+uB_+$ and $z\in B_+vB_+$, with $y$ unique up to a right $B_+$-multiple and $z$ unique up to a left $B_+$-multiple. Therefore $B_3=y.B_+$ is one and the only one Borel subgroup satisfying $\xymatrix{B_1\ar[r]^u & B_3 \ar[r]^v & B_2}$.
\end{proof}

After all the basic facts and notations, we are ready to link a configuration space of points in $\mathcal{B}$ to the double Bruhat cell $G^{u,v}$.

\begin{defn} For a pair of Weyl group elements $(u,v)$ we define the \textit{configuration space of Borel subgroups} $\conf^{u,v}(\mathcal{B})$ to be the quotient space of quadruples of Borel subgroups $(B_1,B_2,B_3,B_4)$ satisfying the following relation
\[
\xymatrix{B_1 \ar[r]^u \ar@{-}[d] & B_2 \ar@{-}[d] \\
B_3 \ar[r]_{v^*} & B_4}
\]
modulo the diagonal action by $G$, i.e., $(x.B_1,x.B_2,x.B_3,x.B_4)\sim (B_1,B_2,B_3,B_4)$ for any $x\in G$. We also call diagrams like the one above \textit{square diagrams}.
\end{defn}

As it turns out, such configuration space $\conf^{u,v}(\mathcal{B})$ is just our old friend $H\backslash G^{u,v}/H$ in disguise.

\begin{prop}\label{flag-bruhat} There is a natural isomorphism $\conf^{u,v}(\mathcal{B})\overset{\cong}{\longrightarrow} H\backslash G^{u,v}/H$.
\end{prop}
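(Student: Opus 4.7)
The plan is to produce the inverse $i^{-1}$ explicitly and verify it is a well defined bijection. Given $x\in G^{u,v}$, I would send it to the class of the quadruple
\[
(B_1,B_2,B_3,B_4):=(B_+,\,xB_-x^{-1},\,B_-,\,xB_+x^{-1}).
\]
The pair $(B_+,B_-)$ is opposite by definition, and the pair $(xB_+x^{-1},xB_-x^{-1})$ is opposite by Proposition \ref{opposite flag} with $g=x$, so both vertical edges of the square diagram are present. The horizontal arrow $d_+(B_+,xB_+x^{-1})=u$ unfolds directly, under the identification $\mathcal{B}\cong G/B_+$ sending $gB_+\mapsto gB_+g^{-1}$, to the condition $x\in B_+uB_+$, which is automatic. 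For the bottom arrow I would rewrite $B_-=\overline{w}_0 B_+\overline{w}_0^{-1}$, so that $B_3$ and $B_2$ correspond to the cosets $\overline{w}_0 B_+$ and $x\overline{w}_0 B_+$; the condition $\overline{w}_0^{-1}\,x\,\overline{w}_0\in B_+v^*B_+$ then becomes, after conjugating by $\overline{w}_0$ and using $\overline{w}_0 B_+\overline{w}_0^{-1}=B_-$ together with $w_0 v^* w_0=v$, simply $x\in B_-vB_-$. Thus both relative position conditions together are equivalent to $x\in G^{u,v}$, and the assignment genuinely lands in $\conf^{u,v}(\mathcal{B})$.

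Next I would check descent and bijectivity. Replacing $x$ by $h_1xh_2$ with $h_i\in H$ yields the quadruple $(B_+,h_1xB_-x^{-1}h_1^{-1},B_-,h_1xB_+x^{-1}h_1^{-1})$ since $h_2$ normalizes $B_\pm$; this is $G$-equivalent to the original via the diagonal action of $h_1$, which itself fixes $B_\pm$, so the map descends to $j:H\backslash G^{u,v}/H\to\conf^{u,v}(\mathcal{B})$. Conversely, given a representative $(B_1,B_2,B_3,B_4)$, Proposition \ref{opposite flag} applied to $(B_1,B_3)$ yields $g\in G$, unique up to right $H$-translation, with $B_1=gB_+g^{-1}$ and $B_3=gB_-g^{-1}$; the diagonal action of $g^{-1}$ normalizes $B_1=B_+$ and $B_3=B_-$. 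A second application of the proposition to $(B_4,B_2)$ furnishes $y$, unique up to right $H$-translation, with $B_4=yB_+y^{-1}$ and $B_2=yB_-y^{-1}$, and the calculation of the previous paragraph forces $y\in G^{u,v}$. The residual diagonal $G$-action preserving this normalization is precisely $B_+\cap B_-=H$, which acts on $y$ by left translation, while the $H$-ambiguities in $g$ and $y$ combine into a right $H$-translation of $y$. Thus the class $HyH\in H\backslash G^{u,v}/H$ is canonically determined, and the assignment $[B_1,B_2,B_3,B_4]\mapsto HyH$ is a two-sided inverse to $j$.

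The only delicate point is the appearance of $v^*$ rather than $v$ on the bottom arrow of the square diagram: this $w_0$-twist is precisely what converts a $d_+$-condition at $B_3=B_-$ into the $B_-vB_-$ condition needed to intersect with $B_+uB_+$ and recover $G^{u,v}$. The remainder is routine bookkeeping on cosets via Proposition \ref{opposite flag}.
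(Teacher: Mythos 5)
Your proposal is correct and follows essentially the same route as the paper: both normalize the quadruple via Proposition \ref{opposite flag} so that $(B_1,B_3)=(B_+,B_-)$, identify $B_4$ and $B_2$ with $xB_+x^{-1}$ and $xB_-x^{-1}$ for a single $x$ that the two relative-position conditions force into $B_+uB_+\cap B_-vB_-$, and track the left and right $H$-ambiguities. You merely spell out the inverse direction and the descent check that the paper compresses into ``it is not hard to see that it is indeed an isomorphism.''
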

\begin{proof} By Proposition \ref{opposite flag}, any element $[B_1,B_2,B_3,B_4]$ in $\conf^{u,v}(\mathcal{B})$ can be represented by the square diagram
\[
\xymatrix{B_+ \ar[r]^u \ar@{-}[d] & x.B_+ \ar@{-}[d] \\
B_- \ar[r]_{v^*} & x.B_-}
\]
for some $x\in G$, and the choice of $x$ is unique up to a left multiple and a right multiple by elements in $H$. Note that by definition of $\conf^{u,v}(\mathcal{B})$, $x\in B_+uB_+\cap B_+v^*B_+=B_+uB_+\cap B_-vB_-$. Thus the map 
\[
[B_1,B_2,B_3,B_4]\mapsto H\backslash x/H
\]
is a well-defined map from $\conf^{u,v}(\mathcal{B})$ to $H\backslash G^{u,v}/H$, and it is not hard to see that this is indeed an isomorphism.
\end{proof}

By identifying $H\backslash G^{u,v}/H$ with $\conf^{u,v}(\mathcal{B})$, we can use properties of the latter space to relate maps of the two spaces. For example, we can define the following map on configuration spaces, and show that it is equivalent to the map $\iota:H\backslash G^{u,v}/H \rightarrow H\backslash G^{u^{-1},v^{-1}}/H$.

\begin{defn} Let $(u,v)$ be a pair of Weyl group elements. We define the map $\iota:\conf^{u,v}(\mathcal{B})\rightarrow \conf^{u^{-1},v^{-1}}(\mathcal{B})$ to be the following.
\[
\left[\vcenter{\vbox{\xymatrix{B_1\ar[r]^u \ar@{-}[d] & B_2 \ar@{-}[d] \\
B_3 \ar[r]_{v^*} & B_4}}}\right]\quad \mapsto \quad 
\left[\vcenter{\vbox{\xymatrix{B_2^\iota\ar[r]^{u^{-1}} \ar@{-}[d] & B_1^\iota \ar@{-}[d] \\
B_4^\iota \ar[r]_{\left(v^{-1}\right)^*} & B_3^\iota}}}\right].
\]
Note that the image lies inside $\conf^{u^{-1},v^{-1}}(\mathcal{B})$ by Proposition \ref{iotadual}. Moreover, it is not hard to see that $\iota$ is of order 2.
\end{defn}

\begin{prop}\label{iotacommute} The following diagram commutes, where the vertical maps are the natural isomorphism stated in Proposition \ref{flag-bruhat}.
\[
\xymatrix{\conf^{u,v}(\mathcal{B}) \ar[r]^\iota \ar[d]_{\cong} & \conf^{u^{-1},v^{-1}}(\mathcal{B}) \ar[d]^{\cong} \\
H\backslash G^{u,v}/H \ar[r]_\iota & H\backslash G^{u^{-1},v^{-1}}/H}
\]
\end{prop}
\begin{proof} Just note that if $\left[\vcenter{\vbox{\xymatrix{B_1\ar[r]^u \ar@{-}[d] & B_2 \ar@{-}[d] \\
B_3 \ar[r]_{v^*} & B_4}}}\right] \quad = \quad \left[\vcenter{\vbox{\xymatrix{B_+\ar[r]^u \ar@{-}[d] & x.B_+ \ar@{-}[d] \\
B_- \ar[r]_{v^*} & x.B_-}}}\right]$, then applying $\iota$ will give 
\[
\left[\vcenter{\vbox{\xymatrix{x^{-1\circ \iota}.B_+\ar[r]^{u^{-1}} \ar@{-}[d] & B_+ \ar@{-}[d] \\
x^{-1\circ \iota}.B_- \ar[r]_{\left(v^*\right)^{-1}} & B_-}}}\right]\quad =\quad \left[\vcenter{\vbox{\xymatrix{B_+\ar[r]^{u^{-1}} \ar@{-}[d] & x^\iota.B_+ \ar@{-}[d] \\
B_- \ar[r]_{\left(v^{-1}\right)^*} & x^\iota.B_-}}}\right],
\]
which is exactly the configuration corresponds to $H\backslash x^\iota/H$.
\end{proof}

We will construct two more automorphisms below, both of which are closely related to Fomin and Zelevinsky's twist map introduced in \cite{FZ}, and we will later show that one of them is a geometric interpretation of a cluster Donaldson-Thomas transformation.

Earlier we have defined a Weyl group element $w^*$ for every Weyl group element $w$; we will define two more here: let $w^l:=w_0w^{-1}$ and let $w^r:=w^{-1}w_0$. It then follows that 
\begin{equation}\label{w0factor}
w_0=w^lw=w^*w^l=ww^r=w^rw^*.
\end{equation}
(The way to remember the superscripts is that in order to multiply to $w_0$, $w^r$ has to be on the right of $w$ and $w^l$ has to be on the left of $w$.)

Now suppose we begin with the square diagram 
\[
\vcenter{\vbox{\xymatrix{B_1 \ar[r]^u \ar@{-}[d] & B_2 \ar@{-}[d] \\
B_3 \ar[r]_{v^*} & B_4}}}.
\]
We can use Corollary \ref{2.8} and Equation \eqref{w0factor} to break the two vertical edges in two ways.
\[
\xymatrix{&B_1 \ar[r]^u \ar@{-}[dd] & B_2 \ar@{-}[dd] & \\
B_5 \ar[ur]^{u^l}\ar@{-}[urr] & & & B_6 \ar[ul]_{v} \\
& B_3 \ar[r]_{v^*} \ar@{-}[urr] \ar[ul]^{u^*} & B_4 \ar[ur]_{v^l} &}
 \quad \quad \quad \quad 
\xymatrix{&B_1 \ar[r]^u \ar[dl]_v \ar@{-}[dd] \ar@{-}[drr] & B_2 \ar@{-}[dd] \ar[dr]^{u^r} & \\
B_7 \ar[dr]_{v^r} \ar@{-}[drr] & & & B_8 \ar[dl]^{u^*} \\
& B_3 \ar[r]_{v^*} & B_4 &}
\]
But then we have two new square diagrams that satisfy the configuration condition of $\conf^{u^*,v^*}(\mathcal{B})$, namely
\[
\xymatrix{B_3 \ar[r]^{u^*} \ar@{-}[d] & B_5 \ar@{-}[d] \\ B_6 \ar[r]_v & B_2} \quad \quad \quad \quad \quad \quad \quad \quad \quad \quad \quad \quad \xymatrix{B_8 \ar[r]^{u^*} \ar@{-}[d] & B_4 \ar@{-}[d] \\ B_1 \ar[r]_v & B_7}
\]
To get back to $\conf^{u,v}(\mathcal{B})$, all we need to do is to apply the involution $*$; the resulting square diagrams are the followings.
\[
\xymatrix{B_3^* \ar[r]^{u} \ar@{-}[d] & B_5^* \ar@{-}[d] \\ B_6^* \ar[r]_{v^*} & B_2^*} \quad \quad \quad \quad \quad \quad \quad \quad \quad \quad \quad \quad \xymatrix{B_8^* \ar[r]^{u} \ar@{-}[d] & B_4^* \ar@{-}[d] \\ B_1^* \ar[r]_{v^*} & B_7^*}
\]
These two square diagrams give rise to two automorphisms on $\conf^{u,v}(\mathcal{B})$, which we will denote as $\eta$ and $\xi$:
\begin{equation}\label{etaxi}
\begin{split}
\conf^{u,v}(\mathcal{B}) \quad  \overset{\xi}{\leftarrow} \quad & \quad \conf^{u,v}(\mathcal{B}) \quad  \quad \overset{\eta}{\rightarrow} \quad \conf^{u,v}(\mathcal{B}) \\
\left[B_3^*,B_5^*,B_6^*,B_2^*\right] \quad \mapsfrom \quad &\left[B_1,B_2, B_3,B_4\right] \quad \mapsto \quad \left[B_8^*,B_4^*,B_1^*,B_7^*\right].
\end{split}
\end{equation}

\begin{prop}\label{xieaaintertwine} The maps $\xi$ and $\eta$ are intertwined by the map $\iota$, i.e., the following diagram commutes
\[
\xymatrix{ \conf^{u,v}(\mathcal{B}) \ar[r]^\xi \ar@{<->}[d]_\iota & \conf^{u,v}(\mathcal{B}) \ar@{<->}[d]^\iota \\ 
\conf^{u^{-1},v^{-1}}(\mathcal{B})\ar[r]_\eta & \conf^{u^{-1},v^{-1}}(\mathcal{B})}
\]
\end{prop}
\begin{proof} Note that if we take the hexagon diagram we used to define $\xi$ (the one below on the left) and apply $\iota$ to it, we get the hexagon diagram on the right.
\[
\xymatrix{&B_1 \ar[r]^u \ar@{-}[dd] & B_2 \ar@{-}[dd] & \\
B_5 \ar[ur]^{u^l}\ar@{-}[urr] & & & B_6 \ar[ul]_{v} \\
& B_3 \ar[r]_{v^*} \ar@{-}[urr] \ar[ul]^{u^*} & B_4 \ar[ur]_{v^l} &}
\quad \quad \quad \quad
\xymatrix{&B_2^\iota \ar[r]^{u^{-1}} \ar[dl]_{v^{-1}} \ar@{-}[dd] \ar@{-}[drr] & B_1^\iota \ar@{-}[dd] \ar[dr]^{\left(u^l\right)^{-1}} & \\
B_6^\iota \ar[dr]_{\left(v^l\right)^{-1}} \ar@{-}[drr] & & & B_5^\iota \ar[dl]^{\left(u^*\right)^{-1}} \\
& B_4^\iota \ar[r]_{\left(v^*\right)^{-1}} & B_3^\iota &}
\]
Note that for any Weyl group element,
\[
\left(w^*\right)^{-1}=\left(w_0ww_0\right)^{-1}=w_0w^{-1}w_0=\left(w^{-1}\right)^*,
\]
\[
\left(w^l\right)^{-1}=\left(w_0w^{-1}\right)^{-1}=\left(w^{-1}\right)^{-1}w_0=\left(w^{-1}\right)^r.
\]
Therefore we can relabel the arrows in the hexagon diagram on the above right as
\[
\xymatrix{&B_2^\iota \ar[r]^{u^{-1}} \ar[dl]_{v^{-1}} \ar@{-}[dd] \ar@{-}[drr] & B_1^\iota \ar@{-}[dd] \ar[dr]^{\left(u^{-1}\right)^r} & \\
B_6^\iota \ar[dr]_{\left(v^{-1}\right)^r} \ar@{-}[drr] & & & B_5^\iota \ar[dl]^{\left(u^{-1}\right)^*} \\
& B_4^\iota \ar[r]_{\left(v^{-1}\right)^*} & B_3^\iota &}
\]
from which (together with Proposition \ref{commutinginvolution}) we can conclude that
\begin{align*}
(\eta\circ \iota)\left[B_1,B_2,B_3,B_4\right]=&\eta\left[B_2^\iota,B_1^\iota,B_4^\iota, B_3^\iota\right]\\
=&\left[\left(B_5^\iota\right)^*,\left(B_3^\iota\right)^*,\left(B_2^\iota\right)^*,\left(B_6^\iota\right)^*\right]\\
=&\left[\left(B_5^*\right)^\iota,\left(B_3^*\right)^\iota,\left(B_2^*\right)^\iota,\left(B_6^*\right)^\iota\right]\\
=&\iota\left[B_3,B_5,B_6,B_2\right]\\
=&\iota \circ \xi \left[B_1,B_2,B_3,B_4\right].\qedhere
\end{align*}
\end{proof}

Before we end this subsection, we would like to relate the autormophisms $\xi$ and $\eta$ on $\conf^{u,v}$ to Fomin and Zelevinsky's twist map. In \cite{FZ}, Fomin and Zelevinsky defined a twist map 
\begin{align*} \tw:G^{u,v}&\rightarrow G^{u^{-1},v^{-1}}\\
 x& \mapsto \left( \left[\overline{u}^{-1}x\right]_-^{-1}\overline{u}^{-1}x\overline{v^{-1}}\left[x\overline{v^{-1}}\right]_+^{-1}\right)^{t\circ \iota},
\end{align*}
which is a biregular map of order 2. By an abuse of notation we will denote the induced map $H\backslash G^{u,v}/H\rightarrow H\backslash G^{u^{-1},v^{-1}}/H$ by $\tw$ as well.

\begin{prop}\label{twistflag} The following diagram commutes.
\[
\xymatrix{ \conf^{u,v}(\mathcal{B}) \ar[r]^\xi \ar[d]_\cong & \conf^{u,v}(\mathcal{B}) \ar[d]^\cong \\ H\backslash G^{u,v}/H \ar[r]_{\iota\circ \tw} & H\backslash G^{u,v}/H}
\]
Combining this result with Propositions \ref{iotacommute} and \ref{xieaaintertwine} we can deduce that the following diagram commutes as well.
\[
\xymatrix{ \conf^{u,v}(\mathcal{B}) \ar[r]^\eta \ar[d]_\cong & \conf^{u,v}(\mathcal{B}) \ar[d]^\cong \\ H\backslash G^{u,v}/H \ar[r]_{\tw\circ\iota} & H\backslash G^{u,v}/H}
\]
\end{prop}
\begin{proof} Take a point $H\backslash x/H$ in $H\backslash G^{u,v}/H$. The left vertical map says that it correspond to the configuration
\[
\vcenter{\vbox{\xymatrix{ B_+ \ar@{-}[d] \ar[r]^u & x.B_+\ar@{-}[d] \\
B_- \ar[r]_{v^*} & x.B_-}}}
\]
which only depends on the double coset $H\backslash x/H$ rather than $x$ itself due to Proposition \ref{flag-bruhat}.

Now let's try to break the two vertical edges into two arrows according to the definition of $\xi$. We claim the unique choices for $B_5$ and $B_6$ are the following.
\[
\xymatrix{&B_+ \ar[r]^u \ar@{-}[dd] & x.B_+ \ar@{-}[dd] & \\
\overline{u}.B_- \ar[ur]^{u^l}\ar@{-}[urr] & & & \left(x\overline{v^{-1}}\right).B_+ \ar[ul]_{v} \\
& B_- \ar[r]_{v^*} \ar@{-}[urr] \ar[ul]^{u^*} & x.B_- \ar[ur]_{v^l} &}
\]
To verify one just need to compute the labelling of the new arrows:
\[
\left(\overline{u}.B_-,B_+\right)\sim \left(\left(uw_0\right).B_+,B_+\right)\sim \left(B_+,u^l.B_+\right);
\]
\[
\left(B_-,\overline{u}.B_-\right)\sim \left(w_0.B_+,\left(uw_0\right).B_+\right)\sim \left(B_+,u^*.B_+\right)
\]
\[
\left(\left(x\overline{v^{-1}}\right).B_+,x.B_+\right)\sim \left(B_+,\overline{v^{-1}}^{-1}.B_+\right)\sim \left(B_+,v.B_+\right);
\]
\[
\left(x.B_-,\left(x\overline{v^{-1}}\right).B_+\right)\sim \left(B_+,\left(w_0v^{-1}\right).B_+\right)\sim \left(B_+,v^l.B_+\right).
\]
Next we take out the new square diagram with vertices $B_-$, $\overline{u}.B_-$, $\left(x\overline{v^{-1}}\right).B_+$, and $x.B_+$:
\begin{align*}
\vcenter{\vbox{\xymatrix{B_- \ar[r]^{u^*} \ar@{-}[d] & \overline{u}.B_- \ar@{-}[d] \\
\left(x\overline{v^{-1}}\right).B_+ \ar[r]_(0.6){v} & x.B_+}}}\quad =&\quad \vcenter{\vbox{\xymatrix{\left[x\overline{v^{-1}}\right]_-.B_- \ar[r]^{u^*} \ar@{-}[d] & \overline{u}.B_- \ar@{-}[d] \\
\left[x\overline{v^{-1}}\right]_-.B_+ \ar[r]_v & \left(\overline{u}\overline{u}^{-1}x\right).B_+}}}\\
\quad =& \quad \vcenter{\vbox{\xymatrix{\left[x\overline{v^{-1}}\right]_-.B_- \ar[r]^{u^*} \ar@{-}[d] & \left(\overline{u}\left[\overline{u}^{-1}x\right]_-\right).B_- \ar@{-}[d] \\
\left[x\overline{v^{-1}}\right]_-.B_+ \ar[r]_v & \left(\overline{u}\left[\overline{u}^{-1}x\right]_-\right).B_+}}}.
\end{align*}
Note that the last square diagram represents the same configuration as the square diagram
\[
\vcenter{\vbox{\xymatrix{B_+ \ar[r]^(0.2){u^*} \ar@{-}[d] & \left(\overline{w}_0\left[x\overline{v^{-1}}\right]_-^{-1}\overline{u}\left[\overline{u}^{-1}x\right]_-\overline{w}_0^{-1}\right).B_+ \ar@{-}[d]\\
B_- \ar[r]_(0.2){v} & \left(\overline{w}_0\left[x\overline{v^{-1}}\right]_-^{-1}\overline{u}\left[\overline{u}^{-1}x\right]_-\overline{w}_0^{-1}\right).B_-
}}}.
\]
Lastly, we need to apply the $*$ involution, which yields the square diagram
\[
\vcenter{\vbox{\xymatrix{B_+ \ar[r]^(0.2){u} \ar@{-}[d] & \left(\left[\overline{u}^{-1}x\right]_-^{-1}\overline{u}^{-1}\left[x\overline{v^{-1}}\right]_-\right)^t.B_+ \ar@{-}[d]\\
B_- \ar[r]_(0.2){v^*} & \left(\left[\overline{u}^{-1}x\right]_-^{-1}\overline{u}^{-1}\left[x\overline{v^{-1}}\right]_-\right)^t.B_-
}}}
\]
(we have also used the identity that $\overline{w}_0^t=\overline{w}_0^{-1}$). This shows that the counterpart of the automorphism $\xi$ on $H\backslash G^{u,v}/H$ can be defined as
\[
H\backslash x/H\mapsto H\left\backslash \left(\left[\overline{u}^{-1}x\right]_-^{-1}\overline{u}^{-1}\left[x\overline{v^{-1}}\right]_-\right)^t\right/H.
\]
But we can further split the factor 
\[
\left[x\overline{v^{-1}}\right]_-=x\overline{v^{-1}}\left[x\overline{v^{-1}}\right]_+^{-1}\left[x\overline{v^{-1}}\right]_0^{-1}
\]
and absorb $\left[x\overline{v^{-1}}\right]_0^{-1}$ into the right $H$-quotient. Thus in conclusion, the counterpart of the automorphism $\xi$ can also be written as
\[
H\backslash x/H \mapsto H\left\backslash\left( \left[\overline{u}^{-1}x\right]_-^{-1}\overline{u}^{-1}x\overline{v^{-1}}\left[x\overline{v^{-1}}\right]_+^{-1}\right)^t\right/H,
\]
which is precisely the map $\iota\circ \tw$.
\end{proof}

\subsection{Cluster Ensemble}\label{cluster} We will give a brief review of Fock and Goncharov's theory of cluster ensemble. We will mainly follow the coordinate description presented in \cite{FGensemble}.

\begin{defn} A \textit{seed} $\vec{i}$ is a quadruple $(I,I_0, \epsilon, d)$ satisfying the following properties:
\begin{enumerate}
    \item $I$ is a finite set;
    \item $I_0\subset I$;
    \item $\epsilon=\left(\epsilon_{ab}\right)_{a,b\in I}$ is a $\mathbb{Q}$-coefficient matrix in which $\epsilon_{ab}$ is always an integer unless $a,b\in I_0$;
    \item $d=\left(d_a\right)_{a\in I}$ is a $|I|$-tuple of positive integers such that $\hat{\epsilon}_{ab}:=\epsilon_{ab}d_b^{-1}$ is a skew-symmetric matrix.
\end{enumerate}
\end{defn}

In the special case where $\epsilon_{ab}$ is itself skew-symmetric with integer entries, the data of a seed defined as above is equivalent to the data of a quiver $Q$ with vertex set $I$, with no loops or 2-cycles, and with the exchange matrix 
\[
\epsilon_{ab}=\#\left\{\text{arrows from $a$ to $b$}\right\}-\#\left\{\text{arrows from $b$ to $a$}\right\}.
\]
Thus by extending the terminology from quivers to seeds, we call elements of $I$ \textit{vertices}, call elements of $I_0$ \textit{frozen vertices}, and call $\epsilon$ the \textit{exchange matrix}. 

\begin{defn} Let $\vec{i}=(I,I_0,\epsilon,d)$ be a seed and let $c$ be a non-frozen vertex. Then the \textit{mutation} of $\vec{i}$ at $c$, which we will denote as $\mu_c$, gives rise to new seed $\vec{i}'=\left(I',I'_0, \epsilon', d'\right)$ where $I'=I$, $I'_0=I_0$, $d'=d$, and
\begin{equation}\label{mutation}
\epsilon'_{ab}=\left\{\begin{array}{ll}
-\epsilon_{ab} & \text{if $c\in \{a,b\}$;} \\
\epsilon_{ab}+\left[\epsilon_{ac}\right]_+\left[\epsilon_{cb}\right]_+-\left[-\epsilon_{ac}\right]_+\left[-\epsilon_{cb}\right]_+ & \text{if $c\notin \{a,b\}$;}
\end{array}\right.
\end{equation}
where $[n]_+:=\max\{0,n\}$.
\end{defn}

It is not hard to check that mutating twice at the same vertex gives back the original seed. In fact, if we start with a skew-symmetric matrix $\epsilon$ so the data of a seed can be translated into a quiver, then seed mutation precisely corresponds to quiver mutation defined by Derksen, Weyman, and Zelevinsky \cite{DWZ}.

Starting with an initial seed $\vec{i}_0$, we say that a seed $\vec{i}$ is \textit{mutation equivalent} to $\vec{i}_0$ if there is a sequence of seed mutations that turns $\vec{i}_0$ into $\vec{i}$; we denote the set of all seeds mutation equivalent to $\vec{i}_0$ by $|\vec{i}_0|$. To each seed $\vec{i}$ in $|\vec{i}_0|$ we associate two algebraic tori $\mathcal{A}_\vec{i}=\mathbb{G}_m^{|I|}$ and $\mathcal{X}_\vec{i}=\mathbb{G}_m^{|I|}$, which are equipped with canonical coordinates $(A_a)$ and $(X_a)$ indexed by the set $I$. These two algebraic tori are linked by a map $p_\vec{i}:\mathcal{A}_\vec{i}\rightarrow \mathcal{X}_\vec{i}$ given by
\[
p_\vec{i}^*(X_a)=\prod_{j\in I} A_b^{\epsilon_{ab}}.
\]
The algebraic tori $\mathcal{A}_\vec{i}$ and $\mathcal{X}_\vec{i}$ are called a \textit{seed $\mathcal{A}$-torus} and a \textit{seed $\mathcal{X}$-torus} respectively.

A seed mutation $\mu_c:\vec{i}\rightarrow \vec{i}'$ gives rise to birational equivalences between the corresponding seed tori, which by an abuse of notation we also denote both as $\mu_c$; in terms of the canonical coordinates $(A'_a)$ and $(X'_a)$ they can be expressed as
\begin{align*}
\mu_c^*(A'_a)=&\left\{\begin{array}{ll} \displaystyle A_c^{-1}\left(\prod_{\epsilon_{cb}>0} A_b^{\epsilon_{cb}}+\prod_{\epsilon_{cb}<0} A_b^{-\epsilon_{cb}}\right) & \text{if $a=c$,}\\
A_a & \text{if $a\neq c$,}\end{array}\right. \\
\text{and} \quad \quad  \mu_c^*(X'_a)=&\left\{\begin{array}{l l} X_c^{-1} & \text{if $a=c$,} \\
X_a\left(1+X_c^{-\sgn (\epsilon_{ac})}\right)^{-\epsilon_{ac}}& \text{if $a\neq c$.}\end{array}\right.
\end{align*}
These two birational equivalences are called \textit{cluster $\mathcal{A}$-mutation} and \textit{cluster $\mathcal{X}$-mutation} respectively. One important feature about cluster mutations is that they commute with the respective $p$ maps.
\[
\xymatrix{\mathcal{A}_\vec{i} \ar[d]_{p_\vec{i}} \ar@{-->}[r]^{\mu_k} & \mathcal{A}_{\vec{i}'} \ar[d]^{p_{\vec{i}'}}\\
\mathcal{X}_\vec{i} \ar@{-->}[r]_{\mu_k} & \mathcal{X}_{\vec{i}'}} 
\]

Besides cluster mutations between seed tori we also care about cluster isomorphisms induced by seed isomorphisms. A \textit{seed isomorphism} $\sigma:\vec{i}\rightarrow \vec{i}'$ is a bijection $\sigma:I\rightarrow I'$ that fixes the subset $I_0\subset I\cap I'$ such that $\epsilon'_{\sigma(i)\sigma(j)}=\epsilon_{ij}$. Given a seed isomorphism $\sigma:\vec{i}\rightarrow \vec{i}'$ between two seeds in $|\vec{i}_0|$, we obtain isomorphisms on the corresponding seed tori, which by an abuse of notation we also denote by $\sigma$:
\[
\sigma^*(A'_{\sigma(a)})=A_a \quad \quad \text{and} \quad \quad \sigma^*(X'_{\sigma(a)})=X_a.
\]
We call these isomorphisms \textit{cluster isomorphisms}. It is not hard to see that cluster isomorphisms also commute with the $p$ maps.
\[
\xymatrix{\mathcal{A}_\vec{i} \ar[d]_{p_\vec{i}} \ar[r]^\sigma & \mathcal{A}_{\vec{i}'} \ar[d]^{p_{\vec{i}'}}\\
\mathcal{X}_\vec{i} \ar[r]_\sigma & \mathcal{X}_{\vec{i}'}} 
\]

\begin{defn} By gluing the seed tori via cluster mutations we obtain the corresponding \textit{cluster varieties}, which will be denoted as $\mathcal{A}_{|\vec{i}_0|}$ and $\mathcal{X}_{|\vec{i}_0|}$ respectively. Each seed torus is called a \textit{cluster coordinate chart}, and the functions $\left(A_a\right)$ and $\left(X_a\right)$ defined on each cluster coordinate chart are called \textit{cluster coordinates}. Since the maps $p_\vec{i}$ commute with cluster mutations, they naturally glue into a map $p:\mathcal{A}_{|\vec{i}_0|}\rightarrow \mathcal{X}_{|\vec{i}_0|}$ of cluster varieties. The triple $\left(\mathcal{A}_{|\vec{i}_0|},\mathcal{X}_{|\vec{i}_0|}, p\right)$ associated to a mutation equivalent family of seeds $|\vec{i}_0|$ is called a \textit{cluster ensemble}.
\end{defn}

\begin{defn}\label{clustertran} Cluster isomorphisms can be seen as birational automorphisms on these cluster varieties which map one cluster coordinate chart to another associated to an isomorphic seed, and we call these birational automorphisms \textit{cluster transformations}. 
\end{defn}

\begin{rmk}\label{clustertran'} In practice, one can also fix a cluster coordinate chart and try to find the rational expressions of the pull-backs of the cluster coordinates on the same cluster coordinate chart (which is what we will do when computing the Donaldson-Thomas transformation); the resulting expression can then be factored as a composition consisting of a sequence of cluster mutations followed by a cluster isomorphism and then followed by another sequence of cluster mutations; this is why sometimes cluster transformations are defined as such compositions as well. The picture below also conveys the idea; note that the different styles represent different cluster coordinate charts.
\[
\scalebox{0.8}{$\tikz{
\draw (0,0) ellipse (2 and 1);
\draw [dashed] (0,0) ellipse (1.3 and 0.7);
\draw [loosely dotted, thick] (0.5,0) ellipse (1.3 and 0.7);
\draw [dotted, thick] (-0.5,0) ellipse (1.3 and 0.7);
\draw [dotted, thick] (-3.5,2) ellipse (1.3 and 0.7);
\draw [loosely dotted, thick] (3.5,2) ellipse (1.3 and 0.7);
\draw [->] (-2.2,1.3) -- (-1.5,1);
\draw [->] (2.2,1.3) -- (1.5,1);
\node at (0,1.4) [] {$\begin{array}{c}\text{inclusions of cluster} \\  \text{coordinate charts}\end{array}$};
\draw [->] (-2.2,2.5) to [bend left] node [above] {cluster isomorphism} (2.2,2.5);
\draw [->, dashed] (-0.5,-1.2) to [bend right] node [below] {cluster transformation} (0.5,-1.2);
\draw [dashed] (-9,2) ellipse (1.3 and 0.7);
\draw [->, dashed] (-7.5,2) -- node [below] {$\begin{array}{c} \text{composition of} \\ \text{cluster mutations} \\ \text{(i.e., change of} \\ \text{coordinates)}\end{array}$} (-5,2);
\draw [dashed] (9,2) ellipse (1.3 and 0.7);
\draw [->, dashed] (5,2) -- node [below] {$\begin{array}{c}\text{composition of} \\ \text{cluster mutations} \\ \text{(i.e., change of} \\ \text{coordinates)}\end{array}$} (7.5,2);
}$}
\]
\end{rmk}

Cluster ensembles connect the theory of cluster algebras with the Poisson geometry as well: on the one hand, the coordinate rings on cluster $\mathcal{A}$-varieties are upper cluster algebras \cite{BFZ}, and on the other hand, cluster $\mathcal{X}$-varieties carry natural Poisson variety structures given by
\[
\{X_a,X_b\}=\hat{\epsilon}_{ab}X_aX_b.
\]
Thus a cluster $\mathcal{X}$-variety is also known as a \textit{cluster Poisson variety}. More details are available in \cite{FGensemble}.

Given a seed $\vec{i}=\left(I,I_0,\epsilon,d\right)$ we define its \textit{opposite seed} to be the seed $\vec{i}^\circ=\left(I^\circ, I_0^\circ, \epsilon^\circ, d^\circ\right)$ with
\[
I^\circ:=I,\quad I_0^\circ:=I_0, \quad \epsilon^\circ=-\epsilon^\circ, \quad d^\circ=d.
\]

\begin{prop} Taking the opposite seed commutes with seed mutation, i.e., if $\vec{i}=\left(I,I_0,\epsilon, d\right)$ is a seed and $c$ is an unfrozen vertex, then $\left(\mu_c\vec{i}\right)^\circ=\mu_c\left(\vec{i}^\circ\right)$.
\end{prop}
\begin{proof} The only thing we need to check is that Equation \eqref{mutation} is invariant under taking opposite seeds, which is obvious.
\end{proof}

By going through the same construction we get the opposite cluster ensemble $\left(\mathcal{A}_{\left|\vec{i}_0^\circ\right|}, \mathcal{X}_{\left|\vec{i}_0^\circ\right|},p^\circ\right)$. Since taking the opposite seed commutes with seed mutation, there is a one-to-one correspondence between the cluster coordinate charts on $\mathcal{A}_{\left|\vec{i}_0\right|}$ and those on $\mathcal{A}_{\left|\vec{i}_0^\circ\right|}$, and a one-to-one correspondence between the cluster coordinate charts on $\mathcal{X}_{\left|\vec{i}_0\right|}$ and those on $\mathcal{X}_{\left|\vec{i}_0^\circ\right|}$. Moreover, Fock and Goncharov defined a pair of variety isomorphisms between the corresponding cluster coordinate charts 
\begin{equation}\label{ixdef}
i_\mathcal{A}:\mathcal{A}_\vec{i}\rightarrow \mathcal{A}_{\vec{i}^\circ} \quad \text{and} \quad i_\mathcal{X}:\mathcal{X}_\vec{i}\rightarrow \mathcal{X}_{\vec{i}^\circ}
\end{equation}
by defining the pull-backs
\[
i_\mathcal{A}^*\left(A^\circ_a\right):=A_a \quad \text{and} \quad i_\mathcal{X}^*\left(X^\circ_a\right)=X_a^{-1}.
\]

\begin{prop} $i_\mathcal{A}$ and $i_\mathcal{X}$ commute with cluster mutation maps and cluster isomorphisms, and the following diagram commutes.
\[
\xymatrix{ \mathcal{A}_\vec{i} \ar[r]^{i_\mathcal{A}} \ar[d]_p & \mathcal{A}_{\vec{i}^\circ} \ar[d]^{p^\circ}\\
\mathcal{X}_{\vec{i}} \ar[r]_{i_\mathcal{X}} & \mathcal{X}_{\vec{i}_0^\circ}}
\]
As a corollary, $i_\mathcal{A}$ can be glued into a variety isomorphism $i_\mathcal{A}:\mathcal{A}_{\left|\vec{i}_0\right|}\rightarrow \mathcal{A}_{\left|\vec{i}_0^\circ\right|}$ and $i_\mathcal{X}$ can be glued into a variety isomorphism $i_\mathcal{X}:\mathcal{X}_{\left|\vec{i}_0\right|}\rightarrow \mathcal{X}_{\left|\vec{i}_0^\circ\right|}$, and they form a commutative diagram together with the respective $p$ maps similar to the one above.
\end{prop}
\begin{proof} This follows from the definition that $\epsilon_{ab}^\circ=-\epsilon_{ab}$.
\end{proof}

\begin{rmk}\label{promise} We will see later that the biregular variety morphism $\iota:H\backslash G^{u,v}/H\rightarrow H\backslash G^{u^{-1},v^{-1}}/H$ induced by the anti-involution $\iota$ on $G$ can be interpreted as $i_\mathcal{X}$ on the associated cluster $\mathcal{X}$-varieties. 
\end{rmk}

\begin{notn}\label{reduced} (\textbf{Important!}) There is a \textit{reduced} version of a seed $\mathcal{X}$-torus, which is obtained as the image of $\mathcal{X}_\vec{i}$ under the projection to the coordinates corresponding to non-frozen vertices (and hence is isomorphic to $\mathbb{G}_m^{|I\setminus I_0|}$). If we need to distinguish the two seed $\mathcal{X}$-tori, we will denote the reduced one as $\underline{\mathcal{X}}_\vec{i}$. One can view the reduced seed $\mathcal{X}$-torus $\underline{\mathcal{X}}_\vec{i}$ as a seed $\mathcal{X}$-torus for the seed $\left(I\setminus I_0,\emptyset, \underline{\epsilon}, \underline{d}\right)$ where $\underline{\epsilon}$ and $\underline{d}$ are obtained by deleting the data entries of $\epsilon$ and $d$ that involve $I_0$ respectively. By composing the $p$ map $p_\vec{i}:\mathcal{A}_\vec{i}\rightarrow \mathcal{X}_\vec{i}$ and the projection map $\mathcal{X}_\vec{i}\rightarrow \underline{\mathcal{X}}_\vec{i}$ we obtain another $p$ map, which by an abuse of notation we also denote as $p_\vec{i}:\mathcal{A}_\vec{i}\rightarrow \underline{\mathcal{X}}_\vec{i}$. In fact, the reduced $p$ map makes more sense since it is guaranteed to be algebraic, whereas the unreduced one may have fractional exponents.

In addition, by looking at the formulas for cluster $\mathcal{X}$-mutation we see that the frozen coordinates never get into the formula of unfrozen ones; therefore we can conclude that the projection $\mathcal{X}_\vec{i}\rightarrow \underline{\mathcal{X}}_\vec{i}$ commutes with cluster transformation. In particular, the reduced seed $\mathcal{X}$-tori also glue together into a \textit{reduced cluster $\mathcal{X}$-variety} (\textit{reduced} in the cluster sense, not in the algebraic geometric sense), which we may denote as $\underline{\mathcal{X}}_{|\vec{i}_0|}$, together with a $p$ map $p:\mathcal{A}_{|\vec{i}_0|}\rightarrow \underline{\mathcal{X}}_{|\vec{i}_0|}$. We call the triple $\left(\mathcal{A}_{|\vec{i}_0|}, \underline{\mathcal{X}}_{|\vec{i}_0|}, p\right)$ a \textit{reduced cluster ensemble}. 

The reduced version is actually more useful and more relevant to our story, but the unreduced one also makes our life easier as it allows us to use amalgamation to define the cluster structure of double Bruhat cells later. We therefore decide to include both in this section and use them to define the cluster structures on double Bruhat cells later; however, after we finish the construction of the cluster structures on double Bruhat cells we will drop the underline notation and only use $\mathcal{X}_{|\vec{i}_0|}$ to denote the \textbf{reduced} cluster $\mathcal{X}$-variety.
\end{notn}

\subsection{Tropicalization and Cluster Donaldson-Thomas Transformation}\label{1.4} One important feature of the theory of cluster ensemble is that all the maps present in the construction (cluster transformation and the $p$ map) are positive, which enables us to tropicalize a cluster ensemble. For the rest of this subsection we will make this statement precise, and use the tropical language to define cluster Donaldson-Thomas transformation.

Let's start with what we mean by tropicalization. Consider a split algebraic torus $\mathcal{X}$. The semiring of \textit{positive rational functions} on $\mathcal{X}$, which we denote as $P(\mathcal{X})$, is the semiring consisting of elements in the form $f/g$ where $f$ and $g$ are linear combinations of characters on $\mathcal{X}$ with positive integral coefficients. A rational map $\phi:\mathcal{X}\dashrightarrow \mathcal{Y}$ between two split algebraic tori is said to be \textit{positive} if it induces a semiring homomorphism $\phi^*:P(\mathcal{Y})\rightarrow P(\mathcal{X})$. It then follows that composition of positive rational maps is still a positive rational map.

One typical example of a positive rational map is a cocharacter $\chi$ of a split algebraic torus $\mathcal{X}$: the induced map $\chi^*$ pulls back an element $f/g\in P(\mathcal{X})$ to $\frac{\langle f, \chi\rangle}{\langle g,\chi\rangle}$ in $P(\mathbb{G}_m)$, where $\langle f, \chi\rangle$ and $\langle g,\chi\rangle$ are understood as linear extensions of the canonical pairing between characters and cocharacters with values in powers of $z$. We will denote the lattice of cocharacters of a split algebraic torus $\mathcal{X}$ by $\mathcal{X}^t$ for reasons that will become clear in a moment.

Note that $P(\mathbb{G}_m)$ is the semiring of rational functions in a single variable $z$ with positive integral coefficients. Thus if we let $\mathbb{Z}^t$ be the semiring $(\mathbb{Z}, \min, +)$, then we can define a semiring homomorphism call the \textit{order} of $z$
\[
\ord_z:P(\mathbb{G}_m)\rightarrow \mathbb{Z}^t,
\]
which is defined on any polynomial $f$ as the lowest power of $z$ present in $f$, and defined on any rational function $f/g$ as $\ord_z(f/g):=\ord_zf-\ord_zg$. Therefore a cocharacter $\chi$ on $\mathcal{X}$ gives rise to a natural semiring homomorphism 
\[
\ord_z \langle \cdot, \chi\rangle:P(\mathcal{X})\rightarrow \mathbb{Z}^t
\]

\begin{prop} The map $\chi\mapsto \ord_z\langle \cdot, \chi\rangle$ is a bijection between the lattice of cocharacters and set of semiring homomorphisms from $P(\mathcal{X})$ to $\mathbb{Z}^t$.
\end{prop}
\begin{proof} Note that $P(\mathcal{X})$ is a free commutative semiring generated by any basis of the lattice of characters, and in particular any choice of coordinates $(X_i)_{i=1}^r$ on the split algebraic torus. Therefore to define a semiring homomorphism from $P(\mathcal{X})$ to $\mathbb{Z}^t$ we just need to assign to each $X_i$ some integer $a_i$. But for any such $r$-tuple $(a_i)$ there exists a unique cocharacter $\chi$ such that $\langle X_i,\chi\rangle=z^{a_i}$. Therefore $\chi\mapsto \ord_z\langle \cdot, \chi\rangle$ is indeed a bijection.
\end{proof}

\begin{cor} A positive rational map $\phi:\mathcal{X}\dashrightarrow \mathcal{Y}$ between split algebraic tori gives rise to a natural map $\phi^t:\mathcal{X}^t\rightarrow \mathcal{Y}^t$ between the respective lattice of cocharacters.
\end{cor}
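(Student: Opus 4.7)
The plan is to construct $\phi^t$ by composing with the bijection of the preceding proposition. Given a positive rational map $\phi:\mathcal{X}\dashrightarrow \mathcal{Y}$, positivity gives a semiring homomorphism $\phi^*:P(\mathcal{Y})\rightarrow P(\mathcal{X})$. For any cocharacter $\chi\in\mathcal{X}^t$, the preceding proposition attaches the semiring homomorphism $\deg_z\langle\cdot,\chi\rangle:P(\mathcal{X})\rightarrow\mathbb{Z}^t$, so the composition
\[
\deg_z\langle\cdot,\chi\rangle\circ\phi^*:P(\mathcal{Y})\rightarrow\mathbb{Z}^t
\]
is again a semiring homomorphism. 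Invoking the bijection in the opposite direction (this time applied to $\mathcal{Y}$) produces a unique cocharacter of $\mathcal{Y}$, which we declare to be $\phi^t(\chi)$.

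I would then verify the two properties one expects of this assignment. First, that the construction is natural: the identity map on $\mathcal{X}$ pulls back to the identity semiring homomorphism on $P(\mathcal{X})$, so $\mathrm{id}^t=\mathrm{id}$; and for composable positive rational maps $\phi:\mathcal{X}\dashrightarrow\mathcal{Y}$ and $\psi:\mathcal{Y}\dashrightarrow\mathcal{Z}$, one has $(\psi\circ\phi)^*=\phi^*\circ\psi^*$, which immediately yields $(\psi\circ\phi)^t=\psi^t\circ\phi^t$ via the uniqueness in the preceding bijection. Second, I would note that the resulting map $\phi^t$ is the right object: in concrete coordinates $(X_i)$ on $\mathcal{X}$ and $(Y_j)$ on $\mathcal{Y}$, if $\phi^*(Y_j)=f_j/g_j$ with $f_j,g_j\in P(\mathcal{X})$, then $\phi^t$ sends $\chi\in\mathcal{X}^t$ to the cocharacter of $\mathcal{Y}$ whose $j$-th component is $\deg_z\langle f_j,\chi\rangle-\deg_z\langle g_j,\chi\rangle$, so the construction is a plain calculation once the preceding proposition is in hand.

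Since the proposition does essentially all the conceptual work, there is no serious obstacle: the only thing to check is that the composition of a semiring homomorphism $P(\mathcal{Y})\rightarrow P(\mathcal{X})$ with a semiring homomorphism $P(\mathcal{X})\rightarrow\mathbb{Z}^t$ is a semiring homomorphism $P(\mathcal{Y})\rightarrow\mathbb{Z}^t$, which is immediate from the definitions. Thus the corollary is essentially a restatement of the preceding proposition in a functorial language, and the brief proof can simply point this out.
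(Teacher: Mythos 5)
Your construction is exactly the paper's: compose $\phi^*$ with the semiring homomorphism $\deg_z\langle\cdot,\chi\rangle$ and invoke the bijection of the preceding proposition for $\mathcal{Y}$ to extract the unique cocharacter $\phi^t(\chi)$. The extra remarks on functoriality and the coordinate formula are correct but not needed for the statement; the core argument matches the paper.
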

\begin{proof} Note that $\phi$ induces a semiring homomorphism $\phi^*:P(\mathcal{Y})\rightarrow P(\mathcal{X})$. Therefore for any cochcaracter $\chi$ of $\mathcal{X}$, the map $f\mapsto \ord_z\langle \phi^*f,\chi\rangle$ is a semiring homomorphism from $P(\mathcal{Y})\rightarrow \mathbb{Z}^t$. By the above proposition there is a unique cocharacter $\eta$ of $\mathcal{Y}$ representing this semiring homomorphism, and we assign $\phi^t(\chi)=\eta$.
\end{proof}

We also want to give an explicit way to compute the induced map $\phi^t$. Fix two coordinate charts $(X_i)$ on $\mathcal{X}$ and $(Y_j)$ on $\mathcal{Y}$. Then $(X_i)$ gives rise to a dual basis $\{\chi_i\}$ of the lattice of cocharacters $\mathcal{X}^t$, which is defined by
\[
\chi_i^*(X_k):=\left\{\begin{array}{ll}
z & \text{if $k=i$;} \\
1 & \text{if $k\neq i$.}
\end{array}\right.
\]
This basis allows us to write each cocharacter $\chi$ of $\mathcal{X}$ as a linear combination $\sum x_i\chi_i$. It is not hard to see that
\[
x_i=\ord_z\langle X_i, \chi\rangle.
\]
Similarly the coordinate chart $(Y_j)$ also gives rise to a basis $\{\eta_j\}$ of the lattice of cocharacters $\mathcal{Y}^t$, and we can write each cocharacter of $\mathcal{Y}$ as a linear combination $\sum y_j\eta_j$. 
On the other hand, for any positive rational expression $q$ in $r$ variables $X_1, \dots, X_r$ we have the so-called \textit{na\"{i}ve tropicalization}, which turns $q$ into a map from $\mathbb{Z}^r$ to $\mathbb{Z}$ via the following process:
\begin{enumerate}
\item replace addition in $q(X_1,\dots, X_r)$ by taking minimum;
\item replace multiplication in $q(X_1,\dots, X_r)$ by addition;
\item replace division in $q(X_1,\dots, X_r)$ by subtraction;
\item replace every constant by zero;
\item replace $X_i$ by $x_i$.
\end{enumerate}
It is not hard to see that, given a positive rational map $\phi:\mathcal{X}\dashrightarrow\mathcal{Y}$, the induced map $\phi^t$ maps $x=\sum x_i\chi_i$ to $\sum y_j\eta_j$ where
\begin{equation}\label{cocharacter}
y_j=(\phi^*(Y_j))^t(x).
\end{equation}
In particular, this shows that $\phi^t:\mathcal{X}^t\rightarrow \mathcal{Y}^t$ is a piecewise linear map.

Now we are ready to define tropicalization. 

\begin{defn} The \textit{tropicalization} of a split algebraic torus $\mathcal{X}$ is defined to be its lattice of cocharacters $\mathcal{X}^t$ (and hence the notation). For a positive rational map $\phi:\mathcal{X}\dashrightarrow \mathcal{Y}$ between split algebraic tori, the \textit{tropicalization} of $\phi$ is defined to be the map $\phi^t:\mathcal{X}^t\rightarrow \mathcal{Y}^t$. The basis $\{\chi_i\}$ of $\mathcal{X}^t$ corresponding to a coordinate system $(X_i)$ on $\mathcal{X}$ is called the \textit{basic laminations} associated to $(X_i)$.
\end{defn}

\begin{exmp} Let's give an example of tropicalization of a positive rational map between two split algebraic tori. Let $\mathcal{X}=\mathbb{G}_m^3$ and let $\mathcal{Y}=\mathbb{G}_m^2$; define a rational map $\phi:\mathcal{X}\dashrightarrow \mathcal{Y}$ via the pull-back of coordinate functions:
\[
\phi^*\left(Y_1\right)=\frac{X_2^{-2}+2X_1X_2+3}{\left(X_1+X_2\right)^2} \quad \quad \text{and} \quad \quad \phi^*\left(Y_2\right)=\frac{2X_1^{-1}+X_2}{4X_2+\left(X_1+2\right)^3+1}.
\]
The tropicalization of $\mathcal{X}$ and $\mathcal{Y}$ are then $\mathcal{X}^t=\mathbb{Z}^3$ and $\mathcal{Y}=\mathbb{Z}^2$, and the tropicalization $\phi^t:\mathcal{X}^t\rightarrow \mathcal{Y}^t$ can be expressed via the pull-back of coordinate functions:
\[
\left(\phi^t\right)^*(y_1)=\min \{-2x_2,x_1+x_2,0\}-2\min\{x_1,x_2\},
\]
\[
\left(\phi^t\right)^*(y_2)=\min\{-x_1,x_2\}-\min\{x_2,3\min\{x_1,0\},0\}.
\]
\end{exmp}

Now let's go back to the cluster varieties $\mathcal{A}_{\left|\vec{i}_0\right|}$ and $\mathcal{X}_{\left|\vec{i}_0\right|}$. Since both cluster varieties are obtained by gluing seed tori via positive birational equivalences, we can tropicalize everything and obtain two new glued objects which we call \textit{tropicalized cluster varieties} and denote as $\mathcal{A}_{\left|\vec{i}_0\right|}^t$ and $\mathcal{X}_{\left|\vec{i}_0\right|}^t$. Note that the tropicalization of the mutation maps are piecewise linear and bijective; therefore the resulting tropical cluster varieties $\mathcal{A}_{\left|\vec{i}_0\right|}^t$ and $\mathcal{X}_{\left|\vec{i}_0\right|}^t$ are themselves lattices. 

Since each seed $\mathcal{X}$-torus $\mathcal{X}_\vec{i}$ is a split algebraic torus, it has a set of basic laminations associated to the canonical coordinates $(X_a)$; we will call this set of basic laminations the \textit{positive basic $\mathcal{X}$-laminations} and denote them as $l_a^+$. Note that $\{-l_a^+\}$ is also a set of basic laminations on $\mathcal{X}_\vec{i}$, which will be called the \textit{negative basic $\mathcal{X}$-laminations} and denote them as $l_a^-$.

With all the terminologies developed, we can now state the definition of Goncharov and Shen's cluster Donaldson-Thomas transformation as follows.

\begin{defn}[Definition 2.15 in \cite{GS}] A \textit{cluster Donaldson-Thomas transformation} (of a seed $\mathcal{X}$-torus $\mathcal{X}_\vec{i}$) is a cluster transformation $\mathrm{DT}:\mathcal{X}_\vec{i}\dashrightarrow \mathcal{X}_\vec{i}$ whose tropicalization $\mathrm{DT}^t:\mathcal{X}_\vec{i}^t\rightarrow \mathcal{X}_\vec{i}^t$ maps each positive basic $\mathcal{X}$-laminations $l_a^+$ to its corresponding negative basic $\mathcal{X}$-laminations $l_a^-$. \end{defn}

Goncharov and Shen proved that a cluster Donaldson-Thomas transformation enjoys the following properties.

\begin{thm} [Goncharov-Shen, Theorem 2.16 in \cite{GS}] \label{gs} A cluster Donaldson-Thomas transformation $\mathrm{DT}:\mathcal{X}_\vec{i}\rightarrow \mathcal{X}_\vec{i}$ is unique if it exists. If $\vec{i}'$ is another seed in $|\vec{i}|$ (the collection of seeds mutation equivalent to $\vec{i}$) and $\tau:\mathcal{X}_\vec{i}\rightarrow \mathcal{X}_{\vec{i}'}$ is a composition of cluster mutations, then the conjugation $\tau \circ \mathrm{DT}\circ \tau^{-1}$ is the cluster Donaldson-Thomas transformation of $\mathcal{X}_{\vec{i}'}$. Therefore it makes sense to view the cluster Donaldson-Thomas transformation $\mathrm{DT}$ as a birational automorphism on a cluster $\mathcal{X}$-variety without referring to any one specific seed $\mathcal{X}$-torus.
\end{thm}

From our discussion on tropicalization above, we can translate the definition of a cluster Donaldson-Thomas transformation into the following equivalent one, which we will use to prove our main theorem.

\begin{prop}\label{lem0} A cluster transformation $\mathrm{DT}:\mathcal{X}_{\left|\vec{i}_0\right|}\dashrightarrow \mathcal{X}_{\left|\vec{i}_0\right|}$ is a cluster Donaldson-Thomas transformation if and only if on one (any hence any) cluster coordinate chart $\left(X_a\right)$, we have
\[
\ord_{X_a}\mathrm{DT}^*(X_b)=-\delta_{ab}
\]
where $\delta_{ab}$ denotes the Kronecker delta.
\end{prop}
\begin{proof} From Equation \eqref{cocharacter} we see that $\mathrm{DT}^t(l_a^+)= l_a^-$ for all $a$ if and only if $\ord_{X_a}\mathrm{DT}^*(X_b)=-\delta_{ab}$.
\end{proof}

\begin{defn}\label{dtxi} Recall that we have a map $i_\mathcal{X}:\mathcal{X}_{\left|\vec{i}_0^\circ\right|}\rightarrow \mathcal{X}_{\left|\vec{i}_0^\circ\right|}$. Suppose we have a cluster Donaldson-Thomas transformation $\DT$ defined on $\mathcal{X}_{\left|\vec{i}_0^\circ\right|}$, then by using the map $i_\mathcal{X}$, we can induce a cluster transformation on $\mathcal{X}_{\left|\vec{i}_0^\circ\right|}$ that is
\[
\Xi:=i_\mathcal{X}\circ \DT\circ i_\mathcal{X}.
\]
\end{defn}

The following picture is a pictorial description of the relation between $\DT$ and $\Xi$.
\[\scalebox{0.8}{$
\tikz{
\draw (0,0) ellipse (2 and 1);
\draw [dashed] (0,0) ellipse (1.3 and 0.7);
\node at (-2,-0.5) [below left] {$\mathcal{X}_{\left|\vec{i}_0\right|}$};
\draw [loosely dotted, thick] (0.5,0) ellipse (1.3 and 0.7);
\draw [dotted, thick] (-0.5,0) ellipse (1.3 and 0.7);
\draw [dotted, thick] (-3.5,2) ellipse (1.3 and 0.7);
\draw [loosely dotted, thick] (3.5,2) ellipse (1.3 and 0.7);
\draw [->] (-2.2,1.3) -- (-1.5,1);
\draw [->] (2.2,1.3) -- (1.5,1);
\draw [->] (-2.2,2.5) to [bend left] node [above] {cluster isomorphism} (2.2,2.5);
\draw [->,dashed] (0.5,-1.2) to [out=-45, in=-45] node [below right] {$\DT$} (2,-0.5);
\draw [dashed] (-9,2) ellipse (1.3 and 0.7);
\draw [->, dashed] (-7.5,2) -- node [above] {cluster mutations} (-5,2);
\draw [dashed] (9,2) ellipse (1.3 and 0.7);
\draw [->, dashed] (5,2) -- node [above] {cluster mutations} (7.5,2);
\draw (0,-4) ellipse (2 and 1);
\draw [dashed] (0,-4) ellipse (1.3 and 0.7);
\node at (-2,-3.5) [above left] {$\mathcal{X}_{\left|\vec{i}_0^\circ\right|}$};
\draw [loosely dotted, thick] (0.5,-4) ellipse (1.3 and 0.7);
\draw [dotted, thick] (-0.5,-4) ellipse (1.3 and 0.7);
\draw [dotted, thick] (-3.5,-6) ellipse (1.3 and 0.7);
\draw [loosely dotted, thick] (3.5,-6) ellipse (1.3 and 0.7);
\draw [->] (-2.2,-5.3) -- (-1.5,-5);
\draw [->] (2.2,-5.3) -- (1.5,-5);
\draw [->] (-2.2,-6.5) to [bend right] node [below] {cluster isomorphism} (2.2,-6.5);
\draw [->,dashed] (0.5,-2.8) to [out=45, in=45] node [above right] {$\Xi$} (2,-3.5);
\draw [dashed] (-9,-6) ellipse (1.3 and 0.7);
\draw [dashed] (9,-6) ellipse (1.3 and 0.7);
\draw [->, dashed] (-7.5,-6) -- node [below] {cluster mutations} (-5,-6);
\draw [->, dashed] (5,-6) -- node [below] {cluster mutations} (7.5,-6);
\draw [<->] (0,-1.2) -- node [left] {$i_\mathcal{X}$} (0,-2.8);
\draw [<->] (-3.5,1) -- node [left] {$i_\mathcal{X}$} (-3.5,-5);
\draw [<->] (3.5,1) -- node [right] {$i_\mathcal{X}$} (3.5,-5);
\draw [<->] (-9,1) -- node [left] {$i_\mathcal{X}$} (-9,-5);
\draw [<->] (9,1) -- node [right] {$i_\mathcal{X}$} (9,-5);
}$}
\]

For a polynomial $f$ in terms of variables $\left(X_a\right)$, we define the \textit{degree} of a variable $X_a$ in $f$, denoted as $\deg_{X_a}f$, to be the highest power of $X_a$ in $f$; for a rational expression $f/g$ where $f$ and $g$ are both polynomials in variables $\left(X_a\right)$ we define the \textit{degree} of a variable $X_a$ in $f/g$ as $\deg_{X_a}\left(f/g\right):=\deg_{X_a}f-\deg_{X_a}g$.

\begin{prop}\label{uniquexi} The cluster transformation $\Xi:\mathcal{X}_{\left|\vec{i}_0^\circ\right|}\dashrightarrow \mathcal{X}_{\left|\vec{i}_0^\circ\right|}$ induced by $\DT$ has the property that
\[
\deg_{X_a}\Xi^*\left(X_b\right)=-\delta_{ab}.
\]
Moreover, a cluster transformation satisfying the above property is unique (as a birational automorphism) if it exists.
\end{prop}
\begin{proof} Suppose $\DT^*\left(X_b\right)$ is a rational expression of the form $\frac{f\left(X_c\right)}{g\left(X_c\right)}$. Then we have $\Xi^*\left(X_b\right)=\frac{g\left(1/X_c\right)}{f\left(1/X_c\right)}$. Thus
\begin{align*}
\deg_{X_a}\Xi^*\left(X_b\right)=&\deg_{X_a} g\left(1/X_c\right)-\deg_{X_a} f\left(1/X_c\right)\\
=& -\ord_{X_a}g\left(X_c\right)+\ord_{X_a}f\left(X_c\right)\\
=& \ord_{X_a} \DT^*\left(X_b\right)\\
=&-\delta_{ab}.
\end{align*}
The uniqueness of $\Xi$ follows from the uniqueness of cluster Donaldson-Thomas transformation (Theorem \ref{gs}).
\end{proof}

\begin{cor} If we have a cluster transformation $\Xi$ on $\mathcal{X}_{\left|\vec{i}_0^\circ\right|}$ satisfying $\deg_{X_a}\Xi^*\left(X_b\right)=-\delta_{ab}$ for all vertices $a$ and $b$, then $\DT:=i_\mathcal{X}\circ \Xi\circ i_\mathcal{X}$ is the cluster Donaldson-Thomas transformation on $\mathcal{X}_{\left|\vec{i}_0\right|}$.
\end{cor}
\begin{proof} Note that $i_\mathcal{X}\circ \Xi\circ i_\mathcal{X}$ is automatically a cluster transformation. The order condition can be obtained in the exact same way as the identity in the proof above.
\end{proof}

\begin{rmk} In an earlier unpublished version of this paper we used $\left(\mathbb{Z},\max,+\right)$ instead of $\left(\mathbb{Z},\min,+\right)$ for the semiring of tropical integers. Then instead of taking the order of a variable in a rational expression we took the degree, and hence in that earlier version we were in fact computing the map $\Xi$. This is why in that earlier unpublished version of this paper we claimed that $\DT$ is biratinoally isomorphic to $\iota\circ \tw$ but in this version we are proving that $\DT$ is birationally isomorphic to $\tw\circ \iota$. The reason we make this change is because the minimum convention is compatible with other versions of cluster Donaldson-Thomas transformations such as maximal green sequences (see Appendix \ref{C}), but the maximum convention is not.
\end{rmk}

\subsection{Amalgamation}\label{amal} Our goal of this subsection is to describe a process of constructing a cluster ensemble from smaller pieces, which is known as \textit{amalgamation}; it was first introduced by Fock and Goncharov in \cite{FGamalgamation} when they studied the Poisson structures on double Bruhat cells. This will be the key ingredient in our construction of the cluster Donaldson-Thomas transformations of double Bruhat cells.

\begin{defn}\label{amalgamation} Let $\left\{\vec{i}^s\right\}=\left\{\left(I^s,I_0^s, \epsilon^s, d^s\right)\right\}$ be a finite collection of seeds, together with a collection of injective maps $i^s:I^s\rightarrow K$ for some finite set $K$ satisfying the following conditions:
\begin{enumerate}
    \item the images of $i^s$ cover $K$;
    \item $\left(i^s\right)^{-1}\left(i^t\left(I^t\right)\right)\subset I_0^s$ for any $s\neq t$;
    \item if $i^s(a)=i^t(b)$ then $d^s_a=d^t_b$.
\end{enumerate}
Then the \textit{amalgamation} of such collection of seeds is defined to be a new seed $(K,K_0,\epsilon, d)$ where
\[
\epsilon_{ab}:=\sum_{\substack{i^s(a^s)=a\\ i^s(b^s)=b}} \epsilon^s_{a^sb^s}, \quad \quad \quad \quad d_a:=d^s_{a^s} \quad \text{for any $a^s$ with $i^s(a^s)=a$},
\]
\[
\text{and} \quad \quad K_0:=\left(\bigcup_s i^s\left(I_0^s\right)\right)\setminus L.
\]
The set $L$ in the last line can be any subset of the set
\[
\{a\in K\mid  \text{both $\epsilon_{ab}$ and $\epsilon_{ba}$ are integers for all $b\in K$}\}.
\]
In particular, elements of the set $L$ are called \textit{defrosted vertices}.
\end{defn}

Observe that if $a=i^s(a^s)$ for some non-frozen vertex $a^s\in I^s\setminus I_0^s$, then $a$ cannot possibly lie inside the image of any other $i^t$. Therefore mutation at $a$ after amalgamation will give the same seed as amalgamation after mutation at $a^s$. This shows that amalgamation commutes with mutation at non-frozen vertices (\cite{FGamalgamation} Lemma 2.2).

If the seed $\vec{k}$ is the amalgamation of seeds $\vec{i}^s$, then on the seed torus level we can induce two \textit{amalgamation maps}:
\[
     \Delta:\mathcal{A}_\vec{k}\rightarrow \prod_s\mathcal{A}_{\vec{i}^s} \quad \quad \text{and} \quad \quad  m:\prod_s\mathcal{X}_{\vec{i}^s}  \rightarrow \mathcal{X}_\vec{k},
\]
whose pull-backs are
\[
\Delta^*\left(A_{a^s}\right)=A_{i^s(a^s)} \quad \quad \text{and} \quad \quad m^*\left(X_a\right)=\prod_{i^s(a^s)=a} X_{a^s}.
\]
One should think of $\Delta$ as some sort of diagonal embedding and $m$ as some sort of multiplication. This point will become much clearer when we construct the cluster structures on double Bruhat cells of semisimple Lie groups.

\begin{prop} The map $p_\vec{k}:\mathcal{A}_\vec{k} \rightarrow \mathcal{X}_\vec{k}$ can be factored as the composition
\[
\xymatrix{ \mathcal{A}_\vec{k} \ar[r]^(0.4){\Delta} & \prod_s \mathcal{A}_{\vec{i}^s} \ar[d]^{\prod_sp_{\vec{i}^s}} & \\
& \prod_s\mathcal{X}_{\vec{i}^s} \ar[r]_(0.6){m}& \mathcal{X}_\vec{k}
}
\]
\end{prop}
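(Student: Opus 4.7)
The plan is to reduce the statement to a comparison of pull-backs of the canonical coordinates $X_a$ on $\mathcal{X}_{\vec{k}}$. Every arrow appearing in the diagram is a morphism of split algebraic tori, and such a morphism is determined by its effect on coordinate ring generators. Thus it suffices to verify that $(m \circ \prod_s p_{\vec{i}^s} \circ \Delta)^*(X_a) = p_{\vec{k}}^*(X_a)$ for each vertex $a \in K$.

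Next I would compute both sides explicitly. The right-hand side is $p_{\vec{k}}^*(X_a) = \prod_{b \in K} A_b^{\epsilon_{ab}}$ by the definition of $p_{\vec{k}}$. For the left-hand side I would apply the pull-backs in the order $m^*$, then $\bigl(\prod_s p_{\vec{i}^s}\bigr)^*$, then $\Delta^*$. The first step rewrites $X_a$ as a product of $X_{a^s}$ over all pairs $(s, a^s)$ with $i^s(a^s) = a$; the second rewrites each $X_{a^s}$ as $\prod_{b^s \in I^s} A_{b^s}^{\epsilon^s_{a^s b^s}}$; and the third replaces each $A_{b^s}$ by $A_{i^s(b^s)}$. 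Regrouping the resulting monomial by the value $b = i^s(b^s) \in K$, I obtain $\prod_{b \in K} A_b^{N_{ab}}$, where $N_{ab} = \sum \epsilon^s_{a^s b^s}$ with the sum ranging over all triples $(s, a^s, b^s)$ satisfying $i^s(a^s) = a$ and $i^s(b^s) = b$. By the amalgamation formula in Definition \ref{amalgamation}, this sum equals $\epsilon_{ab}$, so the two pull-backs agree and the triangle of maps commutes.

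The argument is essentially bookkeeping, and I do not expect a serious obstacle. The one point that requires care is the indexing: injectivity of each $i^s$ guarantees that the fiber $(i^s)^{-1}(a)$ contains at most one element for every $a \in K$, so the sum defining $N_{ab}$ is indexed exactly as in the definition of the amalgamated exchange matrix rather than counting pairs with multiplicity. With this observation in hand, matching the exponents is immediate.
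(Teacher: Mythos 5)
Your proof is correct and is precisely the direct computation that the paper's one-line proof alludes to: both sides reduce to comparing the exponent of each $A_b$ in the pull-back of $X_a$, and the amalgamation formula for $\epsilon_{ab}$ makes them match. Nothing further is needed.
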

\begin{proof} This can be verified by direction computation using the definitions of the $p$ maps and amalgamation maps $\Delta$ and $m$.
\end{proof}

\subsection{Double Bruhat Cells as Cluster Varieties}\label{cells}

Recall that there are a pair of semisimple Lie groups $G_{sc}$ and $G_{ad}$ associated to each semisimple Lie algebra $\mathfrak{g}$, with $G_{sc}$ being simply connected and $G_{ad}\cong G_{sc}/C(G_{sc})$ being centerless. In this subsection we will describe a reduced cluster ensemble $\left(\mathcal{A}^{u,v},\underline{\mathcal{X}}^{u,v},p\right)$ (see Remark \ref{reduced}) together with two rational maps
\[
 \psi:G_{sc}^{u,v}\dashrightarrow \mathcal{A}^{u,v} \quad \quad \text{and} \quad \quad\chi:\underline{\mathcal{X}}^{u,v}\dashrightarrow H\backslash G^{u,v}/H 
\]
for any given pair of Weyl group elements $(u,v)$. The first map $\psi$ can be obtained from a cluster algebra result of Berenstein, Fomin, and Zelevinsky \cite{BFZ}, whereas the second map $\chi$ is the amalgamation map introduced by Fock and Goncharov in \cite{FGamalgamation}. Unfortunately the seed data used in the two references differ by a sign; we choose to follow Fock and Goncharov's treatment and later will comment on its relation to Berenstein, Fomin, and Zelevinsky's result.

The amalgamation that produces the cluster varieties $\mathcal{A}^{u,v}$ and $\underline{\mathcal{X}}^{u,v}$ comes from the spelling of a (equivalently any) reduced word of the pair of Weyl group elements $(u,v)$. To describe it more precisely, we start with the building block pieces, namely seeds that correspond to letters. Recall that the spelling alphabet is $-\Pi\sqcup \Pi$ where $\Pi$ is the set of simple roots, so the letters naturally come in two types: the ones that are simple roots and the ones that are opposite to simple roots. We will describe the seed data associated to each of these two types.

Let $\alpha$ be a simple root. We define a new set $\Pi^\alpha:=\left(\Pi\setminus\{\alpha\}\right)\cup \{\alpha_-,\alpha_+\}$. One should think of $\Pi^\alpha$ as almost the same as the set of simple roots $\Pi$ except the simple root $\alpha$ splits into two copies $\alpha_-$ and $\alpha_+$. 

Now we define the seed associated to the simple root $\alpha$ to be $\vec{i}^\alpha:=\left(\Pi^\alpha,\Pi^\alpha, \epsilon^\alpha, d^\alpha\right)$ where 
\begin{align*}
\epsilon^\alpha_{ab}:=&\left\{\begin{array}{ll}
    \pm 1 & \text{if $a=\alpha_\pm$ and $b=\alpha_\mp$;} \\
    \pm C_{\beta\alpha}/2 & \text{if $a=\alpha_\pm$ and $b=\beta$ that is neither of the two split copies of $\alpha$;}\\
    \pm C_{\alpha\beta}/2 & \text{if $a=\beta$ that is neither of the two split copies of $\alpha$ and $b=\alpha_\mp$;}\\
    0 & \text{otherwise},
\end{array}\right. \\
d^\alpha_a:=&\left\{\begin{array}{ll}
    D_\alpha & \text{if $a=\alpha_\pm$;} \\
    D_\beta & \text{if $a=\beta$ that is neither of the two split copies of $\alpha$.}
\end{array}\right.
\end{align*}
Here $D$ is the diagonal matrix that symmetrize the Cartan matrix $C$ \eqref{symmetrizable}. In contrast, we define the seed associated to $-\alpha$ to be $\vec{i}^{-\alpha}:=\left(\Pi^{-\alpha},\Pi^{-\alpha}, \epsilon^{-\alpha}, d^{-\alpha}\right)$ where 
\[
\Pi^{-\alpha}:=\Pi^\alpha, \quad \quad d^{-\alpha}:=d^\alpha,\quad \quad\text{ and}\quad \quad \epsilon^{-\alpha}:=-\epsilon^\alpha.
\]

It is straightforward to verify that $\vec{i}^\alpha$ and $\vec{i}^{-\alpha}$ are seeds. Note that all vertices of these two seeds are frozen, so there is no mutation available. Before we start amalgamation, we would like to introduce two families rational maps that will go along with amalgamation:
\[
\psi^{\pm \alpha}: G_{sc}\dashrightarrow \mathcal{A}_{\vec{i}^{\pm\alpha}} \quad \quad \text{and} \quad \quad \chi^{\pm \alpha}: \mathcal{X}_{\vec{i}^{\pm\alpha}}\rightarrow G_{ad}.
\]
The rational maps $\psi^{\pm \alpha}$ are defined by the pull-backs
\[
\psi^{-\alpha*}\left(A_a\right)=\left\{\begin{array}{ll}
    \Delta_\alpha & \text{if $a=\alpha_-$;} \\
    \Delta_\alpha\left(\overline{s}_\alpha^{-1} \quad \cdot \quad \right) & \text{if $a=\alpha_+$;}\\
    \Delta_\beta & \text{if $a=\beta$ that is neither of the two split copies of $\alpha$;}
\end{array}\right.
\]
\[
\psi^{\alpha*}\left(A_a\right)=\left\{\begin{array}{ll}
    \Delta_\alpha\left(\quad \cdot \quad \overline{s}_\alpha\right) & \text{if $a=\alpha_-$;} \\
    \Delta_\alpha & \text{if $a=\alpha_+$;}\\
    \Delta_\beta & \text{if $a=\beta$ that is neither of the two split copies of $\alpha$.}
\end{array}\right.
\]
The maps $\chi^{\pm \alpha}$ are defined by
\begin{equation} \label{chi^pm}
\chi^{\pm \alpha}\left(X_a\right):= X_{\alpha_-}^{H^\alpha}e_{\pm \alpha} X_{\alpha_+}^{H^\alpha} \prod_{\beta\neq \alpha} X_\beta^{H^\beta},
\end{equation}
where the product on the right hand side takes place inside the Lie group $G_{ad}$.

Now we are ready for amalgamation. Fix a pair of Weyl group elements $(u,v)$ and a reduced word $\vec{i}:=\left(\alpha(1),\dots, \alpha(l)\right)$ of the pair $(u,v)$. The family of seeds that we are amalgamating is 
\[
\left\{\vec{i}^{\alpha(k)}\right\}_{k=1}^l:=\left\{\left(\Pi^{\alpha(k)}, \Pi^{\alpha(k)}, \epsilon^{\alpha(k)}, d^{\alpha(k)}\right)\right\}_{k=1}^l,
\]
one for each letter $\alpha(k)$ in the reduced word. To define the amalgamation, we also need a finite set $K$ and a collection of injective maps $i^k:\Pi^{\alpha(k)}\rightarrow K$. Let $n_\alpha$ be the total number of times that letters $\pm \alpha$ appear in the reduced word $\vec{i}$; then we define
\[
K:=\left\{\textstyle\binom{\alpha}{i} \ \middle| \ \alpha\in \Pi, 0\leq i\leq n_\alpha\right\}.
\]
Further we define
\[
|\alpha(k)|=\left\{\begin{array}{ll}
    \alpha(k) & \text{if $\alpha(k)$ is a simple root},  \\
    -\alpha(k) & \text{if $\alpha(k)$ is opposite to a simple root},
\end{array}\right.
\]
which we then use to define the injective maps $i^k:\Pi^{\alpha(k)}\rightarrow K$ as follows:
\[
i^k(a)=\left\{\begin{array}{ll}
    \binom{|\alpha(k)|}{i-1} & \text{if $a=|\alpha(k)|_-$ and $\alpha(k)$ is the $i$th appearance of letters $\pm |\alpha(k)|$;} \\
    \binom{|\alpha(k)|}{i} & \text{if $a=|\alpha(k)|_+$ and $\alpha(k)$ is the $i$th appearance of lettes $\pm |\alpha(k)|$;}\\
    \binom{\beta}{j} & \begin{tabular}{l} if $a=\beta$ that is neither of the two split copies of $|\alpha(k)|$ and there \\
    have been $j$ numbers of $\pm\beta$ appearing before $\alpha(k)$.\end{tabular}
\end{array}\right.
\]

\begin{exmp} Although the amalgamation data above is heavy on notations, the idea is intuitive. One should think of $\binom{\alpha}{i}$ as the space before the first appearance of $\pm \alpha$, the gap between every two appearances of $\pm \alpha$, or the space after the last appearance of $\pm \alpha$. Then the injective map $i^k$ basically replaces the letter $\alpha(k)$ with the seed $\vec{i}^{\alpha(k)}$, and the gluing connects to the left via $\left(\Pi\setminus\{|\alpha(k)|\}\right)\cup\{|\alpha(k)|_-\}$ and connects to the right via $\left(\Pi\setminus\{|\alpha(k)|\}\right)\cup\{|\alpha(k)|_+\}$. 

To better convey the idea, consider a rank 3 semisimple group $G$ whose simple roots are $\{\alpha,\beta, \gamma\}$. Let $\vec{i}:=(\alpha, -\beta, -\alpha,\gamma, \beta, -\beta)$ be a reduced word of a pair of Weyl group elements. By writing different letters (disregard the signs) on different horizontal lines, the elements of $K$ then become very clear, as illustrated below.
\[
\tikz{
\node (1) at (1,2) [] {$\alpha$};
\node (2) at (2,1) [] {$-\beta$};
\node (3) at (3,2) [] {$-\alpha$};
\node (4) at (4,0) [] {$\gamma$};
\node (5) at (5,1) [] {$\beta$};
\node (6) at (6,1) [] {$-\beta$};
\draw (0,2) -- node[above]{$\binom{\alpha}{0}$} (1) -- node[above]{$\binom{\alpha}{1}$} (3) -- node[above]{$\binom{\alpha}{2}$} (7,2);
\draw (0,1) -- node[above]{$\binom{\beta}{0}$} (2) -- node[above]{$\binom{\beta}{1}$} (5) -- node[above]{$\binom{\beta}{2}$} (6) -- node[above]{$\binom{\beta}{3}$} (7,1);
\draw (0,0) -- node[above]{$\binom{\gamma}{0}$} (4) -- node[above]{$\binom{\gamma}{1}$} (7,0);
\node at (3,-1) [] {$\vec{i}$};
}
\]
On the other hand, if we also draw the seed associated to each letter the same way, i.e.,
\[
\tikz[baseline=0ex]{
\node (0) at (1,2) [] {$\pm \alpha$};
\draw (0,2) -- node[above]{$\alpha_-$} (0) -- node[above]{$\alpha_+$} (2,2);
\draw (0,1) -- node[above]{$\beta$} (2,1);
\draw (0,0) -- node[above]{$\gamma$} (2,0);
\node at (1,-1) [] {$\vec{i}^{\pm \alpha}$};
}
\quad \quad \quad 
\tikz[baseline=0ex]{
\node (0) at (1,1) [] {$\pm \beta$};
\draw (0,1) -- node[above]{$\beta_-$} (0) -- node[above]{$\beta_+$} (2,1);
\draw (0,2) -- node[above]{$\alpha$} (2,2);
\draw (0,0) -- node[above]{$\gamma$} (2,0);
\node at (1,-1) [] {$\vec{i}^{\pm \beta}$};
}
\quad \quad \quad
\tikz[baseline=0ex]{
\node (0) at (1,0) [] {$\pm \gamma$};
\draw (0,0) -- node[above]{$\gamma_-$} (0) -- node[above]{$\gamma_+$} (2,0);
\draw (0,1) -- node[above]{$\beta$} (2,1);
\draw (0,2) -- node[above]{$\alpha$} (2,2);
\node at (1,-1) [] {$\vec{i}^{\pm \gamma}$};
}
\]
then the injective maps $i^k$ are just placing pieces of building blocks of $\vec{i}$ into the right positions. We will call the diagram where we put letters associated to different simple roots (disregarding the sign) on different horizontal lines a \textit{string diagram}. In a string diagram we call the letters \textit{nodes} and horizontal lines cut out by nodes \textit{strings}. We say a string is on \textit{level} $\alpha$ if it is on the horizontal line corresponding to the simple root $\alpha$. In particular, we say that a string is \textit{closed} if it is cut out by letters on both ends, and otherwise we say that it is \textit{open}. Obviously a string is open if and only if it is at either end of the string diagram.
\end{exmp}

\begin{prop} The set $K$ and injective maps $i^k$ satisfy the conditions for amalgamation (Definition \ref{amalgamation}).
\end{prop}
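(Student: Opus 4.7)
The plan is to verify the three conditions of Definition \ref{amalgamation} one at a time; all three are essentially bookkeeping, with the string diagram in the preceding example serving as the intuitive guide.

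First, for the coverage condition, I would take an arbitrary $\binom{\alpha}{i}\in K$ and exhibit a preimage. If $\alpha$ appears in $\vec{i}$ and $k$ indexes its $i$-th appearance (for $1\leq i\leq n_\alpha$), then by construction $i^k(|\alpha(k)|_+)=\binom{\alpha}{i}$, and the same $k$ sends $|\alpha(k)|_-$ to $\binom{\alpha}{i-1}$, which in particular hits $\binom{\alpha}{0}$ whenever $n_\alpha\geq 1$. If instead $\alpha$ does not appear in $\vec{i}$ at all, then only $\binom{\alpha}{0}$ needs to be covered; picking any $k$, the root $\alpha$ is a non-split vertex of $\Pi^{\alpha(k)}$ with zero prior occurrences of $\pm\alpha$, so $i^k(\alpha)=\binom{\alpha}{0}$.

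Second, the frozenness condition holds vacuously: each seed $\vec{i}^{\alpha(k)}$ is defined with $I_0^{\alpha(k)}=\Pi^{\alpha(k)}$, i.e., every vertex is already frozen, so the required containment $(i^s)^{-1}(i^t(\Pi^{\alpha(t)}))\subset I_0^s$ is immediate for any $s\neq t$.

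Third, I would show that each $i^k$ is ``simple-root preserving'': a vertex $a\in\Pi^{\alpha(k)}$ mapping to $\binom{\alpha}{i}$ is either one of the split copies $|\alpha(k)|_\pm$ with $|\alpha(k)|=\alpha$, or a non-split vertex labelled by the same root $\alpha$; in both cases the definition of $d^{\alpha(k)}$ forces $d^{\alpha(k)}_a=D_\alpha$. Hence $i^s(a)=i^t(b)=\binom{\alpha}{i}$ yields $d^s_a=D_\alpha=d^t_b$. The only mildly delicate step is the case analysis in the coverage condition that handles simple roots absent from $\vec{i}$; the remaining two conditions reduce immediately to unwinding the definitions.
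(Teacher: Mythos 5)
Your proposal is correct and follows essentially the same route as the paper, which simply notes that conditions (1) and (2) are immediate (all vertices of each building-block seed are frozen) and that (3) holds because every $d^{\alpha(k)}$ coincides with the symmetrizing matrix $D$. Your version merely spells out the coverage check (including the case of a simple root absent from the word) in more detail than the paper does.
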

\begin{proof} (1) and (2) are obvious, and (3) holds since the data of $d^{\alpha(k)}$ for every seed $\vec{i}^{\alpha(k)}$ is identical to the diagonal matrix $D$ which symmetrizes the Cartan matrix $C$.  
\end{proof}

The amalgamated seed obtained this way is not very interesting unless we defrost some of its vertices. As it turns out, the right choice of the set of defrosted vertices is
\[
L:=\left\{\textstyle\binom{\alpha}{i}\ \middle| \ 0<i<n_\alpha\right\}.
\]
Of course, we need to show the following statement which is demanded by Definition \ref{amalgamation}.

\begin{prop} In the exchange matrix $\epsilon$ of the amalgamated seed, any entry involving elements of $L$ is an integer.
\end{prop}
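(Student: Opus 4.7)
The plan is to unpack the amalgamation sum $\epsilon_{ab}=\sum_{i^s(a^s)=a,\,i^s(b^s)=b}\epsilon^s_{a^sb^s}$ when $a=\binom{\alpha}{i}\in L$, and show that the potentially half-integer contributions $\pm C_{\beta\alpha}/2$ always pair up to give an integer. First, I will observe that the only non-integer entries in any local seed $\vec{i}^{\pm\gamma}$ are the $\pm C_{\cdot\gamma}/2$ entries between the split vertices $\gamma_\pm$ and the other simple-root vertices. Consequently, a seed $\vec{i}^{\alpha(m)}$ contributes a non-integer only when $b=\binom{\beta}{j}$ with $\beta\neq\alpha$ and $|\alpha(m)|\in\{\alpha,\beta\}$.

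Next I will introduce the key indices. Because $0<i<n_\alpha$, there are unique positions $k<k'$ in the reduced word with $|\alpha(k)|=|\alpha(k')|=\alpha$ being respectively the $i$-th and $(i+1)$-th appearance of $\pm\alpha$; both $\alpha_+$ in seed $k$ and $\alpha_-$ in seed $k'$ map to $a$. Writing $\sigma(m)=\pm1$ for the sign of $\alpha(m)$, the two Type-A seeds $k,k'$ contribute
\[
\sigma(k)\,C_{\beta\alpha}/2\ \text{to}\ \epsilon_{a,\binom{\beta}{j_k^\beta}}\qquad\text{and}\qquad -\sigma(k')\,C_{\beta\alpha}/2\ \text{to}\ \epsilon_{a,\binom{\beta}{j_{k'}^\beta}},
\]
where $j_m^\beta$ counts the $\pm\beta$-letters strictly before position $m$. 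The Type-B seeds are those $m$ with $k<m<k'$ and $|\alpha(m)|=\beta$; if such an $m$ is the $r$-th appearance of $\pm\beta$, its vertex $\alpha$ maps to $a$ and $\beta_\mp$ maps to $\binom{\beta}{r-1},\binom{\beta}{r}$, contributing $\sigma(m)C_{\beta\alpha}/2$ and $-\sigma(m)C_{\beta\alpha}/2$ respectively.

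I will then sum over $j$ by scanning the $\beta$-string. Let $r_0=j_k^\beta<r_1<\cdots<r_t=j_{k'}^\beta$ be the consecutive $\beta$-indices between $\alpha(k)$ and $\alpha(k')$, with intermediate seed labels $q_1,\dots,q_t$. Case-checking gives
\[
\epsilon_{a,\binom{\beta}{j}}=\tfrac{C_{\beta\alpha}}{2}\cdot\bigl(\sigma_{\text{left}}(j)+\sigma_{\text{right}}(j)\bigr),
\]
where the bracket equals $\sigma(k)-\sigma(k')$ when $t=0$, $\sigma(k)+\sigma(q_1)$ at $j=r_0$, $\sigma(q_{s+1})-\sigma(q_s)$ at $j=r_s$ for $1\le s<t$, and $-\sigma(k')-\sigma(q_t)$ at $j=r_t$, and vanishes otherwise. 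Since each bracket is a sum of two elements of $\{\pm1\}$, it lies in $\{-2,0,2\}$, so $\epsilon_{a,\binom{\beta}{j}}\in C_{\beta\alpha}\mathbb Z\subset\mathbb Z$. The symmetric entry $\epsilon_{\binom{\beta}{j},a}$ is handled identically with $C_{\alpha\beta}$ in place of $C_{\beta\alpha}$. Finally the case $b=\binom{\alpha}{j}$ is immediate: only the single seed $k$ or $k'$ contributes and produces $\pm1$.

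The main care needed is the bookkeeping in the pairing argument — making sure every half-integer $\pm C_{\beta\alpha}/2$ produced by a local seed is matched with exactly one other contribution indexed by the same $j$, and that the orientations (the signs of the two adjacent $\pm\alpha$- or $\pm\beta$-letters combined with the $\alpha_\pm$/$\beta_\pm$ convention) line up correctly so that the two signs in the bracket always add rather than produce a lone $\pm1$. The amalgamation-commutes-with-mutation philosophy then confirms that this pairing is exactly the seam structure encoded by the string diagram of the reduced word $\vec{i}$.
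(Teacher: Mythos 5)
Your proposal is correct and follows essentially the same route as the paper's proof: isolate the half-integer entries $\pm C_{\beta\alpha}/2$ as the only threat, and use the fact that $\binom{\alpha}{i}$ with $0<i<n_\alpha$ is a gap between two occurrences of $\pm\alpha$ to show these contributions always arrive in pairs summing to a multiple of $C_{\beta\alpha}$. Your write-up simply carries out explicitly the bookkeeping the paper leaves implicit (in particular, that the partner of the contribution from position $k$ may be an intermediate $\pm\beta$-letter inside the gap rather than position $k'$).
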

\begin{proof} The only possible non-integer entry of $\epsilon$ always comes from entries $\pm C_{\alpha\beta}/2$ of the exchange matrices of the seeds for individual letters. Notice that if $0<i<n_\alpha$, then $\binom{\alpha}{i}$ is a gap between two appearances of $\pm \alpha$. Thus any entry of $\epsilon$ involving $\binom{\alpha}{i}$ will have contributions from the both ends of this gap, which will always have the same absolute value with denominator 2 according to the construction. Therefore any entry of $\epsilon$ involving $\binom{\alpha}{i}$ has to be an integer.
\end{proof}

By now it should be clear that vertices of the amalgamated seed are in bijection with strings in the corresponding string diagram. Since the defrosted set of vertices $L$ are by definition the set of closed strings, it follows that the frozen vertices in the amalgamated seed are in bijection with the open strings.

From this amalgamation procdure we obtain the amalgamated seed $(K, K_0, \epsilon,d)$, which by an abuse of notation we will also denote as $\vec{i}$. Recall that amalgamation can also be lifted to the seed torus level, and hence we have maps
\[
\Delta:\mathcal{A}_\vec{i}\rightarrow \prod_{k=1}^l \mathcal{A}_{\vec{i}^{\alpha(k)}} \quad \quad \text{and} \quad \quad m:\prod_{k=1}^l \mathcal{X}_{\vec{i}^{\alpha(k)}}\rightarrow \mathcal{X}_\vec{i}.
\]
Our next goal is to find maps to complete the following commutative diagram.
\[
\xymatrix{ G_{sc} \ar@{-->}[d]_{\psi_\vec{i}} \ar[r]^(0.4){\Delta} & \prod_{k=1}^l G_{sc} \ar@{-->}[d]^{\prod_k \psi^{\alpha(k)}}\\
\mathcal{A}_\vec{i} \ar[r]_(0.3){\Delta} & \prod_{k=1}^l \mathcal{A}_{\vec{i}^{\alpha(k)}}
}\quad \quad \quad \quad 
\xymatrix{ \prod_{k=1}^l \mathcal{X}_{\vec{i}^{\alpha(k)}} \ar[r]^(0.7){m} \ar[d]_{\prod_k \chi^{\alpha(k)}} & \mathcal{X}_\vec{i} \ar[d]^{\chi_\vec{i}} \\
\prod_{k=1}^l G_{ad} \ar[r]_(0.6){m} & G_{ad}}
\]

Among the new maps we need to define, the easiest one is $m:\prod_{k=1}^l G_{ad}\rightarrow G_{ad}$: as the notation suggested, it is just multiplication in $G_{ad}$ following the order $1\leq k\leq l$. 

However, it requires a bit more work to define $\Delta:G_{sc}\rightarrow \prod_{k=1}^l G_{sc}$. Recall the choice of reduced word $\vec{i}=\left(\alpha(1),\dots, \alpha(l)\right)$ of our pair of Weyl group elements $(u,v)$. For every $1\leq k\leq l$ we define two new Weyl group elements $u_{<k}$ and $v_{>k}$ as following: pick out all the entries of $\vec{i}$ before $\alpha(k)$ (not including $\alpha(k)$) that are opposite to simple roots, and then multiply the corresponding simple reflections to get $u_{<k}$; similarly, pick out all the entries of $\vec{i}$ after $\alpha(k)$ (not including $\alpha(k)$) that are simple roots, and then multiply the corresponding simple reflections to get $v_{>k}$. The map $\Delta$ is then defined to be
\[
\Delta: x\mapsto \left(\overline{u_{<k}}^{-1} x \overline{\left(v_{>k}\right)^{-1}}\right)_{k=1}^l.
\]

Now comes the most difficult part, which is completing the square with the remaining yet-to-define map $\psi_\vec{i}$ and $\chi_\vec{i}$.

\begin{prop}\label{psiandchi} There exists unique maps $\psi_\vec{i}$ and $\chi_\vec{i}$ that fit into the commutative diagrams above and make them commute.
\end{prop}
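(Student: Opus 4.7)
The plan splits into two parallel arguments since $\psi_\vec{i}$ and $\chi_\vec{i}$ appear in independent commutative squares; in each square the two horizontal arrows are pinned down by the data, so the content is to show that the remaining vertical arrow exists and is unique. For uniqueness, I would first observe that the amalgamation morphism $\Delta:\mathcal{A}_\vec{i}\to\prod_k\mathcal{A}_{\vec{i}^{\alpha(k)}}$ is a closed immersion of tori: dually, the ring homomorphism $\Delta^*$ sends $A_{a^k}\mapsto A_{i^k(a^k)}$, and since $\bigcup_k i^k(\Pi^{\alpha(k)})=K$, every coordinate $A_a$ on $\mathcal{A}_\vec{i}$ is hit, making $\Delta^*$ surjective. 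Dually, the multiplication $m:\prod_k\mathcal{X}_{\vec{i}^{\alpha(k)}}\to\mathcal{X}_\vec{i}$ is a torus surjection, because the character--lattice map $a\mapsto\sum_{i^k(a^k)=a}a^k$ is injective (distinct $a\in K$ produce disjoint sums). Injectivity of $\Delta$ and surjectivity of $m$ each force $\psi_\vec{i}$ and $\chi_\vec{i}$, if they exist, to be unique.

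For existence of $\psi_\vec{i}$, I would set
$$\psi_\vec{i}^*(A_a)(x)\;:=\;\psi^{\alpha(k)*}(A_{a^k})\!\left(\overline{u_{<k}}^{-1}\,x\,\overline{v_{>k}^{-1}}\right)$$
for some admissible pair $(k,a^k)$ with $i^k(a^k)=a$ and check independence of the choice. Writing $a=\binom{\beta}{j}$ and letting $k_1<\cdots<k_{n_\beta}$ be the positions of the $\pm\beta$--letters in $\vec{i}$, the admissible $k$'s are $k_j$ (with $a^{k_j}=\beta_+$), $k_{j+1}$ (with $a^{k_{j+1}}=\beta_-$), and each $k\in(k_j,k_{j+1})$ (with $a^k=\beta$, unsplit). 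In the interior range, $\psi^{\alpha(k)*}(A_\beta)=\Delta_\beta$, and as $k$ varies the arguments $\overline{u_{<k}}$ and $\overline{v_{>k}}$ pick up multiplicative factors $\overline{s}_{|\alpha(k)|}^{\pm1}$ with $|\alpha(k)|\neq\beta$, which leave $\Delta_\beta$ unchanged by Proposition \ref{minor}. At the boundary $k=k_j$ or $k=k_{j+1}$, where $\alpha(k)=\pm\beta$, the extra right-- or left--translate $\overline{s}_\beta^{\pm1}$ hard--coded into $\psi^{\pm\beta*}(A_{\beta_\pm})$ absorbs exactly the $\overline{s}_\beta^{\pm1}$--factor that appears in $\overline{u_{<k}}$ or $\overline{v_{>k}}$ when one jumps across that letter, completing well--definedness.

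For existence of $\chi_\vec{i}$, I would verify that $m_{G_{ad}}\circ\prod_k\chi^{\alpha(k)}$ is constant on fibers of $m$. Expanding, each factor $\chi^{\alpha(k)}(y^{(k)})$ is an ordered product of torus terms $y^{(k)}_{a^k}{}^{H^\beta}$ and a single unipotent generator $e_{\pm\alpha(k)}$. Fix a level $\beta$ and gap index $j$; all torus terms associated with the string $\binom{\beta}{j}$ lie within the subproduct indexed by positions in $[k_j,k_{j+1}]$, and every intermediate unipotent generator $e_{\pm\alpha(k)}$ in this range has $|\alpha(k)|\neq\beta$. Proposition \ref{commute} then allows me to slide each such torus term past all intermediate unipotent generators and past all other torus terms, collecting them into a single combined factor
$$\Bigl(\textstyle\prod_{i^k(a^k)=\binom{\beta}{j}}y^{(k)}_{a^k}\Bigr)^{H^\beta}\;=\;m^*\bigl(X_{\binom{\beta}{j}}\bigr)^{H^\beta},$$
which manifestly depends only on the amalgamated coordinate $X_{\binom{\beta}{j}}$. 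Doing this for every string simultaneously rewrites the full product in $G_{ad}$ as a function of $(X_a)_{a\in K}\in\mathcal{X}_\vec{i}$, producing the desired $\chi_\vec{i}$. The main obstacle will be the boundary bookkeeping in the $\psi_\vec{i}$ argument---the careful matching of $\overline{s}_\beta^{\pm1}$--factors in the Weyl lifts $\overline{u_{<k}}$ and $\overline{v_{>k}}$ with those in the definition of $\psi^{\pm\beta*}$ across each $\pm\beta$--node---whereas the $\chi_\vec{i}$ argument is largely mechanical once Proposition \ref{commute} has been invoked.
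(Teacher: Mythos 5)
Your proposal is correct and follows essentially the same route as the paper: uniqueness from injectivity of $\Delta$ and surjectivity of $m$, existence of $\psi_\vec{i}$ by checking that the candidate pullbacks of $A_a$ agree across all letters sharing the string $a$ (using Proposition \ref{minor} for the unsplit coordinates and the built-in $\overline{s}_\beta^{\pm1}$-translates in $\psi^{\pm\beta}$ at the endpoints), and existence of $\chi_\vec{i}$ by sliding the $H^\beta$-factors past unipotent generators via Proposition \ref{commute} so that only the amalgamated products $m^*(X_a)$ survive. The only cosmetic difference is that you organize the $\mathcal{A}$-side check over all admissible positions for a fixed string, whereas the paper compares consecutive letters; the computations are the same.
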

\begin{proof} Let's first consider the commutative diagram on the left. On the one hand, by following the top arrow and then the right arrow of the square we get a rational map $G_{sc}\dashrightarrow \prod_k \mathcal{A}_{\vec{i}^{\alpha(k)}}$. On the other hand, the bottom arrow is some sort of diagonal embedding and is injective. Therefore all we need to show is that the image of $G_{sc}\dashrightarrow \prod_k \mathcal{A}_{\vec{i}^{\alpha(k)}}$ lies inside the image of the bottom arrow. In other words, we need to show that if the vertex $a$ of $\vec{i}^{\alpha(k)}$ is glued to vertex $b$ of $\vec{i}^{\alpha(k+1)}$ during amalgamation, then 
\[
A_a^{\alpha(k)}\left(\psi^{\alpha(k)}\left(\overline{u_{<k}}^{-1} x \overline{\left(v_{>k}\right)^{-1}}\right)\right)=A_b^{\alpha(k+1)}\left(\psi^{\alpha(k+1)}\left(\overline{u_{<k+1}}^{-1} x \overline{\left(v_{>k+1}\right)^{-1}}\right)\right). 
\]
There are a few possible cases to analyze, but the arguments are all analogous. Thus without loss of generality let's assume that $\alpha(k)=\alpha$ and $\alpha(k+1)=\beta$ are both simple roots and $\alpha\neq \beta$. If $a=\beta$ and $b=\beta_-$, then we see right away from the definition of $\psi^{\alpha(k+1)}$ that both sides of the above equation are exactly the same. If $a=b=\gamma$ for some $\gamma$ other than $\beta$, we then know that the left hand side of the desired equality above is $\Delta_\gamma\left(\overline{u_{<k}}^{-1} x \overline{\left(v_{>k}\right)^{-1}}\right)$ whereas the right hand side is $\Delta_\gamma\left(\overline{u_{<k+1}}^{-1} x \overline{\left(v_{>k+1}\right)^{-1}}\right)$. Since we have assumed that both $\alpha$ and $\beta$ are simple roots, it follows that $u_{<k}=u_{<k+1}$ and $v_{>k}=s_\beta v_{>k+1}$. Therefore we have
\[
\Delta_\gamma\left(\overline{u_{<k+1}}^{-1} x \overline{\left(v_{>k+1}\right)^{-1}}\right)=\Delta_\gamma\left(\overline{u_{<k}}^{-1}x\overline{\left(v_{>k}\right)^{-1}}\overline{s}_\beta\right)=\Delta_\gamma\left(\overline{u_{<k}}^{-1} x \overline{\left(v_{>k}\right)^{-1}}\right),
\]
where the last equality is due to Proposition \ref{minor}.

Now let's turn to the commutative diagram on the right. Observe that the top arrow is surjective. So to define $\chi_\vec{i}:\mathcal{X}_\vec{i}\rightarrow G_{ad}$, we can first lift the input to $\prod_k \mathcal{X}_{\vec{i}^{\alpha(k)}}$ and then follow the left arrow and the bottom arrow to arrive at $G_{ad}$. Then all we need to do is the verify that such map is well-defined, i.e., the final output does not depend on the lift. But this is obvious since
\[
X_a^{H^\alpha}X_b^{H^\alpha}=\left(X_aX_b\right)^{H^\alpha}
\]
and Proposition \ref{commute} tells us that whenever $\alpha\neq \beta$,
\[
e_{\pm \beta} X^{H^\alpha}=X^{H^\alpha}e_{\pm \beta}. \qedhere
\]
\end{proof}

Note that the only frozen vertices of the amalgamated seed are of the form $\binom{\alpha}{0}$ and $\binom{\alpha}{n_\alpha}$. According to the map $\chi_\vec{i}$ we constructed above, the coordinates corresponding to these frozen vertices are equivalent to multiplication by elements of the maximal torus $H$ on the left or on the right. Therefore the map $\chi_\vec{i}$ can be passed to a map
\[
\chi_\vec{i}:\underline{\mathcal{X}}_\vec{i}\rightarrow H\backslash G/H.
\]
Note that we don't keep the subscript of $G$ any more because the double quotient of any Lie group $G$ with the same Lie algebra by its maximal torus on both sides is the same variety.

At this point it is natural to ask the question: what if we start with a different reduced word? Is there any relation between the resulting seed $\mathcal{A}$-tori and the resulting seed $\mathcal{X}$-tori? For the rest of this subsection we will explore the relations among these seed tori, which ultimately will give us the reduced cluster ensemble $\left(\mathcal{A}^{u,v}, \mathcal{X}^{u,v},p\right)$ and the maps $\psi:G_{sc}\dashrightarrow\mathcal{A}^{u,v}$ and $\chi:\underline{\mathcal{X}}^{u,v}\rightarrow H\backslash G/H$.

We start with the following elementary observation.

\begin{prop}\label{move} If $\vec{i}$ and $\vec{i}'$ are two reduced words for the same pair of Weyl group elements $(u,v)$, then one can transform $\vec{i}$ to $\vec{i}'$ or vice versa via a finite sequence of the following moves:
\begin{itemize}
    \item move (1): exchange two neighboring letters of opposite signs, i.e., 
    \[
    (\dots, \alpha, -\beta,\dots)\sim (\dots, - \beta,  \alpha,\dots)
    \]
    (we assume that both $\alpha$ and $\beta$ are simple roots);
    \item move (2): replace a consecutive segment of letters with the same sign by their equivalent using the braid relations \eqref{braid}, for example, if $C_{\alpha\beta}C_{\beta\alpha}=-1$, then 
    \[
    (\dots, \pm \alpha,\pm \beta,\pm \alpha,\dots)\sim (\dots, \pm \beta,\pm \alpha,\pm\beta,\dots).
    \]
\end{itemize}
\end{prop}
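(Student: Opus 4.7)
The plan is to reduce the statement to Matsumoto's theorem on reduced words in the Weyl group, using move (1) to separate the two sign classes of letters. The starting observation is that move (1) preserves both the subsequence of positive letters and the subsequence of negative letters of a word: swapping an adjacent pair of opposite signs does not change the relative order of the positive letters among themselves, nor of the negative letters among themselves. Consequently, move (1) applied to a reduced word of $(u,v)$ yields another reduced word of $(u,v)$, since by definition the positive subsequence is a reduced word of $v$ and the negative subsequence (with signs dropped) is a reduced word of $u$.

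Next, I would show that any reduced word of $(u,v)$ can be brought, via a sequence of applications of move (1), into a \emph{normal form} in which all negative letters occupy a contiguous prefix and all positive letters occupy a contiguous suffix. This is a straightforward bubble-sort argument: if the word is not yet in normal form, there must exist an adjacent pair $(\dots,\alpha,-\beta,\dots)$ with $\alpha$ a positive letter and $-\beta$ a negative letter; swapping them strictly reduces the integer $\sum_{k}\#\{j>k \mid \alpha(j)\text{ is negative and }\alpha(k)\text{ is positive}\}$, so after finitely many moves we reach the normal form $(-u_1,\dots,-u_p,v_1,\dots,v_q)$ where $(u_1,\dots,u_p)$ is a reduced word for $u$ and $(v_1,\dots,v_q)$ is a reduced word for $v$.

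Now, given two reduced words $\vec{i}$ and $\vec{i}'$ for $(u,v)$, I would bring each to its respective normal form $\vec{i}_{\mathrm{norm}}$ and $\vec{i}'_{\mathrm{norm}}$ using move (1). The normal form of $\vec{i}$ produces a pair of reduced words (one for $u$, one for $v$), and likewise for $\vec{i}'$. By Matsumoto's theorem, any two reduced words for a given Weyl group element are connected by a finite sequence of braid moves from \eqref{braid}. Applying this theorem to the two reduced words of $u$ gives a sequence of braid moves acting on the negative prefix, and applying it to the two reduced words of $v$ gives a sequence acting on the positive suffix; since both sequences of moves operate within a consecutive segment of letters of a single sign, each individual step is precisely an instance of move (2). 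Thus $\vec{i}_{\mathrm{norm}}$ transforms into $\vec{i}'_{\mathrm{norm}}$ by move (2), and reversing the normalization of $\vec{i}'$ (which is another sequence of move (1) applications) connects $\vec{i}_{\mathrm{norm}}$ to $\vec{i}'$.

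The only nontrivial input is Matsumoto's theorem itself, which is a classical fact about Coxeter groups and may be freely invoked; all other steps are elementary combinatorial manipulations on words. I do not anticipate a genuine obstacle, but if one arises it would be in a careful bookkeeping of the bubble-sort argument to ensure termination and to check that no hidden braid-move occurs during the move (1) sorting phase — a concern that is dispelled by the observation that move (1) alters neither the positive nor the negative subsequence.
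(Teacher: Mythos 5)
Your proposal is correct and follows essentially the same route as the paper: separate the two sign classes using move (1), invoke Matsumoto's theorem (via move (2)) on the resulting reduced words of $u$ and $v$ separately, and then undo the separation with move (1). Your write-up simply fills in the bubble-sort termination detail that the paper leaves implicit.
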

\begin{proof} Recall the general fact that any two reduced words of the same Weyl group element $w$ can be obtained from one another via a finite sequence of moves described by the braid relations. Therefore we can first use move (1) to separate the letters from $\Pi$ and the letters from $-\Pi$ and get reduced words of $u$ and $v$ respectively, and then use move (2) to rearrange the letters in each of them, and lastly use move (1) to mix them up again. 
\end{proof}

Since these two moves are responsible for the transformation between reduced words, we may also want to investigate their induced transformation on seeds.

\begin{prop} Move (1) does not induce any change on the seed data unless $\alpha=\beta$, which then induces a mutation at the vertex corresponding to the gap between these two letters. Each case of move (2) is a composition of seed mutations; in particular, it is a single mutation if $C_{\alpha\beta}C_{\beta\alpha}=1$, a composition of 3 mutations if $C_{\alpha\beta}C_{\beta\alpha}=2$, and a composition of 10 mutations if $C_{\alpha\beta}C_{\beta\alpha}=3$.
\end{prop}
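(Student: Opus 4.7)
My plan is to verify the proposition case by case, working directly with the amalgamated exchange matrix and the local picture in the string diagram. The underlying observation is that every move is local: it affects only the letters at a few consecutive positions, and therefore the contributions to $\epsilon$ are controlled entirely by a constant number of building-block seeds.

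For move (1) with $\alpha\neq\beta$, I would note that the seeds $\vec{i}^{\pm\alpha}$ and $\vec{i}^{\mp\beta}$ have disjoint splitting data (only $\alpha_\pm$ splits in the first, only $\beta_\pm$ in the second), so after gluing through the injections $i^k$ the contributions of these two letters to the amalgamated $\epsilon$ are independent of their order. The set $K$, the frozen/non-frozen classification, the tuple $d$, and every $\epsilon_{ab}$ are unchanged. For move (1) with $\alpha=\beta$, let $c=\binom{\alpha}{i}$ be the gap vertex between the two letters. Using $\epsilon^{-\alpha}=-\epsilon^{\alpha}$ together with the explicit entries of $\epsilon^{\alpha}$, a direct computation shows that contributions to $\epsilon_{c,b}$ from the two swapped letters flip sign, while the contributions to every entry $\epsilon_{b_1,b_2}$ with $c\notin\{b_1,b_2\}$ are unchanged. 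The mutation formula at $c$ then matches the swap provided the quadratic correction $|\epsilon_{b_1,c}|\,\epsilon_{c,b_2}$ vanishes, which in turn follows from the sign pattern $\epsilon_{\beta,c}\,\epsilon_{c,\gamma}=-C_{\alpha\beta}C_{\gamma\alpha}\le 0$ for any simple roots $\beta,\gamma\neq\alpha$ (since $C_{\alpha\beta},C_{\gamma\alpha}\le 0$), so the second branch of the mutation formula applies with no correction. Thus the swap realises exactly one mutation at $c$.

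For move (2), I would handle each rank-two type separately. In all cases the local string diagram before and after the braid relation has the same set of interior gap vertices, merely reassigned to different letters, so the move does not change the vertex set of the amalgamated seed. For $C_{\alpha\beta}C_{\beta\alpha}=1$ (type $A_2$) the unique interior gap vertex carries the single required mutation. For $C_{\alpha\beta}C_{\beta\alpha}=2$ (type $B_2$) I would give a three-step mutation sequence on the three interior gap vertices in the appropriate order, and for $C_{\alpha\beta}C_{\beta\alpha}=3$ (type $G_2$) a ten-step sequence. Verification amounts to iterating the mutation formula \eqref{mutation} and matching the result to the exchange matrix produced by amalgamating the braided reduced word.

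The main obstacle is the $G_2$ case: finding the correct ordering of the ten mutations and carrying out the exchange-matrix arithmetic. To tame it, I would restrict attention to the rank-two subsystem on the levels $\alpha$ and $\beta$, noting that entries of $\epsilon$ involving any third simple root $\gamma$ are passive (they are determined by the unchanged global surrounding, and $\epsilon^{\pm\alpha}$, $\epsilon^{\pm\beta}$ contribute symmetrically at each braided position). This reduces the verification to a finite, type-$G_2$ computation that can be carried out explicitly following the rank-two exchange patterns already present in \cite{BFZ} and \cite{FGD}.
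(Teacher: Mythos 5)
Your overall strategy---case-by-case local verification of the exchange matrix, with explicit mutation sequences in the rank-two cases---is the same as the paper's, and your handling of move (1) with $\alpha\neq\beta$ and the reduction to rank-two subsystems for move (2) match the paper. However, your treatment of move (1) in the case $\alpha=\beta$ contains a concrete error. You claim that the swap $(\alpha,-\alpha)\sim(-\alpha,\alpha)$ changes only the entries of $\epsilon$ involving the gap vertex $c=\binom{\alpha}{1}$, and that the quadratic correction $|\epsilon_{b_1c}|\,\epsilon_{cb_2}$ in the mutation at $c$ always vanishes. Both claims are false, and they fail for the same pairs of vertices. For any simple root $\beta\neq\alpha$ with $C_{\alpha\beta}<0$, the entries $\epsilon_{\binom{\beta}{0}\binom{\alpha}{0}}$ and $\epsilon_{\binom{\beta}{0}\binom{\alpha}{2}}$ (neither of which involves $c$) equal $C_{\alpha\beta}/2$ before the swap and $-C_{\alpha\beta}/2$ after: the only contribution to $\epsilon_{\binom{\beta}{0}\binom{\alpha}{0}}$ comes from the first of the two letters and equals $\pm C_{\alpha\beta}/2$ according to the sign of that letter, so swapping negates it. Correspondingly, in the mutation at $c$ one has $\epsilon_{\binom{\beta}{0}c}\,\epsilon_{c\binom{\alpha}{0}}=(-C_{\alpha\beta})\cdot 1>0$, so the correction term $|\epsilon_{\binom{\beta}{0}c}|\,\epsilon_{c\binom{\alpha}{0}}=-C_{\alpha\beta}$ is nonzero and is precisely what produces the sign flip $C_{\alpha\beta}/2\mapsto-C_{\alpha\beta}/2$. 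Your sign computation $\epsilon_{\beta c}\,\epsilon_{c\gamma}=-C_{\alpha\beta}C_{\gamma\alpha}\leq 0$ only covers pairs of vertices both lying on levels other than $\alpha$; the pairs with one endpoint at $\binom{\alpha}{0}$ or $\binom{\alpha}{2}$ are exactly the ones where the correction is needed. The conclusion is still true, but the verification must run through the full list of changed entries, as the paper does.

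Two smaller points on move (2). In the $C_{\alpha\beta}C_{\beta\alpha}=2$ case the local diagram $(\alpha,\beta,\alpha,\beta)$ has only two interior gap vertices, not three; the three mutations form the sequence $\mu_b\mu_a\mu_b$, revisiting a vertex, and likewise the ten $G_2$ mutations revisit the four interior vertices repeatedly. More importantly, the entire content of these cases is the explicit mutation sequence and the accompanying exchange-matrix bookkeeping, which you defer; the paper supplies these via its quasi-quiver calculus (with the $G_2$ sequence in the appendix), and without exhibiting such sequences the counts of $3$ and $10$ mutations are not established.
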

\begin{proof} Since both the amalgamation and the two moves as described above are local operations, without loss of generality we may assume that there are no other letters present in the reduced words other than the ones involved in the moves. Let's consider move (1) first. If $\alpha\neq \beta$, then the string diagram representing move (1) will look like the following.
\[
\tikz[baseline=4ex]{
\node (a) at (1,1) [] {$ \alpha$};
\node (b) at (2,0) [] {$-\beta$};
\draw (0,1) -- node[above]{$\binom{\alpha}{0}$} (a) -- node[above]{$\binom{\alpha}{1}$} (3,1);
\draw (0,0) -- node[above]{$\binom{\beta}{0}$} (b) -- node[above]{$\binom{\beta}{1}$} (3,0);
}\quad \quad \sim \quad \quad 
\tikz[baseline=4ex]{
\node (a) at (2,1) [] {$ \alpha$};
\node (b) at (1,0) [] {$- \beta$};
\draw (0,1) -- node[above]{$\binom{\alpha}{0}$} (a) -- node[above]{$\binom{\alpha}{1}$} (3,1);
\draw (0,0) -- node[above]{$\binom{\beta}{0}$} (b) -- node[above]{$\binom{\beta}{1}$} (3,0);
}
\]
Thus the only pieces of seed data that may be affected by such a move are the entries $\epsilon_{\binom{\alpha}{0}\binom{\beta}{1}}$, $\epsilon_{\binom{\alpha}{1}\binom{\beta}{0}}$, $\epsilon_{\binom{\beta}{0}\binom{\alpha}{1}}$, and $\epsilon_{\binom{\beta}{1}\binom{\alpha}{0}}$. But the amalgamation construction tells us that they all vanish before and after the move. Thus move (1) does not induce any change on the seed data if $\alpha\neq \beta$.

If we are applying move (1) in the special case $(\alpha,-\alpha)\sim (-\alpha,\alpha)$, then the string diagram will look like the following ($\beta$ can be any simple root other than $\alpha$).
\[
\tikz[baseline=4ex]{
\node (a) at (1,1) [] {$\alpha$};
\node (b) at (3,1) [] {$-\alpha$};
\draw (0,1) -- node[above]{$\binom{\alpha}{0}$} (a) -- node[above]{$\binom{\alpha}{1}$} (b) -- node[above]{$\binom{\alpha}{2}$} (4,1);
\draw (0,0) -- node[above]{$\binom{\beta}{0}$} (4,0);
}\quad \quad \sim \quad \quad 
\tikz[baseline=4ex]{
\node (a) at (1,1) [] {$-\alpha$};
\node (b) at (3,1) [] {$\alpha$};
\draw (0,1) -- node[above]{$\binom{\alpha}{0}$} (a) -- node[above]{$\binom{\alpha}{1}$} (b) -- node[above]{$\binom{\alpha}{2}$} (4,1);
\draw (0,0) -- node[above]{$\binom{\beta}{0}$} (4,0);
}
\]
If we use the unprime notation to denote the exchange matrix of the left and the prime notation to denote that of the right, then we see that the entries of the exchange matrix that got changed under such a move are
\[
\epsilon_{\binom{\alpha}{1}\binom{\beta}{0}}=C_{\beta\alpha}, \quad \quad \epsilon_{\binom{\beta}{0}\binom{\alpha}{1}}=-C_{\alpha\beta},
\]
\[
\epsilon_{\binom{\alpha}{1}\binom{\alpha}{0}}=-\epsilon_{\binom{\alpha}{0}\binom{\alpha}{1}}=\epsilon_{\binom{\alpha}{1}\binom{\alpha}{2}}=-\epsilon_{\binom{\alpha}{2}\binom{\alpha}{1}}=1,
\]
\[
\epsilon_{\binom{\beta}{0}\binom{\alpha}{0}}=\epsilon_{\binom{\beta}{0}\binom{\alpha}{2}}=\frac{C_{\alpha\beta}}{2}, \quad \quad \epsilon_{\binom{\alpha}{0}\binom{\beta}{0}}=\epsilon_{\binom{\alpha}{2}\binom{\beta}{0}}=-\frac{C_{\beta\alpha}}{2},
\]
which change to 
\[
\epsilon'_{\binom{\alpha}{1}\binom{\beta}{0}}=-C_{\beta\alpha}, \quad \quad \epsilon'_{\binom{\beta}{0}\binom{\alpha}{1}}=C_{\alpha\beta},
\]
\[
\epsilon'_{\binom{\alpha}{1}\binom{\alpha}{0}}=-\epsilon'_{\binom{\alpha}{0}\binom{\alpha}{1}}=\epsilon'_{\binom{\alpha}{1}\binom{\alpha}{2}}=-\epsilon'_{\binom{\alpha}{2}\binom{\alpha}{1}}=-1,
\]
\[
\epsilon'_{\binom{\beta}{0}\binom{\alpha}{0}}=\epsilon'_{\binom{\beta}{0}\binom{\alpha}{2}}=-\frac{C_{\alpha\beta}}{2}, \quad \quad \epsilon'_{\binom{\alpha}{0}\binom{\beta}{0}}=\epsilon'_{\binom{\alpha}{2}\binom{\beta}{0}}=\frac{C_{\beta\alpha}}{2},
\]
One can verify easily that such change is exactly the same as a mutation at the vertex $\binom{\alpha}{1}$.

Now let's consider move (2). Due to symmetry, we will only prove for the case where all the letters are simple roots; the case where the letters are opposite to simple roots is completely analogous. Let's start with the simplest case where $C_{\alpha\beta}C_{\beta\alpha}=1$. Then move (2) says that $(\alpha, \beta, \alpha)\sim (\beta, \alpha,\beta)$. The following is the corresponding string diagram.
\[
\tikz[baseline=4ex]{
\node (a) at (1,1) [] {$\alpha$};
\node (b) at (3,1) [] {$\alpha$};
\node (c) at (2,0) [] {$\beta$};
\draw (0,1) -- node[above]{$1$} (a) -- node[above]{$0$} (b) -- node[above]{$2$} (4,1);
\draw (0,0) -- node[above]{$3$} (c) -- node[above]{$4$} (4,0);
}\quad \quad \sim \quad \quad 
\tikz[baseline=4ex]{
\node (a) at (1,0) [] {$\beta$};
\node (b) at (3,0) [] {$\beta$};
\node (c) at (2,1) [] {$\alpha$};
\draw (0,0) -- node[above]{$3$} (a) -- node[above]{$0$} (b) -- node[above]{$4$} (4,0);
\draw (0,1) -- node[above]{$1$} (c) -- node[above]{$2$} (4,1);
}
\]
To avoid possible confusion we rename the vertices (gaps) of the seeds with numbers. Note that the vertex 0 in either picture only has non-vanishing exchange matrix entries with the other 4 vertices that are present but nothing else. We claim that the move (2) in this case induces a single seed mutation at the vertex 0. In fact since $C_{\alpha\beta}C_{\beta\alpha}=1$ we can present the seed data with a quiver, and it is obvious from the quiver presentation that such a move is indeed a seed (quiver) mutation. (We use the convention that dashed arrows represent half weight exchange matrix in either direction).
\[
\tikz[baseline=3ex]{
\node (1) at (0,1) [] {$1$};
\node (0) at (2,1) [] {$0$};
\node (2) at (4,1) [] {$2$};
\node (3) at (1,0) [] {$3$};
\node (4) at (3,0) [] {$4$};
\draw [<-] (1) -- (0);
\draw [<-] (0) -- (2);
\draw [<-] (3) -- (4);
\draw [<-] (0) -- (3);
\draw [<-] (4) -- (0);
\draw [dashed, ->] (1) -- (3);
\draw [dashed, ->] (4) -- (2);
}\quad \quad \sim \quad \quad
\tikz[baseline=3ex]{
\node (1) at (1,1) [] {$1$};
\node (0) at (2,0) [] {$0$};
\node (2) at (3,1) [] {$2$};
\node (3) at (0,0) [] {$3$};
\node (4) at (4,0) [] {$4$};
\draw [<-] (1) -- (2);
\draw [<-] (0) -- (1);
\draw [<-] (2) -- (0);
\draw [<-] (3) -- (0);
\draw [<-] (0) -- (4);
\draw [dashed, <-] (1) -- (3);
\draw [dashed, <-] (4) -- (2);
}
\]

Next let's consider the case $C_{\alpha\beta}C_{\beta\alpha}=2$, for which move (2) says $(\alpha,\beta,\alpha,\beta)\sim (\beta,\alpha,\beta,\alpha)$. Without loss of generality let's assume that $C_{\alpha\beta}=-2$ and $C_{\beta\alpha}=-1$. The corresponding string diagram is the following.
\[
\tikz[baseline=3ex]{
\node (a) at (1,1) [] {$\alpha$};
\node (b) at (3,1) [] {$\alpha$};
\node (c) at (2,0) [] {$\beta$};
\node (d) at (4,0) [] {$\beta$};
\draw (0,1) -- (a) --  (b) --  (5,1);
\draw (0,0) -- (c) -- (d) -- (5,0);
}\quad \quad \sim \quad \quad
\tikz[baseline=3ex]{
\node (a) at (2,1) [] {$\alpha$};
\node (b) at (4,1) [] {$\alpha$};
\node (c) at (1,0) [] {$\beta$};
\node (d) at (3,0) [] {$\beta$};
\draw (0,1) -- (a) --  (b) --  (5,1);
\draw (0,0) -- (c) -- (d) -- (5,0);
}
\]
Unfortunately in this case the exchange matrix is not antisymmetric, so we cannot use an ordinary quiver to present the seed data. One way to go around is to introduce the following new notation to replace arrows in a quiver
\[
\tikz[baseline=-0.5ex]{
    \node (a) at (0,0) [] {$\bullet$};
    \node (b) at (2,0) [] {$\bullet$};
    \node at (0,0) [left] {$a$};
    \node at (2,0) [right] {$b$};
    \draw [z->] (a) -- (b);
}
\quad \quad \text{means} \quad \epsilon_{ab}=1 \quad \text{and} \quad \epsilon_{ba}=-2,
\]
\[
\tikz[baseline=-0.5ex]{
    \node (a) at (0,0) [] {$\bullet$};
    \node (b) at (2,0) [] {$\bullet$};
    \node at (0,0) [left] {$a$};
    \node at (2,0) [right] {$b$};
    \draw [-z>] (a) -- (b);
}
\quad \quad \text{means} \quad \epsilon_{ab}=2 \quad \text{and} \quad \epsilon_{ba}=-1;
\]
we call the resulting picture a \textit{quasi-quiver}, which is a useful tool to make the mutation computation more illustrative. Translating the picture on the left and the picture on the right into the language of quasi-quiver (again we will use dashed line to represent half weight), we get
\[
\tikz[baseline=6ex]{
\node (a) at (0,2) [] {$\bullet$};
\node (b) at (2,2) [] {$\bullet$};
\node (c) at (6,2) [] {$\bullet$};
\node (d) at (0,0) [] {$\bullet$};
\node (e) at (4,0) [] {$\bullet$};
\node (f) at (6,0) [] {$\bullet$};
\draw [->] (b) -- (a);
\draw [->] (c) -- (b);
\draw [->] (f) -- (e);
\draw [->] (e) -- (d);
\draw [-z>] (d) -- (b);
\draw [z->] (b) -- (e);
\draw [-z>] (e) -- (c);
\draw [z-->] (a) -- (d);
\draw [z-->] (c) -- (f);
\node at (b) [above] {$a$};
\node at (e) [below] {$b$};
} \quad \quad \text{and} \quad \quad
\tikz[baseline=6ex]{
\node (a) at (0,2) [] {$\bullet$};
\node (b) at (4,2) [] {$\bullet$};
\node (c) at (6,2) [] {$\bullet$};
\node (d) at (0,0) [] {$\bullet$};
\node (e) at (2,0) [] {$\bullet$};
\node (f) at (6,0) [] {$\bullet$};
\draw [->] (b) -- (a);
\draw [->] (c) -- (b);
\draw [->] (f) -- (e);
\draw [->] (e) -- (d);
\draw [z->] (a) -- (e);
\draw [-z>] (e) -- (b);
\draw [z->] (b) -- (f);
\draw [--z>] (d) -- (a);
\draw [--z>] (f) -- (c);
\node at (b) [above] {$a$};
\node at (e) [below] {$b$};
} 
\]
We claim that either seed (quasi-quiver) can be obtained from the other via a sequence of 3 mutations at the vertices labeled $a$ and $b$. The following is the quasi-quiver illustration of such mutation sequence.
\[
\tikz[baseline=6ex]{
\node (a) at (0,2) [] {$\bullet$};
\node (b) at (2,2) [] {$\bullet$};
\node (c) at (6,2) [] {$\bullet$};
\node (d) at (0,0) [] {$\bullet$};
\node (e) at (4,0) [] {$\bullet$};
\node (f) at (6,0) [] {$\bullet$};
\draw [->] (b) -- (a);
\draw [->] (c) -- (b);
\draw [->] (f) -- (e);
\draw [->] (e) -- (d);
\draw [-z>] (d) -- (b);
\draw [z->] (b) -- (e);
\draw [-z>] (e) -- (c);
\draw [z-->] (a) -- (d);
\draw [z-->] (c) -- (f);
\node at (b) [above] {$a$};
\node at (e) [below] {$b$};
} \quad \quad \quad \quad \quad \quad \quad
\tikz[baseline=6ex]{
\node (a) at (0,2) [] {$\bullet$};
\node (b) at (4,2) [] {$\bullet$};
\node (c) at (6,2) [] {$\bullet$};
\node (d) at (0,0) [] {$\bullet$};
\node (e) at (2,0) [] {$\bullet$};
\node (f) at (6,0) [] {$\bullet$};
\draw [->] (b) -- (a);
\draw [->] (c) -- (b);
\draw [->] (f) -- (e);
\draw [->] (e) -- (d);
\draw [z->] (a) -- (e);
\draw [-z>] (e) -- (b);
\draw [z->] (b) -- (f);
\draw [--z>] (d) -- (a);
\draw [--z>] (f) -- (c);
\node at (b) [above] {$a$};
\node at (e) [below] {$b$};
}
\]
\[
 \tikz{\draw [<->] (0,0) -- node[left]{$\mu_b$} (0,1);} \quad\quad \quad\quad\quad \quad\quad \quad\quad \quad\quad \quad\quad \quad\quad \quad\quad \quad\quad \quad \quad\quad \quad\quad \quad \tikz{\draw [<->] (0,0) -- node[right]{$\mu_b$} (0,1);}
\]
\[
\tikz[baseline=6ex]{
\node (a) at (0,2) [] {$\bullet$};
\node (b) at (2,2) [] {$\bullet$};
\node (c) at (6,2) [] {$\bullet$};
\node (d) at (0,0) [] {$\bullet$};
\node (e) at (4,1) [] {$\bullet$};
\node (f) at (6,0) [] {$\bullet$};
\draw [->] (b) -- (a);
\draw [->] (b) -- (c);
\draw [->] (e) -- (f);
\draw [->] (d) -- (e);
\draw [->] (f) -- (d);
\draw [-z>] (e) -- (b);
\draw [z->] (c) -- (e);
\draw [z-->] (a) -- (d);
\draw [--z>] (f) -- (c);
\node at (b) [above] {$a$};
\node at (e) [below] {$b$};
}
\quad\quad \tikz{\draw [<->] (0,0) -- node[above]{$\mu_a$} (1,0);}\quad \quad
\tikz[baseline=6ex]{
\node (a) at (0,2) [] {$\bullet$};
\node (b) at (4,2) [] {$\bullet$};
\node (c) at (6,2) [] {$\bullet$};
\node (d) at (0,0) [] {$\bullet$};
\node (e) at (2,1) [] {$\bullet$};
\node (f) at (6,0) [] {$\bullet$};
\draw [->] (a) -- (b);
\draw [->] (c) -- (b);
\draw [->] (e) -- (f);
\draw [->] (d) -- (e);
\draw [->] (f) -- (d);
\draw [z->] (b) -- (e);
\draw [-z>] (e) -- (a);
\draw [z-->] (a) -- (d);
\draw [--z>] (f) -- (c);
\node at (b) [above] {$a$};
\node at (e) [below] {$b$};
}
\]

The case $C_{\alpha\beta}C_{\beta\alpha}=3$ can be done similarly. To save space, we will put the quasi-quiver demonstration of one possible sequence of 10 mutations corresponding move (2) in Appendix \ref{A}.
\end{proof}

The last two propositions together implies the following statement, which allows us to glue the seed tori $\mathcal{A}_\vec{i}$ and $\underline{\mathcal{X}}_\vec{i}$ into cluster varieties $\mathcal{A}^{u,v}$ and $\underline{\mathcal{X}}^{u,v}$.

\begin{cor} Any two seeds associated to reduced words of the same pair of Weyl group elements $(u,v)$ are mutation equivalent.
\end{cor}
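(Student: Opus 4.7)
The plan is to combine Proposition~\ref{move} with the preceding proposition in a direct concatenation argument. Given two reduced words $\vec{i}$ and $\vec{i}'$ for the same pair $(u,v)$, Proposition~\ref{move} supplies a finite sequence of moves of type~(1) and~(2) transforming $\vec{i}$ into $\vec{i}'$. By the preceding proposition, each such move corresponds to a (possibly empty) composition of seed mutations: move~(1) is the identity on the seed when $\alpha\neq\beta$ and is a single mutation at the gap vertex when $\alpha=\beta$, while move~(2) is a composition of $1$, $3$, or $10$ mutations according to whether $C_{\alpha\beta}C_{\beta\alpha}$ equals $1$, $2$, or $3$. Concatenating these local mutation sequences in the order dictated by Proposition~\ref{move} yields a seed transformation from the seed $\vec{i}$ to the seed $\vec{i}'$.

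The one bookkeeping point I would make explicit is the relabeling of vertices. The vertex set of the amalgamated seed is indexed by positional data $\binom{\alpha}{i}$ (counting appearances of $\pm\alpha$ before a given gap), so some moves permute the labels even when they do not change the underlying exchange matrix. To account for this I would interleave each local mutation sequence with the seed isomorphism that matches up the two labelings. Such an isomorphism is well-defined because move~(1) with $\alpha\neq\beta$ preserves the counts on every level, move~(1) with $\alpha=\beta$ acts locally on the $\alpha$-level in the manner already computed, and move~(2) rearranges only the letters inside the braid relation, whose effect on labels is read off from the string diagram.

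I do not anticipate any substantive obstacle: the hard computational content, namely verifying that the local change in exchange matrices under each move really is realized by the stated mutations, has already been absorbed into the preceding proposition (including the three-mutation analysis for $C_{\alpha\beta}C_{\beta\alpha}=2$ and the ten-mutation analysis for $C_{\alpha\beta}C_{\beta\alpha}=3$ deferred to Appendix~\ref{A}). Consequently the corollary is essentially formal, and the proof reduces to writing ``apply Proposition~\ref{move} and then invoke the preceding proposition to each move in the resulting sequence.''
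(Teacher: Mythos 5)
Your proposal is correct and is essentially the paper's argument: the paper derives this corollary directly from Proposition~\ref{move} together with the preceding proposition on how moves (1) and (2) act on seeds, exactly as you describe. Your extra remark about tracking the relabeling of vertices $\binom{\alpha}{i}$ via seed isomorphisms is a reasonable bookkeeping point that the paper leaves implicit.
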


After obtaining the cluster varieties $\mathcal{A}^{u,v}$ and $\underline{\mathcal{X}}^{u,v}$, the next natural question to ask is whether the following two diagrams commute, where the map $\mu$ is the cluster transformation induced by a sequence of move (1) and move (2) that transforms the reduced word $\vec{i}$ into the reduced word $\vec{i}'$ of the pair of Weyl group elements $(u,v)$.
\[
\xymatrix{& \mathcal{A}_\vec{i} \ar@{-->}[dd]^\mu \\ G_{sc} \ar@{-->}[ur]^{\psi_\vec{i}} \ar@{-->}[dr]_{\psi_{\vec{i}'}} & \\ & \mathcal{A}_{\vec{i}'}}\quad \quad \quad \quad \quad \quad 
\xymatrix{\underline{\mathcal{X}}_\vec{i} \ar[dr]^{\chi_\vec{i}} \ar@{-->}[dd]_\mu & \\ & H\backslash G/H  \\ \underline{\mathcal{X}}_{\vec{i}'} \ar[ur]_{\chi_{\vec{i}'}} & }
\]

\begin{prop}\label{birational} The two diagrams above commute, and therefore they pass to maps $\psi:G_{sc}\dashrightarrow\mathcal{A}^{u,v}$ and $\chi:\underline{\mathcal{X}}^{u,v}\rightarrow H\backslash G/H$. In particular $\psi$ restricts to a birational equivalence $\psi:G_{sc}^{u,v}\dashrightarrow \mathcal{A}^{u,v}$, and $\chi$ is a birational equivalence onto $H\backslash G^{u,v}/H$.
\end{prop}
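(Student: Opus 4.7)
The plan is to leverage Proposition \ref{move}, which reduces the comparison of two arbitrary reduced words to a finite sequence of moves (1) and (2), together with the preceding proposition that identifies each such move with an explicit cluster transformation (or the identity). So it suffices to verify the commutativity of both diagrams for a single application of either move in isolation; the case-checking then naturally splits along the two moves.

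I would first dispose of move (1) in the easy case $\alpha\neq \beta$: the induced seed transformation is the identity, so on the $\chi$ side I only need to check that $e_{\alpha}\cdot e_{-\beta}=e_{-\beta}\cdot e_{\alpha}$ in $G_{ad}$, which follows from the Chevalley relation $[E_{\alpha},E_{-\beta}]=0$ for $\alpha\neq\beta$; the intervening $X^{H^\gamma}$ factors commute with both $e_{\pm\alpha}$ and $e_{\pm\beta}$ by Proposition \ref{commute}. On the $\psi$ side, the swap leaves every $\overline{u_{<k}}$ and $\overline{v_{>k}}$ unchanged, so all generalized minors defining $\psi_{\vec{i}}^*(A_a)$ are preserved verbatim.

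The crux is move (1) in the case $\alpha=\beta$, where the cluster transformation is the single mutation at the middle vertex $\binom{\alpha}{1}$. For the $\chi$ diagram I would take a convenient lift splitting $X_{\binom{\alpha}{1}}$ between the two seeds, commute the factor $X_{\binom{\alpha}{1}}^{H^\alpha}$ past $e_{-\alpha}$ using $[H^\alpha,E_{-\alpha}]=-E_{-\alpha}$, and apply identity \eqref{e_+e_-} to rewrite $e_{\alpha}(p)e_{-\alpha}(q)$ in reversed order; the resulting rescaling factors and central $(1+pq)^{H_\alpha}$ must then be matched term by term with the cluster $\mathcal{X}$-mutation formula at $\binom{\alpha}{1}$, using $H_\alpha=\sum_\beta C_{\alpha\beta}H^\beta$ to reconcile the exchange-matrix exponents against the neighboring strings on other levels. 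For the $\psi$ side, I would apply \eqref{e_+s} or \eqref{se_-} inside a relevant generalized minor $\Delta_\alpha$ to produce an exchange relation among generalized minors which exactly realizes the $\mathcal{A}$-mutation formula at $\binom{\alpha}{1}$ — this is essentially the classical BFZ exchange relation in disguise. For move (2), both diagrams reduce to the braid relations \eqref{braid} applied to $\overline{s}_\alpha$ in $G_{ad}$ and to the fact that $\overline{w}$ is defined independently of the reduced word chosen, so that $\overline{u_{<k}}$ and $\overline{v_{>k}}$ are globally preserved under the rearrangement.

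Once the two diagrams pass to global maps $\psi:G_{sc}\dashrightarrow\mathcal{A}^{u,v}$ and $\chi:\underline{\mathcal{X}}^{u,v}\to H\backslash G/H$, the last two claims fall out quickly: $\psi$ is birational on $G_{sc}^{u,v}$ because Berenstein–Fomin–Zelevinsky \cite{BFZ} show that the generalized minors $\Delta_\alpha\bigl(\overline{u_{<k}}^{-1}x\,\overline{(v_{>k})^{-1}}\bigr)$ form an initial cluster for an upper cluster algebra structure on $\mathbb{C}[G_{sc}^{u,v}]$, so $\psi_{\vec{i}}^*$ is a birational identification of cluster coordinates; and for any $x$ in the image of $\chi_{\vec{i}}$, inspection of $\chi_{\vec{i}}$ shows $x$ is a product of $h^{H^\alpha}$'s interspersed with $e_{\pm\alpha(k)}$'s in the order dictated by the reduced word, hence a representative of $H\backslash G^{u,v}/H$. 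The main obstacle I anticipate is the explicit bookkeeping in move (1) with $\alpha=\beta$: because $\binom{\alpha}{1}$ has nontrivial exchange-matrix entries with the nearby $\binom{\beta}{j}$ on every other level, the matching of Lie-group rescalings against the $\mathcal{X}$-mutation formula must be done carefully to avoid sign errors, and ensuring one has chosen a compatible lift across the amalgamation of the neighboring seeds is the subtle part.
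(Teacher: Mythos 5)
Your overall strategy---reduce to a single application of move (1) or move (2) via Proposition \ref{move} and check each move separately---is exactly the paper's, and your treatment of move (1) is essentially right: the $\alpha\neq\beta$ case rests on $[E_\alpha,E_{-\beta}]=0$ and Proposition \ref{commute}, plus Proposition \ref{minor} on the $\mathcal{A}$ side (the minors are not quite ``preserved verbatim'': the arguments $\overline{u_{<k}}$, $\overline{v_{>k}}$ adjacent to the swapped pair change by a factor of $\overline{s}_\beta$, and one invokes $\Delta_\alpha(\overline{s}_\beta^{-1}y)=\Delta_\alpha(y)$); the $\alpha=\beta$ case is identity \eqref{e_+e_-} matched against the $\mathcal{X}$-mutation formula and the corresponding generalized-minor exchange relation, which is the first row of the table in Appendix \ref{B}. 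The closing claims also match the paper: birationality of $\psi$ on $G_{sc}^{u,v}$ is the upper-cluster-algebra theorem of \cite{BFZ}, and the image statement for $\chi$ follows from $e_{\pm\alpha}\in B_\pm\cap B_\mp s_\alpha B_\mp$ together with $B_\pm uB_\pm vB_\pm=B_\pm uvB_\pm$ when lengths add.

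The genuine gap is your dismissal of move (2). The seed transformation attached to move (2) is a nontrivial composition of one, three, or ten mutations, and the coordinates \emph{inside} the rearranged segment are not preserved: only the data outside the segment is unchanged by reduced-word independence of $\overline{w}$. What must actually be verified is that the interior coordinates transform exactly by the composite mutation formulas. On the $\mathcal{X}$ side this is a Lusztig-type factorization identity such as
\[
e_\alpha t^{H^\alpha} e_\beta e_\alpha = (1+t)^{H^\alpha}\Bigl(\tfrac{1}{1+1/t}\Bigr)^{H^\beta} e_\beta\,(1/t)^{H^\beta} e_\alpha e_\beta\,(1+t)^{H^\beta}\Bigl(\tfrac{1}{1+1/t}\Bigr)^{H^\alpha},
\]
and on the $\mathcal{A}$ side a three-term generalized-minor identity such as $\Delta_\beta(x\overline{s}_\beta)\Delta_\alpha(x\overline{s}_\alpha)=\Delta_\alpha(x)\Delta_\beta(x\overline{s}_\alpha\overline{s}_\beta)+\Delta_\alpha(x\overline{s}_\alpha\overline{s}_\beta\overline{s}_\alpha)\Delta_\beta(x)$ (with considerably longer analogues in the $C_{\alpha\beta}C_{\beta\alpha}=2,3$ cases). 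Neither of these follows from the braid relation satisfied by the lifts $\overline{s}_\alpha$, which only controls the ``frame'' elements $\overline{u_{<k}}$ and $\overline{v_{>k}}$ at the ends of the segment. The paper does not reprove these identities either---it quotes them from Fomin--Zelevinsky (the $\mathcal{A}$ side) and Fock--Goncharov (the $\mathcal{X}$ side) and tabulates them in Appendix \ref{B}---but your argument must supply or cite them; as written, the move (2) step would fail.
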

\begin{proof} The commutativity of these two diagrams follows from a collection of identities corresponding move (1) and move (2), which have been laid out by Fomin and Zelevinsky on the $\mathcal{A}$ side \cite{FZ} and Fock and Goncharov on the $\mathcal{X}$ side \cite{FGamalgamation}. We will simply quote some of these identities from their papers (in particular we omit the formulas for the $C_{\alpha\beta}C_{\beta\alpha}=3$ case) in Appendix \ref{B}; readers who are interested in these formulas should go to the respective paper to find them.

The claim that $\psi$ restricts to a birational equivalence $\psi:G_{sc}^{u,v}\dashrightarrow \mathcal{A}^{u,v}$ is a reinterpretation of a result by Berenstein, Fomin, and Zelevinsky in \cite{BFZ}: in their paper they showed that the coordinate ring $\mathbb{C}[G_{sc}^{u,v}]$ is isomorphic to the upper cluster algebra generated by the seed data associated to any reduced word of the pair $(u,v)$, which is exactly the coordinate ring of our cluster variety $\mathcal{A}^{u,v}$.

The fact that the image of $\chi$ lies inside $H\backslash G^{u,v}/H$ follows from the fact that $e_{\pm \alpha}\in B_\pm \cap B_\mp s_\alpha B_\mp$ and $B_\pm uB_\pm vB_\pm=B_\pm uvB_\pm$ if $l(u)+l(v)=l(uv)$ (\cite{Hum} Section 29.3 Lemma A), and Fock and Goncharov gave a proof of the map $\chi:\underline{\mathcal{X}}^{u,v}\rightarrow H\backslash G^{u,v}/H$ being a birational equivalence in their paper \cite{FGamalgamation} by relating the cluster $\mathcal{X}$-coordinates to the Lusztig factorization coordinates.
\end{proof}

Now we finally obtained what we want at the beginning of this subsection, namely the reduced cluster ensemble $(\mathcal{A}^{u,v},\underline{\mathcal{X}}^{u,v},p)$ and maps
\[
\psi:G_{sc}^{u,v}\dashrightarrow \mathcal{A}^{u,v} \quad \quad \text{and} \quad \quad \chi:\underline{\mathcal{X}}^{u,v}\rightarrow H\backslash G^{u,v}/H.
\]
These structures enable us to rewrite the map $\iota\circ \tw:H\backslash G^{u,v}/H\rightarrow H\backslash G^{u,v}/H$ and equivalently $\xi:\conf^{u,v}(\mathcal{B})\rightarrow \conf^{u,v}(\mathcal{B})$ in a new way, as we will soon see in the next section.

Before we end this subsection, we should also show the equivalence between the map $\iota:H\backslash G^{u,v}/H\rightarrow H\backslash G^{u,v}/H$ induced by the anti-involution $\iota$ and the cluster theoretical map $i_\mathcal{X}$ as promised earlier in Remark \ref{promise}.

\begin{prop}\label{iota} Let $\vec{i}=\left(\alpha(1),\alpha(2),\dots, \alpha(l)\right)$ be a reduced word of a pair of Weyl group elements $(u,v)$. Then $\vec{i}^\circ=\left(\alpha(l),\dots, \alpha(2),\alpha(1)\right)$ is a reduced word of the pair $\left(u^{-1},v^{-1}\right)$, and the seeds constructed from these two words are opposite to each other in a way that is compatible with mutations (and hence the notation). Moreover, the following diagram commutes.
\[
\xymatrix{\underline{\mathcal{X}}^{u,v} \ar[r]^\chi \ar[d]_{i_{\mathcal{X}}} & H\backslash G^{u,v}/H \ar[d]^\iota \\
\underline{\mathcal{X}}^{u^{-1},v^{-1}} \ar[r]_(0.4){\chi} & H\backslash G^{u^{-1},v^{-1}}/H}
\]
\end{prop}
\begin{proof} It is obvious that $\left(\alpha(l),\dots, \alpha(2),\alpha(1)\right)$ is a reduced word for the pair $\left(u^{-1},v^{-1}\right)$. 

For the second claim, recall that the seed $\vec{i}$ is obtained from the reduced word $\vec{i}$ by amalgamating pieces corresponding to letters, which can be pictorially represented by a string diagram; if we reverse the order of the letters, we also reverse the string diagram, which induces a bijection between the vertices (strings) of the seeds (string diagrams), and gives every entry of the exchange matrix a minus sign. 

For the last claim, we just need to first verify it with Equation \eqref{chi^pm}:
\begin{align*}
\left(\iota\circ \chi^{\pm \alpha}\right)\left(X_a\right)=&\left(X_{\alpha_-}^{H^\alpha}e_{\pm\alpha}X_{\alpha_+}^{H^\alpha}\prod_{\beta\neq \alpha}X_\beta^{H^\beta}\right)^\iota\\
=&\left(\prod_{\beta\neq \alpha} \left(X_\beta^{-1}\right)^{H^\beta}\right)\left(X_{\alpha_-}^{-1}\right)^{H^\alpha}e_{\pm\alpha}\left(X_{\alpha_+}^{-1}\right)^{H^\alpha}\\
=&\chi^{\pm \alpha}\left(X_a^{-1}\right)\\
=& \left(\chi^{\pm \alpha}\circ i_\mathcal{X}\right)\left(X_a\right),
\end{align*}
and then note that by reversing the reduced word we have also reversed the order of multiplication when we use the group multiplication to define $\chi_\vec{i}$, which is exactly what the anti-involution $\iota$ does. 
\end{proof}

\begin{exmp}\label{exmp 2.69} Let's give an example of what the maps $\psi$ and $\chi$ are. Consider the Lie algebra $\mathfrak{sl}_3$. The simply connected Lie group associated to $\mathfrak{sl}_3$ is $\SL_3$ and the adjoint form associated to $\mathfrak{sl}_3$ is $\PGL_3$. Consider the pair of Weyl group elements $\left(w_0,w_0\right)$ for either group. Since the cluster varieties are obtained by gluing seed tori via birational maps, it suffices to describe $\psi$ and $\chi$ on one cluster coordinate chart (seed torus), i.e., $\psi_\vec{i}$ and $\chi_\vec{i}$ for some reduced word $\vec{i}$ of $\left(w_0,w_0\right)$. Let's consider the reduced word 
\[
\vec{i}=(-1,-2,-1,2,1,2)
\]
where $1$ and $2$ denote the two simple roots associated to $\mathfrak{sl}_3$. The corresponding string diagram looks like the following.
\[
\tikz{
\node (21) at (4,0) [] {$-2$};
\node (11) at (2,1.5) [] {$-1$};
\node (22) at (8,0) [] {$2$};
\node (12) at (6,1.5) [] {$-1$};
\node (23) at (12,0) [] {$2$};
\node (13) at (10,1.5) [] {$1$};
\draw (0,0) -- node [above] {$\binom{2}{0}$} (21) -- node [above] {$\binom{2}{1}$} (22) -- node [above] {$\binom{2}{2}$} (23) -- node [above] {$\binom{2}{3}$} (14,0);
\draw (0,1.5) -- node [above] {$\binom{1}{0}$} (11) -- node [above] {$\binom{1}{1}$} (12) -- node [above] {$\binom{1}{2}$} (13) -- node [above] {$\binom{1}{3}$} (14,1.5);
}
\]
Since $C_{12}C_{21}=1$ for $\mathfrak{sl}_3$, we can represent the data of the seed $\vec{i}$ using a quiver (we have colored the frozen vertices with gray).
\[
\tikz{
\node (11) at (0,1.5) [lightgray] {$\binom{1}{0}$};
\node (12) at (4,1.5) [] {$\binom{1}{1}$};
\node (13) at (8,1.5) [] {$\binom{1}{2}$};
\node (14) at (12, 1.5) [lightgray] {$\binom{1}{3}$};
\node (21) at (2,0) [lightgray] {$\binom{2}{0}$};
\node (22) at (6,0) [] {$\binom{2}{1}$};
\node (23) at (10,0) [] {$\binom{2}{2}$};
\node (24) at (14,0) [lightgray] {$\binom{2}{3}$};
\draw [->] (11) -- (12);
\draw [->] (12) -- (13);
\draw [->] (14) -- (13);
\draw [->] (21) -- (22);
\draw [->] (23) -- (22);
\draw [->] (24) -- (23);
\draw [dashed, ->] (21) -- (11);
\draw [->] (12) -- (21);
\draw [->] (22) -- (12);
\draw [->] (13) -- (23);
\draw [->] (23) -- (14);
\draw [dashed, ->] (14) -- (24);
}
\]
From the amalgamation procedure we see that the cluster $\mathcal{A}$-coordinates associated to the vertices of the above quiver, which in this example are minors of an $\SL_3$ matrix, are the following (we use the notation $\Delta_{I,J}$ to denote the determinant of the submatrix with rows $I$ and columns $J$). 
\[
\tikz{
\node (11) at (0,1.5) [lightgray] {$\Delta_{1,3}$};
\node (12) at (4,1.5) [] {$\Delta_{2,3}$};
\node (13) at (8,1.5) [] {$\Delta_{3,3}$};
\node (14) at (12, 1.5) [lightgray] {$\Delta_{3,1}$};
\node (21) at (2,0) [lightgray] {$\Delta_{12,23}$};
\node (22) at (6,0) [] {$\Delta_{23,23}$};
\node (23) at (10,0) [] {$\Delta_{23,13}$};
\node (24) at (14,0) [lightgray] {$\Delta_{23,12}$};
\draw [->] (11) -- (12);
\draw [->] (12) -- (13);
\draw [->] (14) -- (13);
\draw [->] (21) -- (22);
\draw [->] (23) -- (22);
\draw [->] (24) -- (23);
\draw [dashed, ->] (21) -- (11);
\draw [->] (12) -- (21);
\draw [->] (22) -- (12);
\draw [->] (13) -- (23);
\draw [->] (23) -- (14);
\draw [dashed, ->] (14) -- (24);
}
\]
Thus the map $\psi_\vec{i}:\SL_3^{w_0,w_0}\dashrightarrow \mathcal{A}_\vec{i}$ is given by
\[
\psi_\vec{i}(x)= \left(\Delta_{1,3}(x), \Delta_{2,3}(x), \Delta_{3,3}(x), \Delta_{3,1}(x), \Delta_{12,23}(x), \Delta_{23,23}(x), \Delta_{23,13}(x), \Delta_{23,12}(x)\right).
\]

To describe the map $\chi_\vec{i}:\underline{\mathcal{X}}^{u,v}\rightarrow H\backslash \PGL_3^{w_0,w_0}/H$, we first replace the nodes and closed strings in the string diagram with the corresponding matrix factor.
\[
\tikz{
\node (21) at (4,0) [] {$e_{-2}$};
\node (11) at (2,1.5) [] {$e_{-1}$};
\node (22) at (8,0) [] {$e_2$};
\node (12) at (6,1.5) [] {$e_{-1}$};
\node (23) at (12,0) [] {$e_2$};
\node (13) at (10,1.5) [] {$e_1$};
\draw (0,0) -- (21) -- node [above] {$X_3$} (22) -- node [above] {$X_4$} (23) -- (14,0);
\draw (0,1.5) -- (11) -- node [above] {$X_1$} (12) -- node [above] {$X_2$} (13) -- (14,1.5);
}
\]
Note that in the case of $\PGL_3$, 
\[
X^{H^1}=\begin{pmatrix} X & 0 & 0 \\ 0 & 1 & 0 \\ 0 & 0 & 1\end{pmatrix}, \quad e_1=\begin{pmatrix} 1 & 1 & 0 \\ 0 & 1 & 0 \\ 0 & 0 & 1\end{pmatrix}, \quad e_{-1}=\begin{pmatrix} 1 & 0 & 0 \\ 1 & 1 & 0 \\ 0 & 0 & 1\end{pmatrix},
\]
\[
\quad X^{H^2}=\begin{pmatrix} X & 0 & 0 \\ 0 & X & 0 \\ 0 & 0 & 1\end{pmatrix}, \quad e_2=\begin{pmatrix} 1 & 0 & 0 \\ 0 & 1 & 1 \\ 0 & 0 & 1\end{pmatrix}, \quad e_{-2}=\begin{pmatrix} 1 & 0 & 0 \\ 0 & 1 & 0 \\ 0 & 1 & 1\end{pmatrix}.
\]
Therefore to give a representative of the image of the map $\chi_\vec{i}$, all we need to do is multiply all the matrix factors from left to right across the string diagram. Note that due to Proposition \ref{commute}, any possible ambiguity in the order of multiplication does not affect the outcome. As a result, we have
\begin{align*} &\chi_\vec{i}\left(X_1, X_2, X_3, X_4\right)\\
=& H\left\backslash e_{-1}e_{-2} X_1^{H^1}X_3^{H^2} e_{-1}e_2 X_2^{H^2}X_4^{H^2}e_1e_2\right/H\\
=&H\left\backslash\begin{pmatrix} 1 & 0 & 0 \\ 1 & 1 & 0 \\ 0 & 0 & 1\end{pmatrix}\begin{pmatrix} 1 & 0 & 0 \\ 0 & 1 & 0 \\ 0 & 1 & 1\end{pmatrix}\begin{pmatrix} X_1 & 0 & 0 \\ 0 & 1 & 0 \\ 0 & 0 & 1\end{pmatrix}\begin{pmatrix} X_3 & 0 & 0 \\ 0 & X_3 & 0 \\ 0 & 0 & 1\end{pmatrix}\begin{pmatrix} 1 & 0 & 0 \\ 1 & 1 & 0 \\ 0 & 0 & 1\end{pmatrix}\right.\\
&\left.\begin{pmatrix} 1 & 0 & 0 \\ 0 & 1 & 1 \\ 0 & 0 & 1\end{pmatrix}\begin{pmatrix} X_2 & 0 & 0 \\ 0 & 1 & 0 \\ 0 & 0 & 1\end{pmatrix}\begin{pmatrix} X_4 & 0 & 0 \\ 0 & X_4 & 0 \\ 0 & 0 & 1\end{pmatrix}\begin{pmatrix} 1 & 1 & 0 \\ 0 & 1 & 0 \\ 0 & 0 & 1\end{pmatrix}\begin{pmatrix} 1 & 0 & 0 \\ 0 & 1 & 1 \\ 0 & 0 & 1\end{pmatrix}\right/H\\
=&H\left\backslash \begin{pmatrix} X_1X_2X_3X_4 & X_1X_2X_3X_4 & X_1X_2X_3X_4 \\
X_2X_3X_4\left(1+X_1\right) & X_3X_4\left(1+X_2\left(1+X_1\right)\right) & X_3\left(1+X_4\left(1+X_2\left(1+X_1\right)\right)\right) \\
X_2X_3X_4 & X_3X_4\left(1+X_2\right) & 1+X_3\left(1+X_4\left(1+X_2\right)\right) \end{pmatrix}\right/H
\end{align*}

\end{exmp}

\begin{notn} (\textbf{Important!}) Since from now on we will only focus on the reduced cluster ensemble $(\mathcal{A}^{u,v},\underline{\mathcal{X}}^{u,v},p)$, we will drop the underline notation and just write the cluster ensemble as $(\mathcal{A}^{u,v}, \mathcal{X}^{u,v},p)$.
\end{notn}

\section{Proof of Our Main Results}

The first two parts of our main theorem (Theorem \ref{mainthm}) are to prove the commutativity of following two diagrams. We will prove the commutative diagram on the right first, and then use the intertwining maps $i_\mathcal{X}$ and $\iota$ to deduce our main theorem.
\begin{equation}\label{commutative diagram}\vcenter{\vbox{
\xymatrix{\mathcal{X}^{u,v} \ar@{-->}[r]^\DT \ar[d]_\cong^\chi & \mathcal{X}^{u,v} \ar[d]^\cong_\chi \\
H\backslash G^{u,v}/H \ar[r]^{\tw\circ \iota } \ar[d]_\cong & H\backslash G^{u,v}/H \ar[d]^\cong \\
\conf^{u,v}(\mathcal{B})\ar[r]_\eta & \conf^{u,v}(\mathcal{B})}}}
\quad \quad \quad \quad 
\vcenter{\vbox{\xymatrix{\mathcal{X}^{u^{-1},v^{-1}} \ar@{-->}[r]^\Xi \ar[d]_\cong^\chi & \mathcal{X}^{u^{-1},v^{-1}} \ar[d]^\cong_\chi \\
H\backslash G^{u^{-1},v^{-1}}/H \ar[r]^{\iota\circ \tw } \ar[d]_\cong & H\backslash G^{u^{-1},v^{-1}}/H \ar[d]^\cong \\
\conf^{u^{-1},v^{-1}}(\mathcal{B})\ar[r]_\xi & \conf^{u^{-1},v^{-1}}(\mathcal{B})}}}
\end{equation}

The reason we do this is because the horizontal maps in the commutative diagram on the right actually admit a nice factorization that exhibits the interaction between cluster theory and geometry (which is the remaining part of our main theorem). To recall, we claim that the following diagram commutes, where the map $G_{sc}^{u^{-1},v^{-1}}\rightarrow H\backslash G^{u^{-1},v^{-1}}/H$ is just the natural projection map and $s$ is any section in such projection.
\[
\xymatrix{ & G_{sc}^{u^{-1},v^{-1}} \ar@{-->}[r]^\psi  \ar@<.5ex>[d] & \mathcal{A}^{u^{-1},v^{-1}}\ar[d]^p & \\
\mathcal{X}^{u^{-1},v^{-1}} \ar[r]^(0.4){\chi} \ar@{-->}@/_5ex/[rr]_\Xi & H\backslash G^{u^{-1},v^{-1}}/H \ar@<.5ex>[u]^s \ar@/_5ex/[rr]_{\iota \circ \tw} & \mathcal{X}^{u^{-1},v^{-1}} \ar[r]^(0.4){\chi} & H\backslash G^{u^{-1},v^{-1}}/H
}
\]

\begin{strg} To better convey the idea, we give an outline of the structure of the proof; there are in total four steps, and each step will be a subsection on its own.
\begin{enumerate}
    \item Prove that the composition $\xymatrix{H\backslash G^{u,v} /H \ar[r]^(0.6){s} & G_{sc}^{u,v} \ar@{-->}[r]^\psi & \mathcal{A}^{u,v} \ar[r]^p & \mathcal{X}^{u,v}  \ar[r]^(0.4){\chi} & H\backslash G^{u,v}/H}$ is birationally equivalent to the map $\iota \circ \tw$, and hence the composition is independent of the choice of $s$; as a result we also obtain a birational map $\psi:H\backslash G^{u,v}/H\dashrightarrow \mathcal{X}^{u,v}$ that is birationally equivalent to the composition $p\circ \psi\circ s$.
    \item Define birational automorphism $\xymatrix{\Xi:\mathcal{X}^{u,v}\ar[r]^(0.4){\chi} & H\backslash G^{u,v}/H \ar@{-->}[r]^(0.6){\psi} & \mathcal{X}^{u,v}}$ and prove that $\Xi$ has the property that $\deg_{X_a}\Xi^*\left(X_b\right)=-\delta_{ab}$.
    \item Prove that $\Xi$ is a cluster transformation.
    \item Apply the intertwining maps $i_\mathcal{X}$ and $\iota$ to relate the two commutative diagrams in \eqref{commutative diagram}.
\end{enumerate}
\end{strg}

Note that in steps (2) and (3) $\Xi$ is only defined as the composition $\psi\circ \chi$ and nothing else. Therefore one should not assume what we say about the $\Xi$ map in Subsection \ref{1.4}. Instead, one should view steps (2) and (3) together as the proof of the claim that the birational automorphism $\Xi:=\psi\circ \chi$ is indeed identical to the $\Xi$ map we have defined in Subsection \ref{1.4}.

\subsection{Factorization of the Twist \texorpdfstring{$\iota\circ \tw$}{}}\label{3.1}

In this subsection we will prove that $\iota\circ \tw$ is birationally equivalent to the composition $\chi\circ p \circ \psi \circ s$ (Proposition \ref{clustertwist}). The key to prove this proposition is the following two lemmas, equivalent versions of which have been proved by Fomin and Zelevinsky (\cite{FZ}, Theorem 1.9) and H. Williams (\cite{Wilkacmoody}, Proposition 3.23).

\begin{lem} If $x\in B_+\cap B_-vB_-$ and $(\alpha(1),\dots, \alpha(n))$ is a reduced word of $v$, then
\[
\left[x\overline{v^{-1}}\right]_-^t=\prod_{i=1}^n e_{\alpha(i)}(t_i)
\]
where
\[
t_i:=\frac{\prod_{\beta\neq \alpha(i)} \left(\Delta_\beta\left(x\overline{\left(v_{>i}\right)^{-1}}\right)\right)^{-C_{\beta\alpha(i)}}}{\Delta_{\alpha(i)}\left(x\overline{\left(v_{>i}\right)^{-1}}\right)\Delta_{\alpha(i)}\left(x\overline{\left(v_{>i-1}\right)^{-1}}\right)}.
\]
\end{lem}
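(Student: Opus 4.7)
The natural strategy is to telescope along the sequence $Y_i := x\,\overline{v_{>i}^{-1}}$ for $i=0,1,\dots,n$: here $Y_n = x \in B_+$, so $[Y_n]_- = e$, while $Y_0 = x\,\overline{v^{-1}}$ is the element whose $N_-$ part we want. Since $v_{>i-1}=s_{\alpha(i)}v_{>i}$ is a length-additive factorization, the lifts satisfy $\overline{v_{>i-1}^{-1}} = \overline{v_{>i}^{-1}}\,\overline{s}_{\alpha(i)}$, and therefore $Y_{i-1} = Y_i\,\overline{s}_{\alpha(i)}$. The whole proof thus reduces to tracking how the Gaussian decomposition changes under a single right multiplication by $\overline{s}_{\alpha(i)}$, and then iterating from $i=n$ down to $i=1$. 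Working on the open dense locus where each $Y_i$ is Gauss decomposable is harmless, since the stated identity is rational.

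\textbf{One step of the recursion.} Write $Y_i = [Y_i]_-\cdot h_i\cdot [Y_i]_+$ with $h_i:=[Y_i]_0$, and factor the unipotent part as $[Y_i]_+ = m_i\cdot e_{\alpha(i)}(p_i)$, where $m_i$ lies in the subgroup $N_+^{\alpha(i)}$ of $N_+$ generated by the positive-root subgroups $U_\beta$ with $\beta\neq\alpha(i)$. Identity \eqref{e_+s} rewrites $e_{\alpha(i)}(p_i)\,\overline{s}_{\alpha(i)}$ as $e_{-\alpha(i)}(p_i^{-1})\,p_i^{H_{\alpha(i)}}\,e_{\alpha(i)}(-p_i^{-1})$. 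The key algebraic input is the inclusion $N_+^{\alpha(i)}\cdot U_{-\alpha(i)}\subset U_{-\alpha(i)}\cdot N_+^{\alpha(i)}$; this holds because $\alpha(i)$ is simple, so for any positive root $\beta\neq\alpha(i)$ the difference $\beta-\alpha(i)$ cannot be a negative root (else $\alpha(i) = \beta + (\alpha(i)-\beta)$ would express a simple root as a sum of two positive roots), forcing $[U_\beta,U_{-\alpha(i)}]$ to land inside $N_+^{\alpha(i)}$. Using this to slide $m_i$ across $e_{-\alpha(i)}(p_i^{-1})$ and then commuting $h_i$ across the resulting negative-root factor via $h\,e_{-\alpha}(t)=e_{-\alpha}(\alpha(h)^{-1}t)\,h$, one arrives at
\[
[Y_{i-1}]_- = [Y_i]_-\cdot e_{-\alpha(i)}\bigl(\alpha(i)(h_i)^{-1}p_i^{-1}\bigr), \qquad [Y_{i-1}]_0 = h_i\cdot p_i^{H_{\alpha(i)}}.
\]
Taking transposes peels off a factor of $e_{\alpha(i)}(t_i)$ on the left of $[Y_i]_-^t$, with $t_i := \alpha(i)(h_i)^{-1}p_i^{-1}$, and iterating from $i=n$ down to $i=1$ yields $[Y_0]_-^t = e_{\alpha(1)}(t_1)\cdots e_{\alpha(n)}(t_n)$.

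\textbf{Matching the stated formula.} Since $\Delta_\beta(Y_i) = h_i^{\omega_\beta}$ and the simple root expands as $\alpha(i) = \sum_\beta C_{\beta\alpha(i)}\,\omega_\beta$ in the fundamental-weight basis (this is the definition of the Cartan matrix applied to the dual pairing), one gets $\alpha(i)(h_i) = \prod_\beta\Delta_\beta(Y_i)^{C_{\beta\alpha(i)}}$. The recursion $[Y_{i-1}]_0 = h_i\,p_i^{H_{\alpha(i)}}$ gives $\Delta_{\alpha(i)}(Y_{i-1}) = \Delta_{\alpha(i)}(Y_i)\cdot p_i$, so $p_i = \Delta_{\alpha(i)}(Y_{i-1})/\Delta_{\alpha(i)}(Y_i)$. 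Substituting these into $t_i = \alpha(i)(h_i)^{-1}p_i^{-1}$ and separating the $\beta=\alpha(i)$ factor (using $C_{\alpha(i)\alpha(i)}=2$) recovers the stated ratio of minors verbatim. The main obstacle throughout is the commutation lemma $N_+^{\alpha(i)}\cdot U_{-\alpha(i)}\subset U_{-\alpha(i)}\cdot N_+^{\alpha(i)}$, which uses crucially that $\alpha(i)$ is simple; every other step is bookkeeping with the Gaussian decomposition and the linear change of basis between simple roots and fundamental weights encoded by the Cartan matrix.
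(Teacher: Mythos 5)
Your argument is correct, and it follows the same overall inductive skeleton as the paper's proof (peel off one simple reflection at a time via $Y_{i-1}=Y_i\overline{s}_{\alpha(i)}$ and identity \eqref{e_+s}, then identify the resulting torus parts with generalized minors), but the key commutation step is handled quite differently. The paper writes $x$ explicitly in Lusztig coordinates adapted to the chosen reduced word, $x=(\prod_\beta a_\beta^{H_\beta})e_{\alpha(1)}(p_1)\cdots e_{\alpha(n)}(p_n)$, and pushes the factor $e_{-\alpha(n)}(p_n^{-1})p_n^{H_{\alpha(n)}}$ leftward through the entire product using \eqref{e_+e_-}, tracking the accumulated quantity $\sum_{\alpha(i_k)=\alpha(n)}p_{i_k}$ before reading off $t_n$ and setting up the induction. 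You instead work coordinate-free with the Gaussian decomposition, using the factorization $N_+=N_+^{\alpha(i)}\rtimes U_{\alpha(i)}$ and the fact that $N_+^{\alpha(i)}$ (the unipotent radical of the minimal parabolic attached to $\alpha(i)$) is normalized by $U_{-\alpha(i)}$; this yields the clean recursions $[Y_{i-1}]_-=[Y_i]_-e_{-\alpha(i)}\bigl(\alpha(i)(h_i)^{-1}p_i^{-1}\bigr)$ and $[Y_{i-1}]_0=h_ip_i^{H_{\alpha(i)}}$ without ever introducing explicit Lusztig parameters, and the translation into minors is then pure weight bookkeeping, exactly as in the paper. Two small points to tighten: the normality claim should be justified via the full Chevalley commutator formula (the terms are $U_{i\beta-j\alpha(i)}$ for $i,j\geq 1$, not just $U_{\beta-\alpha(i)}$, though the same ``a simple root is not a nonnegative combination of other positive roots'' argument disposes of all of them), and you should note explicitly that sliding $m_i$ across $e_{-\alpha(i)}(p_i^{-1})$ leaves the argument $p_i^{-1}$ unchanged because the correction term lands in $N_+^{\alpha(i)}$ rather than in $U_{-\alpha(i)}$. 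Neither affects the validity of the proof.
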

\begin{proof} We will prove by induction on $n$. There is nothing to show when $n=0$. For $n>0$, we may assume without loss of generality that
\[
x=\left(\prod_\beta a_\beta^{H_\beta} \right)e_{\alpha(1)}(p_1)\dots e_{\alpha(n)}(p_n).
\]
Then it is obvious that for any simple root $\beta$,
\[
\Delta_\beta(x)=a_\beta.
\]
Next let's consider $x\overline{\left(v_{>n-1}\right)^{-1}}=x\overline{s}_{\alpha(n)}$. By using the identity
\[
e_\beta(p)\overline{s}_\beta=e_{-\beta}(p^{-1})p^{H_\beta}e_\beta(-p^{-1})
\]
we get
\[
x\overline{s}_{\alpha(n)}=\left(\prod_\beta a_\beta^{H_\beta} \right)e_{\alpha(1)}(p_1)\dots e_{\alpha(n-1)}(p_{n-1})e_{-\alpha(n)}(p_n^{-1})p_n^{H_{\alpha(n)}}e_{\alpha(n)}(-p_n^{-1}).
\]
Now our mission is to move the factor $e_{-\alpha(n)}(p_n^{-1})p_n^{H_{\alpha(n)}}$ all the way to the front so that we can use induction. The way to do this is to use the fact that $e_\beta(p)$ commutes with $e_{-\alpha}(q)$ whenever $\beta\neq \alpha$ plus the following identities (the first one is identity \eqref{e_+e_-} and the second one comes from the Lie algebra identity $[H_\alpha, E_\beta]=C_{\alpha\beta}E_\beta$):
\begin{align*}
e_\beta(q)e_{-\beta}(p)=&e_{-\beta}\left(\frac{p}{1+pq}\right)(1+pq)^{H_\beta} e_\beta\left(\frac{q}{1+pq}\right);\\
e_\beta(q)a^{H_\alpha}=&a^{H_\alpha}e_\beta\left(a^{-C_{\alpha\beta}}q\right).
\end{align*}
Combining these facts we see that
\[
e_\beta(q)e_{-\alpha(n)}(p^{-1})p^{H_{\alpha(n)}}=\left\{\begin{array}{ll}
    e_{-\alpha(n)}(p^{-1})p^{H_{\alpha(n)}}e_\beta(\cdots) & \text{if $\beta\neq \alpha(n)$;} \\
    e_{-\alpha(n)}\left(\frac{1}{p+q}\right)(p+q)^{H_{\alpha(n)}}e_\beta(\cdots) & \text{if $\beta=\alpha(n)$.} 
\end{array}\right.
\]
Using the above identity recursively, we get
\[
x\overline{s}_{\alpha(n)}=e_{-\alpha(n)}\left(\frac{\prod_\beta a^{-C_{\beta \alpha(n)}}}{\sum_{\alpha(i_k)=\alpha(n)} p_{i_k}}\right)\left(\sum_{\alpha(i_k)=\alpha(n)} p_{i_k}\right)^{H_{\alpha(n)}}\left(\prod_\beta a_\beta^{H_\beta}\right)e_{\alpha(1)}(\cdots)\dots e_{\alpha(n-1)}(\cdots)e_{\alpha(n)}(-p_n^{-1}).
\]
Thus it follows that
\[
\Delta_{\alpha(n)}\left(x\overline{s}_{\alpha(n)}\right)=a_{\alpha(n)}\sum_{\alpha(i_k)=\alpha(n)} p_{i_k}.
\]
Note that if we define $t_n:=\frac{\prod_\beta a_\beta^{-C_{\beta \alpha(n)}}}{\sum_{\alpha(i_k)=\alpha(n)} p_{i_k}}$, then it follows that
\[
t_n=\frac{\prod_{\beta\neq \alpha(n)} a_\beta^{-C_{\beta\alpha(n)}}}{a_{\alpha(n)}^2\sum_{\alpha(i_k)=\alpha(n)}p_{i_k}}=\frac{\prod_{\beta\neq \alpha(n)} \left(\Delta_\beta (x)\right)^{-C_{\beta\alpha(n)}}}{\Delta_{\alpha(n)}(x)\Delta_{\alpha(n)}\left(x\overline{s}_{\alpha(n)}\right)}.
\]

On the other hand, if we define $v':=vs_{\alpha(n)}$ and
\begin{align*}
x':=&e_{-\alpha(n)}\left(-t_n\right)x\overline{s}_{\alpha(n)}e_{\alpha(n)}(p_n^{-1})\\
=&\left(\sum_{\alpha(i_k)=\alpha(n)} p_{i_k}\right)^{H_{\alpha(n)}}\left(\prod_\beta a_\beta^{H_\beta}\right)e_{\alpha(1)}(\cdots)\dots e_{\alpha(n-1)}(\cdots),
\end{align*}
then it follows that $x'\in B_+\cap B_-v'B_-$ and
\[
\left[x\overline{v^{-1}}\right]_-=e_{-\alpha(n)}\left(t_n\right)\left[x'\overline{v'^{-1}}\right]_-.
\]
Note that $l(v')=l(v)-1$; hence we can use induction to finish the proof. The only remaining thing one needs to realize is that
\[
\Delta_\beta\left(x'\overline{\left(v'_{>l}\right)^{-1}}\right)=\Delta_\beta\left(e_{-\alpha(n)}(-t_n)x\overline{s}_{\alpha(n)}e_{\alpha(n)}(p_n^{-1})\overline{\left(v'_{>l}\right)^{-1}}\right)=\Delta_\beta\left(x\overline{\left(v_{>l}\right)^{-1}}\right).
\]
The last equality holds because $v'_{>l}(\alpha(n))$ is a positive root and hence 
\[
e_{\alpha(n)}(p_n^{-1})\overline{\left(v'_{>l}\right)^{-1}}=\overline{\left(v'_{>l}\right)^{-1}}n_+
\]
for some unipotent element $n_+\in N_+$.
\end{proof}

Analogously, one can also prove the following lemma.

\begin{lem} If $x\in B_+uB_+\cap B_-$ and $(\alpha(1),\dots, \alpha(m))$ is a reduced word of $u$, then
\[
\left[\overline{u}^{-1}x\right]_+^t=\prod_{i=1}^m e_{-\alpha(i)}(t_i)
\]
where
\[
t_i:=\frac{\prod_{\beta\neq \alpha(i)} \left(\Delta_\beta\left(\overline{u_{< i}}^{-1}x\right)\right)^{-C_{\beta\alpha(i)}}}{\Delta_{\alpha(i)}\left(\overline{u_{< i}}^{-1}x\right)\Delta_{\alpha(i)}\left(\overline{u_{< i+1}}^{-1}x\right)}.
\]
\end{lem}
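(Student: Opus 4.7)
The plan is to deduce this lemma from the preceding one by applying the transposition anti-involution $g \mapsto g^t$. This anti-involution exchanges $N_+$ and $N_-$, sends $e_\alpha$ to $e_{-\alpha}$, satisfies $\overline{u}^t = \overline{u}^{-1}$ (from $\overline{s}_\alpha^t = \overline{s}_\alpha^{-1}$), and preserves every generalized minor by Proposition \ref{minor}. In particular, the hypothesis $x \in B_+ u B_+ \cap B_-$ becomes $x^t \in B_+ \cap B_- u^{-1} B_-$ under transposition, which is exactly the hypothesis of the previous lemma applied with $v := u^{-1}$.

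First I would establish the identity
\[
\left[\overline{u}^{-1}x\right]_+^t = \left[x^t \overline{u}\right]_-.
\]
This follows by transposing the Gaussian decomposition $\overline{u}^{-1}x = [\overline{u}^{-1}x]_- [\overline{u}^{-1}x]_0 [\overline{u}^{-1}x]_+$: the resulting product $[\overline{u}^{-1}x]_+^t [\overline{u}^{-1}x]_0 [\overline{u}^{-1}x]_-^t$ is a Gaussian decomposition of $x^t \overline{u}$, and uniqueness pins down the minus part.

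Next I would invoke the previous lemma on $x^t$ with $v := u^{-1}$ using the reversed reduced word $\beta(j) := \alpha(m+1-j)$. This yields $[x^t \overline{u}]_-^t = \prod_{j=1}^m e_{\beta(j)}(s_j)$ with an explicit formula for each $s_j$. Transposing once more and reversing the order of multiplication converts this into a product of the form $\prod_{i=1}^m e_{-\alpha(i)}(t_i)$ with $t_i = s_{m-i+1}$. Under the reversal, the partial product $v_{>j}$ for $v = u^{-1}$ equals $u_{<m-j+1}^{-1}$, so $\overline{(v_{>j})^{-1}} = \overline{u_{<m-j+1}}$, and each minor $\Delta_\gamma(x^t \overline{u_{<i}})$ rewrites as $\Delta_\gamma(\overline{u_{<i}}^{-1}x)$ by Proposition \ref{minor}. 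The substitution $i = m-j+1$ then matches the claimed formula for $t_i$ exactly.

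The main obstacle is purely bookkeeping: correctly matching the reversed indexing produced by applying the previous lemma to $u^{-1}$ with the forward indexing in the target formula, and tracking the order-reversal incurred by transposing a product of Chevalley generators. No new analytic input beyond the previous lemma and Proposition \ref{minor} is required; transposition converts the ``peel off the last letter of $v$'' induction used in the previous proof into a ``peel off the first letter of $u$'' induction for this one. A direct alternative is to mimic the inductive argument verbatim, using identity \eqref{se_-} in place of identity \eqref{e_+s} at each step to pull the leftmost letter out of $\overline{u}^{-1}x$; the computations are the mirror image of those already carried out.
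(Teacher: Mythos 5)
Your proposal is correct, and it takes a different route from the paper. The paper gives no separate argument for this lemma: it simply asserts that one proves it ``analogously,'' meaning one reruns the induction of the previous lemma in mirror image, peeling letters off the left of $\overline{u}^{-1}x$ using identity \eqref{se_-} in place of \eqref{e_+s} --- this is exactly your stated fallback. Your primary route instead derives the lemma formally from the previous one via transposition, and all the ingredients check out: $\overline{w}^t=\overline{w}^{-1}$ gives $(\overline{u}^{-1}x)^t=x^t\overline{u}$ and hence $[\overline{u}^{-1}x]_+^t=[x^t\overline{u}]_-$ by uniqueness of the Gaussian decomposition; $x^t\in B_+\cap B_-u^{-1}B_-$ so the previous lemma applies with $v=u^{-1}$ and the reversed reduced word; and since $\overline{v^{-1}}=\overline{u}$ the previous lemma computes precisely $[x^t\overline{u}]_-^t$. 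The index bookkeeping $(v_{>j})^{-1}=u_{<m-j+1}$ and the minor identity $\Delta_\gamma(x^t\overline{u_{<i}})=\Delta_\gamma(\overline{u_{<i}}^{-1}x)$ (first part of Proposition \ref{minor} plus $\overline{w}^t=\overline{w}^{-1}$) then reproduce the claimed $t_i$ exactly. What your approach buys is economy and reduced risk of sign or ordering errors, since no computation is repeated; what the paper's ``analogous induction'' buys is independence from the transposition formalism and a proof that reads symmetrically alongside the first lemma. Either is acceptable; yours is arguably the cleaner write-up.
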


\begin{prop}\label{clustertwist} The composition $\chi\circ p\circ \psi \circ s$ is birationally equivalent to the composition $\iota\circ \tw$. In particular, the former composition does not depend on the choice of lift.
\end{prop}
\begin{proof} Since every seed torus is a Zariski open subset of the corresponding cluster variety and we only care about rational equivalence, we can reduced Proposition \ref{clustertwist} to proving that the twist map is rationally equivalent to the composition 
\[
\xymatrix{G_{sc}^{u,v} \ar@{-->}[r]^{\psi_\vec{i}} \ar@<.5ex>[d]  & \mathcal{A}_\vec{i} \ar[d]^{p_\vec{i}}  & \\
H\backslash G^{u,v}/H  \ar@<.5ex>[u]^s & \mathcal{X}_\vec{i} \ar[r]^(0.3){\chi_\vec{i}} & H\backslash G^{u,v}/H}
\]
for some nice reduced word $\vec{i}$ of the pair of Weyl group elements $(u,v)$. Our choice of reduced word for this proof is any reduced word $\vec{i}:=(\alpha(1),\dots, \alpha(n), \alpha(n+1),\dots, \alpha(l))$ satifying the fact that $(\alpha(1),\dots, \alpha(n))$ is a reduced word of $v$ (so all the letters $\alpha(1),\dots, \alpha(n)$ are simple roots) and $-(\alpha(n+1),\dots, \alpha(l))$ is a reduced word of $u$ (so all the letters $\alpha(n+1),\dots, \alpha(l)$ are opposite to simple roots). In other words, our reduced word $\vec{i}$ can be broken down into two parts: the $v$ part and the $u$ part, as depicted below
\[
\vec{i}:=(\underbrace{\alpha(1),\dots, \alpha(n)}_\text{$v$ part}, \underbrace{\alpha(n+1),\dots, \alpha(l)}_\text{$u$ part}).
\]

Let's ignore the lifting at the very first step for now. Suppose we start with an element $x\in G_{sc}^{u,v}$. As the space of Gaussian decomposable elements $G_0$ is dense in $G_{sc}^{u,v}$ (see \cite{FZ} Proposition 2.14), we may further assume that $x$ is Gaussian decomposable, i.e., $x=[x]_-[x]_0[x]_+$. In particular, from the assumption that $x\in G_{sc}^{u,v}$ we know that $[x]_-[x]_0\in B_+u B_+\cap B_-$ and $[x]_0[x]_+\in B_+\cap B_-vB_-$. Now the composition $\iota\circ \tw$ maps $H\backslash x/H$ to
\begin{align*}
&H\left\backslash \left(\left[\overline{u}^{-1}x\right]_-^{-1}\overline{u}^{-1}x\overline{v^{-1}}\left[x\overline{v^{-1}}\right]_+^{-1}\right)^t\right/H\\
=& H\left\backslash\left(\left[\overline{u}^{-1}[x]_-[x]_0\right]_-^{-1}\overline{u}^{-1}[x]_-[x]_0[x]_+\overline{v^{-1}}\left[[x]_0[x]_+\overline{v^{-1}}\right]_+^{-1}\right)^t\right/H\\
=&H\left\backslash \left(\left[\overline{u}^{-1}[x]_-[x]_0\right]_+[x]_0^{-1}\left[[x]_0[x]_+\overline{v^{-1}}\right]_-\right)^t\right/H\\
=& H\left\backslash \left(\left[[x]_0[x]_+\overline{v^{-1}}\right]_-^t [x]_0^{-1} \left[\overline{u}^{-1}[x]_-[x]_0\right]_+^t\right)\right/H.
\end{align*}

By applying the two lemmas we had above, we see that the element in the middle can be rewritten as
\[
\left(\prod_{i=1}^n e_{\alpha(i)}(t_i)\right)[x]_0^{-1}\left(\prod_{i=n+1}^l e_{\alpha(i)}(t_i)\right)
\]
where
\[
t_i:=\left\{\begin{array}{ll}
    \displaystyle\frac{\prod_{\beta\neq \alpha(i)} \left(\Delta_\beta\left([x]_0[x]_+\overline{\left(v_{>i}\right)^{-1}}\right)\right)^{-C_{\beta\alpha(i)}}}{\Delta_{\alpha(i)}\left([x]_0[x]_+\overline{\left(v_{>i}\right)^{-1}}\right)\Delta_{\alpha(i)}\left([x]_0[x]_+\overline{\left(v_{>i-1}\right)^{-1}}\right)} & \text{if $1\leq i\leq n$}; \\
   \displaystyle\frac{\prod_{\beta\neq \alpha(i)} \left(\Delta_\beta\left(\overline{u_{< i}}^{-1}[x]_-[x]_0\right)\right)^{-C_{\beta\alpha(i)}}}{\Delta_{\alpha(i)}\left(\overline{u_{< i}}^{-1}[x]_-[x]_0\right)\Delta_{\alpha(i)}\left(\overline{u_{< i+1}}^{-1}[x]_-[x]_0\right)}  & \text{if $n+1\leq i\leq l$}.
\end{array}\right.
\]
But then since $\Delta_\gamma \left([x]_0[x]_+\overline{\left(v_{>i}\right)^{-1}}\right)=\Delta_\beta \left(x\overline{\left(v_{>i}\right)^{-1}}\right)$ and $\Delta_\gamma\left(\overline{u_{< i}}^{-1}[x]_-[x]_0\right)=\Delta_\gamma\left(\overline{u_{< i}}^{-1}x\right)$ for any $\gamma$, we can rewrite the $t_i$'s as
\[
t_i:=\left\{\begin{array}{ll}
    \displaystyle\frac{\prod_{\beta\neq \alpha(i)} \left(\Delta_\beta\left(x\overline{\left(v_{>i}\right)^{-1}}\right)\right)^{-C_{\beta\alpha(i)}}}{\Delta_{\alpha(i)}\left(x\overline{\left(v_{>i}\right)^{-1}}\right)\Delta_{\alpha(i)}\left(x\overline{\left(v_{>i-1}\right)^{-1}}\right)} & \text{if $1\leq i\leq n$}; \\
   \displaystyle\frac{\prod_{\beta\neq \alpha(i)} \left(\Delta_\beta\left(\overline{u_{< i}}^{-1}x\right)\right)^{-C_{\beta\alpha(i)}}}{\Delta_{\alpha(i)}\left(\overline{u_{< i}}^{-1}x\right)\Delta_{\alpha(i)}\left(\overline{u_{< i+1}}^{-1}x\right)}  & \text{if $n+1\leq i\leq l$}.
\end{array}\right.
\]
Note that every generalized minor factor present in the above expression is a cluster coordinate of the seed torus $\mathcal{A}_\vec{i}$!

To put things into the right places, we need the following additional identities:
\[
e_\alpha(t)=t^{H^\alpha}e_\alpha t^{-H^\alpha}, \quad \quad \quad \quad e_{-\alpha}(t)=t^{-H^\alpha}e_{-\alpha}t^{H^\alpha}.
\]
Note that we have secretly moved from the $G_{sc}$ territory into the $G_{ad}$ territory: the right hand side of either identities above obviously lives in $G_{ad}$. This is okay because what we care at the end is the image of $\iota\circ \tw$ in the double quotient $H\backslash G^{u,v}/H$, and it's completely fine to project $\left(\prod_{i=1}^n e_{\alpha(i)}(t_i)\right)[x]_0^{-1}\left(\prod_{i=n+1}^l e_{\alpha(i)}(t_i)\right)$ into $G_{ad}$ before taking the double quotient. 

Now for each string $a$ between two nodes $e_{\alpha(i)}$ and $e_{\alpha(j)}$ with $i\leq j$ and $\alpha(i)=\alpha(j)=\alpha$ for a simple root $\alpha$, we see that there are two factors $e_{\alpha(i)}\left(t_i\right)=t_i^{H^\alpha}e_\alpha t_i^{-H^\alpha}$ and $e_{\alpha(j)}\left(t_j\right)=t_j^{H^\alpha}e_\alpha t_j^{-H^\alpha}$ in the factorization $\left(\prod_{k=1}^n e_{\alpha(k)}(t_k)\right)[x]_0^{-1}\left(\prod_{k=n+1}^l e_{\alpha(k)}(t_k)\right)$; but then since $t_i^{-H^\alpha}$ and $t_j^{H^\alpha}$ commute with every factor between in the factorization $\left(\prod_{k=1}^n e_{\alpha(k)}(t_i)\right)[x]_0^{-1}\left(\prod_{k=n+1}^l e_{\alpha(k)}(t_i)\right)$, we can combine them into one single factor as $\left(t_i^{-1}t_j\right)^{H^\alpha}$, and we see that
\begin{align*}
t_i^{-1}t_j=&\displaystyle\frac{\Delta_{\alpha}\left(x\overline{\left(v_{>i}\right)^{-1}}\right)\Delta_{\alpha}\left(x\overline{\left(v_{>i-1}\right)^{-1}}\right)}{\prod_{\beta\neq \alpha} \left(\Delta_\beta\left(x\overline{\left(v_{>i}\right)^{-1}}\right)\right)^{-C_{\beta\alpha}}}\frac{\prod_{\beta\neq \alpha} \left(\Delta_\beta\left(x\overline{\left(v_{>j}\right)^{-1}}\right)\right)^{-C_{\beta\alpha}}}{\Delta_{\alpha}\left(x\overline{\left(v_{>j}\right)^{-1}}\right)\Delta_{\alpha}\left(x\overline{\left(v_{>j-1}\right)^{-1}}\right)}\\
=&\displaystyle\frac{\Delta_{\alpha}\left(x\overline{\left(v_{>i-1}\right)^{-1}}\right)\prod_{\beta\neq \alpha} \left(\Delta_\beta\left(x\overline{\left(v_{>j}\right)^{-1}}\right)\right)^{-C_{\beta\alpha}}}{\Delta_{\alpha}\left(x\overline{\left(v_{>j}\right)^{-1}}\right)\prod_{\beta\neq \alpha} \left(\Delta_\beta\left(x\overline{\left(v_{>i}\right)^{-1}}\right)\right)^{-C_{\beta\alpha}}}.
\end{align*}
Note that all minors present in the final expression above are cluster coordinates of the seed torus $\mathcal{A}_\vec{i}$, and the quotient we are taking is exactly the same as $p^*\left(X_a\right)$ for the string $a$ between $\alpha(i)$ and $\alpha(j)$. This implies that the $v$ part of the factorization $\left(\prod_{k=1}^n e_{\alpha(k)}(t_k)\right)[x]_0^{-1}\left(\prod_{k=n+1}^l e_{\alpha(k)}(t_k)\right)$ can be rewritten as an amalgamation product. A similar argument can be applied to the middle part and the $u$ part as well. This allows us to conclude that for any $x\in G_{sc}^{u,v}$,
\[
H\left\backslash \left(\left[\overline{u}^{-1}x\right]_-^{-1}\overline{u}^{-1}x\overline{v^{-1}}\left[x\overline{v^{-1}}\right]_+^{-1}\right)^t\right/H=\chi_\vec{i}\circ p_\vec{i}\circ \psi_\vec{i}(x).
\]

To finish the proof of the proposition, we also need to show that the image does not depend on the lift from $H\backslash G^{u,v}/H$ to $G_{sc}^{u,v}$ in the first place. Suppose instead of $x$, we start with $hx$ for some element $h\in H$. Then
\[
H\left\backslash\left(\left[\overline{u}^{-1}hx\right]_-^{-1}\overline{u}^{-1}hx\overline{v^{-1}}\left[hx\overline{v^{-1}}\right]_+^{-1}\right)^t\right/H=H\left\backslash\left(\left[\overline{u}^{-1}hx\right]_+ \overline{v^{-1}}\left[hx\overline{v^{-1}}\right]_+^{-1}\right)^t\right/H.
\]
But by the definition of the Weyl group $W:=N_GH/H$, we know that $\overline{u}^{-1}$ normalizes $H$; therefore $\overline{u}^{-1}h=h'\overline{u}^{-1}$ for some $h'\in H$ and we can simplify the above equality to
\begin{align*}
H\left\backslash\left(\left[\overline{u}^{-1}hx\right]_-^{-1}\overline{u}^{-1}hx\overline{v^{-1}}\left[hx\overline{v^{-1}}\right]_+^{-1}\right)^t\right/H=&H\left\backslash\left(\left[\overline{u}^{-1}x\right]_+ \overline{v^{-1}}\left[x\overline{v^{-1}}\right]_+^{-1}\right)^t\right/H\\
=&H\left\backslash \left(\left[\overline{u}^{-1}x\right]_-^{-1}\overline{u}^{-1}x\overline{v^{-1}}\left[x\overline{v^{-1}}\right]_+^{-1}\right)^t\right/H.
\end{align*}
A similar argument can also be applied to the replacement of $x$ by $xh$ for some $h\in H$. This finishes our proof of the proposition.
\end{proof}

\begin{cor} There exists a birational equivalence $\psi:H\backslash G^{u,v}/H\dashrightarrow \mathcal{X}^{u,v}$ that makes the following diagram commute.
\[
\xymatrix{G_{sc}^{u,v} \ar@{-->}[r]^\psi \ar@<.5ex>[d]  & \mathcal{A}^{u,v} \ar[d]^{p} \\
H\backslash G^{u,v}/H  \ar@<.5ex>[u]^s \ar@{-->}[r]_\psi & \mathcal{X}^{u,v}}
\]
\end{cor}
\begin{proof} We have shown in last proposition that $\iota\circ \tw$ is birationally equivalent to the composition $\chi\circ p \circ \psi \circ s$. But then since $\chi$ is also a birational equivalence (Proposition \ref{birational}), we can find an inverse birational equivalence $\chi^{-1}$ and define $\psi:=\chi^{-1}\circ \iota\circ \tw$. But then since both $\tw$ and $\iota$ are biregular morphisms, it follows that $\psi$ itself is a birational equivalence.
\end{proof}

\subsection{Verification of the Degree Condition on \texorpdfstring{$\Xi_\vec{i}$}{}}\label{3.2}

In addition to the birational equivalence $\chi:\mathcal{X}^{u,v}\dashrightarrow H\backslash G^{u,v}/H$, we obtained another birational equivalence $\psi:H\backslash G^{u,v}/H\dashrightarrow \mathcal{X}^{u,v}$ at the end of last subsection. We can compose them and get a birational automorphism.

\begin{defn}\label{defnxi} We define $\Xi:=\psi\circ \chi$ as a birational automorphism on $\mathcal{X}^{u,v}$. 
\end{defn}

Note that at this stage $\Xi$ is only given as a birational automorphism on $\mathcal{X}^{u,v}$, and it is not obvious that it has anything to do with the cluster transformation $\Xi$ we introduced in Subsection \ref{1.4}; but recall from Proposition \ref{uniquexi} that the map $\Xi$ introduced in Subsection \ref{1.4} is uniquely characterized by the following two properties:
\begin{itemize}
    \item $\deg_{X_a}\Xi^*\left(X_b\right)=-\delta_{ab}$ for some cluster coordinate chart;
    \item $\Xi$ is a cluster transformation.
\end{itemize}
Verifying these two properties allows us to identify the map $\Xi$ in Definition \ref{defnxi} with the one we introduced in Subsection \ref{1.4}. We will verify the former one in this subsection and verify the latter one in the next.

Following our subscript convention, we define $\Xi_\vec{i}:=\psi_\vec{i}\circ \chi_\vec{i}$ to be the restriction of $\Xi$ to the cluster coordinate chart $\mathcal{X}_\vec{i}$ for any reduced word $\vec{i}$. So now the question is which reduced word $\vec{i}$ we should use. In contrast to our choice in the last subsection, this time our choice of reduced word has the $u$-part coming before the $v$-part:
\[
\vec{i}=(\underbrace{\alpha(1),\dots, \alpha(n)}_\text{$u$-part}, \underbrace{\alpha(n+1),\dots, \alpha(l)}_\text{$v$-part}),
\]
in other words, the first $n$ letters are opposite to simple roots, which will give a reduced word of $u$ if you switch their signs, and the last $l-n$ letters are simple roots, which gives a reduced word of $v$. 

Fix one such reduced word $\vec{i}$. Let $(X_a)$ be a generic point in the seed torus $\mathcal{X}_\vec{i}$ and let $x$ be an element in $G_{sc}^{u,v}$ such that 
\[
H\backslash x/H=\chi_\vec{i}(X_a).
\]
To compute $\Xi_\vec{i}(X_a)$, we need to first compute the image $\psi_\vec{i}(x)$, whose coordinates are generalized minors of the form
\[
\Delta_\alpha\left(\overline{u_{<k}}^{-1}x\overline{v^{-1}}\right) \quad \quad \text{or} \quad \quad \Delta_\alpha\left(\overline{u}^{-1}x\overline{\left(v_{>k}\right)^{-1}}\right),
\]
depending the corresponding vertex in the seed.

The good news is that we sort of already know the Gaussian decomposition of $x$: from the definition $x:=\chi_\vec{i}(x)$ we know that we can write $x$ as a product of $e_{\alpha(i)}$ and $X_a^{H^\alpha}$, and since the $u$-part of $\vec{i}$ comes before the $v$-part, the first half of such product is in $B_-$ while the second half is in $B_+$. But then since $N_\pm$ is normal in $B_\pm$, we know that for any $n_\pm \in N_\pm$ and $h\in H$, there always exists $n'_\pm$ such that
\[
hn_-=n_-'h \quad \quad \text{and} \quad \quad n_+h=hn'_+.
\]
Using these two identities we can move all the $X_{\binom{\alpha}{k}}^{H^\alpha}$ factors to the middle and get
\[
\Delta_\alpha(x)=\Delta_\alpha\left(\prod_{\beta,i} X_{\binom{\beta}{i}}^{H^\beta}\right)=\prod_{\beta,i} X_{\binom{\beta}{i}}^{\inprod{H^\beta}{\omega_\alpha}}=\prod_{\beta,i} X_{\binom{\beta}{i}}^{\left(C^{-1}\right)_{\beta\alpha}},
\]
where $C^{-1}$ denotes the inverse of the Cartan matrix $C$.

A couple of the bad things happen here. One is that although the Cartan matrix $C$ has integer entries, it is generally not true for $C^{-1}$ to have integer entries, and we may run into trouble of taking fraction power of an algebraic variable. However, this may not be as bad as we imagine after all, because at the end of the day what we need is not individual generalized minors but some of their ratios, and hopefully these ratios are algebraic over the cluster variables. The other bad thing is that $\Delta_\alpha(x)$ is not even on the list of the generalized minors we are trying to compute!

Was our effort in vain? Of course not. We can modify it slightly to compute the generalized minors we actually need. The key is again the identities \eqref{e_+s} and \eqref{se_-}, which we will write down again:
\[
e_\alpha(t)\overline{s}_\alpha=e_{-\alpha}\left(t^{-1}\right)t^{H_\alpha} e_\alpha\left(-t^{-1}\right); \quad \quad  \quad \quad \overline{s}_\alpha^{-1}e_{-\alpha}(t)=e_{-\alpha}\left(-t^{-1}\right)t^{H_\alpha} e_\alpha\left(t^{-1}\right).
\]
Note that since we are free to take any lift from $H\backslash x/H:=\chi_\vec{i}\left(X_a\right)$ to a group element $x$, we may assume that $x$ admits a factorization that fits into the Gaussian decomposition pattern $N_-HN_+$, which begins with $e_{\alpha(1)}$ on the left and ends with $e_{\alpha(l)}$ on the right. Now if we were to compute minors of $x\overline{s}_{\alpha(l)}$, we can use identity \eqref{e_+s} to get
\[
x\overline{s}_{\alpha(l)}=\underbrace{e_{\alpha(1)}\dots e_{\alpha(l)}}_{\text{factorization of $x$}}\overline{s}_{\alpha(l)}=e_{\alpha(1)}\dots e_{-\alpha(l)}e_{\alpha(l)}^{-1}.
\]
But then since we begin with a reduced word $\vec{i}$ whose $u$-part comes before its $v$-part, the only factor in the above factorization of $x\overline{s}_{\alpha(l)}$ that stops it from being a Gaussian decomposition is $e_{-\alpha(l)}$. Therefore if we were to compute $\Delta_\alpha \left(x\overline{s}_\alpha\right)$, we just need to move the factor $e_{-\alpha(l)}$ all the way through the $HN_+$-part of the product $\chi_\vec{i}(X_a)$ into the $N_-$-part by using the identity \eqref{e_+e_-}
\[
e_\alpha(p)e_{-\alpha}(q)=e_{-\alpha}\left(\frac{q}{1+pq}\right)(1+pq)^{H_\alpha}e_\alpha\left(\frac{p}{1+pq}\right),
\]
and then pick up whatever is left in the $H$-part and compute the generalized minors. Variation of this observation is also true when we multiply $\overline{s}_{\alpha(l-1)}, \overline{s}_{\alpha(l-2)}, \dots$ on the right and $\overline{s}_{\alpha(1)}^{-1}, \overline{s}_{\alpha(2)}^{-1}, \dots$ on the left of $x$.

But things can still get very messy as we keep applying these identities. Fortunately we don't actually need the explicit expression of $\Xi_\vec{i}^*\left(X_a\right)$ but just to verify the identity $\deg_{X_b}\Xi_\vec{i}^*\left(X_a\right)=-\delta_{ab}$. This allows us to go around the difficulty by only considering the leading power of each variable.

\begin{notn} To better record the data of the leading power of each cluster $\mathcal{X}$-variable, we introduce the following notations replacing the strings and the nodes in the string diagram, each of which carries a monomial inside and represents an element in $G_{ad}$:
\begin{align*}
\text{$\tikz[baseline=0ex]{\draw (-0.75,-0.75) -- (0.75,-0.75) -- (0,0.75) -- cycle;
\node[scale=0.8] at (0,-0.25) [below] {$\textstyle\prod_aX_a^{p_a}$};}$ at level $\alpha$} \quad =& \quad e_\alpha\left(c\prod_a X_a^{p_a}+\text{terms of lower powers}\right),\\
\text{$\tikz[baseline=0ex]{\draw (-0.75,0.75) -- (0.75,0.75) -- (0,-0.75) -- cycle;
\node[scale=0.8] at (0,0.25) [above] {$\textstyle\prod_aX_a^{p_a}$};}$ at level $\alpha$} \quad =& \quad e_{-\alpha}\left(c\prod_a X_a^{p_a}+\text{terms of lower powers}\right),\\
\text{$\tikz[baseline=0ex]{\draw (0,0) circle [radius=0.75];
\node[scale=0.8] at (0,0) [] {$\textstyle\prod_aX_a^{p_a}$};}$ at level $\alpha$} \quad =& \quad \left(c\prod_a X_a^{p_a}+\text{terms of lower powers}\right)^{H^\alpha}.
\end{align*}
We impose the convention that an empty figure means the monomial inside is 1. In particular, we don't draw empty circles since it is just the identity element.
\end{notn}

\begin{defn} We say a picture representing the leading degrees of a variable $X$ is a \textit{final picture} if there is only one circle on each horizontal level, and all triangles of the shape $\tikz[baseline=0ex,scale=0.5]{\draw (-0.75,0.75) -- (0.75,0.75) -- (0,-0.75) -- cycle;}$ are before the circle on that horizontal level and all triangles of the shape $\tikz[baseline=0ex, scale=0.5]{\draw (-0.75,-0.75) -- (0.75,-0.75) -- (0,0.75) -- cycle;}$ are after the circle on that horizontal level.
\end{defn}

\begin{exmp} Consider a rank 2 root system with simple roots $\alpha$ and $\beta$. Then the image of the amalgamation map $\chi_\vec{i}:\mathcal{X}_\vec{i}\rightarrow H\backslash G^{u,v}/H$ corresponding to the reduced word $\vec{i}:=(-\alpha,-\beta,\alpha,\beta)$ can be represented by our notation above as
\[
\tikz[scale=0.8]{
\node at (0,2) [] {$\alpha$};
\node at (0,0) [] {$\beta$};
\draw (1.25,2.75) -- (2.75,2.75) -- (2,1.25) -- cycle;
\draw (4,2) circle [radius=0.75];
\node at (4,2) [] {$X_{\binom{\alpha}{1}}$};
\draw (5.25,1.25) -- (6.75,1.25) -- (6,2.75) -- cycle;
\draw (3.25,0.75) -- (4.75,0.75) -- (4,-0.75) -- cycle;
\draw (6,0) circle [radius=0.75];
\node at (6,0) [] {$X_{\binom{\beta}{1}}$};
\draw (7.25,-0.75) -- (8.75,-0.75) -- (8,0.75) -- cycle;
}
\]
\end{exmp}

Using such notation, we see that to compute the leading powers of cluster $\mathcal{X}$-variables in certain generalized minors, all we need to do is to multiply the corresponding lifts of Weyl group elements on the two sides, and then move figures around so that all triangles of the form $\tikz[baseline=0ex,scale=0.5]{\draw (-0.75,0.75) -- (0.75,0.75) -- (0,-0.75) -- cycle;}$ are on the left of circles and all triangles of the form $\tikz[baseline=0ex, scale=0.5]{\draw (-0.75,-0.75) -- (0.75,-0.75) -- (0,0.75) -- cycle;}$ are on the right of circles, and whatever monomials are left in the circles in the middle at the very end will give us the leading powers of the cluster $\mathcal{X}$-variables.

This may sound more difficult than it really is; in fact, there are many identities we can use to help us achieve the goal.

\begin{prop}\label{figure} The following identities hold (for notation simplicity we only include one cluster $\mathcal{X}$-variable $X$ here).
\begin{enumerate}
    \item Neighboring circles on the same level can be merged into one single circle with the respective monomials multiplied. Different figures on different levels commute with each other. Circles on different levels also commute with each other.
    \vspace{0.5cm}
    \item $\tikz[baseline=0ex, scale=0.7]{
    \draw (-0.75,-0.75) -- (0.75,-0.75) -- (0,0.75) -- cycle;
    \node[scale=0.8] at (0,0) [below] {$X^p$};
    \draw (2,0) circle [radius=0.75];
    \node[scale=0.8] at (2,0) [] {$X^d$};
    }  =  \tikz[baseline=0ex, scale=0.7]{
    \draw (-0.75,-0.75) -- (0.75,-0.75) -- (0,0.75) -- cycle;
    \node[scale=0.8] at (0,0) [below] {$X^{p-d}$};
    \draw (-2,0) circle [radius=0.75];
    \node[scale=0.8] at (-2,0) [] {$X^d$};
    }$.
    \vspace{0.5cm}
    \item $\tikz[baseline=0ex, scale=0.7]{
    \draw (-0.75,0.75) -- (0.75,0.75) -- (0,-0.75) -- cycle;
    \node[scale=0.8] at (0,0) [above] {$X^p$};
    \draw (-2,0) circle [radius=0.75];
    \node[scale=0.8] at (-2,0) [] {$X^d$};
    }  = \tikz[baseline=0ex, scale=0.7]{
    \draw (-0.75,0.75) -- (0.75,0.75) -- (0,-0.75) -- cycle;
    \node[scale=0.8] at (0,0) [above] {$X^{p-d}$};
    \draw (2,0) circle [radius=0.75];
    \node[scale=0.8] at (2,0) [] {$X^d$};
    }$.
    \vspace{0.5cm}
    \item Recall the notation $[n]_+:=\max\{0,n\}$; then 
    
    \vspace{0.5cm}
    $\tikz[baseline=0ex]{
    \draw (-0.75,0.75) -- (0.75,0.75) -- (0,-0.75) -- cycle;
    \node[scale=0.8] at (0,0) [above] {$X^q$};
    \draw (-2,-0.75) -- (-0.5,-0.75) -- (-1.25,0.75) -- cycle;
    \node[scale=0.8] at (-1.25,0) [below] {$X^p$};
    }  =  \tikz[baseline=0ex]{
    \draw (-0.75,0.75) -- (0.75,0.75) -- (0,-0.75) -- cycle;
    \node[scale=0.7] at (0,0.25) [above] {$X^{q-\left[p+q\right]_+}$};
    \draw (2,0) circle [radius=0.75];
    \node[scale=0.8] at (2,0) [] {$X^{2\left[p+q\right]_+}$};
    \draw (3.25,-0.75) -- (4.75,-0.75) -- (4,0.75) -- cycle;
    \node [scale=0.7] at (4,-0.25) [below] {$X^{p-\left[p+q\right]_+}$};
    \draw (2,-2) circle [radius=0.75];
    \node [scale=0.8] at (2,-2) [] {$X^{C_{\alpha\beta}\left[p+q\right]_+}$};
    \node at (5.5,0) [] {$\alpha$};
    \node at (5.5,-2) [] {$\beta$};
    }$.
    \vspace{0.5cm}
    \item $\tikz[baseline=0ex, scale=0.7]{
    \draw (-0.75,-0.75) -- (0.75,-0.75) -- (0,0.75) -- cycle;
    \node[scale=0.8] at (0,0) [below] {$X^p$};}\overline{s}_\alpha =  \tikz[baseline=0ex, scale=0.7]{
    \draw (-0.75,0.75) -- (0.75,0.75) -- (0,-0.75) -- cycle;
    \node[scale=0.8] at (0,0) [above] {$X^{-p}$};
    \draw (2,0) circle [radius=0.75];
    \node[scale=0.8] at (2,0) [] {$X^{2p}$};
    \draw (3.25,-0.75) -- (4.75,-0.75) -- (4,0.75) -- cycle;
    \node[scale=0.8] at (4,0) [below] {$X^{-p}$};
    \draw (2,-2) circle [radius=0.75];
    \node[scale=0.8] at (2,-2) [] {$X^{pC_{\alpha\beta}}$};
    \node at (6,0) [] {$\alpha$};
    \node at (6,-2) [] {$\beta.$};
    }$
    \vspace{0.5cm}
    \item $\overline{s}_\alpha^{-1}\tikz[baseline=0ex, scale=0.7]{
    \draw (-0.75,0.75) -- (0.75,0.75) -- (0,-0.75) -- cycle;
    \node[scale=0.8] at (0,0) [above] {$X^p$};}=  \tikz[baseline=0ex, scale=0.7]{
    \draw (-0.75,0.75) -- (0.75,0.75) -- (0,-0.75) -- cycle;
    \node[scale=0.8] at (0,0) [above] {$X^{-p}$};
    \draw (2,0) circle [radius=0.75];
    \node[scale=0.8] at (2,0) [] {$X^{2p}$};
    \draw (3.25,-0.75) -- (4.75,-0.75) -- (4,0.75) -- cycle;
    \node[scale=0.8] at (4,0) [below] {$X^{-p}$};
    \draw (2,-2) circle [radius=0.75];
    \node[scale=0.8] at (2,-2) [] {$X^{pC_{\alpha\beta}}$};
    \node at (6,0) [] {$\alpha$};
    \node at (6,-2) [] {$\beta.$};
    }$
\end{enumerate}
\end{prop}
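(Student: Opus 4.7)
The plan is to verify each identity by direct computation in $G_{ad}$ while tracking only the leading monomial of each factor. All six identities reduce to three elementary inputs: (a) the commutation rule $a^{H^\alpha}e_{\pm\beta}(t)a^{-H^\alpha}=e_{\pm\beta}(a^{\pm\delta_{\alpha\beta}}t)$, which is the exponentiation of $[H^\alpha,E_{\pm\beta}]=\pm\delta_{\alpha\beta}E_{\pm\beta}$; (b) the $\SL_2$-level identities \eqref{e_+e_-}, \eqref{e_+s}, \eqref{se_-}; and (c) the change of basis $H_\alpha=\sum_\beta C_{\alpha\beta}H^\beta$.

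For (1), the three sub-statements are verified separately and exactly (not just at leading order): circles on the same level merge because $H$ is abelian, giving $(\prod X_a^{p_a})^{H^\alpha}(\prod X_a^{q_a})^{H^\alpha}=(\prod X_a^{p_a+q_a})^{H^\alpha}$; two circles on different levels commute because $[H^\alpha,H^\beta]=0$; and a circle at level $\alpha$ commutes with a triangle at level $\beta\neq\alpha$ by Proposition \ref{commute}. For (2), input (a) with $\beta=\alpha$ gives $e_\alpha(t)X^{dH^\alpha}=X^{dH^\alpha}e_\alpha(X^{-d}t)$; substituting $t=cX^p+(\text{lower})$ gives the identity and replaces the exponent $p$ of the triangle by $p-d$. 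Identity (3) is the mirror statement using $[H^\alpha,E_{-\alpha}]=-E_{-\alpha}$.

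For (4), apply \eqref{e_+e_-} with arguments whose leading monomials are $cX^p$ and $c'X^q$. The key observation is that $1+pq$ has leading monomial $X^{\underline{p+q}}$ with $\underline{d}:=\max\{0,d\}$, because the constant $1$ dominates when $p+q<0$ and the product $pq$ dominates when $p+q>0$. Consequently the leading monomials of the three factors on the right side of \eqref{e_+e_-} are $X^{q-\underline{p+q}}$, $X^{\underline{p+q}\,H_\alpha}$, and $X^{p-\underline{p+q}}$. Expanding the toral factor via (c) as $\prod_\beta X^{\underline{p+q}\,C_{\alpha\beta}H^\beta}$ produces a circle at every level $\beta$ carrying $X^{\underline{p+q}\,C_{\alpha\beta}}$; at level $\alpha$ this is $X^{2\underline{p+q}}$ because $C_{\alpha\alpha}=2$, matching the diagram. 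Identities (5) and (6) follow identically from \eqref{e_+s} and \eqref{se_-} applied to $t=cX^p$: the factor $t^{-1}$ contributes leading monomial $X^{-p}$ to the outer triangles, and $t^{H_\alpha}$ contributes $X^{2p}$ at level $\alpha$ and $X^{pC_{\alpha\beta}}$ at each other level $\beta$.

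The proofs are essentially bookkeeping. The only subtle point is the $\underline{p+q}$ truncation in (4), which arises because $1+pq$ is not a pure monomial, so one needs to distinguish the regimes $p+q>0$ and $p+q<0$; once this is observed, the remaining work is a routine substitution. We also rely implicitly on the fact that the leading coefficients remain nonzero in generic position, so that extracting a leading monomial is well-defined at every step — this is automatic since the denominators appearing (such as $1+pq$) are nonvanishing on a dense open subset.
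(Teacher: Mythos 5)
Your proposal is correct and follows exactly the same route as the paper: part (1) from the vanishing commutators $[E_{\pm\alpha},E_{\mp\beta}]=[H^\alpha,E_{\pm\beta}]=[H^\alpha,H^\beta]=0$ for $\alpha\neq\beta$, parts (2)--(3) from $[H^\alpha,E_{\pm\alpha}]=\pm E_{\pm\alpha}$, and parts (4)--(6) from the identities \eqref{e_+e_-}, \eqref{e_+s}, \eqref{se_-} together with the expansion $H_\alpha=\sum_\beta C_{\alpha\beta}H^\beta$. Your explicit treatment of the $\underline{p+q}$ truncation in (4) and the genericity of leading coefficients simply fills in details the paper leaves implicit.
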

\begin{proof} (1) follows from the fact that $\left[E_{\pm \alpha},E_{\mp \beta}\right]=\left[H^\alpha,E_{\pm \beta}\right]=\left[H^\alpha,H^\beta\right]=0$ whenever $\alpha\neq \beta$; (2) and (3) follows from the commutator relation $\left[H^\alpha,E_{\pm\alpha}\right]=\pm E_{\pm\alpha}$; (4), (5), and (6) follow from identities \eqref{e_+e_-}, \eqref{e_+s}, and \eqref{se_-} respectively.
\end{proof}

\begin{strg} To better guide the readers through the rest of this subsection, let's once for all say how to use Proposition \ref{figure} to achieve our goal. Recall that we essentially want to compute generalized minors of group elements of the form $\overline{u_{<k}}^{-1}x\overline{v^{-1}}$ and $\overline{u}^{-1}x\overline{\left(v_{>k}\right)^{-1}}$. If we can somehow draw the final pictures of these elements, then all we need are the exponents of the labellings in the circles in the middle. But this is a managable task: note that the picture of $x:=\psi_\vec{i}\left(X_a\right)$ already have all triangles of the form $\tikz[baseline=0ex,scale=0.5]{\draw (-0.75,0.75) -- (0.75,0.75) -- (0,-0.75) -- cycle;}$ before all triangles of the form $\tikz[baseline=0ex, scale=0.5]{\draw (-0.75,-0.75) -- (0.75,-0.75) -- (0,0.75) -- cycle;}$, with additional circles scatterred in the mix; then by using moves (1), (2), and (3) we can move the circles to the middle and get the final picture of the group element $x$; but then by using the $u$-part and the $v$-part of the reduced word $\vec{i}$ as reduced words for $u$ and $v$ respectively, multiplying $\overline{u_{<k}}^{-1}$ on the left and multiplying $\overline{\left(v_{>k}\right)^{-1}}$ on the right can be done using moves (5) and (6), and the newly emerged triangles and circles can be rearranged into their supposed positions in the final picture by using moves (1) - (4).
\end{strg}

There are further simplifications of the moves in Proposition \ref{figure}. For example, in move (4) we see that if $p+q\leq 0$, then the circles in the middle of the right hand side never appear. As we will see from the analysis in the rest of this subsection, the condition $p+q\leq 0$ is always satisfied, and hence we can replace move (4) by move (4') below.

Another possible simplification is to drop the last triangle of the form $\tikz[baseline=0ex, scale=0.5]{\draw (-0.75,-0.75) -- (0.75,-0.75) -- (0,0.75) -- cycle;}$ when applying (5) of Proposition \ref{figure}: based on the choice of our reduced word $\vec{i}$ we may assume that $(\alpha(n+1),\dots, \alpha(l))$ is a reduced word of $v$; then for any $m>n$ we know that $s_{\alpha(m)}\dots s_{\alpha(l-1)}$ is guaranteed to map $e_{\alpha(l)}$ to some unipotent element $n_+\in N_+$, and hence dropping the last triangle of the form $\tikz[baseline=0ex, scale=0.5]{\draw (-0.75,-0.75) -- (0.75,-0.75) -- (0,0.75) -- cycle;}$ will not affect our computation of generalized minor whatsoever. A similar argument can be applied to (6) of Proposition \ref{figure} to drop the first triangle of the form $\tikz[baseline=0ex,scale=0.5]{\draw (-0.75,0.75) -- (0.75,0.75) -- (0,-0.75) -- cycle;}$ as well. In conclusion, we have the following corollary of Proposition \ref{figure} which can be used to effectively replace moves (4), (5) and (6).

\begin{cor} With the assumption that the reduced word $\vec{i}$ has its $u$-part before its $v$-part, we may replace moves (4), (5), and (6) by the following moves. 

\vspace{0.5cm}

(4') If $p+q\leq 0$, then $\tikz[baseline=0ex,scale=0.7]{
    \draw (-0.75,0.75) -- (0.75,0.75) -- (0,-0.75) -- cycle;
    \node at (0,0) [above] {$X^q$};
    \draw (-1.25,-0.75) -- (-2.75,-0.75) -- (-2,0.75) -- cycle;
    \node at (-2,0) [below] {$X^p$};
    }  =  \tikz[baseline=0ex,scale=0.7]{
    \draw (-0.75,0.75) -- (0.75,0.75) -- (0,-0.75) -- cycle;
    \node at (0,0) [above] {$X^q$};
    \draw (1.25,-0.75) -- (2.75,-0.75) -- (2,0.75) -- cycle;
    \node at (2,0) [below] {$X^p$};
    }$.

\vspace{0.5cm}

\indent (5') $\tikz[baseline=0ex, scale=0.7]{
    \draw (-0.75,-0.75) -- (0.75,-0.75) -- (0,0.75) -- cycle;
    \node[scale=0.8] at (0,0) [below] {$X^p$};}\overline{s}_\alpha =  \tikz[baseline=0ex, scale=0.7]{
    \draw (-0.75,0.75) -- (0.75,0.75) -- (0,-0.75) -- cycle;
    \node[scale=0.8] at (0,0) [above] {$X^{-p}$};
    \draw (2,0) circle [radius=0.75];
    \node[scale=0.8] at (2,0) [] {$X^{2p}$};
    \draw (2,-2) circle [radius=0.75];
    \node[scale=0.8] at (2,-2) [] {$X^{pC_{\alpha\beta}}$};
    \node at (4,0) [] {$\alpha$};
    \node at (4,-2) [] {$\beta.$};
    }$
    
\vspace{0.5cm}

\indent (6') $\overline{s}_\alpha^{-1}\tikz[baseline=0ex, scale=0.7]{
    \draw (-0.75,0.75) -- (0.75,0.75) -- (0,-0.75) -- cycle;
    \node[scale=0.8] at (0,0) [above] {$X^p$};}=  \tikz[baseline=0ex, scale=0.7]{
    \draw (2,0) circle [radius=0.75];
    \node[scale=0.8] at (2,0) [] {$X^{2p}$};
    \draw (3.25,-0.75) -- (4.75,-0.75) -- (4,0.75) -- cycle;
    \node[scale=0.8] at (4,0) [below] {$X^{-p}$};
    \draw (2,-2) circle [radius=0.75];
    \node[scale=0.8] at (2,-2) [] {$X^{pC_{\alpha\beta}}$};
    \node at (6,0) [] {$\alpha$};
    \node at (6,-2) [] {$\beta.$};
    }$
    
\vspace{0.5cm}
\end{cor}

In order to make our statements precise, we should also give the following definition.

\begin{defn} Fix a reduced word $\vec{i}$ whose $u$-part comes before its $v$-part. On the corresponding seed torus $\mathcal{X}_\vec{i}$, a cluster variable $X$ is said to be in the \emph{$v_{>k}$-part} if the left node of the corresponding string is a simple root $\alpha(j)$ for some $j\geq k$, and is said to be in the \emph{$u_{<k}$-part} if the right node of the corresponding string is an opposite simple root $\alpha(i)$ for some $i\leq k$; a cluster variable $X$ is said to be in the \emph{$v$-part} if it is in the $v_{>l(u)+1}$-part, and is said to be in the \emph{$u$-part} if it is in the $u_{<l(u)}$-part. 
\end{defn}

Let's start our detailed analysis. Our first task is to compute the final picture of $x\overline{v^{-1}}$. As a convention, we will use a dashed vertical line to keep track of the separation between the $u$-part and the $v$-part of the reduced word. 

To begin, we notice that there are exactly $l(v)$ number of triangles of the form  $\tikz[baseline=0ex, scale=0.5]{\draw (-0.75,-0.75) -- (0.75,-0.75) -- (0,0.75) -- cycle;}$ on the right of the central dashed line. Since each letter of $l(v)$ flips a triangle from  $\tikz[baseline=0ex, scale=0.5]{\draw (-0.75,-0.75) -- (0.75,-0.75) -- (0,0.75) -- cycle;}$ to $\tikz[baseline=0ex,scale=0.5]{\draw (-0.75,0.75) -- (0.75,0.75) -- (0,-0.75) -- cycle;}$, we would expect that after all the flipping and moving but before we take the minors, there will be no triangles of the form $\tikz[baseline=0ex, scale=0.5]{\draw (-0.75,-0.75) -- (0.75,-0.75) -- (0,0.75) -- cycle;}$ left in the picture.

So now let's do a case-by-case analysis of the eventual degree of each cluster $\mathcal{X}$-variable in $x\overline{v^{-1}}$. Consider the case where $X$ is a cluster $\mathcal{X}$-variable not contained in the $v$-part of the reduced word, such as the $X$ in the following picture of $x$.
\[
\dots \tikz[scale=0.8, baseline=0ex] {
\draw (-0.75,0.75) -- (0.75,0.75) -- (0,-0.75) -- cycle;
}
\tikz[scale=0.8, baseline=0ex]{
\draw (0,0) circle [radius=0.75];
\node at (0,0) [] {$X$};
} \tikz[scale=0.8, baseline=0ex] {
\draw (-0.75,0.75) -- (0.75,0.75) -- (0,-0.75) -- cycle;
} \dots \tikz[baseline=0ex, scale=0.8]{
\draw[dashed] (0,-1.5) -- (0,1.5);
\draw [fill=white] (0,0) circle [radius=0.75];
}\dots \tikz[scale=0.8, baseline=0ex] {
\draw (-0.75,-0.75) -- (0.75,-0.75) -- (0,0.75) -- cycle;
}\dots
\]
We first need to move it towards to the central dashed line passed all the triangles of the form $\tikz[baseline=0ex,scale=0.5]{\draw (-0.75,0.75) -- (0.75,0.75) -- (0,-0.75) -- cycle;}$ using move (3).
\[
\dots \tikz[scale=0.8, baseline=0ex] {
\draw (-0.75,0.75) -- (0.75,0.75) -- (0,-0.75) -- cycle;
}\dots \tikz[scale=0.8, baseline=0ex] {
\draw (-0.75,0.75) -- (0.75,0.75) -- (0,-0.75) -- cycle;
\node at (0,0) [above] {$X^{-1}$};
}
 \dots \tikz[baseline=0ex, scale=0.8]{
\draw[dashed] (0,-1.5) -- (0,1.5);
\draw [fill=white] (0,0) circle [radius=0.75];
\node at (0,0) [] {$X$};
}\dots \tikz[scale=0.8, baseline=0ex] {
\draw (-0.75,-0.75) -- (0.75,-0.75) -- (0,0.75) -- cycle;
}\dots
\]
Now to compute the labellings involving $X$ in the final picture of $x\overline{v^{-1}}$, we need to use move (5') repeatedly to flip the triangles of the form $\tikz[baseline=0ex, scale=0.5]{\draw (-0.75,-0.75) -- (0.75,-0.75) -- (0,0.75) -- cycle;}$ on the right using the $\overline{s}_\alpha$ factors from $\overline{v^{-1}}$, and pass the flipped triangles to the other side of the central dashed line using move (3) (currently we are only focusing on the variable $X$ so we can safely ignore the labellings emerged from move (5')); note that when a flipped triangle on the same horizontal level as $X$ passes the central dashed line it picks up a label $X^{-1}$ as well; therefore in the final picture of $x\overline{v^{-1}}$, the distribution of the labelling $X$ is the following.
\[
\underbrace{\dots \tikz[scale=0.8, baseline=0ex] {
\draw (-0.75,0.75) -- (0.75,0.75) -- (0,-0.75) -- cycle;
}\dots}_{\substack{\text{triangles originally} \\ \text{on the left of $X$}}}
 \quad \underbrace{\dots \tikz[scale=0.8, baseline=0ex] {
\draw (-0.75,0.75) -- (0.75,0.75) -- (0,-0.75) -- cycle;
\node at (0,0) [above] {$X^{-1}$};
}\dots}_{\substack{\text{triangles originally}  \\ \text{between $X$ and the} \\ \text{central dashed line}}} \quad 
\underbrace{\dots \tikz[scale=0.8, baseline=0ex] {
\draw (-0.75,0.75) -- (0.75,0.75) -- (0,-0.75) -- cycle;
\node at (0,0) [above] {$X^{-1}$};
}\dots}_{\substack{\text{triangles originally} \\ \text{on the right of the} \\ \text{central dashed line}}}
\tikz[baseline=0ex, scale=0.8]{
\draw[dashed] (0,-1.5) -- (0,1.5);
\draw [fill=white] (0,0) circle [radius=0.75];
\node at (0,0) [] {$X$};
}.
\]

Things get a bit trickier when $X$ is a cluster $\mathcal{X}$-variable is contained in the $v$-part of the reduced word. To simply things, let's assume that in the series of diagrams below, the top level is always level $\alpha$ and the bottom level is any other level $\beta$, unless otherwise specified. Suppose we start with the following picture of $x$.
\[
   \dots \tikz[scale=0.8, baseline=0ex]{
    \draw[dashed] (0,-3.5) -- (0,1.5);
    \draw[fill=white] (0,0) circle [radius=0.75];
    \draw[fill=white] (0,-2) circle [radius=0.75];
    }
    \dots \tikz[scale=0.8, baseline=0ex] {
\draw (-0.75,-0.75) -- (0.75,-0.75) -- (0,0.75) -- cycle;
}
\tikz[scale=0.8, baseline=0ex] {
\draw (-0.75,-2.75) -- (0.75,-2.75) -- (0,-1.25) -- cycle;
\node at (0,0) [] {$\dots$};
} \tikz[scale=0.8, baseline=0ex] {
\draw (-0.75,-0.75) -- (0.75,-0.75) -- (0,0.75) -- cycle;
}
\tikz[scale=0.8, baseline=0ex] {
\draw (-0.75,-2.75) -- (0.75,-2.75) -- (0,-1.25) -- cycle;
\draw (0,0) circle [radius=0.75];
\node at (0,0) [] {$X$};
}
\tikz[scale=0.8, baseline=0ex] {
\draw (-0.75,-0.75) -- (0.75,-0.75) -- (0,0.75) -- cycle;
}\dots
\]
The first step is analogous to what we did in last case: we move $X$ to the central dashed line using move (2), which leaves a labeling $X^{-1}$ on every triangle of the form $\tikz[baseline=0ex, scale=0.5]{\draw (-0.75,-0.75) -- (0.75,-0.75) -- (0,0.75) -- cycle;}$ originally on the left of it.
\[
 \dots \tikz[scale=0.8, baseline=0ex]{
    \draw[dashed] (0,-3.5) -- (0,1.5);
    \draw[fill=white] (0,0) circle [radius=0.75];
    \draw[fill=white] (0,-2) circle [radius=0.75];
    \node at (0,0) [] {$X$};
    }
    \dots \tikz[scale=0.8, baseline=0ex] {
\draw (-0.75,-0.75) -- (0.75,-0.75) -- (0,0.75) -- cycle;
\node at (0,0) [below] {$X^{-1}$};
}
\tikz[scale=0.8, baseline=0ex] {
\draw (-0.75,-2.75) -- (0.75,-2.75) -- (0,-1.25) -- cycle;
\node at (0,0) [] {$\dots$};
} \tikz[scale=0.8, baseline=0ex] {
\draw (-0.75,-0.75) -- (0.75,-0.75) -- (0,0.75) -- cycle;
\node at (0,0) [below] {$X^{-1}$};
}
\tikz[scale=0.8, baseline=0ex] {
\draw (-0.75,-2.75) -- (0.75,-2.75) -- (0,-1.25) -- cycle;
\node at (0,0) [] {$\dots$};
}
\tikz[scale=0.8, baseline=0ex] {
\draw (-0.75,-0.75) -- (0.75,-0.75) -- (0,0.75) -- cycle;
}\dots 
\]
Next we start flipping triangles using factors from $\overline{v^{-1}}$ corresponding to its constituent letters. Note that there are two kinds of triangles of the form $\tikz[baseline=0ex, scale=0.5]{\draw (-0.75,-0.75) -- (0.75,-0.75) -- (0,0.75) -- cycle;}$ on level $\alpha$: the ones that have labelling $X^{-1}$ and the ones that do not. Flipping the ones that do not have labelling does not cause complication, as we can see from the formula of move (5'); they only pick up a factor of $X^{-1}$ when they cross the central dashed line, and the final picture after such a flip looks like the following.
\begin{equation}\label{unlabeled}
\dots \tikz[scale=0.8, baseline=0ex] {
\draw (-0.75,0.75) -- (0.75,0.75) -- (0,-0.75) -- cycle;
\node at (0,0) [above] {$X^{-1}$};
} 
\tikz[scale=0.8,baseline=0ex]{
\draw (-0.75,-1.25) -- (0.75,-1.25) -- (0,-2.75) -- cycle;
\node at (0,0) [] {$\dots$};
}\dots\tikz[scale=0.8, baseline=0ex]{
    \draw[dashed] (0,-3.5) -- (0,1.5);
    \draw[fill=white] (0,0) circle [radius=0.75];
    \draw[fill=white] (0,-2) circle [radius=0.75];
    \node at (0,0) [] {$X$};
    }
    \dots \tikz[scale=0.8, baseline=0ex] {
\draw (-0.75,-0.75) -- (0.75,-0.75) -- (0,0.75) -- cycle;
\node at (0,0) [below] {$X^{-1}$};
}
\tikz[scale=0.8, baseline=0ex] {
\draw (-0.75,-2.75) -- (0.75,-2.75) -- (0,-1.25) -- cycle;
\node at (0,0) [] {$\dots$};
} \tikz[scale=0.8, baseline=0ex] {
\draw (-0.75,-0.75) -- (0.75,-0.75) -- (0,0.75) -- cycle;
\node at (0,0) [below] {$X^{-1}$};
}
\dots
\end{equation}
However, flipping a triangle of the form $\tikz[baseline=0ex, scale=0.5]{\draw (-0.75,-0.75) -- (0.75,-0.75) -- (0,0.75) -- cycle;}$ with an $X^{-1}$ labelling introduces more labellings involving $X$. Let's draw what happens when we do such a flip for the first time.
\begin{align*}
&\dots \tikz[scale=0.8, baseline=0ex] {
\draw (-0.75,0.75) -- (0.75,0.75) -- (0,-0.75) -- cycle;
\node at (0,0) [above] {$X^{-1}$};
} 
\tikz[scale=0.8,baseline=0ex]{
\draw (-0.75,-1.25) -- (0.75,-1.25) -- (0,-2.75) -- cycle;
\node at (0,0) [] {$\dots$};
}\dots\tikz[scale=0.8, baseline=0ex]{
    \draw[dashed] (0,-3.5) -- (0,1.5);
    \draw[fill=white] (0,0) circle [radius=0.75];
    \draw[fill=white] (0,-2) circle [radius=0.75];
    \node at (0,0) [] {$X$};
    }
    \dots \tikz[scale=0.8, baseline=0ex] {
\draw (-0.75,-0.75) -- (0.75,-0.75) -- (0,0.75) -- cycle;
\node at (0,0) [below] {$X^{-1}$};
}
\tikz[scale=0.8, baseline=0ex] {
\draw (-0.75,-2.75) -- (0.75,-2.75) -- (0,-1.25) -- cycle;
\node at (0,0) [] {$\dots$};
} \tikz[scale=0.8, baseline=0ex] {
\draw (-0.75,-0.75) -- (0.75,-0.75) -- (0,0.75) -- cycle;
\node at (0,0) [below] {$X^{-1}$};
}
\overline{s}_\alpha\\
=\quad &\dots \tikz[scale=0.8, baseline=0ex] { 
\draw (-0.75,0.75) -- (0.75,0.75) -- (0,-0.75) -- cycle;
\node at (0,0) [above] {$X^{-1}$};
} 
\tikz[scale=0.8,baseline=0ex]{
\draw (-0.75,-1.25) -- (0.75,-1.25) -- (0,-2.75) -- cycle;
\node at (0,0) [] {$\dots$};
}\dots\tikz[scale=0.8, baseline=0ex]{
    \draw[dashed] (0,-3.5) -- (0,1.5);
    \draw[fill=white] (0,0) circle [radius=0.75];
    \draw[fill=white] (0,-2) circle [radius=0.75];
    \node at (0,0) [] {$X$};
    }
    \dots \tikz[scale=0.8, baseline=0ex] {
\draw (-0.75,-0.75) -- (0.75,-0.75) -- (0,0.75) -- cycle;
\node at (0,0) [below] {$X^{-1}$};
}
\tikz[scale=0.8, baseline=0ex] {
\draw (-0.75,-2.75) -- (0.75,-2.75) -- (0,-1.25) -- cycle;
\node at (0,0) [] {$\dots$};
} \tikz[scale=0.8, baseline=0ex] {
\draw (-0.75,0.75) -- (0.75,0.75) -- (0,-0.75) -- cycle;
\node at (0,0) [above] {$X$};
\draw (1.5,0) circle [radius=0.75];
\node at (1.5,0) [] {$X^{-2}$};
\draw (1.5,-2) circle [radius=0.75];
\node at (1.5,-2) [] {$X^{-C_{\alpha\beta}}$};
}\\
=\quad & \dots \tikz[scale=0.8, baseline=0ex] { 
\draw (-0.75,0.75) -- (0.75,0.75) -- (0,-0.75) -- cycle;
\node at (0,0) [above] {$X^{-1}$};
} 
\tikz[scale=0.8,baseline=0ex]{
\draw (-0.75,-1.25) -- (0.75,-1.25) -- (0,-2.75) -- cycle;
\node at (0,0) [] {$\dots$};
}\dots\tikz[scale=0.8, baseline=0ex]{
    \draw[dashed] (0,-3.5) -- (0,1.5);
    \draw[fill=white] (0,0) circle [radius=0.75];
    \draw[fill=white] (0,-2) circle [radius=0.75];
    \node at (0,0) [] {$X$};
    }
 \tikz[scale=0.8, baseline=0ex]{\draw (-0.75,0.75) -- (0.75,0.75) -- (0,-0.75) -- cycle;
\node at (0,0) [above] {$X$};} \dots 
\tikz[scale=0.8, baseline=0ex] {
\draw (-0.75,-0.75) -- (0.75,-0.75) -- (0,0.75) -- cycle;
\node at (0,0) [below] {$X^{-1}$};
}
\tikz[scale=0.8, baseline=0ex] {
\draw (-0.75,-2.75) -- (0.75,-2.75) -- (0,-1.25) -- cycle;
\node at (0,0) [] {$\dots$};
} \tikz[scale=0.8, baseline=0ex] {
\draw (1.5,0) circle [radius=0.75];
\node at (1.5,0) [] {$X^{-2}$};
\draw (1.5,-2) circle [radius=0.75];
\node at (1.5,-2) [] {$X^{-C_{\alpha\beta}}$};
}\\
= \quad & 
 \dots \tikz[scale=0.8, baseline=0ex] {
\draw (-0.75,0.75) -- (0.75,0.75) -- (0,-0.75) -- cycle;
\node at (0,0) [above] {$X^{-1}$};
} 
\tikz[scale=0.8,baseline=0ex]{
\draw (-0.75,-1.25) -- (0.75,-1.25) -- (0,-2.75) -- cycle;
\node at (0,0) [] {$\dots$};
}
\tikz[scale=0.8, baseline=0ex] {
\draw (-0.75,0.75) -- (0.75,0.75) -- (0,-0.75) -- cycle;
}
\tikz[scale=0.8, baseline=0ex]{
    \draw[dashed] (0,-3.5) -- (0,1.5);
    \draw[fill=white] (0,0) circle [radius=0.75];
    \draw[fill=white] (0,-2) circle [radius=0.75];
    \node at (0,0) [] {$X$};
    }
    \tikz[scale=0.8, baseline=0ex] {
\draw (1.5,0) circle [radius=0.75];
\node at (1.5,0) [] {$X^{-2}$};
\draw (1.5,-2) circle [radius=0.75];
\node at (1.5,-2) [] {$X^{-C_{\alpha\beta}}$};
} \dots \tikz[scale=0.8, baseline=0ex] {
\draw (-0.75,-0.75) -- (0.75,-0.75) -- (0,0.75) -- cycle;
\node at (0,0) [below] {$X$};
}
\tikz[scale=0.8, baseline=0ex] {
\draw (-0.75,-2.75) -- (0.75,-2.75) -- (0,-1.25) -- cycle;
\node at (0,0) [] {$\dots$};
\node at (0,-2) [below] {$X^{C_{\alpha\beta}}$};
}\\
=\quad & 
 \dots \tikz[scale=0.8, baseline=0ex] {
\draw (-0.75,0.75) -- (0.75,0.75) -- (0,-0.75) -- cycle;
\node at (0,0) [above] {$X^{-1}$};
} 
\tikz[scale=0.8,baseline=0ex]{
\draw (-0.75,-1.25) -- (0.75,-1.25) -- (0,-2.75) -- cycle;
\node at (0,0) [] {$\dots$};
}
\tikz[scale=0.8, baseline=0ex] {
\draw (-0.75,0.75) -- (0.75,0.75) -- (0,-0.75) -- cycle;
}
\dots\tikz[scale=0.8, baseline=0ex]{
    \draw[dashed] (0,-3.5) -- (0,1.5);
    \draw[fill=white] (0,0) circle [radius=0.75];
    \draw[fill=white] (0,-2) circle [radius=0.75];
    \node at (0,0) [] {$X^{-1}$};
    \node at (0,-2) [] {$X^{-C_{\alpha\beta}}$};
    }
    \dots \tikz[scale=0.8, baseline=0ex] {
\draw (-0.75,-0.75) -- (0.75,-0.75) -- (0,0.75) -- cycle;
\node at (0,0) [below] {$X$};
}
\tikz[scale=0.8, baseline=0ex] {
\draw (-0.75,-2.75) -- (0.75,-2.75) -- (0,-1.25) -- cycle;
\node at (0,0) [] {$\dots$};
\node at (0,-2) [below] {$X^{C_{\alpha\beta}}$};
} 
\end{align*}

We want to pause here and make a few observations before proceeding any further. 

\begin{prop}\label{1stobs} Let $X$ be a cluster $\mathcal{X}$-variable originally on level $\alpha$ contained in the $v_{>k}$-part. Then in the final picture of $x\overline{v_{>k-1}}$, the distribution of $X$ always follows the pattern
\[
\dots \tikz[baseline=0ex, scale=0.8]{
\draw[dashed] (0,-1.5) -- (0,1.5);
\draw [fill=white] (0,0) circle [radius=0.75];
\node at (0,0) [] {$X^{p_\beta}$};
}\dots \tikz[scale=0.8, baseline=0ex] {
\draw (-0.75,-0.75) -- (0.75,-0.75) -- (0,0.75) -- cycle;
\node at (0,0) [below] {$X^{-p_\beta}$};
}\dots\tikz[scale=0.8, baseline=0ex] {
\draw (-0.75,-0.75) -- (0.75,-0.75) -- (0,0.75) -- cycle;
\node at (0,0) [below] {$X^{-p_\beta}$};
}\dots
\]
on every level for some integer $p_\beta$.
\end{prop}
\begin{proof} This can be done using a backward induction on $k$ (starting from $l$ and working back to the first letter in the $v$-part of $\vec{i}$). The base case is already done just before the statement of the proposition. Now inductively when we multiply $\overline{s}_\gamma$ on the right for some $\gamma$, the last $\tikz[scale=0.7, baseline=0ex] {
\draw (-0.75,-0.75) -- (0.75,-0.75) -- (0,0.75) -- cycle;
\node[scale=0.8] at (0,0) [below] {$X^{-p_\gamma}$};
}$ turns into $\tikz[baseline=0ex, scale=0.7]{\draw (-0.75,0.75) -- (0.75,0.75) -- (0,-0.75) -- cycle; \node[scale=0.8] at (0,0) [above] {$X^{p_\gamma}$}; \draw (1.5,0) circle [radius=0.75]; \node[scale=0.8] at (1.5,0) [] {$X^{-2p_\gamma}$};}$ with some additional $\tikz[baseline=0ex,scale=0.7]{\draw (0,0) circle [radius=0.75]; \node[scale=0.8] at (0,0) [] {$X^{-p_\gamma C_{\gamma\beta}}$};}$ on other $\beta$ levels. But then by move (4') we can move $\tikz[baseline=0ex, scale=0.7]{\draw (-0.75,0.75) -- (0.75,0.75) -- (0,-0.75) -- cycle; \node[scale=0.8] at (0,0) [above] {$X^{p_\gamma}$};}$ through all $\tikz[scale=0.7, baseline=0ex] {
\draw (-0.75,-0.75) -- (0.75,-0.75) -- (0,0.75) -- cycle;
\node[scale=0.8] at (0,0) [below] {$X^{-p_\gamma}$};
}$ on level $\gamma$ without changing anything. Lastly, the circles on the far right, while moving towards the middle, change the remaining triangles in the $v$-part of the reduced word uniformly through each level; hence at the end of the day we have the following.
\begin{align*} 
\text{Level $\gamma$} \quad \quad\quad \quad \quad \quad 
&\dots\tikz[scale=0.8, baseline=0ex]{
    \draw[dashed] (0,-3.5) -- (0,1.5);
    \draw[fill=white] (0,0) circle [radius=0.75];
    \draw[fill=white] (0,-2) circle [radius=0.75];
    \node at (0,0) [] {$X^{p_\gamma}$};
    \node at (0,-2) [] {$X^{p_\beta}$};
    }
    \dots \tikz[scale=0.8, baseline=0ex] {
\draw (-0.75,-0.75) -- (0.75,-0.75) -- (0,0.75) -- cycle;
\node at (0,0) [below] {$X^{-p_\gamma}$};
}
\tikz[scale=0.8, baseline=0ex] {
\draw (-0.75,-2.75) -- (0.75,-2.75) -- (0,-1.25) -- cycle;
\node at (0,0) [] {$\dots$};
\node at (0,-2) [below] {$X^{-p_\beta}$};
} 
\tikz[scale=0.8, baseline=0ex] {
\draw (-0.75,-0.75) -- (0.75,-0.75) -- (0,0.75) -- cycle;
\node at (0,0) [below] {$X^{-p_\gamma}$};
}
\overline{s}_\gamma \\
\text{Level $\gamma$} \quad \quad \quad \quad  =\quad & \dots\tikz[scale=0.8, baseline=0ex]{
    \draw[dashed] (0,-3.5) -- (0,1.5);
    \draw[fill=white] (0,0) circle [radius=0.75];
    \draw[fill=white] (0,-2) circle [radius=0.75];
    \node at (0,0) [] {$X^{p_\gamma}$};
    \node at (0,-2) [] {$X^{p_\beta}$};
    }
    \dots \tikz[scale=0.8, baseline=0ex] {
\draw (-0.75,-0.75) -- (0.75,-0.75) -- (0,0.75) -- cycle;
\node at (0,0) [below] {$X^{-p_\gamma}$};
}
\tikz[scale=0.8, baseline=0ex] {
\draw (-0.75,-2.75) -- (0.75,-2.75) -- (0,-1.25) -- cycle;
\node at (0,0) [] {$\dots$};
\node at (0,-2) [below] {$X^{-p_\beta}$};
} 
\tikz[scale=0.8, baseline=0ex] {
\draw (-0.75,0.75) -- (0.75,0.75) -- (0,-0.75) -- cycle;
\node at (0,0) [above] {$X^{p_\gamma}$};
} \tikz[scale=0.8, baseline=0ex] {
\draw (1.5,0) circle [radius=0.75];
\node at (1.5,0) [] {$X^{-2p_\gamma}$};
\draw (1.5,-2) circle [radius=0.75];
\node [scale=0.7] at (1.5,-2) [] {$X^{-p_\gamma C_{\gamma\beta}}$};
}\\
\text{Level $\gamma$} \quad \quad \quad \quad  =\quad & \dots
\tikz[scale=0.8, baseline=0ex] {
\draw (-0.75,0.75) -- (0.75,0.75) -- (0,-0.75) -- cycle;
} 
\tikz[scale=0.8, baseline=0ex]{
    \draw[dashed] (0,-3.5) -- (0,1.5);
    \draw[fill=white] (0,0) circle [radius=0.75];
    \draw[fill=white] (0,-2) circle [radius=0.75];
    \node at (0,0) [] {$X^{-p_\gamma}$};
    \node [scale=0.6] at (0,-2) [] {$X^{p_\beta-p_\gamma C_{\gamma\beta}}$};
    }
    \dots \tikz[scale=0.8, baseline=0ex] {
\draw (-0.75,-0.75) -- (0.75,-0.75) -- (0,0.75) -- cycle;
\node at (0,0) [below] {$X^{p_\gamma}$};
}
\tikz[scale=0.8, baseline=0ex] {
\draw (-0.75,-2.75) -- (0.75,-2.75) -- (0,-1.25) -- cycle;
\node at (0,0) [] {$\dots$};
\node [scale=0.5] at (0,-2.25) [below] {$X^{-p_\beta+p_\gamma C_{\gamma\beta}}$};
}\qedhere
\end{align*}
\end{proof}

\begin{cor}\label{2ndobs} Let $X$ be a cluster $\mathcal{X}$-variable originally on level $\alpha$ contained in the $v$-part. Then in the final picture of $x\overline{v^{-1}}$, every triangle that has been moved from the $v$-part to the $u$-part no longer carries any factor of $X$ in their labellings, except for those that are originally on the right of $X$ on the same level (the ones with thickened boundaries below), each of which carries a labelling $X^{-1}$ in the final picture.
\[
 \dots \tikz[scale=0.8, baseline=0ex]{
    \draw[dashed] (0,-3.5) -- (0,1.5);
    \draw[fill=white] (0,0) circle [radius=0.75];
    \draw[fill=white] (0,-2) circle [radius=0.75];
    }
    \dots 
    \tikz[scale=0.8, baseline=0ex] {
\draw (-0.75,-0.75) -- (0.75,-0.75) -- (0,0.75) -- cycle;
}
\tikz[scale=0.8, baseline=0ex] {
\draw (-0.75,-2.75) -- (0.75,-2.75) -- (0,-1.25) -- cycle;
\draw (0,0) circle [radius=0.75];
\node at (0,0) [] {$X$};
}
\tikz[scale=0.8, baseline=0ex] {
\draw [ultra thick](-0.75,-0.75) -- (0.75,-0.75) -- (0,0.75) -- cycle;
}\dots \overline{v^{-1}} \quad = \quad \tikz[scale=0.8, baseline=0ex] {
\draw [ultra thick] (-0.75,0.75) -- (0.75,0.75) -- (0,-0.75) -- cycle;
\node at (0,0) [above] {$X^{-1}$};
} 
\tikz[scale=0.8,baseline=0ex]{
\draw (-0.75,-1.25) -- (0.75,-1.25) -- (0,-2.75) -- cycle;
\node at (0,0) [] {$\dots$};
}
\tikz[scale=0.8, baseline=0ex] {
\draw (-0.75,0.75) -- (0.75,0.75) -- (0,-0.75) -- cycle;
} \tikz[scale=0.8, baseline=0ex]{
    \draw[dashed] (0,-3.5) -- (0,1.5);
    \draw[fill=white] (0,0) circle [radius=0.75];
    \draw[fill=white] (0,-2) circle [radius=0.75];
    \node at (0,0) [] {$X^{p_\alpha}$};
    \node at (0,-2) [] {$X^{p_\beta}$};
    }
\]
\end{cor}
\begin{proof} From last proposition we know that starting from the very first time when a triangle $\tikz[scale=0.7, baseline=0ex] {
\draw (-0.75,-0.75) -- (0.75,-0.75) -- (0,0.75) -- cycle;
\node[scale=0.8] at (0,0) [below] {$X^{-p_\gamma}$};
}$ is flipped, in the final picture after each additional multiple of $\overline{s}_\gamma$ on the right, the exponents of $X$ in the triangles of the form $\tikz[baseline=0ex, scale=0.5]{\draw (-0.75,-0.75) -- (0.75,-0.75) -- (0,0.75) -- cycle;}$ are opposite to the exponents of $X$ in the circle on the same level. Therefore when these triangles are moved across the central dashed line, they come out carrying no factor of $X$. However, the triangles that were moved before the first time we flipped a triangle with a labelling $X^{-1}$ still carries a labelling of $X^{-1}$, as we have seen from Picture \eqref{unlabeled}.
\end{proof}

Computing the exponents $p_\alpha$ in the final picture of $x\overline{v^{-1}}$ is still non-trivial; however, we can formulate the corollaries that will help us prove statements about such exponents without actually computing them. We start with the following corollary which is just a summary of Picture \eqref{unlabeled} and the picture in the proof of Proposition \ref{1stobs}.

\begin{cor}\label{3.16} Let $X$ be a cluster $\mathcal{X}$-variable originally contained in the $v$-part on level $\alpha$. Let $\gamma=\alpha(k)$ be a simple root (and hence $\alpha(k)$ is a letter in the $v$-part of the reduced word $\vec{i}$). Let $p_\beta$ be the exponent of $X$ in the circle on level $\beta$ in the final picture of $x\overline{\left(v_{>k-1}\right)^{-1}}$. 
\begin{itemize}
    \item If $X$ is not in the $v_{>k}$-part, then $p_\beta=\delta_{\alpha\beta}$.
    \item If $X$ is in the $v_{>k}$-part, then
    \[
p_\beta=q_\beta-C_{\gamma\beta}q_\gamma,
\]
where $q_\beta$ are the counterparts of $p_\beta$ for $x\overline{\left(v_{>k}\right)^{-1}}$.
\end{itemize}
\end{cor}

Based on this corollary we get the following identity.

\begin{cor}\label{3.10} Let $X$ be a cluster $\mathcal{X}$-variable originally contained in the $v_{>k}$-part. Let $\gamma=\alpha(k)$ be a simple root. Fix a simple root $\alpha$ (not necessarily the level where $X$ is originally on). Let $p_\beta$ be the exponent of $X$ in the circle on level $\beta$ in the final picture of $x\overline{\left(v_{>k-1}\right)^{-1}}$, and let $q_\beta$ be their counterparts for $x\overline{\left(v_{>k}\right)^{-1}}$. Then
\[
\sum_\beta p_\beta \left(C^{-1}\right)_{\beta \alpha}=-\delta_{\alpha\gamma}q_\alpha+\sum_\beta q_\beta \left(C^{-1}\right)_{\beta\alpha},
\]
where $C^{-1}$ is the inverse matrix of the Cartan matrix $C$. In other words, unless $\alpha=\gamma$, the quantity $\sum_\beta p_\beta \left(C^{-1}\right)_{\beta\alpha}$ is invariant under an additional multiplication of $\overline{s}_\gamma$ on the right.
\end{cor}
\begin{proof} We just need to plug in the second claim from the last corollary:
\[
\sum_\beta q_\beta \left(C^{-1}\right)_{\beta \alpha}=\sum_\beta \left(p_\beta-C_{\gamma\beta}p_\gamma\right)\left(C^{-1}\right)_{\beta\alpha}=-\delta_{\alpha\gamma}p_\alpha+\sum_\beta p_\beta \left(C^{-1}\right)_{\beta\alpha}.\qedhere
\]
\end{proof}

Now we are more or less done with $x\overline{v^{-1}}$. To finish what we have started, we now need to multiply some part of the reduced word of $u$ on the left to get $\overline{u_{<k}}^{-1}x\overline{v^{-1}}$. This means we start with whatever we get at the end when computing $x\overline{v^{-1}}$, and then flip the triangles on the left one at a time using move (6'), and then move each one of them to the right of the central dashed line. Note that in the final picture of $x\overline{v^{-1}}$ the triangles on the left of the dashed line have two possible origins: the ones that are originally in the $u$-part and the ones that are originally in the $v$-part. We will now draw two dashed lines in the next few pictures, with one representing the separation between the triangles of different origins, and the other one representing the original separation between the $u$-part and the $v$-part of the reduced word.
\[
\underbrace{\dots  
\tikz[scale=0.8, baseline=0ex] {
\draw (-0.75,0.75) -- (0.75,0.75) -- (0,-0.75) -- cycle;
}
\tikz[scale=0.8, baseline=0ex] {
\draw (-0.75,-1.25) -- (0.75,-1.25) -- (0,-2.75) -- cycle;
\node at (0,0) [] {$\dots$};
}
\tikz[scale=0.8, baseline=0ex] {
\draw (-0.75,0.75) -- (0.75,0.75) -- (0,-0.75) -- cycle;
}
\dots}_\text{triangles that are originally in $u$ part}
\tikz[scale=0.8,baseline=0ex]{
\draw [dashed] (0,-3.5) -- (0,1.5);
}
\underbrace{\dots  
\tikz[scale=0.8, baseline=0ex] {
\draw (-0.75,0.75) -- (0.75,0.75) -- (0,-0.75) -- cycle;
}
\tikz[scale=0.8, baseline=0ex] {
\draw (-0.75,-1.25) -- (0.75,-1.25) -- (0,-2.75) -- cycle;
\node at (0,0) [] {$\dots$};
}
\tikz[scale=0.8, baseline=0ex] {
\draw (-0.75,0.75) -- (0.75,0.75) -- (0,-0.75) -- cycle;
}
\dots}_\text{triangles that come from the $v$ part}
\tikz[scale=0.8, baseline=0ex]{
\draw[dashed] (0,-3.5) -- (0,1.5);
\draw[fill=white] (0,0) circle [radius=0.75];
\draw[fill=white] (0,-2) circle [radius=0.75];
}
\]

Suppose $X$ is originally a cluster $\mathcal{X}$-variable on level $\alpha$ strictly contained in the $v$-part. Then from Corollary \ref{2ndobs} we know that in the final picture of $x\overline{v^{-1}}$, only the triangles that are originally on level $\alpha$ to the right of $X$ (and hence from the $v$-part) will carry a factor of $X^{-1}$ in them.
\[
\dots  
\tikz[scale=0.8, baseline=0ex] {
\draw (-0.75,0.75) -- (0.75,0.75) -- (0,-0.75) -- cycle;
}
\tikz[scale=0.8, baseline=0ex] {
\draw (-0.75,-1.25) -- (0.75,-1.25) -- (0,-2.75) -- cycle;
\node at (0,0) [] {$\dots$};
}
\tikz[scale=0.8, baseline=0ex] {
\draw (-0.75,0.75) -- (0.75,0.75) -- (0,-0.75) -- cycle;
}
\dots
\tikz[scale=0.8,baseline=0ex]{
\draw [dashed] (0,-3.5) -- (0,1.5);
}
\dots  
\tikz[scale=0.8, baseline=0ex] {
\draw (-0.75,0.75) -- (0.75,0.75) -- (0,-0.75) -- cycle;
\node at (0,0) [above] {$X^{-1}$};
}
\tikz[scale=0.8, baseline=0ex] {
\draw (-0.75,-1.25) -- (0.75,-1.25) -- (0,-2.75) -- cycle;
\node at (0,0) [] {$\dots$};
}
\tikz[scale=0.8, baseline=0ex] {
\draw (-0.75,0.75) -- (0.75,0.75) -- (0,-0.75) -- cycle;
}
\dots
\tikz[scale=0.8, baseline=0ex]{
\draw[dashed] (0,-3.5) -- (0,1.5);
\draw[fill=white] (0,0) circle [radius=0.75];
\draw[fill=white] (0,-2) circle [radius=0.75];
\node at (0,0) [] {$X^{p_\alpha}$};
\node at (0,-2) [] {$X^{p_\beta}$};
}
\]
Therefore when we apply (4) and (6') repeatedly to compute $\overline{u_{<k}}^{-1}x\overline{v^{-1}}$, no leading power of $X$ is going to change except when the newly flipped triangles cross the central dashed line from the left to the right; but even for that, the labellings on the circles in the middle are still unchanged, which are the only things we are going to use when we compute $\deg_{X_a}\Xi_\vec{i}^*\left(X_b\right)$.
\begin{align*}
&\overline{u_{<k}}^{-1}\dots  
\tikz[scale=0.8, baseline=0ex] {
\draw (-0.75,0.75) -- (0.75,0.75) -- (0,-0.75) -- cycle;
}
\tikz[scale=0.8, baseline=0ex] {
\draw (-0.75,-1.25) -- (0.75,-1.25) -- (0,-2.75) -- cycle;
\node at (0,0) [] {$\dots$};
}
\tikz[scale=0.8, baseline=0ex] {
\draw (-0.75,0.75) -- (0.75,0.75) -- (0,-0.75) -- cycle;
}
\dots
\tikz[scale=0.8,baseline=0ex]{
\draw [dashed] (0,-3.5) -- (0,1.5);
}
\dots  
\tikz[scale=0.8, baseline=0ex] {
\draw (-0.75,0.75) -- (0.75,0.75) -- (0,-0.75) -- cycle;
\node at (0,0) [above] {$X^{-1}$};
}
\tikz[scale=0.8, baseline=0ex] {
\draw (-0.75,-1.25) -- (0.75,-1.25) -- (0,-2.75) -- cycle;
\node at (0,0) [] {$\dots$};
}
\tikz[scale=0.8, baseline=0ex] {
\draw (-0.75,0.75) -- (0.75,0.75) -- (0,-0.75) -- cycle;
}
\dots
\tikz[scale=0.8, baseline=0ex]{
\draw[dashed] (0,-3.5) -- (0,1.5);
\draw[fill=white] (0,0) circle [radius=0.75];
\draw[fill=white] (0,-2) circle [radius=0.75];
\node at (0,0) [] {$X^{p_\alpha}$};
\node at (0,-2) [] {$X^{p_\beta}$};
}\\
=&
\dots  
\tikz[scale=0.8, baseline=0ex] {
\draw (-0.75,0.75) -- (0.75,0.75) -- (0,-0.75) -- cycle;
}
\dots
\tikz[scale=0.8,baseline=0ex]{
\draw [dashed] (0,-3.5) -- (0,1.5);
}
\dots  
\tikz[scale=0.8, baseline=0ex] {
\draw (-0.75,0.75) -- (0.75,0.75) -- (0,-0.75) -- cycle;
\node at (0,0) [above] {$X^{-1}$};
}
\tikz[scale=0.8, baseline=0ex] {
\draw (-0.75,-1.25) -- (0.75,-1.25) -- (0,-2.75) -- cycle;
\node at (0,0) [] {$\dots$};
}
\tikz[scale=0.8, baseline=0ex] {
\draw (-0.75,0.75) -- (0.75,0.75) -- (0,-0.75) -- cycle;
}
\dots
\tikz[scale=0.8, baseline=0ex]{
\draw[dashed] (0,-3.5) -- (0,1.5);
\draw[fill=white] (0,0) circle [radius=0.75];
\draw[fill=white] (0,-2) circle [radius=0.75];
\node at (0,0) [] {$X^{p_\alpha}$};
\node at (0,-2) [] {$X^{p_\beta}$};
}
\dots
\tikz[scale=0.8, baseline=0ex] {
\draw (-0.75,-0.75) -- (0.75,-0.75) -- (0,0.75) -- cycle;
\node at (0,0) [below] {$X^{-p_\alpha}$};
}
\tikz[scale=0.8, baseline=0ex] {
\draw (-0.75,-2.75) -- (0.75,-2.75) -- (0,-1.25) -- cycle;
\node at (0,0) [] {$\dots$};
\node at (0,-2) [below] {$X^{-p_\beta}$};
}
\dots
\end{align*}
Therefore we can formulate the following proposition.

\begin{prop}\label{3.17} Let $X$ be a cluster $\mathcal{X}$-variable originally on level $\alpha$ strictly contained in the $v$-part. If the exponent of $X$ in the circle on level $\beta$ in the final picture of $x\overline{v^{-1}}$ is $p_\beta$ for each $\beta$ (here we include the possibility that $\beta=\alpha$), then the exponent of $X$ in the circle on level $\beta$ in the final picture of $\overline{u_{<k}}^{-1}x\overline{v^{-1}}$ is still $p_\beta$ for each $\beta$.  
\end{prop}

Next if $X$ is originally a cluster $\mathcal{X}$-variable on level $\alpha$ neither in the $v$-part nor in the $u_{<k}$-part, then we can also see easily that
\begin{align*}
&\overline{u_{<k}}^{-1}\dots  
\tikz[scale=0.8, baseline=0ex] {
\draw (-0.75,0.75) -- (0.75,0.75) -- (0,-0.75) -- cycle;
}
\tikz[scale=0.8, baseline=0ex] {
\draw (-0.75,-1.25) -- (0.75,-1.25) -- (0,-2.75) -- cycle;
\node at (0,0) [] {$\dots$};
}
\tikz[scale=0.8, baseline=0ex] {
\draw (-0.75,0.75) -- (0.75,0.75) -- (0,-0.75) -- cycle;
\node at (0,0) [above] {$X^{-1}$};
}
\dots
\tikz[scale=0.8,baseline=0ex]{
\draw [dashed] (0,-3.5) -- (0,1.5);
}
\dots  
\tikz[scale=0.8, baseline=0ex] {
\draw (-0.75,0.75) -- (0.75,0.75) -- (0,-0.75) -- cycle;
\node at (0,0) [above] {$X^{-1}$};
}
\tikz[scale=0.8, baseline=0ex] {
\draw (-0.75,-1.25) -- (0.75,-1.25) -- (0,-2.75) -- cycle;
\node at (0,0) [] {$\dots$};
}
\tikz[scale=0.8, baseline=0ex] {
\draw (-0.75,0.75) -- (0.75,0.75) -- (0,-0.75) -- cycle;
\node at (0,0) [above] {$X^{-1}$};
}
\dots
\tikz[scale=0.8, baseline=0ex]{
\draw[dashed] (0,-3.5) -- (0,1.5);
\draw[fill=white] (0,0) circle [radius=0.75];
\draw[fill=white] (0,-2) circle [radius=0.75];
\node at (0,0) [] {$X$};
}\\
=&
\dots  
\tikz[scale=0.8, baseline=0ex] {
\draw (-0.75,0.75) -- (0.75,0.75) -- (0,-0.75) -- cycle;
\node at (0,0) [above] {$X^{-1}$};
}
\dots
\tikz[scale=0.8,baseline=0ex]{
\draw [dashed] (0,-3.5) -- (0,1.5);
}
\dots  
\tikz[scale=0.8, baseline=0ex] {
\draw (-0.75,0.75) -- (0.75,0.75) -- (0,-0.75) -- cycle;
\node at (0,0) [above] {$X^{-1}$};
}
\tikz[scale=0.8, baseline=0ex] {
\draw (-0.75,-1.25) -- (0.75,-1.25) -- (0,-2.75) -- cycle;
\node at (0,0) [] {$\dots$};
}
\tikz[scale=0.8, baseline=0ex] {
\draw (-0.75,0.75) -- (0.75,0.75) -- (0,-0.75) -- cycle;
\node at (0,0) [above] {$X^{-1}$};
}
\dots
\tikz[scale=0.8, baseline=0ex]{
\draw[dashed] (0,-3.5) -- (0,1.5);
\draw[fill=white] (0,0) circle [radius=0.75];
\draw[fill=white] (0,-2) circle [radius=0.75];
\node at (0,0) [] {$X$};
}
\dots
\tikz[scale=0.8, baseline=0ex] {
\draw (-0.75,-0.75) -- (0.75,-0.75) -- (0,0.75) -- cycle;
\node at (0,0) [below] {$X^{-1}$};
}
\tikz[scale=0.8, baseline=0ex] {
\draw (-0.75,-2.75) -- (0.75,-2.75) -- (0,-1.25) -- cycle;
\node at (0,0) [] {$\dots$};
}
\dots
\end{align*}
Therefore we can formulate the following proposition.

\begin{prop}\label{u-v} Let $X$ be a cluster $\mathcal{X}$-variable originally on level $\alpha$ neither in the $u_{<k}$-part nor in the $v$-part. Then the exponent of $X$ in the circle on level $\beta$ in the final picture of $\overline{u_{<k+1}}^{-1}x\overline{v^{-1}}$ is $\delta_{\alpha\beta}$.
\end{prop}

Lastly, if $X$ is a cluster $\mathcal{X}$-variable on level $\alpha$ in the $u_{<k}$-part, then by going over similar computations as we did for $x\overline{v^{-1}}$, one can arrive at statements analogous to Corollary \ref{3.16} and Corollary \ref{3.10}, which we summarize in the following propositions; their proofs will be left as an exercise for the readers.

\begin{prop}\label{3.21} Let $X$ be a cluster $\mathcal{X}$-variable originally contained in the $u_{<k}$-part on level $\alpha$. Let $\gamma=-\alpha(k)$ be a simple root (and hence $\alpha(k)$ is a letter in the $u$-part of the reduced word $\vec{i}$). Let $p_\beta$ be the exponent of $X$ in the circle on level $\beta$ in the final picture of $\overline{u_{<k+1}}^{-1}x\overline{v^{-1}}$ and let $q_\beta$ are the counterparts of $p_\beta$ for $\overline{u_{<k}}^{-1}x\overline{v^{-1}}$. Then
\[
p_\beta=q_\beta-C_{\gamma\beta}q_\gamma.
\]
\end{prop}

\begin{prop}\label{3.20} Let $X$ be a cluster $\mathcal{X}$-variable originally contained in the $u_{<k}$-part. Let $\gamma=-\alpha(k)$ be a simple root. Fix a simple root $\alpha$ (not necessarily the level where $X$ is originally on). Let $p_\beta$ be the exponent of $X$ in the circle on level $\beta$ in the final picture of $\overline{u_{<k+1}}^{-1}x\overline{v^{-1}}$, and let $q_\beta$ be their counterparts for $\overline{u_{<k+1}}^{-1}x\overline{v^{-1}}$. Then
\[
\sum_\beta p_\beta\left(C^{-1}\right)_{\beta\alpha}=-\delta_{\alpha\gamma}q_\alpha+\sum_\beta q_\beta\left(C^{-1}\right)_{\beta\alpha}.
\]
In other words, unless $\alpha=\gamma$, the quantity $\sum_\beta p_\beta \left(C^{-1}\right)_{\beta\alpha}$ is invariant under an additional multiplication of $\overline{s}_\gamma^{-1}$ on the left.
\end{prop}

Propositions \ref{3.17}, \ref{u-v}, \ref{3.21}, \ref{3.20}, and their symmetric versions for group elements $\overline{u}^{-1}x\overline{\left(v_{>k}\right)^{-1}}$ will play significant roles in the proof of the main theorem of this subsection, which is the following.

\begin{prop}\label{1sthalf} For our choice of the reduced word $\vec{i}$ (whose $u$-part comes before its $v$-part), we have 
\[
\deg_{X_a}\Xi_\vec{i}^*\left(X_b\right)=-\delta_{ab}
\]
for any cluster $\mathcal{X}$-variables $X_a$ and $X_b$.
\end{prop}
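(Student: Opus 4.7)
The plan is to compute $\DT_\vec{i}^*(X_a) = (p_\vec{i} \circ \psi_\vec{i})^*(X_a)$ explicitly as a Laurent monomial in generalized minors, and then read off the leading $X_b$-power using the schematic figure calculus already developed. For a generic point $(X_c) \in \mathcal{X}_\vec{i}$, I would choose a lift $x \in G_{sc}^{u,v}$ with $\chi_\vec{i}(X_c) = H\backslash x/H$, so that
\[
p_\vec{i}^*(X_a) = \prod_{c} A_c(\psi_\vec{i}(x))^{\epsilon_{ac}},
\]
where $A_c$ is the generalized minor attached to the string $c$, of the form $\Delta_{\alpha}(\overline{u_{<k}}^{-1} x \overline{(v_{>k})^{-1}})$ (or one of its specialisations at $k=1$ or $k=l$). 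The amalgamation construction guarantees that $\epsilon_{ac} \neq 0$ only when the strings $a$ and $c$ share a common node in the string diagram, so the product is supported on a small neighbourhood of the string $a$.

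The next step is to read off the leading $X_b$-exponent of each such minor. After using identities (1)--(3) of Proposition \ref{figure} to consolidate all circles in the schematic picture into the middle separator, the $H$-component at position $k$ takes the form $\prod_\beta Y_\beta(k)^{H^\beta}$ with $Y_\beta(k)$ a monomial in the $X_c$'s. Since $\langle H^\beta,\omega_\alpha\rangle = (C^{-1})_{\beta\alpha}$, the leading $X_b$-power of $\Delta_\alpha$ equals $\sum_\beta p_\beta(k)(C^{-1})_{\beta\alpha}$, where $p_\beta(k)$ denotes the $X_b$-exponent on level $\beta$ at position $k$. Proposition \ref{3.10}, together with its symmetric analogue for left multiplication by $\overline{u_{<k}}^{-1}$, then implies that this quantity is invariant when $k$ crosses a letter of a level different from $\alpha$. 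This invariance property matches the support of $\epsilon_{ac}$ perfectly and produces a telescoping cancellation in $\prod_c A_c^{\epsilon_{ac}}$.

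The remainder of the proof is a local case analysis at the string $X_a$, split according to whether $X_b$ lies in the $u$-part, the $v$-part, or at the $u$/$v$ boundary, and whether $a$ and $b$ are on the same level, on Cartan-coupled levels, or on completely disjoint levels. In each case, identities (4')--(6') of Proposition \ref{figure} track how $p_\beta(k)$ propagates across the transition from the $u$-part to the $v$-part, and the telescoping of the previous paragraph reduces the calculation to a finite boundary contribution localised at the nodes flanking the string $a$. I expect the main obstacle to be the case where the strings $a$ and $b$ both lie near the $u$/$v$ boundary on interacting levels: there the schematic picture involves both triangle orientations and circles simultaneously, so the exponents must be tracked carefully through (4'), (5'), and (6') before cancellations can be invoked. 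Once this bookkeeping is carried out, the surviving terms add to $-1$ when $b = a$ and to $0$ when $b \neq a$, yielding $\deg_{X_b} \DT_\vec{i}^*(X_a) = -\delta_{ab}$.
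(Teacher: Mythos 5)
Your proposal follows essentially the same route as the paper's proof: expressing $\DT_\vec{i}^*(X_a)$ as the product $\prod_c A_c^{\epsilon_{ac}}$ of generalized minors supported near the string $a$, reading off leading $X_b$-powers via the pairing $\langle H^\beta,\omega_\alpha\rangle=(C^{-1})_{\beta\alpha}$ in the schematic figure calculus, invoking Proposition \ref{3.10} (and its left-multiplication analogue) for the invariance that kills all but a boundary contribution, and finishing with a case analysis according to the positions of $a$ and $b$ relative to the $u$/$v$ split. The outline defers the explicit exponent bookkeeping that the paper carries out (in particular the computation showing the surviving contribution is exactly $-1$ when $b=a$), but the strategy and the key lemmas are the same.
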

\begin{proof} In the proof we will mainly focus on two cases depending on where $X_b$ is: (i) $X_b$ is contained in the $u$-part, and (ii) $X_b$ is in neither the $u$-part nor the $v$-part; the case where $X_b$ is contained in the $v$-part can be proved by a symmetric argument of case (i).

Let's consider case (i) first. Suppose the vertex (string) $b$ is on level $\alpha$ cut out by nodes $\alpha(i)$ and $\alpha(j)$ as below. 
\[ \tikz{
    \node (1) at (1,1) [] {$\alpha(i)$};
    \node (2) at (3,1) [] {$\alpha(j)$};
    \draw (0,1) -- (1) -- node[above]{$b$} (2) -- (4,1);
    }
\]
From the factorization $\Xi_\vec{i}=\psi_\vec{i}\circ \chi_\vec{i}=p_\vec{i}\circ \psi_\vec{i}\circ s\circ \chi_\vec{i}$ we learn that
\[
\Xi_\vec{i}^*\left(X_b\right)=\frac{\Delta_\alpha\left(\overline{u_{<j+1}}^{-1}x\overline{v^{-1}}\right)\prod_{\beta\neq \alpha} \left(\Delta_\beta\left(\overline{u_{<i}}^{-1}x\overline{v^{-1}}\right)\right)^{-C_{\beta\alpha}}}{\Delta_\alpha\left(\overline{u_{<i}}^{-1}x\overline{v^{-1}}\right)\prod_{\beta\neq \alpha} \left(\Delta_\beta\left(\overline{u_{<j+1}}^{-1}x\overline{v^{-1}}\right)\right)^{-C_{\beta\alpha}}}.
\]

If $a$ is a vertex on level $\gamma$ not in the $u_{<j}$-part, then using Propositions \ref{3.17} and \ref{u-v} it is not hard to see from the final pictures that multiplying $\overline{u_{<i}}^{-1}$ and $\overline{u_{<j+1}}^{-1}$ on the left of $x\overline{v^{-1}}$ yield the same degrees of $X_a$ in the middle, and hence 
\[
\deg_{X_a}\Xi_\vec{i}^*\left(X_b\right)=0.
\]

This reduces this case to the situation where $a$ is a vertex contained in the $u_{<j}$-part. Under such circumstance, it is helpful to rewrite $\Xi_\vec{i}^*\left(X_b\right)$ as 
\begin{equation}\label{degDT}
\Xi_\vec{i}^*\left(X_b\right)=\frac{\Delta_\alpha\left(\overline{u_{<i}}^{-1}x\overline{v^{-1}}\right)\prod_\beta\left(\Delta_\beta\left(\overline{u_{<j+1}}^{-1}x\overline{v^{-1}}\right)\right)^{C_{\beta\alpha}}}{\Delta_\alpha\left(\overline{u_{<j+1}}^{-1}x\overline{v^{-1}}\right)\prod_\beta \left(\Delta_\beta\left(\overline{u_{<i}}^{-1}x\overline{v^{-1}}\right)\right)^{C_{\beta\alpha}}}.
\end{equation}
Such expression may still look complicated, but it is actually easy to compute, especially if we only care about the leading power: note that if one tries to compute $\prod_\beta\left( \Delta_\beta \left(n_-\left(\prod_\mu t_\mu^{H^\mu}\right)n_+\right)\right)^{C_{\beta\alpha}}$ for some $n_\pm \in N_\pm$ and $t_\mu\in \mathbb{C}^*$, all you need is just to realize that
\[
\sum_\beta C_{\beta\alpha}\inprod{H^\mu}{\omega_\beta}=\sum_{\beta,\nu} C_{\beta\alpha}\left(C^{-1}\right)_{\mu\nu}\inprod{H_\nu}{\omega_\beta}=\delta_{\alpha\mu},
\]
which implies immediately that
\[
\prod_\beta\left( \Delta_\beta \left(n_-\left(\prod_\mu t_\mu^{H^\mu}\right)n_+\right)\right)^{C_{\beta\alpha}}=t_\alpha.
\]

Now let's suppose that in the final picture of $\overline{u_{<i}}x\overline{v^{-1}}$ the exponent of $X_a$ in the circle on level $\beta$ is $p_\beta$ for each $\beta$, and in the final picture of $\overline{u_{<j+1}}^{-1}x\overline{v^{-1}}$ the exponent of $X_a$ in the circle on level $\beta$ is $q_\beta$ for each $\beta$. From Equation \ref{degDT} we see that 
\begin{equation}
\begin{split}\label{degree}
\deg_{X_a}\Xi_\vec{i}^*\left(X_b\right)=&\inprod{\sum_\beta p_\beta H^\beta}{\omega_\alpha}+q_\alpha-\inprod{\sum_\beta q_\beta H^\beta}{\omega_\alpha}-p_\alpha\\
=&\left(\sum_\beta p_\beta\left(C^{-1}\right)_{\beta\alpha}\right)+q_\alpha-\left(\sum_\beta q_\beta \left(C^{-1}\right)_{\beta\alpha}\right)-p_\alpha.
\end{split}
\end{equation}

Let's further divide this case into three subcases:
\begin{itemize}
    \item $X_a$ is in the $u_{<i}$-part;
    \item $X_a$ is not in $u_{<i}$-part but is in $u_{<j-1}$-part;
    \item $X_a$ is not in $u_{<j-1}$-part but is in $u_{<j}$-part.
\end{itemize}
Note that the second subcase implies that $X_a$ is not on level $\alpha$ (and hence $\gamma\neq \alpha$) since $\alpha(i)$ and $\alpha(j)$ are two consecutive nodes, and the last case allows only one possible choice of $X_a$, namely $X_a=X_b$.

Let's first look at the subcase where $X_a$ is in the $u_{<i}$-part. Note that to go from $\overline{u_{<i}}^{-1}x\overline{v^{-1}}$ to $\overline{u_{<j+1}}^{-1}x\overline{v^{-1}}$ we need to first multiply $\overline{s}_{-\alpha(i)}^{-1}=\overline{s}_\alpha^{-1}$ on the left to get to $\overline{u_{<i+1}}^{-1}x\overline{v^{-1}}$, and then multiply by a sequence of $\overline{s}_\beta^{-1}$ on the left for various $\beta\neq \alpha$ to get to $\overline{u_{<j}}^{-1}x\overline{v^{-1}}$, and then finally multiply another $\overline{s}_{-\alpha(j)}^{-1}=\overline{s}_\alpha^{-1}$ on the left to get to $\overline{u_{<j+1}}^{-1}x\overline{v^{-1}}$. Let's denote the counterparts of $p_\beta$ in $\overline{u_{<i+1}}^{-1}x\overline{v^{-1}}$ as $p'_\beta$ and denote the counterparts of $q_\beta$ in $\overline{u_{<j}}^{-1}x\overline{v^{-1}}$ as $q'_\beta$. Since $X_a$ is in the $u_{<i}$-part and $i<j$, it follows from Proposition \ref{3.20} (with $\alpha$ being the choice of simple root $\alpha$ in the statement) that
\[ \sum_\beta q_\beta \left(C^{-1}\right)_{\beta\alpha}= -q'_\alpha+\sum_\beta q'_\beta \left(C^{-1}\right)_{\beta\alpha} \quad \text{and}\quad \sum_\beta p'_\beta \left(C^{-1}\right)_{\beta\alpha}= -p_\alpha+\sum_\beta p_\beta \left(C^{-1}\right)_{\beta\alpha}.
\]
But then by Proposition \ref{3.21} we also have $q_\alpha=q'_\alpha-C_{\alpha\alpha}q'_\alpha=-q'_\alpha$; therefore the first equality above can be turned into
\[
-q_\alpha+\sum_\beta q_\beta \left(C^{-1}\right)_{\beta\alpha}=\sum_\beta q'_\beta \left(C^{-1}\right)_{\beta\alpha}.
\]
But then we also know from Proposition \ref{3.20} that $\sum_\beta q'_\beta\left(C^{-1}\right)_{\beta\alpha}=\sum_\beta p'_\beta\left(C^{-1}\right)_{\beta\alpha}$ because the sequence of multiplications by $\overline{s}_\beta^{-1}$ happens between the multiplications by $\overline{s}_{-\alpha(i)}^{-1}$ and $\overline{s}_{-\alpha(j)}^{-1}$ only involve $\beta\neq \alpha$. Therefore we can conclude that
\[
-p_\alpha+\sum_\beta p_\beta\left(C^{-1}\right)_{\beta\alpha}=\sum_\beta p'_\beta\left(C^{-1}\right)_{\beta\alpha}=\sum_\beta q'_\beta\left(C^{-1}\right)_{\beta\alpha}=-q_\alpha+\sum_\beta q_\beta \left(C^{-1}\right)_{\beta\alpha},
\]
which implies that $\deg_{X_a}\Xi_\vec{i}^*\left(X_b\right)=0$.

Next let's turn to the second subcase, with $X_a$ being not in $u_{<i}$-part but in $u_{<j-1}$-part. By Proposition \ref{u-v} we know that $p_\beta=\delta_{\beta\gamma}$; in particular, since $\gamma\neq \alpha$ in this subcase, $p_\alpha=0$. Again let $q'_\beta$ as defined in the last subcase; then by the same argument we can deduce that
\[
-p_\alpha+\sum_\beta p_\beta\left(C^{-1}\right)_{\beta\alpha}=\sum_\beta p_\beta\left(C^{-1}\right)_{\beta\alpha}=\sum_\beta q'_\beta\left(C^{-1}\right)_{\beta\alpha}=-q_\alpha+\sum_\beta q_\beta \left(C^{-1}\right)_{\beta\alpha},
\]
which also implies that $\deg_{X_a}\Xi_\vec{i}^*\left(X_b\right)=0$.

This leaves us with one remaining subcase, namely $X_a=X_b$. By Proposition \ref{u-v} and Proposition \ref{3.20} we know that $p_\beta=\delta_{\alpha\beta}$ and $q_\beta=\delta_{\alpha\beta}-C_{\alpha\beta}$; plug these into Equation \eqref{degree} we get
\begin{align*}
\deg_{X_b}\Xi_\vec{i}^*\left(X_b\right)=&\left(\sum_\beta \delta_{\alpha\beta}\left(C^{-1}\right)_{\beta\alpha}\right)+(-1)-\left(\sum_\beta \left(\delta_{\alpha\beta}-C_{\alpha\beta}\right)\left(C^{-1}\right)_{\beta\alpha}\right)-1\\ =&\left(C^{-1}\right)_{\alpha\alpha}-1-\left(\left(C^{-1}\right)_{\alpha\alpha}-1\right)-1\\
=&-1,
\end{align*}
which concludes the proof of case (i).

Let's now turn to case (ii). Again suppose the vertex (string) $b$ is on level $\alpha$ cut out by nodes $\alpha(i)$ and $\alpha(j)$, but this time with $-\alpha(i)=\alpha(j)=\alpha$. Then we know that
\begin{equation} \label{bigchunk}
\begin{split}
\Xi_\vec{i}^*\left(X_b\right)=&\frac{\left(\prod_{\beta\neq \alpha}\left(\Delta_\beta\left(\overline{u_{<i}}^{-1}x\overline{v^{-1}}\right)\right)^{-C_{\beta\alpha}}\right)\left(\prod_{\beta\neq \alpha}\left(\Delta_\beta\left(\overline{u}^{-1}x\overline{\left(v_{>j}\right)^{-1}}\right)\right)^{-C_{\beta\alpha}}\right)}{\Delta_\alpha\left(\overline{u_{<i}}^{-1}x\overline{v^{-1}}\right)\Delta_\alpha\left(\overline{u}^{-1}x\overline{\left(v_{>j}\right)^{-1}}\right)\prod_{\beta\neq \alpha}\left(\Delta_\beta\left(\overline{u}^{-1}x\overline{v^{-1}}\right)\right)^{-C_{\beta\alpha}}}\\
=&\frac{\Delta_\alpha\left(\overline{u_{<i}}^{-1}x\overline{v^{-1}}\right)\Delta_\alpha\left(\overline{u}^{-1}x\overline{\left(v_{>j}\right)^{-1}}\right)\prod_\beta\left(\Delta_\beta\left(\overline{u}^{-1}x\overline{v^{-1}}\right)\right)^{C_{\beta\alpha}}}{\left(\Delta_\alpha\left(\overline{u}^{-1}x\overline{v^{-1}}\right)\right)^2\left(\prod_\beta\left(\Delta_\beta\left(\overline{u_{<i}}^{-1}x\overline{v^{-1}}\right)\right)^{C_{\beta\alpha}}\right)\left(\prod_\beta\left(\Delta_\beta\left(\overline{u}^{-1}x\overline{\left(v_{>j}\right)^{-1}}\right)\right)^{C_{\beta\alpha}}\right)}.
\end{split}
\end{equation}

This time the easy ones are the cluster variables $X_a$ that are in neither the $u$-part nor the $v$-part, for which there is at most one on each level. From the Proposition \ref{u-v} we know that their exponents do not change as we move triangles across. Suppose $X_a$ is on level $\gamma$ to begin with. Then by simple computation we get that
\[
\deg_{X_a}\Xi_\vec{i}^*\left(X_b\right)=2\inprod{H^\gamma}{\omega_\alpha}+\delta_{\alpha\gamma}-2\inprod{H^\gamma}{\omega_\alpha}-2\delta_{\alpha\gamma}=-\delta_{\alpha\gamma}=-\delta_{ab}.
\]

Therefore to finish case (ii), we only need to show that $\deg_{X_a}\Xi_\vec{i}^*\left(X_b\right)=0$ for any $X_a$ on level $\gamma$ that is in either the $u$-part or the $v$-part. Due to the symmetry of the arguments, we will only consider the subcase where $X_a$ is in the $u$-part of the reduced word. Suppose in the final picture of $\overline{u}^{-1}x\overline{v^{-1}}$ the exponent of $X_a$ in the circle on level $\beta$ is $p_\beta$ for each $\beta$. Then we learn from (the symmetric version of) Proposition \ref{3.17} that in the final picture of $\overline{u}^{-1}x\overline{\left(v_{>j}\right)^{-1}}$, the exponent of $X_a$ in the circle on level $\beta$ is also $p_\beta$ for each $\beta$. Let $q_\beta$ be the counterparts of $p_\beta$ for $\overline{u_{<i}}^{-1}x\overline{v^{-1}}$. Then from equation \eqref{bigchunk} we can deduce that
\begin{equation}\label{degreeofchunk}
\begin{split}
\deg_{X_a} \Xi_\vec{i}^*\left(X_b\right)=&\inprod{\sum_\beta q_\beta H^\beta}{\omega_\alpha}+\inprod{\sum_\beta p_\beta H^\beta}{\omega_\alpha}+p_\alpha-2\inprod{\sum_\beta p_\beta H^\beta}{\omega_\alpha}-q_\alpha-p_\alpha\\
=&\left(\sum_\beta q_\beta \left(C^{-1}\right)_{\beta\alpha}\right)-\left(\sum_\beta p_\beta \left(C^{-1}\right)_{\beta\alpha}\right)-q_\alpha.
\end{split}
\end{equation}

For the final picture of $\overline{u_{<i}}^{-1}x\overline{v^{-1}}$, we need to divide it into two subcases depending on whether or not $X_a$ is in the $u_{<i}$-part. In particular, if $X_a$ is not in the $u_{<i}$-part then $\gamma\neq \alpha$, because $\alpha(i)$ is supposely the last opposite simple root on level $\alpha$. 

If $X_a$ is not in $u_{<i}$-part, we know from Proposition \ref{u-v} that $p_\beta=q_\beta=\delta_{\beta\gamma}$ and particularly $p_\alpha=\delta_{\alpha\gamma}=0$; plugging these into Equation \eqref{degreeofchunk} we see immediately that $\deg_{X_a}\Xi_\vec{i}^*\left(X_b\right)=0$.

If $X_a$ is in $u_{<i}$-part, we know from Proposition \ref{3.21} that $p_\beta=q_\beta-C_{\gamma\beta}q_\gamma$. Plug this into Equation \eqref{degreeofchunk} we see that
\begin{align*}
\deg_{X_a}\Xi_\vec{i}^*\left(X_b\right)=&\left(\sum_\beta q_\beta\left(C^{-1}\right)_{\beta\alpha}\right)-\left(\sum_\beta \left(q_\beta-C_{\gamma\beta}q_\gamma\right)\left(C^{-1}\right)_{\beta\alpha}\right)-q_\alpha\\
=&\left(\sum_\beta q_\beta\left(C^{-1}\right)_{\beta\alpha}\right)-\left(\sum_\beta q_\beta\left(C^{-1}\right)_{\beta\alpha}\right)+q_\alpha-q_\alpha\\
=&0.
\end{align*}
This finally completes the whole proof of Proposition \ref{1sthalf}.
\end{proof}

\subsection{Cluster Nature of \texorpdfstring{$\Xi_\vec{i}$}{}}\label{3.3} In this subsection we will continue to let $\vec{i}$ be the reduced word we have used in last subsection and prove the claim that the map $\Xi_\vec{i}:=\psi_\vec{i}\circ \chi_\vec{i}$, as a birational automorphism on $\mathcal{X}^{u,v}$, is indeed a cluster transformation (see Definition \ref{clustertran}).

To simplify notations in this subsection, we will denote letters of the reduced word $\vec{i}$ as follows:
\[
\vec{i}:=(\underbrace{-\alpha(1),\dots, -\alpha(m)}_\text{$u$-part}, \underbrace{\beta(1),\dots, \beta(n)}_\text{$v$-part}).
\]
In this way, $\left(\alpha(1),\dots, \alpha(m)\right)$ will be a reduced word for the Weyl group element $u$, and $\left(\beta(1),\dots, \beta(n)\right)$ will be a reduced word for the Weyl group element $v$. 

Let's recap what we know about $\Xi_\vec{i}$ so far. From the definition $\Xi_\vec{i}:=\psi_\vec{i}\circ \chi_\vec{i}$ and Proposition \ref{clustertwist} we know that the top half of the following diagram commutes, and from Proposition \ref{twistflag} we know that the bottom half of the following diagram commutes, so we can conclude that the whole diagram commutes.
\[
\xymatrix{\mathcal{X}^{u,v} \ar@{-->}[r]^{\Xi_\vec{i}} \ar[d]_\cong^\chi & \mathcal{X}^{u,v} \ar[d]^\cong_\chi \\
H\backslash G^{u,v}/H \ar[r]^{\iota\circ \tw } \ar[d]_\cong & H\backslash G^{u,v}/H \ar[d]^\cong \\
\conf^{u,v}(\mathcal{B})\ar[r]_\xi & \conf^{u,v}(\mathcal{B})}
\]
In particular, the vertical composite maps are mapping $\left(X_a\right)$ to the configuration $\left[\vcenter{\vbox{\xymatrix{B_+ \ar[r]^u \ar@{-}[d] & x.B_+ \ar@{-}[d] \\
B_- \ar[r]_{v^*} & x.B_-}}}\right]$ for any $x$ with $H\backslash x/H=\chi_\vec{i}\left(X_a\right)$.

Let's also recall from \eqref{etaxi} what the map $\xi:\conf^{u,v}\left(\mathcal{B}\right)\rightarrow \conf^{u,v}(\mathcal{B})$ does. Starting with a configuration of four Borel subgroups $\left[B_1,B_2,B_3,B_4\right]$, we break the two vertical edges using Corollary \ref{2.8}, and then take out the other square diagram, and then apply the $*$ involution to it.
\[
\left[\vcenter{\vbox{\xymatrix{B_1 \ar[r]^u \ar@{-}[d] & B_2 \ar@{-}[d] \\
B_3 \ar[r]_{v^*} & B_4}}}\right]
\quad \quad \rightsquigarrow \quad \quad 
\left[\vcenter{\vbox{\xymatrix{&B_1 \ar[r]^u \ar@{-}[dd] & B_2 \ar@{-}[dd] & \\
B_5 \ar[ur]^{u^l}\ar@{-}[urr] & & & B_6 \ar[ul]_{v} \\
& B_3 \ar[r]_{v^*} \ar@{-}[urr] \ar[ul]^{u^*} & B_4 \ar[ur]_{v^l} &}}}\right]
\quad \quad \rightsquigarrow \quad \quad 
\left[\vcenter{\vbox{\xymatrix{B_3^* \ar[r]^u \ar@{-}[d] & B_5^* \ar@{-}[d] \\
B_6^* \ar[r]_{v^*} & B_2^*}}}\right]
\]

Let's put aside the $*$ involution for now and focus on how to get the new square diagram $\vcenter{\vbox{\xymatrix{B_3 \ar[r]^{u^*} \ar@{-}[d] & B_5 \ar@{-}[d] \\
B_6 \ar[r]_{v} & B_2}}}$ from the old one $\vcenter{\vbox{\xymatrix{B_1 \ar[r]^u \ar@{-}[d] & B_2 \ar@{-}[d] \\
B_3 \ar[r]_{v^*} & B_4}}}$. We claim that this can be done via a sequence of ``tilting'', which we represent by the diagram below. One can think of the diagram below as a refinement of the hexagon diagram in the middle above: we just break the arrows $u^*$, $u$, $v^*$, and $v$ into even more pieces using Corollary \ref{2.8} and the reduced words $\left(\alpha(1),\dots, \alpha(m)\right)$ and $\left(\beta(1),\dots, \beta(n)\right)$ for $u$ and $v$ respectively. Note that in particular, the new Borel subgroups satisfy the relative position condition $\xymatrix{B_u^{(k)} \ar@{-}[r] & B_{u^*}^{(k)}}$ and $\xymatrix{B_v^{(l)} \ar@{-}[r] & B_{v^*}^{(l)}}$ for all $1\leq k\leq m-1$ and $1\leq l\leq n-1$.
\begin{equation}\label{diagram}
\tikz{
\node (u0) at +(135:5) [] {$B_1$};
\node (u1) at +(112.5:5) [] {$B_u^{(1)}$};
\node (u2) at +(90:5) [] {$B_u^{(2)}$};
\node (u3) at +(67.5:5) [] {$\cdots$};
\node (um) at +(45:5) [] {$B_2$};
\node (u*0) at +(-135:5) [] {$B_3$};
\node (u*1) at +(-153:5) [] {$B_{u^*}^{(1)}$};
\node (u*2) at +(-171:5) [] {$B_{u^*}^{(2)}$};
\node (u*3) at +(171:5) [] {$\vdots$};
\node (u*m) at +(153:5) [] {$B_5$};
\node (v*l) at +(-45:5) [] {$B_4$};
\node (v*1) at +(-112.5:5) [] {$B_{v^*}^{(1)}$};
\node (v*2) at +(-90:5) [] {$B_{v^*}^{(2)}$};
\node (v*3) at +(-67.5:5) [] {$\cdots$};
\node (v0) at +(-27:5) [] {$B_6$};
\node (v1) at +(-9:5) [] {$B_v^{(1)}$};
\node (v2) at +(9:5) [] {$B_v^{(2)}$};
\node (v3) at +(27:5) [] {$\vdots$};
\draw [->] (u0) -- (u1) node [midway, above left] {$s_{\alpha(1)}$};
\draw [->] (u1) -- (u2) node [midway, above] {$s_{\alpha(2)}$};
\draw [->] (u2) -- (u3) node [midway, above] {$s_{\alpha(3)}$};
\draw [->] (u3) -- (um) node [midway, above right] {$s_{\alpha(m)}$};
\draw [->] (u*0) -- (u*1) node [midway, below left] {$s_{\alpha(1)}^*$};
\draw [->] (u*1) -- (u*2) node [midway, left] {$s_{\alpha(2)}^*$};
\draw [->] (u*m) -- (u0) node [midway, above left] {$u^l$};
\draw [->] (u*2) -- (u*3) node [midway, left] {$s_{\alpha(3)}^*$};
\draw [->] (u*3) -- (u*m) node [midway, left] {$s_{\alpha(m)}^*$};
\draw [->] (u*0) -- (v*l) node [midway, below] {$v^*$};
\draw (um) -- (v*l);
\draw (u0) -- (u*0);
\draw [->] (u0) -- (um) node [midway, above] {$u$};
\draw (u*1) -- (u1);
\draw (u*2) -- (u2);
\draw (u*m) -- (um);
\draw [->] (u*0) -- (v*1) node [midway, below left] {$s_{\beta(1)}^*$};
\draw [->] (v*1) -- (v*2) node [midway, below] {$s_{\beta(2)}^*$};
\draw [->] (v*2) -- (v*3) node [midway, below] {$s_{\beta(3)}^*$};
\draw [->] (v*3) -- (v*l) node [midway, below right] {$s_{\beta(n)}^*$};
\draw [->] (v*l) -- (v0) node [midway, right] {$v^l$};
\draw [->] (v0) -- (v1) node [midway, right] {$s_{\beta(1)}$};
\draw [->] (v1) -- (v2) node [midway, right] {$s_{\beta(2)}$};
\draw [->] (v2) -- (v3) node [midway, right] {$s_{\beta(3)}$};
\draw [->] (v3) -- (um) node [midway, above right] {$s_{\beta(n)}$};
\draw (u*0) -- (v0);
\draw (v*1) -- (v1);
\draw (v*2) -- (v2);
}
\end{equation}

But how do we realize such tilting for the quadruple $\vcenter{\vbox{\xymatrix{B_+ \ar[r]^u \ar@{-}[d] & x.B_+ \ar@{-}[d] \\
B_- \ar[r]_{v^*} & x.B_-}}}$ corresponding to $H\backslash x/H=\chi_\vec{i}\left(X_a\right)$? First, we observe that since the map $\xi$ is well-defined for configurations of quadruples of Borel subgroups, we have the freedom to use the diagonal $G$-action to move the initial quadruple of Borel subgroups into more convenient relative positions. Note that since the reduced word $\vec{i}$ has its $u$-part before its $v$-part, we know that any lift $x$ of $H\backslash x/H=\psi_\vec{i}\left(X_a\right)$ is already Gaussian decomposable. Therefore we can draw a vertical line across the string diagram, separating the factors occur in the defining product $\psi_\vec{i}\left(X_a\right)$ into the $u$-part and the $v$-part; we hence define $x_-$ to be the product of the factors in the $u$-part and $x_+$ be the product of the factors in the $v$-part. Note that such separation is not unique because the $H$-factors in the middle can go in either the $u$-part of the $v$-part; but regardless of the choices, we always have 
\[
x_-\in B_+uB_+\cap B_- \quad \text{and} \quad x_+\in B_+\cap B_-vB_-.
\] 
Since we also know that $\xi$ does not depend on the lift $x$, to simplify notation we may also assume that the frozen cluster $\mathcal{X}$-variables associated to the open strings are all 1.

\begin{exmp}\label{exmp 3.20} Let's keep a running example for demonstration purpose. Consider the group $\PGL_4$ with the pair of Weyl group elements $\left(w_0,w_0\right)$. Let $\vec{i}$ be the reduced word $\left(-1,-2,-1,-3,-2,-1,3,2,1,3,2,3\right)$. The string diagram associated to this reduced word looks like the following.
\[
\tikz{
\node (11) at (1,2) [] {$-1$};
\node (21) at (2,1) [] {$-2$};
\node (31) at (3,0) [] {$-3$};
\node (12) at (4,2) [] {$-1$};
\node (22) at (5,1) [] {$-2$};
\node (13) at (6,2) [] {$-1$};
\node (32) at (8,0) [] {$3$};
\node (23) at (9,1) [] {$2$};
\node (14) at (10,2) [] {$1$};
\node (33) at (11,0) [] {$3$};
\node (24) at (12,1) [] {$2$};
\node (34) at (13,0) [] {$3$};
\draw (0,2) -- (11) -- node [above] {$X_1$} (12) -- node [above] {$X_2$} (13) -- node [above] {$X_3$} (14) -- (14,2);
\draw (0,1) -- (21) -- node [above] {$X_4$} (22) -- node [above right] {$X_5$} (23) -- node [above] {$X_6$} (24) -- (14,1);
\draw (0,0) -- (31) -- node [above] {$X_7$} (32) -- node [above] {$X_8$} (33) -- node [above] {$X_9$} (34) -- (14,0);
\draw [dashed] (7,-1) -- (7,3);
\node at (3,-1) [] {$u$-part};
\node at (11,-1) [] {$v$-part};
}
\]
Then according to the description above, our choice of lift $x$ of $H\backslash x/H=\chi_\vec{i}\left(X_a\right)$ (with all frozen variable assigned to be 1) is factored as $x_-x_+$ where
\[
x_-=e_{-1}X_1^{H^1}e_{-2}X_4^{H^2} e_{-3}e_{-1} X_2^{H^1} e_{-2} X_7^{H^3}e_{-1}=\begin{pmatrix} X_1X_2X_4X_7 & 0 & 0 & 0 \\ 
X_4X_7\left(1+X_2+X_1X_2\right) & X_4X_7 & 0 & 0 \\
X_7 \left(1+X_4+X_2X_4\right) & X_7 \left(1+X_4\right) & X_7 & 0 \\
X_7 & X_7 & X_7 & 1\end{pmatrix},
\]
\begin{align*}
x_+=&X_5^{H^2}X_3^{H^1}  e_3e_2X_8^{H^3}e_1X_6^{H^2}e_3e_2X_9^{H^3}e_3\\
=&\begin{pmatrix} X_3X_5X_6X_8X_9 & X_3X_5X_6X_8X_9 & X_3X_5X_6X_8X_9 & X_3X_5X_6X_8X_9 \\ 
0 & X_5X_6X_8X_9 & X_5X_8x_9\left(1+X_6\right) & X_5X_8\left(1+X_9+X_6X_9\right) \\
0 & 0 & X_8 X_9 & 1+X_8+X_8X_9 \\
0 & 0 & 0 & 1
\end{pmatrix} .
\end{align*}
\end{exmp}

Now back to the general case. If we act on the initial quadruple by $x_-^{-1}$, we see that as configurations,
\[
\left[\vcenter{\vbox{\xymatrix{B_+ \ar[r]^u \ar@{-}[d] & x.B_+ \ar@{-}[d] \\
B_- \ar[r]_{v^*} & x.B_+}}}\right]=\left[\vcenter{\vbox{\xymatrix{x_-^{-1}.B_+ \ar[r]^u \ar@{-}[d] & B_+ \ar@{-}[d] \\
B_- \ar[r]_{v^*} & x_+.B_-}}}\right].
\]
So instead of tilting the initial quadruple of Borel subgroups, we decide to tilt the quadruple 
\[
\vcenter{\vbox{\xymatrix{B_1 \ar[r]^u \ar@{-}[d] & B_2 \ar@{-}[d] \\
B_3 \ar[r]_{v^*} & B_4}}}=\vcenter{\vbox{\xymatrix{x_-^{-1}.B_+ \ar[r]^u \ar@{-}[d] & B_+ \ar@{-}[d] \\
B_- \ar[r]_{v^*} & x_+.B_-}}}.
\]

Let's describe how we can find $B_5$ first, and to do so we will focus on the upper left semicircle. Start with the edge 
\[
\left(\xymatrix{B_1\ar@{-}[r] & B_3}\right) \quad = \quad \left( \xymatrix{x_-^{-1}.B_+\ar@{-}[r] & B_-}\right) \quad =\quad \left(  \xymatrix{x_-^{-1}.B_+\ar@{-}[r] & x_-^{-1}.B_-}\right),
\]
we first change the $x_-^{-1}$ in front of the last pair of Borel subgroups to $\left(e_{\alpha(1)}e_{-\alpha(1)}.x_-\right)^{-1}$ and define
\[
\left(\xymatrix{B_u^{(1)}\ar@{-}[r] & B_{u^*}^{(1)}} \right)\quad := \quad \left( \xymatrix{\left(e_{\alpha(1)}e_{-\alpha(1)}^{-1}x_-\right)^{-1}. B_+ \ar@{-}[r] & \left(e_{\alpha(1)}e_{-\alpha(1)}^{-1}x_-\right)^{-1}.B_- }\right).
\]
We should check the relative position conditions $\xymatrix{B_1 \ar[r]^{s_{\alpha(1)}} & B_u^{(1)}}$ and $\xymatrix{B_3 \ar[r]^{s_{\alpha(1)}^*} & B_{u^*}^{(1)}}$ to make sure we are on the right track. But this is not too bad; note that
\begin{equation}\label{relative}
\begin{split}
\left(x_-^{-1}.B_+, \left(e_{\alpha(1)}e_{-\alpha(1)}^{-1}x_-\right)^{-1}.B_+\right)\sim \left(B_+, \left(e_{-\alpha(1)}e_{\alpha(1)}^{-1}\right).B_+\right)\sim \left(B_+, e_{-\alpha(1)}.B_+\right), \\
\left(x_-^{-1}.B_-, \left(e_{\alpha(1)}e_{-\alpha(1)}^{-1}x_-\right)^{-1}.B_-\right)\sim \left(B_-,\left(e_{-\alpha(1)}e_{\alpha(1)}^{-1}\right).B_-\right) \sim \left(B_-,e_{\alpha(1)}^{-1}.B_-\right);
\end{split}
\end{equation}
then our desired relative position conditions follow from the fact that
\[
e_{-\alpha(1)}\in B_+s_{\alpha(1)}B_+\cap N_- \quad \text{and} \quad e_{\alpha(1)}^{-1}\in N_+\cap B_-s_{\alpha(1)}B_-.
\]

But we should also say the reason why we choose $B_u^{(1)}$ and $B_{u^*}^{(1)}$ like this. Let's turn to the $u$-part of the original string diagram, which is also the string diagram for $x_-$ and see what happens when we multiply $x_-$ by $e_{\alpha(1)}e_{-\alpha(1)}^{-1}$ on the left. Note that with all frozen $\mathcal{X}$-variables set to 1, the first non-trivial factor in the defining product for $x_-$ is $e_{-\alpha(1)}$; thus multiplication by $e_{\alpha(1)}e_{-\alpha(1)}^{-1}$ on the left is the same as changing this factor from $e_{-\alpha(1)}$ to $e_{\alpha(1)}$. Then we can use a sequence of cluster mutations to move this new factor $e_{\alpha(1)}$ all the way to the right of the whole $u$-part. This implies that $e_{\alpha(1)}e_{-\alpha(1)}^{-1}x_-$ is in the image of the map 
\[
\chi_{\left(-\alpha(2), -\alpha(3), \dots, -\alpha(m), \alpha(1)\right)}.
\]
Note that this is very analogous to the triangle flipping and moving we have done in last subsection! Guided by such observation, we might lead to think that the next pair $\xymatrix{B_u^{(2)}\ar@{-}[r] & B_{u^*}^{(2)}}$ can be obtained by multiplying $e_{\alpha(2)}e_{-\alpha(2)}^{-1}$ on the left of $e_{\alpha(1)}e_{-\alpha(1)}^{-1}x_-$ and then move the new $\alpha(2)$ node to the right using cluster mutations; however, this is incorrect.

This is incorrect because there is a subtlety here: since $e_{\alpha(1)}e_{-\alpha(1)}^{-1}x_-\in G_{ad}$ is at the group level not the double quotient level, when we apply a sequence of cluster mutations to move the node $\alpha(1)$ from left to right, we inevitably introduce some non-trivial frozen variables on the left of the string diagram. Fortunately this is not too much a problem: we can let $h_1\in H$ be the product of the factors corresponding to these frozen variables, and $e_{\alpha(1)}e_{-\alpha(1)}^{-1}x_-$ can then be factored as $h_1e_{-\alpha(2)}\dots$; then instead of multiplying by $e_{\alpha(2)}e_{-\alpha(2)}^{-1}$ on the left we can multiply by $e_{\alpha(2)}e_{-\alpha(2)}^{-1}h_1^{-1}$ and play the same game again. In other words, we would define 
\begin{align*}
&\left(\xymatrix{B_u^{(2)} \ar@{-}[r] & B_{u^*}^{(2)}}\right)\\
:=\quad &\left(\xymatrix{\left(e_{\alpha(2)}e_{-\alpha(2)}^{-1}h_1^{-1}e_{\alpha(1)}e_{-\alpha(1)}^{-1}x_-\right)^{-1}.B_+ \ar@{-}[r] & \left(e_{\alpha(2)}e_{-\alpha(2)}^{-1}h_1^{-1}e_{\alpha(1)}e_{-\alpha(1)}^{-1}x_-\right)^{-1}.B_-}\right).
\end{align*}
One can verify the relative position conditions $\xymatrix{B_u^{(1)}\ar[r]^{s_{\alpha(2)}} & B_u^{(2)}}$ and $\xymatrix{B_{u^*}^{(1)}\ar[r]^{s_{\alpha(2)}^*} & B_{u^*}^{(2)}}$ in a computation similar to \eqref{relative}; to save space, let's just do $\xymatrix{B_u^{(1)}\ar[r]^{s_{\alpha(2)}} & B_u^{(2)}}$ here:
\begin{align*}
\left(\left(e_{\alpha(1)}e_{-\alpha(1)}^{-1}x_-\right)^{-1}.B_+,\left(e_{\alpha(2)}e_{-\alpha(2)}^{-1}h_1^{-1}e_{\alpha(1)}e_{-\alpha(1)}^{-1}x_-\right)^{-1}.B_+ \right)\sim & \left(B_+,\left(e_{\alpha(2)}e_{-\alpha(2)}^{-1}h_1^{-1}\right)^{-1}.B_+\right)\\
\sim & \left(B_+,e_{-\alpha(2)}.B_+\right),
\end{align*}
and the relative position condition follows from the fact that $e_{-\alpha(2)}\in B_+s_{\alpha(2)}B_+\cap N_-$.

\begin{exmp} Let's continue to use the example \ref{exmp 3.20} to see how we compute $h_1$. If we start with the factorization $x_-=e_{-1}X_1^{H^1}e_{-2}X_4^{H^2}e_{-3}e_{-1} X_2^{H^1} e_{-2} X_7^{H^3}e_{-1}$ and multiply $e_1e_{-1}^{-1}$ on the left, we see that
\[
e_1e_{-1}^{-1}x_-=e_1X_1^{H^1}e_{-2}X_4^{H^2}e_{-3}e_{-1} X_2^{H^1} e_{-2} X_7^{H^3}e_{-1},
\]
which corresponds to the following string diagram (note that the first node changes from $-1$ to $1$ due to the left multiplication by $e_1e_{-1}^{-1}$.
\[
\tikz{
\node (11) at (1,2) [] {$1$};
\node (21) at (2,1) [] {$-2$};
\node (31) at (3,0) [] {$-3$};
\node (12) at (4,2) [] {$-1$};
\node (22) at (5,1) [] {$-2$};
\node (13) at (6,2) [] {$-1$};
\draw (0,2) -- (11) -- node [above] {$X_1$} (12) -- node [above] {$X_2$} (13) -- (7,2);
\draw (0,1) -- (21) -- node [above] {$X_4$} (22) -- (7,1);
\draw (0,0) -- (31) -- node [above] {$X_7$} (7,0);
}
\]

We can use cluster mutations to reparametrize $e_1e_{-1}^{-1}x_-$ using the following string diagram (see Proposition \ref{birational} and Appendix \ref{B}).
\[
\tikz{
\node (21) at (2,1) [] {$-2$};
\node (31) at (3,0) [] {$-3$};
\node (12) at (4,2) [] {$-1$};
\node (22) at (5,1) [] {$-2$};
\node (13) at (6,2) [] {$-1$};
\node (14) at (7.5,2) [] {$1$};
\draw (0.5,2) -- node [above] {$Y'_1$} (12) -- node [above] {$X'_1$} (13) -- node [above] {$X'_2$} (14) -- node [above] {$Y'_2$} (9,2);
\draw (0.5,1) -- (21) -- node [above] {$X'_4$} (22) -- node [above] {$Y'_3$} (9,1);
\draw (0.5,0) -- (31) -- node [above] {$X'_7$} (9,0);
}
\]
The new variables are related to the old one as below.
\begin{align*}
Y'_1=& 1+X_1 & X'_1=&\frac{1+X_2+X_1X_2}{X_1} & X'_2=& \frac{1}{X_2\left(1+X_1\right)} & Y'_2=&1+X_2+X_1X_2 \\
X'_4=&\frac{X_1X_4}{1+X_1} & Y'_3=&\frac{X_2\left(1+X_1\right)}{1+X_2+X_1X_2} & X'_7=&X_7 & &
\end{align*}
Note that before the frozen variables on the left are all 1, but after the sequence of mutations that moves the node 1 from left to right, we get a non-zero frozen variable on the top left that is $Y'_1$. Therefore we will need to set $h_1=\left(Y'_1\right)^{H^1}=\begin{pmatrix} 1+X_1 & 0 & 0 & 0 \\ 0 & 1 & 0 & 0 \\ 0 & 0 & 1 & 0 \\ 0 & 0 & 0 & 1\end{pmatrix}$ and apply $e_2e_{-2}^{-1}h_1^{-1}$ when we change the next left most node from $-2$ to $2$.
\end{exmp}

\begin{rmk}\label{left-mult} Note that by left multiplication $x_-\mapsto e_1e_{-1}^{-1}x_-$ we have moved from $\PGL_4^{w_0,e}$ to $\PGL_4^{s_1w_0,s_1}$. This may seem odd at the first glance, but we should point out that if we quotient out $H$ on the left, the seed for $H\left\backslash \PGL_4^{w_0,e}\right.$ (defined by the $u$-part of $\vec{i}$) and the seed for $H\left\backslash \PGL_4^{s_1w_0,s_1}\right.$ (defined by the string diagrams above) are actually the same, which can be encoded by the following quiver.
\[
\tikz{
\node (1) at (0,2) [] {$1$};
\node (2) at (2,2) [] {$2$};
\node (3) at (4,2) [] {$3$};
\node (4) at (1,1) [] {$4$};
\node (5) at (3,1) [] {$5$};
\node (7) at (2,0) [] {$7$};
\draw [->] (1) -- (2);
\draw [->] (2) -- (3);
\draw [->] (2) -- (4);
\draw [->] (4) -- (1);
\draw [->] (4) -- (5);
\draw [->] (7) -- (4);
\draw [->] (5) -- (2);
\draw [dashed, ->] (3) -- (5);
\draw [dashed, ->] (5) -- (7);
}
\]
This is because the node that changes from $-1$ to $1$ via the left multiplication $e_1e_{-1}^{-1}$ is the left-most node and it does not contribute to the unfrozen part of the seed. In other words, left multiplying by $e_1e_{-1}^{-1}$ can be seen as a \textbf{cluster isomorphism} once we quotient out $H$ on the left. This happens again when we left multiply $e_1e_{-1}^{-1}x_-$ by $e_2e_{-2}^{-1}h_1^{-1}$, which changes the underlying variety from the $H\left\backslash \PGL_4^{s_1w_0,s_1}\right.$ to $H\left\backslash \PGL_4^{s_2s_1w_0,s_1s_2}\right.$. A similar fact also applies to changing the right most node from a simple root to its opposite and at the same time quotienting out $H$ on the right. This observation is a key ingredient we use to prove the cluster nature of $\Xi_\vec{i}$ in Proposition \ref{2ndhalf}.
\end{rmk}

Now back to the general story. We can continue the tilting process recursively: after multiplying $e_{\alpha(k)}e_{-\alpha(k)}^{-1}h_{k-1}^{-1}$ on the left of 
\[
e_{\alpha(k-1)}e_{-\alpha(k-1)}^{-1}h_{k-2}^{-1}\dots e_{\alpha(1)}e_{-\alpha(1)}^{-1}x_-,
\]
and defining the pair $\xymatrix{B_u^{(k)}\ar@{-}[r] & B_{u^*}^{(k)}}$, we obtain a new element $h_k\in H$ which is the product of factors corresponding to the new frozen $\mathcal{X}$-variables emerged from the sequence of cluster mutations that moves the node $\alpha(k)$ behind the remaining opposite simple root nodes. Then we just need to define
\begin{align*}
&\left(\xymatrix{B_u^{(k+1)}\ar@{-}[r] & B_{u^*}^{(k+1)}}\right)\\
:=\quad & \left(\xymatrix{ \left(e_{\alpha(k)}e_{-\alpha(k)}^{-1}h_{k-1}^{-1}\dots e_{\alpha(1)}e_{-\alpha(1)}^{-1}x_-\right)^{-1}.B_+ \ar@{-}[r] & \left(e_{\alpha(k)}e_{-\alpha(k)}^{-1}h_{k-1}^{-1}\dots e_{\alpha(1)}e_{-\alpha(1)}^{-1}x_-\right)^{-1}.B_- }\right)
\end{align*}
By computation similar to \eqref{relative} one can verify the relative position conditions $\xymatrix{B_u^{(k-1)} \ar[r]^{s_{\alpha(k)}} & B_u^{(k)}}$ and $\xymatrix{B_{u^*}^{(k-1)} \ar[r]^{s_{\alpha(k)}^*} & B_{u^*}^{(k)}}$ for all $1\leq k\leq m$.

At the end of this process, we will get the pair 
\begin{align*}
&\left(\xymatrix{B_u^{(m)}\ar@{-}[r] & B_{u^*}^{(m)}}\right)\\
:=\quad & \left(\xymatrix{ \left(e_{\alpha(m)}e_{-\alpha(m)}^{-1}h_{m-1}^{-1}\dots e_{\alpha(1)}e_{-\alpha(1)}^{-1}x_-\right)^{-1}.B_+ \ar@{-}[r] & \left(e_{\alpha(m)}e_{-\alpha(m)}^{-1}h_{m-1}^{-1}\dots e_{\alpha(1)}e_{-\alpha(1)}^{-1}x_-\right)^{-1}.B_- }\right),
\end{align*}

In order to say for sure that $B_{u^*}^{(m)}$ is the right choice for $B_5$, we need to verify the final relative position conditions
\[
B_u^{(m)}=B_+ \quad \text{and} \quad \xymatrix{B_{u^*}^{(m)} \ar[r]^{u^l} & x_-^{-1}.B_+}.
\]

It is not hard to see that the former claim $B_u^{(m)} =B_+$ is equivalent to the following lemma.

\begin{lem}\label{B+} The product $e_{\alpha(m)}e_{-\alpha(m)}^{-1}h_{m-1}^{-1}e_{\alpha(m-1)}e_{-\alpha(m-1)}^{-1}h_{m-2}^{-1}\dots h_1^{-1}e_{\alpha(1)}e_{-\alpha(1)}^{-1}x_-$ lies in $B_+$.
\end{lem}
\begin{proof} This can be seen easily from the string diagram of $x_-$ (i.e., the $u$-part of the original string diagram): recall that the left multiplication by $e_{\alpha(1)}e_{-\alpha(1)}^{-1}$ changes the node $-\alpha(1)$ to $\alpha(1)$, and then the next left multiplication by $e_{\alpha(2)}e_{-\alpha(2)}h_1^{-1}$ changes the node $-\alpha(2)$ to $\alpha(2)$, and so on. At the end there are no opposite simple root nodes left and hence the above product must be in the image of the map 
\[
\chi_{\left(\alpha(m), \alpha(m-1),\dots, \alpha(2), \alpha(1)\right)},
\]
which is inside $B_+$ by definition.
\end{proof}

For the latter claim $\xymatrix{\left(e_{\alpha(m)}e_{-\alpha(m)}^{-1}h_{m-1}^{-1}\dots h_1^{-1}e_{\alpha(1)}e_{-\alpha(1)}^{-1}x_-\right)^{-1}.B_- \ar[r]^(0.8){u^l} & x_-^{-1}.B_+}$, it suffices to show that the product
\[
\left(\overline{w}_0e_{\alpha(m)}e_{-\alpha(m)}^{-1}h_{m-1}^{-1}\dots h_1^{-1}e_{\alpha(1)}e_{-\alpha(1)}^{-1}x_-\right)x_-^{-1}=\overline{w}_0e_{\alpha(m)}e_{-\alpha(m)}^{-1}h_{m-1}^{-1}\dots h_1^{-1}e_{\alpha(1)}e_{-\alpha(1)}^{-1}
\]
lies inside $B_+u^lB_+$ (see Proposition \ref{eqposition}). But since $\overline{w}_0=\overline{u^l}\overline{u}$, it suffices to show the following lemma.

\begin{lem} The product $\overline{u}e_{\alpha(m)}e_{-\alpha(m)}^{-1}h_{m-1}^{-1}\dots h_1^{-1}e_{\alpha(1)}e_{-\alpha(1)}^{-1}$ lies in $B_+$.
\end{lem}
\begin{proof} To show this, we recall that $\overline{s}_\alpha:=e_\alpha^{-1}e_{-\alpha}e_\alpha^{-1}$; thus
\[
\overline{u}e_{\alpha(m)}e_{-\alpha(m)}^{-1}h_{m-1}^{-1}\dots h_1^{-1}e_{\alpha(1)}e_{-\alpha(1)}^{-1}=\overline{s_{\alpha(1)}\dots s_{\alpha(m-1)}} e_{\alpha(m)}^{-1}h_{m-1}^{-1}e_{\alpha(m-1)}e_{-\alpha(m-1)}^{-1}\dots h_1^{-1}e_{\alpha(1)}e_{-\alpha(1)}^{-1}.
\]
But then since $(\alpha(1),\dots, \alpha(m))$ is a reduced word of $u$, $s_{\alpha(1)}\dots s_{\alpha(m-1)}$ maps the simple root $\alpha(m)$ to a positive root; this implies that if we conjugate $e_{\alpha(m)}^{-1}$, which is an element in the unipotent subgroup corresponding to the simple root $\alpha(m)$, by the lift $\overline{s_{\alpha(1)}\dots s_{\alpha(m-1)}}$, we will get some element $n$ in the unipotent subgroup corresponding to the positive root $s_{\alpha(1)}\dots s_{\alpha(m-1)}(\alpha(m))$. In addition, since the lift $\overline{s_{\alpha(1)}\dots s_{\alpha(m-1)}}$ is an element in the normalizer subgroup $N_GH$, there exists some $h\in H$ such that $h\overline{s_{\alpha(1)}\dots s_{\alpha(m-1)}}=\overline{s_{\alpha(1)}\dots s_{\alpha(m-1)}}h_{m-1}^{-1}$. Thus we can deduce that 
\[
\overline{s_{\alpha(1)}\dots s_{\alpha(m-1)}} e_{\alpha(m)}^{-1}h_{m-1}^{-1}=n\overline{s_{\alpha(1)}\dots s_{\alpha(m-1)}}h_{m-1}^{-1}=nh\overline{s_{\alpha(1)}\dots s_{\alpha(m-1)}}.
\]
The proof is then finished by induction on $m$.
\end{proof}

So now we have successfully found $B_5$, which is
\[
B_5=\left(e_{\alpha(m)}e_{-\alpha(m)}^{-1}h_{m-1}^{-1}e_{\alpha(m-1)}e_{-\alpha(m-1)}^{-1}h_{m-2}^{-1}\dots h_1^{-1}e_{\alpha(1)}e_{-\alpha(1)}^{-1}x_-\right)^{-1}.B_-
\]
By applying a symmetric process to the lower right semicircle in Diagram \ref{diagram}, we can also find $B_6$, we will give the answer below, and leave the actual computation as an exercise for the readers:
\[
B_6=\left(x_+e_{\beta(n)}^{-1}e_{-\beta(n)}t_{n-1}^{-1}e_{\beta(n-1)}^{-1}e_{-\beta(n-1)}t_{n-2}^{-1}\dots t_1^{-1}e_{\beta(1)}^{-1}e_{-\beta(1)}\right).B_+,
\]
where $t_i$ are again elements of $H$ that arise as product of factors corresponding to non-trivial frozen $\mathcal{X}$-variables emerged from the moving (cluster mutation) process.

\begin{rmk} If one choose a lift $x$ of $H\backslash x/H$ that does not have frozen $\mathcal{X}$-variables set to 1, then one will have two extra $H$ elements $h_0$ and $t_n$ added to the expressions for $B_5$ and $B_6$ at the appropriate places (namely between $\dots e_{-\alpha(1)}^{-1}h_0^{-1}x_-$ and $x_+t_n^{-1}e_{\beta(n)}^{-1}\dots$).
\end{rmk}

Now we are ready to prove our main goal of this subsection.

\begin{prop}\label{2ndhalf} $\Xi_\vec{i}:\mathcal{X}^{u,v}\dashrightarrow \mathcal{X}^{u,v}$ is a cluster transformation.
\end{prop}
\begin{proof} We have seen from the discussion above that $\xi$ maps the configuration $\left[\vcenter{\vbox{\xymatrix{x_-^{-1}.B_+ \ar[r]^u \ar@{-}[d] & B_+ \ar@{-}[d] \\
B_- \ar[r]_{v^*} & x_+.B_-}}}\right]$ to $\left[\vcenter{\vbox{\xymatrix{ B_- \ar[r]^(0.4){u^*} \ar@{-}[d] & \left(zx_-\right)^{-1}.B_- \ar@{-}[d]\\ \left(x_+w\right).B_+
 \ar[r]_(0.6){v} & B_+}}}\right]^*$,
where 
\[
z:=e_{\alpha(m)}e_{-\alpha(m)}^{-1}h_{m-1}^{-1}\dots h_1^{-1}e_{\alpha(1)}e_{-\alpha(1)}^{-1}
\]
and 
\[
w:=e_{\beta(n)}^{-1}e_{-\beta(n)}t_{n-1}^{-1}\dots t_1^{-1}e_{\beta(1)}^{-1}e_{-\beta(1)}.
\]
Recall from lemma \ref{B+} that the product $zx_-$ lies in $B_+$; a symmetric statement of this is also true: the product $x_+w$ lies in $B_-$. Therefore the above configuration is equivalent to 
\[
\left[\vcenter{\vbox{\xymatrix{\left(x_+w\right). B_- \ar[r]^{u^*} \ar@{-}[d] & \left(zx_-\right)^{-1}.B_- \ar@{-}[d]\\
\left(x_+w\right).B_+ \ar[r]_{v} & \left(zx_-\right)^{-1}.B_+}}}\right]^*.
\]
But we also know that $B_\pm=w_0.B_\mp$; therefore the above configuration is equivalent to
\[
\left[\vcenter{\vbox{\xymatrix{\left(x_+w\right)\overline{w}_0. B_+\ar[r]^{u^*} \ar@{-}[d] & \left(zx_-\right)^{-1}\overline{w}_0.B_+ \ar@{-}[d]\\
\left(x_+w\right)\overline{w}_0.B_- \ar[r]_{v} & \left(zx_-\right)^{-1}\overline{w}_0.B_-}}}\right]^*,
\]
which is also equivalent to
\[
\left[\vcenter{\vbox{\xymatrix{ B_+ \ar[r]^{u^*} \ar@{-}[d] & 
\overline{w}_0^{-1}y^{-1}\overline{w}_0.B_+
 \ar@{-}[d]\\
B_- \ar[r]_{v} & \overline{w}_0^{-1}y^{-1}\overline{w}_0.B_-}}}\right]^*
\]
where we define $y:=zxw$. 

Recall that $B_\pm^*=B_\pm$ and the $*$ involution is defined as $x^*:=\overline{w}_0 \left(x^{-1}\right)^t \overline{w}_0^{-1}$; applying the $*$ involution to the configuration above, we see that the image configuration is equivalent to
\[
\left[\vcenter{\vbox{\xymatrix{B_+ \ar[r]^{u} \ar@{-}[d] & y^t.B_+ \ar@{-}[d]\\
B_- \ar[r]_{v^*} & y^{t}.B_-}}}\right].
\]

Now let's remind ourselves about the following commutative diagram again.
\[
\xymatrix{\mathcal{X}^{u,v} \ar@{-->}[r]^{\Xi_\vec{i}} \ar[d]_\cong^\chi & \mathcal{X}^{u,v} \ar[d]^\cong_\chi \\
H\backslash G^{u,v}/H \ar[r]^{\iota\circ \tw } \ar[d]_\cong & H\backslash G^{u,v}/H \ar[d]^\cong \\
\conf^{u,v}(\mathcal{B})\ar[r]_\xi & \conf^{u,v}(\mathcal{B})}
\]
Our discussion above shows that on the middle level, generically we have
\[
\iota\circ \tw \left(H\backslash x/H\right)=H\left\backslash y^t\right/H.
\]

But what does this mean to the cluster $\mathcal{X}$-variables on the cluster coordinate chart $\mathcal{X}_\vec{i}^{u,v}$? Recall that when we multiply $e_{\alpha(1)}e_{-\alpha(1)}^{-1}$ on the left of $x$, we change the left most node on the string diagram from $-\alpha(1)$ to $\alpha(1)$. But since $\mathcal{X}_\vec{i}^{u,v}$ does not have any frozen variables (because of the double quotient by $H$), such left multiplication by $e_{\alpha(1)}e_{-\alpha(1)}$ in fact a cluster isomorphism (see Remark \ref{left-mult}). Then we move this node to the right of the $u$-part of the string diagram using a sequence of cluster mutations. Then we multiply $e_{\alpha(2)}e_{-\alpha(2)}^{-1}h_1^{-1}$ on the left again, which changes the left most node from $-\alpha(2)$ to $\alpha(2)$ and is again a cluster isomorphism. Then we again move the new node $\alpha(2)$ to the right of the remaining $u$-part of the string diagram using cluster mutations... The same thing happens on the $v$-part as well. Therefore by factoring $H\backslash y/H$ as
\begin{align*}
 H\left\backslash \left(e_{\alpha(m)}e_{-\alpha(m)}^{-1}h_{m-1}^{-1}\right)\right.\dots& \left(e_{\alpha(2)}e_{-\alpha(2)}^{-1}h_1^{-1}\right)\left(e_{\alpha(1)}e_{-\alpha(1)}^{-1}\right)x\left(e_{\beta(n)}^{-1}e_{-\beta(n)}\right) \\
&\left.\left(t_{n-1}^{-1}e_{\beta(n-1)}^{-1}e_{-\beta(n-1)}^{-1}\right)\dots\left( t_1^{-1}e_{\beta(1)}^{-1}e_{-\beta(1)}\right)\right/H
\end{align*}
we see that multiplying each additional factor on the left or on the right of $H\backslash x/H$ can be realized by a cluster isomorphism followed by a sequence of cluster mutations. 
\[
\tikz{
\node (0) at (0,2) [] {$H\backslash x/H$};
\node (1) at (6,2) [] {$H\left\backslash e_{\alpha(1)}e_{-\alpha(1)}^{-1}x\right/H$};
\node (2) at (12,2) [] {$H\left\backslash e_{\alpha(1)}e_{-\alpha(1)}^{-1}x\right/H$};
\node (3) at (0,0) [] {$H\left\backslash e_{\alpha(2)}e_{-\alpha(2)}h_1^{-1}e_{\alpha(1)}e_{-\alpha(1)}^{-1}x\right/H$};
\node (4) at (9,0) [] {$H\left\backslash e_{\alpha(2)}e_{-\alpha(2)}h_1^{-1}e_{\alpha(1)}e_{-\alpha(1)}^{-1}x\right/H$};
\node (5) at (13,0) [] {$\cdots$};
\draw [->] (0) -- node [above] {cluster isomorphism} (1);
\draw [->] (1) -- node [above] {cluster mutations} (2);
\draw [->] (2) -- node [below right] {cluster isomorphism} (3);
\draw [->] (3) -- node [below] {cluster mutations} (4);
\draw [->] (4) -- node [above] {$\cdots$} (5); 
}
\]

After the prescribed sequence of mutations, one gets a cluster where the $\mathcal{X}$-coordinates and the quiver are associated to the factorization of $y\in G_{ad}^{v^{-1},u^{-1}}$ using the reduced word
\[
\vec{j}=\left(\alpha(m), \alpha(m-1), \dots, \alpha(1),-\beta(n),-\beta(n-1),\dots, -\beta(1)\right).
\]
But this is in fact okay because we still need to take transposition. We claim that transposition can be realized as a cluster isomorphism! Note that if we write $y$ as a product according to the string diagram associated to the reduced word $\vec{j}$ and then apply transposition to each amalgamation factor, we see that  transposition does not change the cluster $\mathcal{X}$-variables, but switches the simple root nodes to corresponding opposite simple root nodes and vice versa and reverses the order of the nodes completely. Therefore we can interpret the same cluster $\mathcal{X}$-coordinates and the same quiver as giving the factorization of $y^t\in G_{ad}^{u,v}$ using the reduced word
\[
\vec{k}=\left(\beta(1),\beta(2),\dots, \beta(n), -\alpha(1), -\alpha(2),\dots, \alpha(m)\right). 
\]

But then since $\vec{k}$ is a reduced word of our pair of Weyl group elements $(u,v)$, there is a sequence of cluster mutations that restores our original reduced word $\vec{i}$. The composition of all the cluster mutations and cluster isomorphisms so far exhibits $\Xi_\vec{i}$ as a cluster transformation (see Remark \ref{clustertran'}), and hence the proof is finished. 
\end{proof}

\begin{rmk} Note that when factoring $\Xi_\vec{i}$ as a composition of cluster mutations and cluster isomorphisms, we actually go through cluster coordinate charts that do not belong to the double Bruhat cell $H\backslash G^{u,v}/H$ (see also Remark \ref{left-mult}); but this is okay as long as we get back to $H\backslash G^{u,v}/H$ at the very end.
\end{rmk}

We see in Diagram \eqref{diagram} that the diameter $\left(\xymatrix{ B_2\ar@{-}[r] & B_3}\right) \quad = \quad \left(\xymatrix{ B_+ \ar@{-}[r] & B_-}\right)$ is contained in both the initial quadruple and final quadruple during tilting. In fact, if we draw our initial quadruple as 
\[
\xymatrix{x_-^{-1}.B_+ \ar[rr]^u \ar@{-}[dr] & & B_+ \ar@{-}[dl] \ar@{-}[dr] & \\
& B_- \ar[rr]_{v^*} & & x_+.B_-}
\]
and place the string diagram associated to $\vec{i}$ horizontally across such parallelogram with opposite simple root nodes contained in the left triangle and simple root nodes contained in the right triangle, 
\[
\tikz{
\node (1) at (0,4) [] {$x_-^{-1}.B_+$};
\node (2) at (4,4) [] {$B_+$};
\node (3) at (2,0) [] {$B_-$};
\node (4) at (6,0) [] {$x_+.B_-$};
\draw [->] (1) -- node [above] {$u$} (2);
\draw [->] (3) -- node [below] {$v^*$} (4);
\draw (1) -- (3) -- (2) -- (4);
\draw (-2,1) -- (8,1);
\draw (-2,2) -- (8,2);
\draw (-2,3) -- (8,3);
}
\]
we can describe the whole ``cluster factorization'' of $\Xi_\vec{i}$ via triangle flipping and moving.
\begin{equation} \label{triangleflipping}
\begin{split}
\tikz{
\draw (0,1.5) -- (2,1.5) -- (3,0) -- (1,0) -- cycle;
\draw (1,0) -- (2,1.5);
\node at (1,0.75) [above] {$u$};
\node at (2,0.75) [below] {$v$};
}&
\\
\bigg\downarrow \quad \quad \quad \quad&\begin{array}{l}\text{Multiplying factors on both sides of $x$ to make it}\\ \text{to $y$, which is a sequence of cluster mutations.}
\end{array}
\\
\tikz{
\draw (0,0) -- (2,0) -- (3,1.5) -- (1,1.5) -- cycle;
\draw (1,1.5) -- (2,0);
\node at (1,0.75) [below] {$u^{-1}$};
\node at (2,0.75) [above] {$v^{-1}$};
}& 
\\
\bigg\downarrow \quad \quad\quad \quad & \text{Transposition, which is a cluster isomorphism.}
\\
\tikz{
\draw (0,0) -- (2,0) -- (3,1.5) -- (1,1.5) -- cycle;
\draw (1,1.5) -- (2,0);
\node at (1,0.75) [below] {$v$};
\node at (2,0.75) [above] {$u$};
}& 
\\
\bigg\downarrow \quad \quad \quad \quad&\begin{array}{l}\text{Restoring the original layout, which is another}\\
\text{sequence of cluster mutations.}\end{array}
\\
\tikz{
\draw (0,1.5) -- (2,1.5) -- (3,0) -- (1,0) -- cycle;
\draw (1,0) -- (2,1.5);
\node at (1,0.75) [above] {$u$};
\node at (2,0.75) [below] {$v$};
}&
\end{split}
\end{equation}
You may have wondered why we have used triangles of the form $\tikz[baseline=0ex, scale=0.5]{\draw (-0.75,0.75) -- (0.75,0.75) -- (0,-0.75) -- cycle;}$ in the $u$-part and triangles of the form $\tikz[baseline=0ex, scale=0.5]{\draw (-0.75,-0.75) -- (0.75,-0.75) -- (0,0.75) -- cycle;}$ in the $v$-part when computing the leading powers of variables in $\Xi_\vec{i}^*\left(X_a\right)$; well, the above sequence of diagrams is the reason.

\subsection{Recovering \texorpdfstring{$\DT$ from $\Xi$}{}}\label{3.4}

By now we have finished all the technical part of the proof of our main theorem. What's left is just to apply the intertwining maps to deduce the commutative diagram on the left from the one on the right in \ref{commutative diagram}. To summarize all the results, we combine the commutative diagrams in \ref{commutative diagram} together with the intertwining maps into a 3-dimensoinal commutative diagram, and recall the relevant definitions and propositions.
\[
\tikz{
\node (l1) at (0,0) [] {$\conf^{u,v}(\mathcal{B})$};
\node (m1) at (0,3) [] {$H\backslash G^{u,v}/H$};
\node (u1) at (0,6) [] {$\mathcal{X}^{u,v}$};
\node (l2) at (4,-1) [] {$\conf^{u,v}(\mathcal{B})$};
\node (m2) at (4,2) [] {$H\backslash G^{u,v}/H$};
\node (u2) at (4,5) [] {$\mathcal{X}^{u,v}$};
\node (l3) at (8,0) [] {$\conf^{u^{-1},v^{-1}}(\mathcal{B})$};
\node (m3) at (8,3) [] {$H\backslash G^{u^{-1},v^{-1}}/H$};
\node (u3) at (8,6) [] {$\mathcal{X}^{u^{-1},v^{-1}}$};
\node (l4) at (12,-1) [] {$\conf^{u^{-1},v^{-1}}(\mathcal{B})$};
\node (m4) at (12,2) [] {$H\backslash G^{u^{-1},v^{-1}}/H$};
\node (u4) at (12,5) [] {$\mathcal{X}^{u^{-1},v^{-1}}$};
\node (cr1) at (8,5) [] {$\quad $};
\node (cr2) at (4,3) [] {$\quad $};
\node (cr3) at (4,0) [] {$\quad$};
\node (cr4) at (8,2) [] {$\quad$};
\draw [->] (l1) -- node [below left] {$\eta$} (l2);
\draw [->] (m1) -- node [left] {$\cong$} (l1);
\draw [->] (m2) -- node [left] {$\cong$} (l2);
\draw [->] (m1) -- node [below] {$\tw\circ \iota$} (m2);
\draw [->] (u1) -- node [left] {$\chi, \cong$} (m1);
\draw [->] (u2) -- node [left] {$\chi, \cong$} (m2);
\draw [dashed, ->] (u1) -- node [below] {$\DT$} (u2);
\draw [<->] (u1) -- node [above] {$i_\mathcal{X}$} (u3);
\draw [<->] (u2) -- node [above left] {$i_\mathcal{X}\quad \quad \quad $} (u4);
\draw [dashed, ->] (u3) -- node [above] {$\Xi$} (u4);
\draw [->] (u3) -- (cr1) -- node [right] {$\chi,\cong$} (m3);
\draw [->] (u4) -- node [right] {$\chi,\cong$} (m4);
\draw [->] (cr2) -- node [above] {$\iota$} (m3);
\draw [->] (cr2) -- (m1);
\draw [<->] (m2) -- node [below left] {$\iota\quad \quad \quad$} (m4);
\draw [->] (m3) -- node [above] {$\iota\circ \tw$} (m4);
\draw [->] (cr3) -- node [above] {$\iota$} (l3);
\draw [->] (cr3) -- (l1);
\draw [->] (l2) -- node [below] {$\iota$} (l4);
\draw [->] (m3) -- (cr4) -- node [right] {$\cong$} (l3);
\draw [->] (m4) -- node [right] {$\cong$} (l4);
\draw [->] (l3) -- node [above right] {$\xi$} (l4);
}
\]

\begin{itemize}
\item Commutativity of the lower vertical squares both on the left and on the right is due to Proposition \ref{twistflag} .
\item Commutativitiy of the upper vertical square on the right is due to Definition \ref{defnxi} and Proposition \ref{clustertwist}.
\item Commutativity of the upper vertical squares both in the front and at the back is due to Proposition \ref{iota}.
\item Commutativity of the lower vertical squares both in the front and at the back is due to Proposition \ref{iotacommute}.
\item Propositions \ref{1sthalf} and \ref{2ndhalf} show that the map $\Xi$ defined in Definition \ref{defnxi} is the same as the cluster transformation $\Xi$ defined in \ref{dtxi}.
\item Commutativity of the top square then follows from Definition \ref{dtxi}.
\item Commutativity of the horizontal square on the middle level follows from the fact that $\iota$ is of order 2.
\item Commutativity of the bottom square is due to Proposition \ref{xieaaintertwine}. 
\end{itemize}

Using all the result we have so far, we can now prove the remaining part of our main theorem (Theorem \ref{mainthm}).

\vspace{0.5cm}

\noindent\textit{Proof of Part (1) of Theorem \ref{mainthm}.} The only remaining square in the above 3-dimensional diagram absent on the list above is the upper vertical one on the left, and it must commute as well due to the commutativity of the rest of the diagram. Furthermore we know that $\DT$ is a cluster transformation because $\Xi$ is a cluster transformation and $i_\mathcal{X}$ intertwines cluster transformations. Thus our main theorem is proved. \qed

\begin{rmk} \label{maxgreen} At the end, we would like to state one more way of computing the Donaldson-Thomas transformation in the case of $H\backslash G^{u,v}/H$. Recall that in Diagram \eqref{triangleflipping} we have factored $\Xi$ into a composition of a sequence of cluster mutations followed by a cluster isomorphism followed by another sequence of cluster mutations. We can surely intertwine that picture using the anti-involution $\iota$ to get a similar ``cluster factorization'' of the Donaldson-Thomas transformation $\DT$. Note that diagrammatically, $\iota$ is a cluster isomorphism that changes $\tikz[baseline=3ex,scale=0.7]{
\draw (0,0) -- (2,0) -- (3,1.5) -- (1,1.5) -- cycle;
\draw (1,1.5) -- (2,0);
\node at (1,0.75) [below] {$v$};
\node at (2,0.75) [above] {$u$};
}$ to $\tikz[baseline=3ex,scale=0.7]{
\draw (0,1.5) -- (2,1.5) -- (3,0) -- (1,0) -- cycle;
\draw (1,0) -- (2,1.5);
\node at (1,0.75) [above] {$u^{-1}$};
\node at (2,0.75) [below] {$v^{-1}$};
}$. Therefore if we intertwine Diagram \eqref{triangleflipping} by $\iota$ we get the following.
\[
\begin{split}
\tikz{
\draw (0,0) -- (2,0) -- (3,1.5) -- (1,1.5) -- cycle;
\draw (1,1.5) -- (2,0);
\node at (1,0.75) [below] {$v$};
\node at (2,0.75) [above] {$u$};
}
&
\\
\bigg\downarrow \quad \quad \quad \quad&\begin{array}{l}\text{Multiplying factors on both sides of $x$ to realize}\\ \text{tilting, which is a sequence of cluster mutations.}
\end{array}
\\
\tikz{
\draw (0,1.5) -- (2,1.5) -- (3,0) -- (1,0) -- cycle;
\draw (1,0) -- (2,1.5);
\node at (1,0.75) [above] {$v^{-1}$};
\node at (2,0.75) [below] {$u^{-1}$};
}
& 
\\
\bigg\downarrow \quad \quad\quad \quad & \text{Transposition, which is a cluster isomorphism.}
\\
\tikz{
\draw (0,1.5) -- (2,1.5) -- (3,0) -- (1,0) -- cycle;
\draw (1,0) -- (2,1.5);
\node at (1,0.75) [above] {$u$};
\node at (2,0.75) [below] {$v$};
}
& 
\\
\bigg\downarrow \quad \quad \quad \quad&\begin{array}{l}\text{Restoring the original layout, which is another}\\
\text{sequence of cluster mutations.}\end{array}
\\
\tikz{
\draw (0,0) -- (2,0) -- (3,1.5) -- (1,1.5) -- cycle;
\draw (1,1.5) -- (2,0);
\node at (1,0.75) [below] {$v$};
\node at (2,0.75) [above] {$u$};
}
&
\end{split}
\]

In a private conversation with L. Shen, he pointed out that this procedure can be used to construct a maximal green sequence, which is the combinatorial version of cluster Donaldson-Thomas transformation. For a more concrete example please see Appendix \ref{C}.
\end{rmk}

\setcounter{subsection}{0}
\renewcommand{\thesubsection}{\Alph{subsection}}

\section*{Appendix}

\subsection{A Sequence of 10 Mutations Corresponding to Move (2) in the \texorpdfstring{$G_2$}{} Case} \label{A} this case says that $(\alpha,\beta,\alpha,\beta,\alpha,\beta)\sim (\beta,\alpha,\beta, \alpha,\beta,\alpha)$. Without loss of generality let's assume that $C_{\alpha\beta}=-1$ and $C_{\beta\alpha}=-3$. The following pictures are the string diagrams and their corresponding quasi-quivers together with the sequence of seed mutations that transform one into the other. For more details and deeper reasons for why this is true, please see Fock and Goncharov's paper on amalgamation (\cite{FGamalgamation} Section 3.7).
\[
\begin{tikzpicture}[baseline=3ex]
\node (1) at (1,1) [] {$\alpha$};
\node (2) at (3,1) [] {$\alpha$};
\node (3) at (5,1) [] {$\alpha$};
\node (4) at (2,0) [] {$\beta$};
\node (5) at (4,0) [] {$\beta$};
\node (6) at (6,0) [] {$\beta$};
\draw (0,0) -- (4) -- (5) -- (6) -- (7,0);
\draw (0,1) -- (1) -- (2) -- (3) --(7,1);
\end{tikzpicture}\quad\quad \sim \quad \quad 
\begin{tikzpicture}[baseline=3ex]
\node (1) at (1,0) [] {$\beta$};
\node (2) at (3,0) [] {$\beta$};
\node (3) at (5,0) [] {$\beta$};
\node (4) at (2,1) [] {$\alpha$};
\node (5) at (4,1) [] {$\alpha$};
\node (6) at (6,1) [] {$\alpha$};
\draw (0,1) -- (4) -- (5) -- (6) -- (7,1);
\draw (0,0) -- (1) -- (2) -- (3) --(7,0);
\end{tikzpicture}
\]
\[
\tikz{
\node (1) at (0,0) [] {$\bullet$};
\node (2) at (2,0) [] {$\bullet$};
\node (3) at (4,0) [] {$\bullet$};
\node (4) at (6,0) [] {$\bullet$};
\node (5) at (0,2) [] {$\bullet$};
\node (6) at (2,2) [] {$\bullet$};
\node (7) at (4,2) [] {$\bullet$};
\node (8) at (6,2) [] {$\bullet$};
\draw [->] (2) -- (1);
\draw [->] (3) -- (2);
\draw [->] (4) -- (3);
\draw [->] (6) -- (5);
\draw [->] (7) -- (6);
\draw [->] (8) -- (7);
\draw [x-->] (5) -- (1);
\draw [-x>] (1) -- (6);
\draw [x->] (6) -- (2);
\draw [-x>] (2) -- (7);
\draw [x->] (7) -- (3);
\draw [-x>] (3) -- (8);
\draw [x-->] (8) -- (4);
\node at (6) [above] {$a$};
\node at (7) [above] {$b$};
\node at (2) [below] {$c$};
\node at (3) [below] {$d$};
}
\quad \quad  \quad \quad \quad\quad \quad
\tikz{
\node (1) at (0,0) [] {$\bullet$};
\node (2) at (2,0) [] {$\bullet$};
\node (3) at (4,0) [] {$\bullet$};
\node (4) at (6,0) [] {$\bullet$};
\node (5) at (0,2) [] {$\bullet$};
\node (6) at (2,2) [] {$\bullet$};
\node (7) at (4,2) [] {$\bullet$};
\node (8) at (6,2) [] {$\bullet$};
\draw [->] (2) -- (1);
\draw [->] (3) -- (2);
\draw [->] (4) -- (3);
\draw [->] (6) -- (5);
\draw [->] (7) -- (6);
\draw [->] (8) -- (7);
\draw [--x>] (1) -- (5);
\draw [x->] (5) -- (2);
\draw [-x>] (2) -- (6);
\draw [x->] (6) -- (3);
\draw [-x>] (3) -- (7);
\draw [x->] (7) -- (4);
\draw [--x>] (4) -- (8);
\node at (6) [above] {$a$};
\node at (7) [above] {$b$};
\node at (2) [below] {$c$};
\node at (3) [below] {$d$};
}
\]
\[
 \tikz{\draw [<->] (0,0) -- node[left]{$\mu_d$} (0,1);} \quad\quad \quad\quad\quad \quad\quad \quad\quad \quad\quad \quad\quad \quad\quad \quad\quad \quad\quad \quad \quad\quad \quad\quad \quad \tikz{\draw [<->] (0,0) -- node[right]{$\mu_d$} (0,1);}
\]
\[
\tikz{
\node (1) at (0,0) [] {$\bullet$};
\node (2) at (2,0) [] {$\bullet$};
\node (3) at (4,0) [] {$\bullet$};
\node (4) at (6,0) [] {$\bullet$};
\node (5) at (0,2) [] {$\bullet$};
\node (6) at (2,2) [] {$\bullet$};
\node (7) at (4,2) [] {$\bullet$};
\node (8) at (6,2) [] {$\bullet$};
\draw [->] (2) -- (1);
\draw [->] (2) -- (3);
\draw [->] (3) -- (4);
\draw [->] (6) -- (5);
\draw [->] (7) -- (6);
\draw [double distance=2pt, -implies] (7) -- (8);
\draw [->] (4) edge[bend left] (2);
\draw [x-->] (5) -- (1);
\draw [-x>] (1) -- (6);
\draw [x->] (6) -- (2);
\draw [-x>] (3) -- (7);
\draw [x->] (8) -- (3);
\draw [--x>] (4) -- (8);
\node at (6) [above] {$a$};
\node at (7) [above] {$b$};
\node at (2) [below] {$c$};
\node at (3) [below] {$d$};
}
\quad \quad  \quad \quad \quad\quad \quad
\tikz{
\node (1) at (0,0) [] {$\bullet$};
\node (2) at (2,0) [] {$\bullet$};
\node (3) at (4,0) [] {$\bullet$};
\node (4) at (6,0) [] {$\bullet$};
\node (5) at (0,2) [] {$\bullet$};
\node (6) at (2,2) [] {$\bullet$};
\node (7) at (4,2) [] {$\bullet$};
\node (8) at (6,2) [] {$\bullet$};
\draw [->] (2) -- (1);
\draw [->] (2) -- (3);
\draw [->] (3) -- (4);
\draw [->] (6) -- (5);
\draw [double distance=2pt, -implies] (6) -- (7);
\draw [->] (8) -- (7);
\draw [->] (4) edge[bend left] (2);
\draw [--x>] (1) -- (5);
\draw [x->] (5) -- (2);
\draw [-x>] (3) -- (6);
\draw [x->] (7) -- (3);
\draw [--x>] (4) -- (8);
\node at (6) [above] {$a$};
\node at (7) [above] {$b$};
\node at (2) [below] {$c$};
\node at (3) [below] {$d$};
}
\]
\[
 \tikz{\draw [<->] (0,0) -- node[left]{$\mu_c$} (0,1);} \quad\quad \quad\quad\quad \quad\quad \quad\quad \quad\quad \quad\quad \quad\quad \quad\quad \quad\quad \quad \quad\quad \quad\quad \quad \tikz{\draw [<->] (0,0) -- node[right]{$\mu_a$} (0,1);}
\]
\[
\tikz{
\node (1) at (0,0) [] {$\bullet$};
\node (2) at (2,0) [] {$\bullet$};
\node (3) at (4,0) [] {$\bullet$};
\node (4) at (6,0) [] {$\bullet$};
\node (5) at (0,2) [] {$\bullet$};
\node (6) at (2,2) [] {$\bullet$};
\node (7) at (4,2) [] {$\bullet$};
\node (8) at (6,2) [] {$\bullet$};
\draw [->] (1) -- (2);
\draw [->] (3) -- (2);
\draw [->] (6) -- (5);
\draw [->] (7) -- (6);
\draw [double distance=2pt, -implies] (7) -- (8);
\draw [->] (2) edge[bend right] (4);
\draw [->] (4) edge[bend left] (1);
\draw [x-->] (5) -- (1);
\draw [x->] (6) -- (3);
\draw [-x>] (2) -- (6);
\draw [-x>] (3) -- (7);
\draw [x->] (8) -- (3);
\draw [--x>] (4) -- (8);
\node at (6) [above] {$a$};
\node at (7) [above] {$b$};
\node at (2) [below] {$c$};
\node at (3) [below] {$d$};
}
\quad \quad  \quad \quad \quad\quad \quad
\tikz{
\node (1) at (0,0) [] {$\bullet$};
\node (2) at (2,0) [] {$\bullet$};
\node (3) at (4,0) [] {$\bullet$};
\node (4) at (6,0) [] {$\bullet$};
\node (5) at (0,2) [] {$\bullet$};
\node (6) at (2,2) [] {$\bullet$};
\node (7) at (4,2) [] {$\bullet$};
\node (8) at (6,2) [] {$\bullet$};
\draw [->] (2) -- (1);
\draw [->] (2) -- (3);
\draw [->] (3) -- (4);
\draw [->] (5) -- (6);
\draw [double distance=2pt, -implies] (7) -- (6);
\draw [->] (8) -- (7);
\draw [->] (4) edge[bend left] (2);
\draw [--x>] (1) -- (5);
\draw [-x>] (3) -- (5);
\draw [x->] (5) -- (2);
\draw [x->] (6) -- (3);
\draw [-x>] (3) -- (7);
\draw [--x>] (4) -- (8);
\node at (6) [above] {$a$};
\node at (7) [above] {$b$};
\node at (2) [below] {$c$};
\node at (3) [below] {$d$};
}
\]
\[
 \tikz{\draw [<->] (0,0) -- node[left]{$\mu_b$} (0,1);} \quad\quad \quad\quad\quad \quad\quad \quad\quad \quad\quad \quad\quad \quad\quad \quad\quad \quad\quad \quad \quad\quad \quad\quad \quad \tikz{\draw [<->] (0,0) -- node[right]{$\mu_c$} (0,1);}
\]
\[
\tikz{
\node (1) at (0,0) [] {$\bullet$};
\node (2) at (2,0) [] {$\bullet$};
\node (3) at (4,0) [] {$\bullet$};
\node (4) at (6,0) [] {$\bullet$};
\node (5) at (0,2) [] {$\bullet$};
\node (6) at (2,2) [] {$\bullet$};
\node (7) at (4,2) [] {$\bullet$};
\node (8) at (6,2) [] {$\bullet$};
\draw [->] (1) -- (2);
\draw [->] (3) -- (2);
\draw [->] (6) -- (5);
\draw [->] (6) -- (7);
\draw [double distance=2pt, -implies] (8) -- (7);
\draw [->] (2) edge[bend right] (4);
\draw [->] (4) edge[bend left] (1);
\draw [x-->] (5) -- (1);
\draw [-x>] (2) -- (6);
\draw [x->] (7) -- (3);
\draw [-x>] (3) -- (8);
\draw [--x>] (4) -- (8);
\node at (6) [above] {$a$};
\node at (7) [above] {$b$};
\node at (2) [below] {$c$};
\node at (3) [below] {$d$};
}
\quad \quad  \quad \quad \quad\quad \quad
\tikz{
\node (1) at (0,0) [] {$\bullet$};
\node (2) at (2,0) [] {$\bullet$};
\node (3) at (4,0) [] {$\bullet$};
\node (4) at (6,0) [] {$\bullet$};
\node (5) at (0,2) [] {$\bullet$};
\node (6) at (2,2) [] {$\bullet$};
\node (7) at (4,2) [] {$\bullet$};
\node (8) at (6,2) [] {$\bullet$};
\draw [->] (1) -- (2);
\draw [->] (3) -- (2);
\draw [->] (5) -- (6);
\draw [double distance=2pt, -implies] (7) -- (6);
\draw [->] (8) -- (7);
\draw [->] (2) edge[bend right] (4);
\draw [->] (4) edge[bend left] (1);
\draw [x-->] (5) -- (1);
\draw [-x>] (2) -- (5);
\draw [x->] (6) -- (3);
\draw [-x>] (3) -- (7);
\draw [--x>] (4) -- (8);
\node at (6) [above] {$a$};
\node at (7) [above] {$b$};
\node at (2) [below] {$c$};
\node at (3) [below] {$d$};
}
\]
\[
 \tikz{\draw [<->] (0,0) -- node[left]{$\mu_a$} (0,1);} \quad\quad \quad\quad\quad \quad\quad \quad\quad \quad\quad \quad\quad \quad\quad \quad\quad \quad\quad \quad \quad\quad \quad\quad \quad \tikz{\draw [<->] (0,0) -- node[right]{$\mu_d$} (0,1);}
\]
\[
\tikz{
\node (1) at (0,0) [] {$\bullet$};
\node (2) at (2,0) [] {$\bullet$};
\node (3) at (4,0) [] {$\bullet$};
\node (4) at (6,0) [] {$\bullet$};
\node (5) at (0,2) [] {$\bullet$};
\node (6) at (2,2) [] {$\bullet$};
\node (7) at (4,2) [] {$\bullet$};
\node (8) at (6,2) [] {$\bullet$};
\draw [->] (1) -- (2);
\draw [->] (3) -- (2);
\draw [->] (5) -- (6);
\draw [->] (7) -- (6);
\draw [double distance=2pt, -implies] (8) -- (7);
\draw [->] (2) edge[bend right] (4);
\draw [->] (4) edge[bend left] (1);
\draw [x-->] (5) -- (1);
\draw [x->] (6) -- (2);
\draw [-x>] (2) -- (5);
\draw [-x>] (2) -- (7);
\draw [x->] (7) -- (3);
\draw [-x>] (3) -- (8);
\draw [--x>] (4) -- (8);
\node at (6) [above] {$a$};
\node at (7) [above] {$b$};
\node at (2) [below] {$c$};
\node at (3) [below] {$d$};
}
\quad \quad  \quad \quad \quad\quad \quad
\tikz{
\node (1) at (0,0) [] {$\bullet$};
\node (2) at (2,0) [] {$\bullet$};
\node (3) at (4,0) [] {$\bullet$};
\node (4) at (6,0) [] {$\bullet$};
\node (5) at (0,2) [] {$\bullet$};
\node (6) at (2,2) [] {$\bullet$};
\node (7) at (4,2) [] {$\bullet$};
\node (8) at (6,2) [] {$\bullet$};
\draw [->] (1) -- (2);
\draw [->] (2) -- (3);
\draw [->] (5) -- (6);
\draw [->] (6) -- (7);
\draw [->] (8) -- (7);
\draw [->] (2) edge[bend right] (4);
\draw [->] (4) edge[bend left] (1);
\draw [x-->] (5) -- (1);
\draw [-x>] (2) -- (5);
\draw [x->] (6) -- (2);
\draw [-x>] (3) -- (6);
\draw [x->] (7) -- (3);
\draw [--x>] (4) -- (8);
\node at (6) [above] {$a$};
\node at (7) [above] {$b$};
\node at (2) [below] {$c$};
\node at (3) [below] {$d$};
}
\]
\[
 \tikz{\draw [<->] (1,0) -- node[below left]{$\mu_d$} (0,1);} \quad\quad\quad   \quad\quad \quad\quad \quad\quad \quad \quad\quad \quad\quad \quad \tikz{\draw [<->] (-1,0) -- node[below right]{$\mu_b$} (0,1);}
\]
\[
\tikz{
\node (1) at (0,0) [] {$\bullet$};
\node (2) at (2,0) [] {$\bullet$};
\node (3) at (4,0) [] {$\bullet$};
\node (4) at (6,0) [] {$\bullet$};
\node (5) at (0,2) [] {$\bullet$};
\node (6) at (2,2) [] {$\bullet$};
\node (7) at (4,2) [] {$\bullet$};
\node (8) at (6,2) [] {$\bullet$};
\draw [->] (1) -- (2);
\draw [->] (2) -- (3);
\draw [->] (5) -- (6);
\draw [->] (7) -- (6);
\draw [->] (7) -- (8);
\draw [->] (2) edge[bend right] (4);
\draw [->] (4) edge[bend left] (1);
\draw [x-->] (5) -- (1);
\draw [x->] (6) -- (2);
\draw [-x>] (2) -- (5);
\draw [-x>] (3) -- (7);
\draw [x->] (8) -- (3);
\draw [--x>] (4) -- (8);
\node at (6) [above] {$a$};
\node at (7) [above] {$b$};
\node at (2) [below] {$c$};
\node at (3) [below] {$d$};
}
\]

\subsection{\texorpdfstring{Computation of $\DT$ for $H\backslash \PGL_3^{w_0,w_0}/H$}{}}\label{C}

In this section we will do an example of computation of the Donaldson-Thomas transformation in the case of $H\backslash\PGL_3^{w_0,w_0}/H$. The reduced word we will use is the following:
\[
\vec{i}=(2,1,2,-1,-2,-1).
\]
The associated string diagram looks like the following.
\[
\tikz{
\node (21) at (2,0) [] {$2$};
\node (11) at (4,1.5) [] {$1$};
\node (22) at (6,0) [] {$2$};
\node (12) at (8,1.5) [] {$-1$};
\node (23) at (10,0) [] {$-2$};
\node (13) at (12,1.5) [] {$-1$};
\draw (0,0) -- (21) -- node [above] {$X_3$} (22) -- node [above] {$X_4$} (23) -- (14,0);
\draw (0,1.5) -- (11) -- node [above] {$X_1$} (12) -- node [above] {$X_2$} (13) -- (14,1.5);
}
\]

We will compute $\DT_\vec{i}$ in two ways:
\begin{itemize}
    \item one using the factorization $\DT=i_\mathcal{X}\circ \Xi_{\vec{i}^\circ}\circ i_\mathcal{X}=i_\mathcal{X}\circ \psi_{\vec{i}^\circ}\circ \xi_{\vec{i}^\circ}\circ i_\mathcal{X}$;
\item the other one using a sequence of mutations also known as a maximal green sequence (see Remark \ref{maxgreen}).
\end{itemize}

Let's start with the first method. The opposite reduced word of $\vec{i}$ is 
\[
\vec{i}^\circ=(-1,-2,-1,2,1,2).
\]
The associated string diagram looks like the following (we have rewritten the cluster variables associated to the string diagram of $\vec{i}^\circ$ in terms of the original ones).
\[
\tikz{
\node (21) at (-2,0) [] {$2$};
\node (11) at (-4,1.5) [] {$1$};
\node (22) at (-6,0) [] {$2$};
\node (12) at (-8,1.5) [] {$-1$};
\node (23) at (-10,0) [] {$-2$};
\node (13) at (-12,1.5) [] {$-1$};
\draw (0,0) -- (21) -- node [above] {$X_3^{-1}$} (22) -- node [above] {$X_4^{-1}$} (23) -- (-14,0);
\draw (0,1.5) -- (11) -- node [above] {$X_1^{-1}$} (12) -- node [above] {$X_2^{-1}$} (13) -- (-14,1.5);
}
\]

Note that $\vec{i}^\circ$ is exactly the same seed we have used in Example \ref{exmp 2.69}. Therefore we can just do a substitution to get the image $\chi_{\vec{i}^\circ}\circ i_\mathcal{X}\left(X_a\right)$, which is just
\[
H\left\backslash \begin{pmatrix} 
\frac{1}{X_1X_2X_3X_4} & \frac{1}{X_1X_2X_3X_4} & \frac{1}{X_1X_2X_3X_4} \\
\frac{1+X_2}{X_1X_2X_3X_4} & \frac{1+X_2\left(1+X_1\right)}{X_1X_2X_3X_4} & \frac{1+X_2\left(1+X_1\left(1+X_3\right)\right)}{X_1X_2X_3X_4} \\
\frac{1}{X_1X_3X_4} & \frac{1+X_1}{X_1X_3X_4} & \frac{1+X_1\left(1+X_3\left(1+X_4\right)\right)}{X_1X_3X_4}
\end{pmatrix}\right/H.
\]

But then we need to take a lift from $H\backslash \PGL_3^{w_0,w_0}/H$ to $\SL_3^{w_0,w_0}$. One easy way to do this is to divide the matrix in between the two $H$-quotients by the cube root of its determinant. This gives the matrix
\[
\begin{pmatrix} 
\frac{1}{X_1^\frac{2}{3}X_2^\frac{2}{3}X_3^\frac{1}{3}X_4^\frac{1}{3}} & \frac{1}{X_1^\frac{2}{3}X_2^\frac{2}{3}X_3^\frac{1}{3}X_4^\frac{1}{3}} & \frac{1}{X_1^\frac{2}{3}X_2^\frac{2}{3}X_3^\frac{1}{3}X_4^\frac{1}{3}} \\
\frac{1+X_2}{X_1^\frac{2}{3}X_2^\frac{2}{3}X_3^\frac{1}{3}X_4^\frac{1}{3}} & \frac{1+X_2\left(1+X_1\right)}{X_1^\frac{2}{3}X_2^\frac{2}{3}X_3^\frac{1}{3}X_4^\frac{1}{3}} & \frac{1+X_2\left(1+X_1\left(1+X_3\right)\right)}{X_1^\frac{2}{3}X_2^\frac{2}{3}X_3^\frac{1}{3}X_4^\frac{1}{3}} \\
\frac{1}{X_1^\frac{2}{3}X_3^\frac{1}{3}X_4^\frac{1}{3}} & \frac{1+X_1}{X_1^\frac{2}{3}X_3^\frac{1}{3}X_4^\frac{1}{3}} & \frac{1+X_1\left(1+X_3\left(1+X_4\right)\right)}{X_1^\frac{2}{3}X_3^\frac{1}{3}X_4^\frac{1}{3}}
\end{pmatrix},
\]
and we name it $x$ for future reference. (Although entries of $x$ are not even rationals, we will still recover rational expressions at the end of the computation.)

To compute $\psi_{\vec{i}^\circ}\circ \chi_{\vec{i}^\circ}\circ i_\mathcal{X}\left(X_a\right)$, we have to first find out which generalized minors of $x$ we need to take, and then take their ratios as prescribed by the map $p_{\vec{i}^\circ}$. Fortunately we have done that in Example \ref{exmp 2.69} and we can just simply copy down the quiver together with the associated cluster $\mathcal{A}$-variables.
\[
\tikz{
\node (11) at (0,1.5) [lightgray] {$\Delta_{1,3}$};
\node (12) at (4,1.5) [] {$\Delta_{2,3}$};
\node (13) at (8,1.5) [] {$\Delta_{3,3}$};
\node (14) at (12, 1.5) [lightgray] {$\Delta_{3,1}$};
\node (21) at (2,0) [lightgray] {$\Delta_{12,23}$};
\node (22) at (6,0) [] {$\Delta_{23,23}$};
\node (23) at (10,0) [] {$\Delta_{23,13}$};
\node (24) at (14,0) [lightgray] {$\Delta_{23,12}$};
\draw [->] (11) -- (12);
\draw [->] (12) -- (13);
\draw [->] (14) -- (13);
\draw [->] (21) -- (22);
\draw [->] (23) -- (22);
\draw [->] (24) -- (23);
\draw [dashed, ->] (21) -- (11);
\draw [->] (12) -- (21);
\draw [->] (22) -- (12);
\draw [->] (13) -- (23);
\draw [->] (23) -- (14);
\draw [dashed, ->] (14) -- (24);
}
\]
But then since we need to compute $i_\mathcal{X}\circ \psi_{\vec{i}^\circ}\circ \chi_{\vec{i}^\circ}\circ i_\mathcal{X}\left(X_a\right)$ at the end, which will require us to take ratios of generalized minors of $x$ according to the quiver associated to $\vec{i}^\circ$ and then inverting them, we can just take the ratios of generalized minors of $x$ according to the quiver associated to $\vec{i}$, which is the following quiver instead.
\[
\tikz{
\node (11) at (0,1.5) [lightgray] {$\Delta_{1,3}$};
\node (12) at (-4,1.5) [] {$\Delta_{2,3}$};
\node (13) at (-8,1.5) [] {$\Delta_{3,3}$};
\node (14) at (-12, 1.5) [lightgray] {$\Delta_{3,1}$};
\node (21) at (-2,0) [lightgray] {$\Delta_{12,23}$};
\node (22) at (-6,0) [] {$\Delta_{23,23}$};
\node (23) at (-10,0) [] {$\Delta_{23,13}$};
\node (24) at (-14,0) [lightgray] {$\Delta_{23,12}$};
\draw [<-] (11) -- (12);
\draw [<-] (12) -- (13);
\draw [<-] (14) -- (13);
\draw [<-] (21) -- (22);
\draw [<-] (23) -- (22);
\draw [<-] (24) -- (23);
\draw [dashed, <-] (21) -- (11);
\draw [<-] (12) -- (21);
\draw [<-] (22) -- (12);
\draw [<-] (13) -- (23);
\draw [<-] (23) -- (14);
\draw [dashed, <-] (14) -- (24);
}
\]
This gives the following pull-backs of cluster $\mathcal{X}$-variables:
\begin{align*}
 \DT^*\left(X_1\right)=&\frac{\Delta_{3,1}(x)\Delta_{2,3}(x)}{\Delta_{23,23}(x)}= \frac{1+X_2+X_1X_2+X_1X_2X_3}{X_1\left(1+X_3+X_3X_4+X_2X_3X_4\right)},\\
 \DT^*\left(X_2\right)=&\frac{\Delta_{23,23}(x)\Delta_{1,3}(x)}{\Delta_{3,3}(x)\Delta_{12,23}(x)}=\frac{1+X_4+X_2X_4+X_1X_2X_4}{X_2\left(1+X_1+X_1X_3+X_1X_3X_4\right)},\\
 \DT^*\left(X_3\right)=&\frac{\Delta_{23,12}(x)\Delta_{3,3}(x)}{\Delta_{3,1}(x)\Delta_{23,23}(x)}=\frac{1+X_1+X_1X_3+X_1X_3X_4}{X_3\left(1+X_4+X_2X_4+X_1X_2X_4\right)},\\
 \DT^*\left(X_4\right)=&\frac{\Delta_{23,13}(x)\Delta_{12,23}(x)}{\Delta_{2,3}(x)}=\frac{1+X_3+X_3X_4+X_2X_3X_4}{X_4\left(1+X_2+X_1X_2+X_1X_2X_3\right)}.
\end{align*}

Now let's apply the second method, which consists of three steps:
\begin{enumerate}
     \item Recursively change the nodes at the two ends of the string diagram to their opposite and then move them inward using cluster mutations, until all nodes are changed;
     \item Apply a transposition, which is a cluster isomorphism;
     \item Restore the string diagram back to the original one using cluster mutations again.
\end{enumerate}

Below is the execution of such process in this particular example. We see that at the end of such process we do obtain the same expressions for the pull-backs of the cluster $\mathcal{X}$-variables via $\DT$ as we had above.

\[
\scalebox{0.6}{$\tikz{
\node (21) at (2,0) [] {$2$};
\node (11) at (4,1.5) [] {$1$};
\node (22) at (6,0) [] {$2$};
\node (12) at (8,1.5) [] {$-1$};
\node (23) at (10,0) [] {$-2$};
\node (13) at (12,1.5) [] {$-1$};
\draw (1,0) -- (21) -- node [above] {$X_3$} (22) -- node [above] {$X_4$} (23) -- (13,0);
\draw (1,1.5) -- (11) -- node [above] {$X_1$} (12) -- node [above] {$X_2$} (13) -- (13,1.5);
}$}
\quad \quad   \quad\quad \quad
\scalebox{0.6}{$\tikz{
\node (21) at (2,0) [] {$2$};
\node (11) at (4,1.5) [] {$1$};
\node (22) at (6,0) [] {$2$};
\node (12) at (8,1.5) [] {$-1$};
\node (23) at (10,0) [] {$-2$};
\node (13) at (12,1.5) [] {$-1$};
\draw (1,0) -- (21) -- node [above] {$\frac{1+X_1+X_1X_3+X_1X_3X_4}{X_3\left(1+X_4+X_2X_4+X_1X_2X_4\right)}$} (22) -- node [above] {$\frac{1+X_3+X_3X_4+X_2X_3X_4}{X_4\left(1+X_2+X_1X_2+X_1X_2X_3\right)}$} (23) -- (13,0);
\draw (1,1.5) -- (11) -- node [above] {$ \frac{1+X_2+X_1X_2+X_1X_2X_3}{X_1\left(1+X_3+X_3X_4+X_2X_3X_4\right)}$} (12) -- node [above] {$\frac{1+X_4+X_2X_4+X_1X_2X_4}{X_2\left(1+X_1+X_1X_3+X_1X_3X_4\right)}$} (13) -- (13,1.5);
}$}
\]
\[
\scalebox{0.6}{$\tikz{\draw [<-] (0,0) -- node[left]{$\begin{array}{r} \text{change the left most} \\ \text{node from $2$ to $-2$} \end{array}$} node [right] {$\begin{array}{l} \text{change the right most} \\ \text{node from $-1$ to $1$} \end{array}$}(0,2);}$} \quad\quad \quad\quad \quad\quad \quad\quad \quad\quad \quad\quad \quad\quad \quad\quad \quad\quad \quad  \scalebox{0.6}{$\tikz{\draw [->] (0,0) -- node[right]{$\mu_2\circ \mu_3$} (0,2);}$}\quad\quad \quad\quad
\]
\[
\scalebox{0.6}{$\tikz{
\node (21) at (4,0) [] {$-2$};
\node (11) at (2,1.5) [] {$1$};
\node (22) at (6,0) [] {$2$};
\node (12) at (8,1.5) [] {$-1$};
\node (23) at (12,0) [] {$-2$};
\node (13) at (10,1.5) [] {$1$};
\draw (1,0) -- (21) -- node [above] {$X_3$} (22) -- node [above] {$X_4$} (23) -- (13,0);
\draw (1,1.5) -- (11) -- node [above] {$X_1$} (12) -- node [above] {$X_2$} (13) -- (13,1.5);
}$}
\quad \quad   \quad\quad \quad
\scalebox{0.6}{$\tikz{
\node (21) at (-2.5,0) [] {$2$};
\node (11) at (-2,1.5) [] {$-1$};
\node (22) at (-6.5,0) [] {$-2$};
\node (12) at (-7.5,1.5) [] {$1$};
\node (23) at (-12,0) [] {$2$};
\node (13) at (-11.5,1.5) [] {$-1$};
\draw (-1,0) -- (21) -- node [above] {$\frac{X_4\left(1+X_2+X_1X_2+X_1X_2X_3\right)}{1+X_3+X_3X_4+X_2X_3X_4}$} (22) -- node [above] {$\frac{1+X_3}{X_3X_4\left(1+X_2\right)}$} (23) -- (-13,0);
\draw (-1,1.5) -- (11) -- node [above] {$\frac{1+X_2}{X_1X_2\left(1+X_3\right)}$} (12) -- node [above] {$\frac{X_1\left(1+X_3+X_3X_4+X_2X_3X_4\right)}{1+X_2+X_1X_2+X_1X_2X_3}$} (13) -- (-13,1.5);
}$} 
\]
\[
\quad\quad\quad\quad\quad \quad\quad\scalebox{0.6}{$\tikz{\draw [<-] (0,0) -- node[left]{$\mu_2\circ \mu_3$} (0,2);}$} \quad \quad \quad\quad \quad\quad\quad\quad \quad\quad \quad\quad \quad\quad \quad\quad \quad\quad \quad\quad \quad \quad\quad \quad\quad \quad\scalebox{0.6}{$\tikz{\draw [->] (0,0) -- node[right]{$\begin{array}{l} \text{move the nodes $1$ and $-2$} \\ \text{in the middle past each other}\end{array}$} (0,2);}$}
\]
\[
\scalebox{0.6}{$\tikz{
\node (21) at (4,0) [] {$2$};
\node (11) at (2,1.5) [] {$1$};
\node (22) at (6,0) [] {$-2$};
\node (12) at (8,1.5) [] {$1$};
\node (23) at (12,0) [] {$-2$};
\node (13) at (10,1.5) [] {$-1$};
\draw (1,0) -- (21) -- node [above] {$\frac{1}{X_3}$} (22) -- node [above] {$\frac{X_3X_4\left(1+X_2\right)}{1+X_3}$} (23) -- (13,0);
\draw (1,1.5) -- (11) -- node [above] {$\frac{X_1X_2\left(1+X_3\right)}{1+X_2}$} (12) -- node [above] {$\frac{1}{X_2}$} (13) -- (13,1.5);
}$} 
\quad \quad   \quad\quad \quad
\scalebox{0.6}{$\tikz{
\node (21) at (-4,0) [] {$2$};
\node (11) at (-2,1.5) [] {$-1$};
\node (22) at (-8,0) [] {$-2$};
\node (12) at (-6,1.5) [] {$1$};
\node (23) at (-12,0) [] {$2$};
\node (13) at (-10,1.5) [] {$-1$};
\draw (-1,0) -- (21) -- node [above] {$\frac{X_4\left(1+X_2+X_1X_2+X_1X_2X_3\right)}{1+X_3+X_3X_4+X_2X_3X_4}$} (22) -- node [above] {$\frac{1+X_3}{X_3X_4\left(1+X_2\right)}$} (23) -- (-13,0);
\draw (-1,1.5) -- (11) -- node [above] {$\frac{1+X_2}{X_1X_2\left(1+X_3\right)}$} (12) -- node [above] {$\frac{1+X_2+X_1X_2+X_1X_2X_3}{X_1\left(1+X_3+X_3X_4+X_2X_3X_4\right)}$} (13) -- (-13,1.5);
}$} 
\]
\[
\scalebox{0.6}{$\tikz{\draw [<-] (0,0) -- node[left]{$\begin{array}{r} \text{change the left most} \\ \text{node from $1$ to $-1$} \end{array}$} node [right] {$\begin{array}{l} \text{change the right most} \\ \text{node from $-2$ to $2$} \end{array}$}(0,2);}$}  \quad\quad \quad\quad \quad\quad \quad\quad \quad\quad \quad\quad \quad\quad \quad\quad \quad\quad \quad 
\scalebox{0.6}{$\tikz{\draw [->] (0,0) -- node[right]{$\mu_1\circ \mu_4$} (0,2);}$}\quad\quad \quad\quad
\]
\[
\scalebox{0.6}{$\tikz{
\node (21) at (2,0) [] {$2$};
\node (11) at (4,1.5) [] {$-1$};
\node (22) at (6,0) [] {$-2$};
\node (12) at (8,1.5) [] {$1$};
\node (23) at (10,0) [] {$2$};
\node (13) at (12,1.5) [] {$-1$};
\draw (1,0) -- (21) -- node [above] {$\frac{1}{X_3}$} (22) -- node [above] {$\frac{X_3X_4\left(1+X_2\right)}{1+X_3}$} (23) -- (13,0);
\draw (1,1.5) -- (11) -- node [above] {$\frac{X_1X_2\left(1+X_3\right)}{1+X_2}$} (12) -- node [above] {$\frac{1}{X_2}$} (13) -- (13,1.5);
}$} 
\quad \quad   \quad\quad \quad 
\scalebox{0.6}{$\tikz{
\node (21) at (-2,0) [] {$2$};
\node (11) at (-4,1.5) [] {$1$};
\node (22) at (-8,0) [] {$2$};
\node (12) at (-6,1.5) [] {$-1$};
\node (23) at (-10,0) [] {$-2$};
\node (13) at (-12,1.5) [] {$-1$};
\draw (-1,0) -- (21) -- node [above] {$\frac{1}{X_3}$} (22) -- node [above] {$\frac{X_3X_4\left(1+X_2\right)}{1+X_3}$} (23) -- (-13,0);
\draw (-1,1.5) -- (11) -- node [above] {$\frac{X_1X_2\left(1+X_3\right)}{1+X_2}$} (12) -- node [above] {$\frac{1}{X_2}$} (13) -- (-13,1.5);
}$} 
\]
\[
 \quad\quad\scalebox{0.6}{$\tikz{\draw [<-] (0,0) -- node[left]{$\begin{array}{r} \text{change the left most} \\ \text{node from $2$ to $-2$} \end{array}$} node [right] {$\begin{array}{l} \text{change the right most} \\ \text{node from $-1$ to $1$} \end{array}$}(0,2);}$}  
\quad\quad\quad\quad\quad \quad\quad\quad \quad\quad \quad\quad \quad  \quad\quad \quad\quad\quad  
\scalebox{0.6}{$\tikz{\draw [->] (0,0) -- node[right]{$\begin{array}{l} \text{move the nodes $-1$ and $2$} \\ \text{in the middle past each other}\end{array}$} (0,2);}$}
\]
\[
\scalebox{0.6}{$\tikz{
\node (21) at (2,0) [] {$-2$};
\node (11) at (4,1.5) [] {$-1$};
\node (22) at (6,0) [] {$-2$};
\node (12) at (8,1.5) [] {$1$};
\node (23) at (10,0) [] {$2$};
\node (13) at (12,1.5) [] {$1$};
\draw (1,0) -- (21) -- node [above] {$\frac{1}{X_3}$} (22) -- node [above] {$\frac{X_3X_4\left(1+X_2\right)}{1+X_3}$} (23) -- (13,0);
\draw (1,1.5) -- (11) -- node [above] {$\frac{X_1X_2\left(1+X_3\right)}{1+X_2}$} (12) -- node [above] {$\frac{1}{X_2}$} (13) -- (13,1.5);
}$} 
\quad \quad   \quad\quad \quad \scalebox{0.6}{$\tikz{
\node (21) at (-2,0) [] {$2$};
\node (11) at (-4,1.5) [] {$1$};
\node (22) at (-6,0) [] {$2$};
\node (12) at (-8,1.5) [] {$-1$};
\node (23) at (-10,0) [] {$-2$};
\node (13) at (-12,1.5) [] {$-1$};
\draw (-1,0) -- (21) -- node [above] {$\frac{1}{X_3}$} (22) -- node [above] {$\frac{X_3X_4\left(1+X_2\right)}{1+X_3}$} (23) -- (-13,0);
\draw (-1,1.5) -- (11) -- node [above] {$\frac{X_1X_2\left(1+X_3\right)}{1+X_2}$} (12) -- node [above] {$\frac{1}{X_2}$} (13) -- (-13,1.5);
}$} 
\]
\[
\scalebox{0.6}{$\tikz{
\draw [->] (-4,0) to [bend right] node [below] {Transposition} (4,0);
}$}
\]

It is not hard to verify that the mutation sequence
\[
\mu_2\circ \mu_3\circ \mu_1\circ \mu_4\circ \mu_2\circ \mu_3
\]
is a maximal green sequence for the following quiver.
\[
\tikz{
\node (1) at (0,2) [] {$1$};
\node (2) at (2,2) [] {$2$};
\node (3) at (0,0) [] {$3$};
\node (4) at (2,0) [] {$4$};
\draw [->] (1) -- (2);
\draw [->] (2) -- (4);
\draw [->] (4) -- (3);
\draw [->] (3) -- (1);
}
\]

\begin{landscape}

\subsection{Cluster Identities on Double Bruhat Cells } The following table contains identities corresponding to move (1) and move (2) in the cases of $C_{\alpha\beta}C_{\beta\alpha}=1$ and $C_{\alpha\beta}C_{\beta\alpha}=2$. \label{B} 

\begin{table}[h]
\begin{center}
\resizebox{\columnwidth}{!}{
\begin{tabular}{|c|c|c|}\hline
&  $\mathcal{A}$   &  $\mathcal{X}$ \\ \hline
$\begin{array}{c} (\alpha,-\alpha) \\ \downarrow \\ (-\alpha,\alpha)\end{array}$ & $\Delta_\alpha\left(\overline{s}_\alpha^{-1}x\overline{s}_\alpha\right)=\displaystyle\frac{
\Delta_\alpha\left(\overline{s}_\alpha^{-1} x\right)\Delta_\alpha\left(x\overline{s}_\alpha\right) +\prod_{\beta\neq \alpha}\left( \Delta_\beta(x)\right)^{-C_{\beta\alpha}}}{\Delta_\alpha(x)}$ & $e_\alpha t^{H^\alpha} e_{-\alpha}=\displaystyle(1+t)^{H^\alpha}e_{-\alpha}\left(\frac{1}{t}\right)^{H^\alpha}
e_\alpha (1+t)^{H^\alpha}\prod_{\beta\neq \alpha} \left(\frac{1}{1+\frac{1}{t}}\right)^{H^\beta}$ \\ \hline
$\begin{array}{c} (\alpha,\beta,\alpha) \\
\downarrow \\
(\beta,\alpha,\beta)\end{array}$ &  $\Delta_\beta\left(x\overline{s}_\beta\right)=\displaystyle\frac{\Delta_\alpha(x)\Delta_\beta\left(x\overline{s}_\alpha\overline{s}_\beta\right)+\Delta_\alpha\left(x\overline{s}_\alpha\overline{s}_\beta\overline{s}_\alpha\right)\Delta_\beta(x)}{\Delta_\alpha\left(x\overline{s}_\alpha\right)}$ & $e_\alpha t^{H^\alpha} e_\beta e_\alpha=\displaystyle(1+t)^{H^\alpha}\left(\frac{1}{1+\frac{1}{t}}\right)^{H^\beta}e_\beta \left(\frac{1}{t}\right)^{H^\beta} e_\alpha e_\beta (1+t)^{H^\beta}\left(\frac{1}{1+\frac{1}{t}}\right)^{H^\alpha}$ \\ \hline
$\begin{array}{c} (\alpha,\beta,\alpha,\beta) \\ \downarrow \\ (\beta,\alpha,\beta,\alpha) \\ \quad \\ \text{with $C_{\alpha\beta}=-2$} \\ \text{and $C_{\beta\alpha}=-1$} \end{array}$ &
{$\!\begin{aligned} 
\Delta_\alpha\left(x\overline{s}_\alpha\right)=&\displaystyle\frac{1}{\Delta_\alpha\left(x\overline{s}_\beta\overline{s}_\alpha\right)\Delta_\beta\left(x\overline{s}_\beta\right)}\left(\left(\Delta_\alpha(x)\right)^2\Delta_\beta\left(x\overline{s}_\beta \overline{s}_\alpha \overline{s}_\beta\right)+\left(\Delta_\alpha\left(x\overline{s}_\beta\overline{s}_\alpha\right)\right)^2\Delta_\beta(x)\right. \\
& \left. + \Delta_\alpha(x)\Delta_\alpha\left(x\overline{s}_\beta\overline{s}_\alpha\overline{s}_\beta\overline{s}_\alpha\right)\Delta_\beta \left(x\overline{s}_\beta\right)\right)\\
\Delta_\beta\left(x\overline{s}_\alpha\overline{s}_\beta\right)=&\displaystyle\frac{1}{\left(\Delta_\alpha\left(x\overline{s}_\beta\overline{s}_\alpha\right)\right)^2\Delta_\beta\left(x\overline{s}_\beta\right)}\left(\left(\Delta_\alpha(x)\Delta_\beta\left(x\overline{s}_\beta\overline{s}_\alpha\overline{s}_\beta\right)+\Delta_\alpha\left(x\overline{s}_\beta\overline{s}_\alpha\overline{s}_\beta\overline{s}_\alpha\right)\Delta_\beta\left(x\overline{s}_\beta\right)\right)^2\right.\\
&\left.+\left(\Delta_\alpha\left(x\overline{s}_\beta\overline{s}_\alpha\right)\right)^2\Delta_\beta\left(x\overline{s}_\alpha\overline{s}_\alpha \overline{s}_\beta\right)\Delta_\beta(x)\right)
\end{aligned}$}
& $\begin{array}{l} e_\alpha e_\beta t_\alpha^{H^\alpha} t_\beta^{H^\beta} e_\alpha e_\beta=y_1^{H^\alpha}y_2^{H^\beta}e_\beta e_\alpha y_3^{H^\alpha}y_4^{H^\beta} e_\beta e_\alpha y_5^{H^\alpha}y_6^{H^\beta} \\
\quad \\
\text{where $y_1, y_2, \dots, y_6$ are functions of $t_\alpha$ and $t_\beta$ as described below.}
\end{array}$ \\ \hline
\end{tabular}
}
\end{center}
\end{table} 

\begin{align*}
    y_1=& \frac{1+t_\beta+2t_\alpha t_\beta+t_\alpha^2t_\beta}{1+t_\beta+t_\alpha t_\beta} & y_2=&\frac{t_\alpha^2 t_\beta}{1+t_\beta +2t_\alpha t_\beta+t_\alpha^2t_\beta} \\
    y_3=& \frac{t_\alpha}{1+t_\beta+2t_\alpha t_\beta+t_\alpha^2t_\beta} & y_4=& \frac{\left(1+t_\beta+t_\alpha t_\beta\right)^2}{t_\alpha^2 t_\beta} \\
    y_5=& 1+t_\beta+t_\alpha t_\beta & y_6=&\frac{t_\beta+t_\beta^2+2t_\alpha t_\beta^2+t_\alpha^2t_\beta^2}{\left(1+t_\beta+t_\alpha t_\beta\right)^2}
\end{align*}
\end{landscape}

\bibliographystyle{amsalpha-a}

\bibliography{biblio}

\end{document}